\def\sideremark#1{\ifvmode\leavevmode\fi\vadjust{\vbox to0pt{\vss
 \hbox to 0pt{\hskip\hsize\hskip1em
 \vbox{\hsize2.1cm\tiny\raggedright\pretolerance10000
  \noindent #1\hfill}\hss}\vbox to15pt{\vfil}\vss}}}%
\let\oldsqrt\sqrt
\def\sqrt{\mathpalette\DHLhksqrt}
\def\DHLhksqrt#1#2{%
\setbox0=\hbox{$#1\oldsqrt{#2\,}$}\dimen0=\ht0
\advance\dimen0-0.2\ht0
\setbox2=\hbox{\vrule height\ht0 depth -\dimen0}%
{\box0\lower0.4pt\box2}}
\newcommand{\R}{\mathbb{R}} 
\newcommand{\N}{\mathbb{N}} 
\newcommand{\Z}{\mathbb{Z}} 
\newcommand{\inn}{\textnormal{int}} 
\newcommand{\dist}{\textnormal{dist}} 
\newcommand{\diam}{\textnormal{diam}} 
\newcommand{\supp}{\textnormal{supp}} 
\newcommand{\essinf}{\textnormal{essinf}} 
\renewcommand{\phi}{\varphi}
\renewcommand{\div}{\textnormal{div}}
\newcommand{\rz}{\mathbb{R}}
\newcommand{\cD}{{\mathcal D}}
\newcommand{\cE}{{\mathcal E}}
\newcommand{\cF}{{\mathcal F}}
\newcommand{\cG}{{\mathcal G}}
\newcommand{\cL}{{\mathcal L}}
\newcommand{\cW}{{\mathcal W}}
\newcommand{\eps}{\varepsilon}
\newcommand*\re{\mathbb{R}}
\newcommand*\rn{\mathbb{R}^N}
\theoremstyle{definition}
\newtheorem{defi}{Definition}[section]
\newtheorem{remark}[defi]{Remark}
\theoremstyle{plain} 
\newtheorem{thm}[defi]{Theorem}
\newtheorem{prop}[defi]{Proposition}
\newtheorem{lemma}[defi]{Lemma}
\newtheorem{cor}[defi]{Corollary}
\newtheorem{definition}[defi]{Definition}
\theoremstyle{definition}
\numberwithin{equation}{section}
\title[logarithmic $p$-Laplacian]{The Dirichlet problem for the Logarithmic $p$-Laplacian}
\author[Dyda]{Bart{\l}omiej Dyda}
\address[B. Dyda]{Faculty of Pure and Applied Mathematics\\ Wroc{\l}aw University 
	of Science and Technology\\
	Wybrze\.ze Wyspia\'nskiego 27,
	50-370 Wroc{\l}aw, Poland
}
\email{bdyda@pwr.edu.pl\quad dyda@math.uni-bielefeld.de}
\author[Jarohs]
{Sven Jarohs}
\address[S. Jarohs]{Institut f\"ur Mathematik,
Goethe-Universit\"at Frankfurt.
Robert-Mayer-Str. 10
D-60629 Frankfurt am Main, Germany}
\email{jarohs@math.uni-frankfurt.de}
\author[Sk]
{Firoj Sk}
\address[F. Sk]{Carl von Ossietzky Universit\"{a}t Oldenburg,
Fakult\"{a}t V,
Institut f\"{u}r Mathematik,
Ammerländer Heerstraße 114-118,
26129 Oldenburg,
Germany}
\email{firoj.sk@uni-oldenburg.de\quad firojmaciitk7@gmail.com}
\keywords{Fractional $p$-Laplacian, fractional Sobolev spaces, maximum principle, comparison principle, Hardy inequality, eigenvalue problem.}
\subjclass{Primary: 35D30; 35B50; 35B51; 35P30; 35R11, Secondary: 46E35; 35A23.}
\begin{document}

\begin{abstract}
We introduce and study the logarithmic $p$-Laplacian $L_{\Delta_p}$, which emerges from the formal derivative of the fractional $p$-Laplacian $(-\Delta_p)^s$ at $s=0$. This operator is nonlocal, has logarithmic order, and is the nonlinear version of the newly developed logarithmic Laplacian operator \cite{CW19}. We present a variational framework to study the Dirichlet problems involving the $L_{\Delta_p}$ in bounded domains. 

This allows us to investigate the connection between the first Dirichlet eigenvalue and eigenfunction of the fractional $p$-Laplacian and the logarithmic $p$-Laplacian. As a consequence, we deduce a Faber-Krahn inequality for the first Dirichlet eigenvalue of $L_{\Delta_p}$. We discuss maximum and comparison principles for $L_{\Delta_p}$ in bounded domains and demonstrate that the validity of these depends on the sign of the first Dirichlet eigenvalue of $L_{\Delta_p}$. In addition, we prove that the first Dirichlet eigenfunction of $L_{\Delta_p}$ is bounded.
Furthermore, we establish a boundary Hardy-type inequality for the spaces associated with the weak formulation of the logarithmic $p$-Laplacian.
\end{abstract}

\maketitle

\tableofcontents

\section{Introduction}
In recent decades there has been a growing interest in the understanding of boundary value problems involving nonlocal integro-differential operators --- the most prominent example is given by the \textit{fractional Laplace} operator. Fueled by this interest, also \textit{nonlinear nonlocal} interactions have been studied extensively \cite{L14,L16,IMS16,BP16,PQ16,MPSY16,CKP16,ILPS16,BLS18,IMP23} with a prominent example being given by the \textit{fractional $p$-Laplace} operator. This operator is given by
\begin{align*}
(-\Delta_p)^su(x)&=C_{N,s,p}\,p.v.\int_{\R^N}\frac{|u(x)-u(y)|^{p-2}(u(x)-u(y))}{|x-y|^{N+sp}}\,dy,
\end{align*}
where $p\in(1,\infty)$, $s\in(0,1)$, p.v.\ stands for the Cauchy principal value, and $u$ is suitably regular at $x\in\R^N$ and integrable at infinity with respect to the kernel $z\mapsto |z|^{-N-sp}$. The constant $C_{N,s,p}$ here is chosen in the particular case $p=2$ such that $(-\Delta_2)^s$ has the Fourier symbol $|\cdot|^{2s}$. With this, $(-\Delta_2)^s$ can easily be seen as an intermediate operator between the identity operator and the Laplacian $-\Delta=-\sum_{k=1}^{N}\partial_{kk}$. Similarly, for $p\neq 2$, the fractional $p$-Laplace operator can be seen as an intermediate operator between the identity and the $p$-Laplacian $-\Delta_p$ given by $-\Delta_pu=-\div(|\nabla u|^{p-2}\nabla u)$. However, in this general case an appropriate constant is not clear, see the discussions in \cite{TGV21,F23}. Here, we choose $C_{N,s,p}$ such that the limits
$$
\lim_{s\to0^+}(-\Delta_p)^su(x)=|u(x)|^{p-2}u(x)\quad\text{and}\quad \lim_{s\to1^-}(-\Delta_p)^su(x)=-\Delta_pu(x),
$$
hold for smooth compactly supported functions, see Section \ref{constant}. The goal of this work is to find a suitable operator $L_{\Delta_p}$ to improve the understanding at the limit $s\to 0^+$. To be precise, we define an operator $L_{\Delta_p}$, which we call the \textit{logarithmic $p$-Laplace} operator, such that the expansion
\begin{equation}\label{simple expansion}
(-\Delta_p)^su(x)=|u(x)|^{p-2}u(x)+sL_{\Delta_p}u(x)+o(s)\quad\text{for $s\to 0^+$},
\end{equation}
holds for a suitable class of functions $u$.

In the linear case that is $p=2$, the study of the expansion \eqref{simple expansion} has been started in \cite{CW19}, where the authors show that it holds
$$
L_{\Delta_2}u(x)=C_N\int_{B_1(x)}\frac{u(x)-u(y)}{|x-y|^N}\,dy-C_N\int_{\R^N\setminus B_1(x)}\frac{u(y)}{|x-y|^{N}}\,dy+\rho_Nu(x),
$$
where $C_N=\frac{\Gamma(N/2)}{\pi^{N/2}}$ and $\rho_N=2\ln 2-\gamma+\psi(N/2)$. Here and below, $\psi=\Gamma'/\Gamma$ denotes the digamma function, $\Gamma$ is the gamma function, and $\gamma=-\Gamma'(1)$ is the Euler-Mascheroni constant. Moreover, it is shown in \cite{CW19} that $L_{\Delta_2}$ indeed has the Fourier symbol $2\ln|\cdot|$.

\noindent Recent studies have focused on the behavior of small order limits $s\to0^+$
and their connection to $L_{\Delta_2}$ within the context of $s$-dependent nonlinear Dirichlet problems, see \cite{AnSa, SanSa}. The investigation of fractional problems in this regime is especially relevant to optimization problems where the optimal order $s$ is small. Such small order limits are particularly important in applications like image processing and population dynamics, as discussed in references \cite{AnBa, PeVe, SpVa}.
Having the expansion at zero, there have been several works on the study of the expansion of eigenvalues, eigenfunctions, and certain solutions of the problem involving $L_{\Delta_2}$, see \cite{ChVe1, ChVe2, LaWe, JSW20,HS21,FJW22}. In the spirit of the Caﬀarelli--Silvestre extension problem for the fractional Laplacian, a characterization of the logarithmic Laplacian through a local extension problem is addressed in \cite{ChHaWe}. This logarithmic Laplacian operator appears naturally in the expansion at $s=1$, see \cite{JSW23}, and also arises in the geometric context of the 0-fractional perimeter, see \cite{LuNoPo}.

Inspired by the aforementioned works, we aim to generalize these results to the nonlinear case. In particular, we give an explicit representation of the operator $L_{\Delta_p}$ in \eqref{simple expansion}, which, unlike its linear counterpart, is both nonlinear and of logarithmic order. This representation is important for addressing problems involving $L_{\Delta_p}$ where the standard techniques are insufficient due to the nonlinear nature of the operator. In addition, the combination of nonlinearity and the weak singularity of the kernel in the representation of $L_{\Delta_p}$ introduces several challenges and we develop new techniques.
\subsection{Main results}
Our first main result deals with the expansion in \eqref{simple expansion}.
\begin{thm}\label{derivative}
Let $0<s<1$ and $1<p<\infty$. Suppose $u\in C_c^{\alpha}(\rz^{N})$ for some $\alpha>0$. Then, for $x\in\rz^N$ 
\begin{equation}\label{integral representation}
    \begin{split}
        L_{\Delta_p}u(x)
        &:=\frac{d}{ds}\Big|_{s=0}(-\Delta_p)^s\,u(x)\\
        &=C_{N,p}\int_{B_1(x)}\frac{|u(x)-u(y)|^{p-2}(u(x)-u(y))}{|x-y|^N}dy\\
        &\quad +C_{N,p}\int_{\rz^N\setminus B_1(x)}\frac{|u(x)-u(y)|^{p-2}(u(x)-u(y))-|u(x)|^{p-2}u(x)}{|x-y|^N}dy+\rho_N\,|u(x)|^{p-2}u(x),
    \end{split}
\end{equation}
where
$$
C_{N,p}:=\frac{p\,\Gamma\left(\frac{N}{2}\right)}{2\,\pi^{\frac{N}{2}}}\quad\text{and}\quad \rho_N:=\rho_N(p):=2\ln(2)-\gamma+\frac{p}{2}\psi\left(\frac{N}{2}\right).
$$ 
Moreover, for any $1<q\leq\infty$, we have $L_{\Delta_p}u\in L^q(\rz^N)\cap C(\rz^N)$ and
\begin{equation*}
    \frac{(-\Delta_p)^s\,u-|u|^{p-2}u}{s}\xrightarrow{s\to0^+} L_{\Delta_p}u\,\,\text{ in }L^q(\rz^N).
\end{equation*}
\end{thm}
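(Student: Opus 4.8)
The plan is to work directly from the singular–integral representation of $(-\Delta_p)^s$ together with the explicit value of the normalizing constant $C_{N,s,p}$ fixed in Section~\ref{constant}. Put $\Phi(t):=|t|^{p-2}t$ and $\omega:=|\mathbb{S}^{N-1}|=2\pi^{N/2}/\Gamma(N/2)$, so that $C_{N,p}=p/\omega$; one may assume $\alpha\in(0,1]$ and write $[u]_\alpha$ for the $\alpha$–Hölder seminorm. I would first observe that for $u\in C_c^{\alpha}(\R^N)$ and $0<s<\tfrac{\alpha(p-1)}{p}$ the integral defining $(-\Delta_p)^su(x)$ is absolutely convergent — near $x$ one has $|\Phi(u(x)-u(y))|\le[u]_\alpha^{p-1}|x-y|^{\alpha(p-1)}$ with $\alpha(p-1)-sp>0$, and the kernel is integrable at infinity — so no principal value is needed for $s$ in a right neighbourhood of $0$, which is all that the derivative at $s=0$ depends on. Splitting the integral at $\partial B_1(x)$ and, on the exterior piece, adding and subtracting $\Phi(u(x))$ (using $\int_{\R^N\setminus B_1(x)}|x-y|^{-N-sp}\,dy=\omega/(sp)$) leads to the exact identity
\begin{equation*}
\frac{(-\Delta_p)^su(x)-\Phi(u(x))}{s}
=\frac{C_{N,s,p}}{s}\bigl(A_s(x)+B_s(x)\bigr)+\frac{\tfrac{\omega}{sp}C_{N,s,p}-1}{s}\,\Phi(u(x)),
\end{equation*}
where $A_s(x):=\int_{B_1(x)}\frac{\Phi(u(x)-u(y))}{|x-y|^{N+sp}}\,dy$ and $B_s(x):=\int_{\R^N\setminus B_1(x)}\frac{\Phi(u(x)-u(y))-\Phi(u(x))}{|x-y|^{N+sp}}\,dy$, both absolutely convergent for small $s$.

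Next I would let $s\to0^+$ in this identity. From the explicit formula for $C_{N,s,p}$ a direct computation with the Gamma and digamma functions gives $\tfrac{\omega}{sp}C_{N,s,p}\to1$, hence $C_{N,s,p}/s\to p/\omega=C_{N,p}$, and $\bigl(\tfrac{\omega}{sp}C_{N,s,p}-1\bigr)/s\to\rho_N$. For $A_s(x)$ the integrand is dominated (after $y=x+z$), for $s\le s_0<\tfrac{\alpha(p-1)}{p}$, by $[u]_\alpha^{p-1}|z|^{\alpha(p-1)-N-s_0p}\in L^1(B_1(0))$; for $B_s(x)$, since $\Phi(u(x)-u(y))-\Phi(u(x))$ vanishes whenever $u(y)=0$ while $|x-y|\ge1$ on $\R^N\setminus B_1(x)$, the integrand is bounded by $C\,|x-y|^{-N}\mathbf{1}_{\{u(y)\neq0\}}$, integrable independently of $s$. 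Dominated convergence then gives $A_s(x)\to\int_{B_1(x)}\frac{\Phi(u(x)-u(y))}{|x-y|^N}\,dy$ and $B_s(x)\to\int_{\R^N\setminus B_1(x)}\frac{\Phi(u(x)-u(y))-\Phi(u(x))}{|x-y|^N}\,dy$, so the right-hand side of the identity converges to the expression in \eqref{integral representation}; since $(-\Delta_p)^su(x)\to\Phi(u(x))$ as $s\to0^+$, this limit is the asserted right derivative at $s=0$.

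For the regularity of $L_{\Delta_p}u$ I would argue by continuity under the integral sign: in \eqref{integral representation} the first integral has integrand continuous in $x$ and dominated (after $y=x+z$) by $[u]_\alpha^{p-1}|z|^{\alpha(p-1)-N}$, and the second has integrand continuous in $x$ and, for $x$ in any fixed ball $K$, supported (in $z$) in the fixed bounded set $(\supp u)-K$ where it is bounded; hence both are continuous, and so is $L_{\Delta_p}u$. For the integrability I would use that if $\dist(x,\supp u)\ge1$ then $u(x)=0$, both integrals over $B_1(x)$ vanish, and \eqref{integral representation} reduces to $L_{\Delta_p}u(x)=-C_{N,p}\int_{\supp u}\frac{\Phi(u(y))}{|x-y|^N}\,dy$, whence $|L_{\Delta_p}u(x)|\le C\,\dist(x,\supp u)^{-N}\lesssim(1+|x|)^{-N}$ for $|x|$ large; combined with boundedness on compacta this gives $L_{\Delta_p}u\in L^q(\R^N)$ for every $q\in(1,\infty]$, and the $|x|^{-N}$ tail is precisely what forces the exclusion of $q=1$.

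Finally, for the $L^q$ convergence I would make the pointwise argument quantitative. Using $0\le|z|^{-sp}-1\le sp\,\ln\tfrac1{|z|}\,|z|^{-s_0p}$ for $|z|<1$ and $0\le1-|z|^{-sp}\le sp\,\ln|z|$ for $|z|\ge1$ (valid for $s\le s_0<\tfrac{\alpha(p-1)}{p}$), one obtains $\|A_s-A_0\|_{L^q(\R^N)}\le Cs$ and $\|B_s-B_0\|_{L^q(\R^N)}\le C_qs$ for every $q\in(1,\infty]$, where $A_0,B_0$ denote the $s=0$ limits above: both differences are supported in a fixed neighbourhood of $\supp u$ outside which $|B_s-B_0|\le Cs\,|x|^{-N}\ln|x|$, which lies in $L^q$ exactly for $q>1$. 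Writing $\tfrac{C_{N,s,p}}{s}(A_s+B_s)-C_{N,p}(A_0+B_0)=\tfrac{C_{N,s,p}}{s}\bigl((A_s-A_0)+(B_s-B_0)\bigr)+\bigl(\tfrac{C_{N,s,p}}{s}-C_{N,p}\bigr)(A_0+B_0)$, and using $A_0+B_0=C_{N,p}^{-1}\bigl(L_{\Delta_p}u-\rho_N\Phi(u)\bigr)\in L^q(\R^N)$ for $q>1$ together with $C_{N,s,p}/s\to C_{N,p}$, $\bigl(\tfrac{\omega}{sp}C_{N,s,p}-1\bigr)/s\to\rho_N$ and $\Phi(u)\in L^q$, one concludes $\bigl\|\tfrac{(-\Delta_p)^su-\Phi(u)}{s}-L_{\Delta_p}u\bigr\|_{L^q(\R^N)}\le C_q\,s\to0$. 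I expect the main obstacle to be exactly this last step: controlling the weak logarithmic singularity $\bigl(|x-y|^{-sp}-1\bigr)/s$ uniformly near the diagonal while simultaneously keeping the $|x|^{-N}$ decay of the far field uniform in $s$, so that the approximating functions stay bounded in $L^q(\R^N)$ rather than merely in $L^q_{\mathrm{loc}}$ and the limit may legitimately be taken there.
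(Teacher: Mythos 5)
Your proposal is correct, and it takes a genuinely different (and in some ways cleaner) route than the paper's proof. The paper splits $(-\Delta_p)^su(x)$ at $\partial B_r(x)$ for an auxiliary radius $r>4$ chosen so that $\supp u\subset B_{r/4}$; this makes the far–field contribution $D_r(s,p,x)$ reduce to a purely scalar factor $D_r(s,p)$ times $g(u(x))$ when $|x|<r/2$, at the cost of introducing a correction term $F$ supported in $B_{r/2}^c$. They then bound the $L^q$ error by $(m_{p,q}+M_{p,q})\,r^{N/q-N}$ and send $r\to\infty$ after $s\to0^+$, i.e.\ a double limit. You instead split at $\partial B_1(x)$ from the start, which matches the structure of the target formula, and obtain a single exact identity
$\frac{(-\Delta_p)^su-\Phi(u)}{s}=\frac{C_{N,s,p}}{s}(A_s+B_s)+\frac{\frac{\omega}{sp}C_{N,s,p}-1}{s}\Phi(u)$.
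You then estimate $A_s-A_0$ and $B_s-B_0$ directly using $0\le|z|^{-sp}-1\le sp\ln\frac1{|z|}|z|^{-s_0p}$ on $B_1$ and $0\le1-|z|^{-sp}\le sp\ln|z|$ on $B_1^c$, yielding the quantitative rate $O(s)$ in $L^q$ in a single limit, including $q=\infty$ directly (which the paper handles by a concluding remark). The trade-off: you must control the convolution-type integral $B_s-B_0$ uniformly and show it decays like $s\,|x|^{-N}\ln|x|$ (hence is in $L^q$ only for $q>1$), whereas the paper sidesteps this by making the far field identically a multiple of $g(u(x))$ supported in $B_{r/2}$. Both approaches require the $O(|x|^{-N})$ far-field decay of $L_{\Delta_p}u$ itself, which you establish correctly via $L_{\Delta_p}u(x)=-C_{N,p}\int_{\supp u}\Phi(u(y))|x-y|^{-N}\,dy$ for $\dist(x,\supp u)\ge1$. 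One small imprecision in your write-up: $B_s-B_0$ is not actually supported in a fixed neighbourhood of $\supp u$ — only $A_s-A_0$ is — but the subsequent estimate $|B_s-B_0|\le Cs\,|x|^{-N}\ln|x|$ for $x$ outside that neighbourhood is the right statement and makes the argument go through.
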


In contrast
to the linear case, the \textit{convolution type} integral in $\R^N\setminus B_1(x)$ in the representation \eqref{integral representation} of $L_{\Delta_p}$ cannot be studied simply with usual convolution inequalities, since on the one hand, $z\mapsto |z|^{-N}$ is non-integrable at infinity and on the other hand, due to the appearing nonlinearity in the numerator the term does not immediately compensate the singular behavior at infinity. Similar to the case $p=2$, we also have a more localized representation to $L_{\Delta_p}$, see Lemma \ref{some properties}(3) and Lemma \ref{alternative2} below. Given $\Omega\subset \R^N$ open and $u\in C^{\alpha}_c(\R^N)$, $0<\alpha<1$, it also holds
\begin{align*}
        L_{\Delta_p}u(x)&=C_{N,p} \int_{\Omega}\frac{|u(x)-u(y)|^{p-2}(u(x)-u(y))}{|x-y|^{N}}dy+\Big(\rho_N(p)+h_{\Omega}(x)\Big)|u(x)|^{p-2}u(x)\\
        &\qquad\qquad +C_{N,p}\int_{\R^N\setminus \Omega}\frac{|u(x)-u(y)|^{p-2}(u(x)-u(y))-|u(x)|^{p-2}u(x)}{|x-y|^N}\,dy,
    \end{align*}
where
\begin{equation}\label{h-omega function}
    h_{\Omega}(x):=C_{N,p}\int_{B_1(x)\setminus \Omega}|x-y|^{-N}\,dy-C_{N,p}\int_{\Omega\setminus B_1(x)}|x-y|^{-N}\,dy.
 \end{equation}
Note that the function $h_{\Omega}$ (up to the multiplicative constant $\frac{p}{2}$) coincides with the function introduced in \cite[Corollary 1.9]{CW19}, and several bounds and various properties of this function can be found in \cite[Section 4]{CW19} and \cite{JSW20,JSW23}. 
\smallskip

Our next result deals with the expansion at $s=0$ of the first eigenvalue of the fractional $p$-Laplacian. Recall the fractional Sobolev space for $\Omega\subset \R^N$ open and bounded set 
$$
\cW^{s,p}_0(\Omega)=\{u\in W^{s,p}(\R^N)\;:\; u1_{\R^N\setminus \Omega}\equiv 0\},
$$
and the first (Dirichlet) eigenvalue of $(-\Delta_p)^s$ in $\Omega$ given by
\begin{equation}\label{intro:lambda-s-p}
\lambda_{s,p}^1(\Omega)=\inf_{\substack{u\in\cW^{s,p}_0(\Omega)\\\|u\|_{L^p(\Omega)}=1}}\frac{C_{N,s,p}}{2}\iint_{\R^N\times\R^N}\frac{|u(x)-u(y)|^{p}}{|x-y|^{N+sp}}\,dxdy.
\end{equation}
It is well known that the first eigenvalue $\lambda_{s,p}^1(\Omega)$ is positive and that there is an associated minimizer $\phi_{s}$, which is unique up to sign and can be chosen to be positive in $\Omega$. Similarly, we can and do set up a weak framework of $L_{\Delta_p}$. To this end, let
\begin{equation}\label{eq:first ev of frac p}
X^p_0(\Omega):=\left\{u\in L^p(\R^N)\;:\; u1_{\R^N\setminus \Omega}\equiv 0\quad\text{and}\quad \int_{\R^N}\int_{B_1(x)}\frac{|u(x)-u(y)|^p}{|x-y|^N}\,dydx<\infty\right\},
\end{equation}
which we discuss in more detail in Section \ref{variational}. Such spaces have been recently investigated in \cite{F23} and the kernel $z\mapsto 1_{B_1}(z)|z|^{-N}$ can be seen as the kernel of a $p$-L\'evy operator as introduced in \cite{F23}. Thus, the operator $L_{\Delta_p}$ is a $p$-L\'evy operator perturbed by two lower order terms. In \cite{F23}, several important statements for the analysis of solutions such as compact embeddings into $L^p(\Omega)$ and a Poincar\'e inequality have been shown, which hold in particular for $X^p_0(\Omega)$. We recall the statements adjusted to our setting in Section \ref{variational}.

The fractional Hardy inequality\cite{Dyda, DyKi, DyVa} states that for any bounded Lipschitz domain $\Omega\subset\rn$ and $0<s<1,\,p>0$ with $sp\neq 1$ there exists a constant $C=C(N,s,p,\Omega)>0$ such that 
\begin{equation}\label{Dyda-Hardy inq}
\int_{\Omega}\frac{|u(x)|^p}{\delta_x^{sp}}dx\leq C\iint_{\Omega\times\Omega}\frac{|u(x)-u(y)|^p}{|x-y|^{N+sp}}\,dx\,dy +C \int_\Omega |u(x)|^p\,dx \quad\text{ for all } u\in C_c^\infty(\Omega),
\end{equation}
and this inequality fails to hold in the so called \emph{critical case} $sp=1$. Recently, in \cite{AdJaRo, AdRoSa, AdRoSa2}, the authors addressed the critical case of the Hardy inequality, by inserting a logarithmic weight in the denominator on the left side of \eqref{Dyda-Hardy inq}.

We are interested in an inequality \eqref{Dyda-Hardy inq} for $s=0$ and with a~logarithmic weight in the numerator of the left hand side.
In our next result, we show the validity of this inequality for any $1\leq p<\infty$. However, in Section \ref{sec:hardy}, we prove a much more general result (Theorem \ref{hardy}). This inequality plays an important role in studying the solution space of the logarithmic $p$-Laplacian.

\begin{thm}[Logarithmic boundary Hardy inequality]\label{intro:hardy}
  Let $\Omega\subset \R^N$ be an open bounded Lipschitz set and $1\leq p<\infty$. Then there is $c>0$, depending on $\Omega$, $N$, and $p$, such that for every $u \in L^p(\Omega)$
  \[
  \int_\Omega |u(x)|^p \ln^+\left(\frac{1}{\delta_x}\right) \,dx
  \leq c \left(\  \iint_{\substack{\Omega\times \Omega\\ |x-y|<1}}\frac{|u(x)-u(y)|^p}{|x-y|^N}\,dy\,dx
  + \int_\Omega |u(x)|^p\,dx \right),
  \]
  where $\ln^+(t)=\max\{\ln(t),0\}$ for $t>0$.
\end{thm}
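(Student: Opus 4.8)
The plan is to reduce, by a standard localization and boundary-flattening, to a model estimate on the half-space $\mathbb{H}=\mathbb{R}^{N-1}\times(0,\infty)$, and then to prove that model estimate by a ``layer-cake in the weight'' argument. Let me stress at the outset that this is a genuine endpoint ($s=0$) statement that does \emph{not} follow from the classical fractional Hardy inequality \eqref{Dyda-Hardy inq}: since $|x-y|^{-N}\le|x-y|^{-N-sp}$ for $|x-y|<1$ and $s>0$, the Gagliardo-type integral on the right-hand side above is strictly smaller than the one in \eqref{Dyda-Hardy inq}, so letting $s\to0^+$ there is of no help and a direct argument is required.

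\emph{Step 1 (localization and flattening).} Cover $\partial\Omega$ by finitely many bounded open sets $U_1,\dots,U_m$ in which, after a rigid motion, $\Omega\cap U_i=\{(x',x_N)\in U_i:x_N>\gamma_i(x')\}$ with $\gamma_i$ Lipschitz, and set $U_0=\{x\in\Omega:\delta_x>d_0\}$ with $d_0$ small enough that $\{U_i\}_{i=0}^m$ covers $\overline{\Omega}$; pick a subordinate partition of unity $\psi_0,\dots,\psi_m$ with $\operatorname{supp}\psi_i\Subset U_i$ for $i\ge1$. Using $u=\sum_i\psi_iu$ and $\big|\sum_i a_i\big|^p\le(m+1)^{p-1}\sum_i|a_i|^p$, it suffices to bound $\int_\Omega|\psi_iu|^p\ln^+(1/\delta_x)\,dx$ for each $i$; for $i=0$ this is at most $\ln^+(1/d_0)\|u\|_{L^p(\Omega)}^p$. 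For $i\ge1$, since $\operatorname{supp}\psi_i$ lies at positive distance $\eta_i$ from $\partial U_i$, one has $\delta_x=\operatorname{dist}(x,\partial V_i)$ on $\operatorname{supp}\psi_i$ whenever $\delta_x<\eta_i$, where $V_i=\{x_N>\gamma_i(x')\}$; hence, up to an additive $C\|u\|_{L^p(\Omega)}^p$, we may work on $V_i$ with $\delta_x$ replaced by $\operatorname{dist}(x,\partial V_i)\simeq x_N-\gamma_i(x')$. The bi-Lipschitz map $\Phi_i(x',t)=(x',t+\gamma_i(x'))$ sends $\mathbb{H}$ onto $V_i$ with unit Jacobian, preserves $L^p$ norms, satisfies $\operatorname{dist}(\Phi_i(x',t),\partial V_i)\simeq t$, and distorts $|x-y|$ only by a constant depending on $\operatorname{Lip}(\gamma_i)$, so it reduces the estimate for $\psi_iu$ to the model estimate below applied (with a radius $\rho_i$ chosen small enough that the pullback stays in $\{|a-b|<1\}$) to $w_i:=(\psi_iu)\circ\Phi_i\in L^p(\mathbb{H})$. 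Finally, from $\psi_i(a)u(a)-\psi_i(b)u(b)=\psi_i(a)(u(a)-u(b))+u(b)(\psi_i(a)-\psi_i(b))$ with $|\psi_i(a)-\psi_i(b)|\le\|\nabla\psi_i\|_\infty|a-b|$, the finiteness of $\int_{|z|<c}|z|^{p-N}\,dz$, and the fact that for $a\in\operatorname{supp}\psi_i$ and $|a-b|$ small both $a,b$ lie in $U_i\cap V_i=U_i\cap\Omega\subset\Omega$, one checks that the localized Gagliardo energy of $w_i$ on $\mathbb{H}$ is controlled by $\iint_{\Omega\times\Omega,\,|a-b|<1}\frac{|u(a)-u(b)|^p}{|a-b|^N}\,da\,db+\|u\|_{L^p(\Omega)}^p$.

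\emph{Step 2 (the model estimate).} It remains to prove that for $1\le p<\infty$ and $w\in L^p(\mathbb{H})$,
\[
\int_{\{0<x_N<1\}}|w(x)|^p\ln\tfrac1{x_N}\,dx\ \le\ C(N,p)\Big(\iint_{\substack{x_N,\,y_N>0\\ |x-y|<1}}\frac{|w(x)-w(y)|^p}{|x-y|^N}\,dx\,dy+\|w\|_{L^p(\mathbb{H})}^p\Big).
\]
Writing $\ln\frac1{x_N}=\int_{x_N}^1\frac{ds}{s}$ and applying Tonelli,
\[
\int_{\{0<x_N<1\}}|w|^p\ln\tfrac1{x_N}\,dx=\int_0^1\frac{H(s)}{s}\,ds,\qquad H(s):=\int_{\{0<x_N<s\}}|w(x)|^p\,dx .
\]
Fix $0<s<\frac25$ and set $A_s=\{(x,y):0<x_N<\tfrac s2,\ s<y_N<\tfrac{3s}2,\ |x'-y'|<s\}$; on $A_s$ one has $\tfrac s2<|x-y|<1$, while for each admissible $x$ the $y$-section has measure $\simeq s^N$. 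From $|w(x)|^p\le2^{p-1}\big(|w(x)-w(y)|^p+|w(y)|^p\big)$, integrating in $y$ over that section and then in $x$ over $\{0<x_N<\tfrac s2\}$ (the factors $s^N$ and $|x-y|^{-N}\simeq s^{-N}$ cancelling),
\[
\iint_{A_s}\frac{|w(x)-w(y)|^p}{|x-y|^N}\,dx\,dy\ \gtrsim\ 2^{-(p-1)}H(\tfrac s2)-\big(H(\tfrac{3s}2)-H(s)\big),
\]
the implicit constant depending only on $N$. Integrate this against $\frac{ds}{s}$ over $(0,\frac25)$. On the left, Tonelli yields $\iint\frac{|w(x)-w(y)|^p}{|x-y|^N}\big(\int_{\{s:(x,y)\in A_s\}}\frac{ds}{s}\big)dx\,dy$; a fixed pair contributes only for $s$ in an interval $(\ell,r)$ with $\ell\ge\tfrac23 y_N\ge\tfrac23 r$, so $\int_\ell^r\frac{ds}{s}\le\ln\tfrac32$, and every such pair satisfies $|x-y|<1$; hence the left side is $\le\ln\tfrac32\iint_{|x-y|<1}\frac{|w(x)-w(y)|^p}{|x-y|^N}$. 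On the right, $\int_0^{2/5}\frac{H(s/2)}{s}\,ds=\int_0^{1/5}\frac{H(u)}{u}\,du$, while $\int_0^{2/5}\frac{H(3s/2)-H(s)}{s}\,ds=\int_{\mathbb{H}}|w(x)|^p\big(\int_{\{s:\,s<x_N<3s/2,\ s<2/5\}}\tfrac{ds}{s}\big)dx\le\ln\tfrac32\,\|w\|_{L^p(\mathbb{H})}^p$. Rearranging gives $\int_0^{1/5}\frac{H(u)}{u}\,du\lesssim\iint_{|x-y|<1}\frac{|w(x)-w(y)|^p}{|x-y|^N}+\|w\|_{L^p(\mathbb{H})}^p$, and since $\int_{1/5}^1\frac{H(s)}{s}\,ds\le H(1)\ln5\le\ln5\,\|w\|_{L^p(\mathbb{H})}^p$ the model estimate follows. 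Every inequality used---the convexity bound for $t\mapsto t^p$, Tonelli---remains valid for $p=1$ as well (with convexity constant $1$).

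The essential point, and the only genuine obstacle, is Step 2: one must see why the extra logarithm on the left cannot be removed and how it is produced. A function of size $\simeq1$ concentrated within distance $\varepsilon$ of the boundary is forced to decay to $0$ over the bulk of $\Omega$, and this transition, weighed against the non-integrable kernel $|z|^{-N}$ over distances up to $O(1)$, already carries a factor $\simeq\varepsilon\ln(1/\varepsilon)$ in the Gagliardo energy; the layer-cake identity together with the ``vertical frustum'' lower bound $A_s$ is exactly the device that matches this contribution with the weighted integral on the left. Everything else---the localization in Step 1, the distance comparisons under $\Phi_i$, and the cutoff cross-term---is routine but lengthy, and uses throughout that $\operatorname{supp}\psi_i$ stays at positive distance from $\partial U_i$.
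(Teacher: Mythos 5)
Your argument is correct, and it takes a genuinely different route from the paper. The paper proves a more general Theorem~\ref{hardy} valid for any open $\Omega\neq\R^N$ whose Whitney decomposition admits a hierarchy of ``escape cubes'' $E(Q,j)$ at each dyadic scale $2^j$ between the cube scale and $2^{j_0}$; the comparison $\fint_Q|u|^p \lesssim \fint_{E(Q,j)}|u|^p + (\text{Gagliardo piece})$ is summed over $j$ to create the factor $(j_0-m)\simeq\ln(1/\delta_x)$, and a separate counting estimate (assumption~(iii)) controls the overlap of the escape cubes; Proposition~\ref{plump} then verifies the assumptions for locally plump sets, which include Lipschitz sets. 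You instead localize and flatten the boundary to a half-space, and produce the logarithm by the layer-cake identity $\ln(1/x_N)=\int_{x_N}^1 ds/s$ combined with a frustum-region lower bound on the Gagliardo form at each continuous scale $s$; the telescoping term $H(3s/2)-H(s)$ plays the role of the escape-cube overlap count. The two proofs share the same engine — summing or integrating a fixed-scale comparison over dyadic or continuous scales, with the essentially disjoint supports (respectively: escape cubes of prescribed sidelength, and $s$-parametrized frusta whose $s$-interval for a fixed pair $(x,y)$ has bounded $ds/s$-measure) keeping the Gagliardo energy from being overcounted. What the paper's approach buys is generality: it proves the inequality for all $p>0$ and for locally plump sets (unbounded sets, non-graph boundaries) directly, without a partition of unity and without any boundary-flattening, and gives explicit constants (e.g.\ on the half-space, Proposition~\ref{halfspace}). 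What yours buys is transparency for the bounded Lipschitz case: the model computation on $\R^N_+$ exhibits exactly how the non-integrable kernel $|z|^{-N}$ together with the $O(1)$-distance cut-off generates the $\ln(1/\delta_x)$ weight. Your Step~2 also works verbatim for $0<p<1$ (subadditivity replacing convexity), and your Step~1 localization requires only $p>0$ for $\int_{|z|<1}|z|^{p-N}\,dz<\infty$, so your argument in fact covers the paper's range $0<p<\infty$ as well, restricted to the Lipschitz setting.
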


As mentioned above, Theorem \ref{intro:hardy} is a direct consequence of Corollary \ref{cor:hardyplump} below, which even holds for any $p>0$. Moreover, it follows that we can identify $X^p_0(\Omega)$ with those $L^p(\Omega)$-functions which satisfy
$$
\iint_{\substack{\Omega\times \Omega\\ |x-y|<1}}\frac{|u(x)-u(y)|^p}{|x-y|^N}\,dy\,dx<\infty,
$$
extended by zero in $\R^N\setminus \Omega$. Such a characterization is known\footnote{Note that the spaces $W^{s,p}(\Omega)$ and $W^{s,p}_0(\Omega)$ coincide for $sp\leq 1$.} for $W^{s,p}(\Omega)$ and $\cW^{s,p}_0(\Omega)$ if $sp<1$, and also for the corresponding spaces of logarithmic order in the case $p=2$, see \cite{CW19}. 

We note that Theorem~\ref{intro:hardy} was obtained for bounded Lipschitz set $\Omega\subset\rn$ and $p=2$, see \cite[Proposition A.1]{CW19}. Indeed, in this case such an inequality was proven with Fourier methods, which cannot be extended to the case $p\neq 2$. 
Additionally, for $\Omega=\R^N\setminus\{0\}$, the inequality of Theorem~\ref{intro:hardy} was proved
in \cite[Lemma~2.2]{LWXY}.
Very recently, in \cite[Theorem~3.4]{GL24}, a~version of the Hardy inequality in $\R^N$ with kernels more general than logarithmic was obtained. 

\smallskip

In the case of the fractional $p$-Laplacian, for an open bounded set $\Omega\subset \R^N$, there is a unique (up to sign and normalization) first eigenfunction $u_1\in X^p_0(\Omega)$ of $L_{\Delta_p}$ in $\Omega$ (see Sections \ref{variational} and \ref{eigenfunction}) corresponding to the first eigenvalue
\begin{align*}
    \lambda_{L,p}^1(\Omega)&:=\inf_{\substack{u\in X^p_0(\Omega)\\ \|u\|_{L^p(\Omega)}=1}}\Bigg(\  \frac{C_{N,p}}{2}\int_{\R^N}\int_{B_1(x)}\frac{|u(x)-u(y)|^p}{|x-y|^N}\,dydx+\rho_N(p)\\
    &\qquad\qquad \qquad \qquad +C_{N,p}\int_{\R^N}\int_{\R^N\setminus B_1(x)}\frac{|u(x)-u(y)|^p-|u(x)|^p}{|x-y|^N}\,dydx\Bigg)\\
    &=\inf_{\substack{u\in X^p_0(\Omega)\\ \|u\|_{L^p(\Omega)}=1}} \Bigg(\ \frac{C_{N,p}}{2}\iint_{\Omega\times \Omega}\frac{|u(x)-u(y)|^p}{|x-y|^N}\,dxdy+\int_{\Omega}(h_{\Omega}(x)+\rho_{N}(p))|u(x)|^p\,dx\Bigg).
\end{align*}
Contrary to the fractional $p$-Laplacian case, however, $\lambda_{L,p}^1(\Omega)$ is in general not positive. This follows from the logarithmic scaling behavior, see Proposition \ref{prop of lambda1}(ii) below, 
\begin{equation}\label{eigen scaling}
\lambda_{L,p}^1(r\Omega)=\lambda_{L,p}^1(\Omega)-p\ln(r)\quad\text{for $r>0$, $1<p<\infty$, and $\Omega\subset \R^N$ open.}
\end{equation}
We state our next result which can be seen as the nonlinear version of  \cite[Theorem 1.5]{CW19}.
\begin{thm}\label{relation between evs of log p Lap and frac p Lap}
    Let $\Omega$ be an open bounded Lipschitz subset of $\rn$ and $p\in(1,\infty)$. Then 
    \begin{equation*}
        \lambda^1_{L,p}(\Omega)=\frac{d}{ds}\bigg|_{s=0} \lambda^1_{s,p}(\Omega).
    \end{equation*}
Moreover, if we define $\phi_s$ as the $L^p$-normalized unique positive extremal for $\lambda^1_{s,p}(\Omega)$, then we have, as $s\to0^+$
\begin{equation}\label{eigenfunction conv}
    \phi_s\to u_1 \text{ in }L^p(\Omega),
\end{equation}
where $u_1$ is the $L^p$-normalized unique positive extremal for $\lambda^1_{L,p}(\Omega)$. 
\end{thm}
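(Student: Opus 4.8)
The plan is to reduce everything to the two limits $\lambda^1_{s,p}(\Omega)\to1$ and $s^{-1}(\lambda^1_{s,p}(\Omega)-1)\to\lambda^1_{L,p}(\Omega)$ as $s\to0^+$, which together give $\lambda^1_{L,p}(\Omega)=\frac{d}{ds}\big|_{s=0}\lambda^1_{s,p}(\Omega)$ (a one-sided derivative, since $\lambda^1_{s,p}$ is defined only for $s>0$). Write $\mathcal E_s(u)=\frac{C_{N,s,p}}{2}\iint_{\R^N\times\R^N}\frac{|u(x)-u(y)|^p}{|x-y|^{N+sp}}\,dx\,dy$ and $\mathcal E_L(u)=\int_{\R^N}u\,L_{\Delta_p}u\,dx$, so that $\lambda^1_{s,p}(\Omega)=\inf\{\mathcal E_s(u):u\in\cW^{s,p}_0(\Omega),\,\|u\|_{L^p(\Omega)}=1\}$ and $\lambda^1_{L,p}(\Omega)=\inf\{\mathcal E_L(u):u\in X^p_0(\Omega),\,\|u\|_{L^p(\Omega)}=1\}$; note $\cW^{s,p}_0(\Omega)\subset X^p_0(\Omega)$ (on $\{|x-y|<1\}$ the Gagliardo kernel dominates $1_{B_1}(x-y)|x-y|^{-N}$), and $\mathcal E_L$ is finite on $X^p_0(\Omega)$ by the logarithmic Hardy inequality of Theorem \ref{intro:hardy}, which controls $\int_\Omega|h_\Omega|\,|u|^p$. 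For the upper bound I would use, for fixed $u\in C^\infty_c(\Omega)$, the identity $\mathcal E_s(u)=\int_{\R^N}u\,(-\Delta_p)^s u\,dx$ together with the $L^q$-convergence of Theorem \ref{derivative}, which yields $\mathcal E_s(u)=\|u\|_{L^p(\Omega)}^p+s\,\mathcal E_L(u)+o(s)$; hence $\lambda^1_{s,p}(\Omega)\le1+s\,\mathcal E_L(u)+o(s)$ for normalized $u$, and infimizing over $C^\infty_c(\Omega)$ (dense in $X^p_0(\Omega)$, on which $\mathcal E_L$ is continuous; see Section \ref{variational}) gives $\limsup_{s\to0^+}\lambda^1_{s,p}(\Omega)\le1$ and $\limsup_{s\to0^+}s^{-1}(\lambda^1_{s,p}(\Omega)-1)\le\lambda^1_{L,p}(\Omega)$.

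The core of the proof — and the step I expect to be the main obstacle — is a matching \emph{uniform} lower bound. Since $u$ vanishes outside $\Omega$, splitting the kernel at $|x-y|=1$ and symmetrizing the far part yields the exact identity
\begin{multline*}
\mathcal E_s(u)=\frac{C_{N,s,p}\,\omega_{N-1}}{sp}\,\|u\|_{L^p(\Omega)}^p
+\frac{C_{N,s,p}}{2}\iint_{|x-y|<1}\frac{|u(x)-u(y)|^p}{|x-y|^{N+sp}}\,dx\,dy\\
+\frac{C_{N,s,p}}{2}\iint_{|x-y|\ge1}\frac{|u(x)-u(y)|^p-|u(x)|^p-|u(y)|^p}{|x-y|^{N+sp}}\,dx\,dy,
\end{multline*}
where $\omega_{N-1}=|\sph^{N-1}|$; by the normalization of $C_{N,s,p}$ (Section \ref{constant}) one has $\frac{C_{N,s,p}}{s}\to C_{N,p}$ and $\frac{C_{N,s,p}\,\omega_{N-1}}{sp}\to1$. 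Feeding the expansion $|x-y|^{-sp}=1+sp\ln\frac{1}{|x-y|}+O\!\big(s^2\ln^2|x-y|\big)$ into the last two integrals, matching the first-order terms with $\mathcal E_L(u)$ written in the localized form $\frac{C_{N,p}}{2}\iint_{\Omega\times\Omega}\frac{|u(x)-u(y)|^p}{|x-y|^N}\,dx\,dy+\int_\Omega(\rho_N+h_\Omega)|u|^p\,dx$, and using Theorem \ref{intro:hardy} to dominate the contribution of a neighborhood of $\partial\Omega$ (where $h_\Omega$ is logarithmically unbounded below), I would establish that there is $\eta(s)\to0$ with
\begin{equation}\label{sk:lb}
\mathcal E_s(u)\ \ge\ \|u\|_{L^p(\Omega)}^p+s\Big(\mathcal E_L(u)-\eta(s)\big(1+[u]^p_{X^p_0(\Omega)}\big)\Big)\qquad\text{for all }u\in\cW^{s,p}_0(\Omega),
\end{equation}
where $[u]^p_{X^p_0(\Omega)}=\int_{\R^N}\int_{B_1(x)}\frac{|u(x)-u(y)|^p}{|x-y|^N}\,dy\,dx$. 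The difficulty is precisely to keep every remainder term uniform in $u$ while assuming only finiteness of the $X^p_0(\Omega)$-seminorm; the Fourier-transform argument used for $p=2$ in \cite{CW19} is unavailable here because of the nonlinearity and the non-integrable weight $|z|^{-N}$.

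Granting \eqref{sk:lb}, the first assertion follows quickly. Taking $\|u\|_{L^p(\Omega)}=1$ and using the coercivity of the logarithmic form (Section \ref{eigenfunction}), \eqref{sk:lb} gives $\mathcal E_s(u)\ge1-o(1)$ uniformly, hence $\liminf_{s\to0^+}\lambda^1_{s,p}(\Omega)\ge1$, so with the upper bound $\lambda^1_{s,p}(\Omega)\to1$. Applying \eqref{sk:lb} to $u=\phi_s$ (which lies in $\cW^{s,p}_0(\Omega)\subset X^p_0(\Omega)$ and is $L^p$-normalized) and using coercivity again, the boundedness of $s^{-1}(\lambda^1_{s,p}(\Omega)-1)$ forces $\sup_{0<s<s_0}[\phi_s]_{X^p_0(\Omega)}<\infty$; then $\eta(s)\big(1+[\phi_s]^p_{X^p_0(\Omega)}\big)\to0$ and
\[
s^{-1}\big(\lambda^1_{s,p}(\Omega)-1\big)=s^{-1}\big(\mathcal E_s(\phi_s)-1\big)\ \ge\ \mathcal E_L(\phi_s)-o(1)\ \ge\ \lambda^1_{L,p}(\Omega)-o(1),
\]
so $\liminf_{s\to0^+}s^{-1}(\lambda^1_{s,p}(\Omega)-1)\ge\lambda^1_{L,p}(\Omega)$, which together with the upper bound proves $\lambda^1_{L,p}(\Omega)=\frac{d}{ds}\big|_{s=0}\lambda^1_{s,p}(\Omega)$.

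For the convergence of eigenfunctions, take an arbitrary sequence $s_n\to0^+$. By the uniform bound on $[\phi_{s_n}]_{X^p_0(\Omega)}$ and the compact embedding $X^p_0(\Omega)\hookrightarrow\hookrightarrow L^p(\Omega)$ (from \cite{F23}; see Section \ref{variational}), a subsequence satisfies $\phi_{s_{n_k}}\to\phi$ in $L^p(\Omega)$ and $\phi_{s_{n_k}}\rightharpoonup\phi$ in $X^p_0(\Omega)$, with $\phi\ge0$ and $\|\phi\|_{L^p(\Omega)}=1$. From \eqref{sk:lb} and the first assertion, $\mathcal E_L(\phi_{s_{n_k}})\le s_{n_k}^{-1}(\lambda^1_{s_{n_k},p}(\Omega)-1)+o(1)\to\lambda^1_{L,p}(\Omega)$, and since $\mathcal E_L$ is weakly lower semicontinuous on $X^p_0(\Omega)$ (convexity of the seminorm part, together with the compact embedding and Theorem \ref{intro:hardy} to pass to the limit in the $h_\Omega$-term), we obtain $\mathcal E_L(\phi)\le\lambda^1_{L,p}(\Omega)$. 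As also $\mathcal E_L(\phi)\ge\lambda^1_{L,p}(\Omega)$ by $\|\phi\|_{L^p(\Omega)}=1$, $\phi$ is a nonnegative $L^p$-normalized minimizer, hence a first Dirichlet eigenfunction of $L_{\Delta_p}$; by uniqueness of the positive first eigenfunction (Section \ref{eigenfunction}), $\phi=u_1$. Since the limit does not depend on the chosen subsequence, $\phi_s\to u_1$ in $L^p(\Omega)$ as $s\to0^+$, which is \eqref{eigenfunction conv}.
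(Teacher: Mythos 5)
Your Step 1 (upper bound) coincides with the paper's, and the overall architecture (boundedness of $\phi_s$ in $X^p_0(\Omega)$, compactness, uniqueness, convergence of eigenfunctions, then the lower bound) is sound. The genuine gap is exactly where you flag it: the inequality \eqref{sk:lb} is asserted, not proved, and the particular strategy you sketch for proving it is likely to fail as stated. You propose to feed the expansion $|x-y|^{-sp}=1+sp\ln\frac{1}{|x-y|}+O(s^2\ln^2|x-y|)$ into the short-range integral and absorb the remainder into $\eta(s)[u]^p_{X^p_0(\Omega)}$. But for $|x-y|\to 0$ the factor $\ln^2|x-y|$ is unbounded, so $\iint_{|x-y|<1}\frac{|u(x)-u(y)|^p\ln^2|x-y|}{|x-y|^N}\,dx\,dy$ is \emph{not} controlled by $[u]^p_{X^p_0(\Omega)}$; the second-order remainder is not a bounded perturbation of the seminorm. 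Moreover, comparing against the localized form $\frac{C_{N,p}}{2}\iint_{\Omega\times\Omega}\cdot + \int_\Omega(\rho_N+h_\Omega)|u|^p$ creates spurious cross terms (short-range pairs with $x\in\Omega,\,y\in\R^N\setminus\Omega$) that are then forced back through the Hardy inequality; this is extra pain for no gain.

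The lower bound can instead be obtained cheaply, and this is effectively what the paper does in its Step 2 (applied to $\phi_s$): compare against the \emph{non-localized} form $\mathcal E_L(u)=\cE_p(u,u)+\cF_p(u,u)+\rho_N(p)\|u\|^p_{L^p}$ so that the kernel splits at $|x-y|=1$ on both sides, and for the short-range part simply use $|x-y|^{-sp}\ge 1$ to drop the positive surplus, i.e.\ $\frac{C_{N,s,p}}{2}\iint_{|x-y|<1}\frac{|u(x)-u(y)|^p}{|x-y|^{N+sp}}\ge\frac{s\,d_{N,p}(s)}{C_{N,p}}\,\cE_p(u,u)$ with $d_{N,p}(s)/C_{N,p}\to1$. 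No second-order Taylor expansion is needed there. For the far part, $x,y\in\Omega$ forces $1\le|x-y|\le\mathrm{diam}(\Omega)+1$, so $|x-y|^{-sp}-1=O(s)$ uniformly and the contribution is uniformly bounded for $L^p$-normalized $u$. Combined with the expansion $\frac{C_{N,s,p}\omega_N}{sp}=1+s\rho_N(p)+O(s^2)$ this gives an inequality of the type \eqref{sk:lb} with error of the form $s\,\eta(s)\cE_p(u,u)+O(s^2)$, which is what your subsequent argument actually uses. The paper's own route avoids proving such a uniform inequality altogether: it proves a $\Gamma$-convergence type statement (Lemma \ref{conv btwn functional}) that, for a sequence $\{u_{s_n}\}$ bounded in $X^p_0(\Omega)$, the quantity $s_n^{-1}[\cE(u_{s_n},v)-\langle g(u_{s_n}),v\rangle]$ converges to $\cE_{L,p}(u,v)$ along a subsequence for every fixed $v\in C^\infty_c(\Omega)$. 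Testing the eigenvalue equation against such $v$ rather than against $u_{s_n}$ itself is much easier to pass to the limit in, because only first powers of differences of $u_{s_n}$ appear against a fixed smooth $v$. The paper then identifies the limit of $\phi_{s_n}$ as $u_1$ by uniqueness, argues by contradiction to get $\phi_s\to u_1$ in $L^p$, and finally obtains the $\liminf$ for the derivative quotient from this convergence — no weak lower semicontinuity of $\mathcal E_L$ on $X^p_0(\Omega)$ is invoked, although your argument via convexity of $\cE_p$ and continuity of the remaining terms under $L^p$-convergence is also viable once the boundedness of $\{\phi_s\}$ is secured.
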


As a consequence of Theorem \ref{relation between evs of log p Lap and frac p Lap}, we deduce the Faber-Krahn inequality for the logarithmic $p$-Laplacian operator $L_{\Delta_p}$.

\begin{cor}[Faber--Krahn inequality for $L_{\Delta_p}$]\label{faber krahn}
Let $\Omega\subset\rn$ be an open bounded Lipschitz set with $|\Omega|=m\in(0,\infty)$, $p\in(1,\infty)$ and let $B^{(m)}\subset\rn$ be any ball with volume $m$. Then
$$
\lambda^1_{L,p}(B^{(m)})\leq\lambda^1_{L,p}(\Omega).
$$
\end{cor}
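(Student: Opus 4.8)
The plan is to deduce the Faber--Krahn inequality for $L_{\Delta_p}$ from the classical Faber--Krahn inequality for the fractional $p$-Laplacian together with the differentiation identity in Theorem~\ref{relation between evs of log p Lap and frac p Lap}. Recall that for every $s\in(0,1)$ and every open bounded set $\Omega$ with $|\Omega|=m$, one has $\lambda^1_{s,p}(B^{(m)})\le \lambda^1_{s,p}(\Omega)$; this is the well-known Faber--Krahn inequality for $(-\Delta_p)^s$, proved via Schwarz (spherical decreasing) rearrangement, using that rearrangement preserves the $L^p$-norm and does not increase the Gagliardo seminorm $\iint |u(x)-u(y)|^p|x-y|^{-N-sp}\,dx\,dy$. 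Since $\phi_s\in\cW^{s,p}_0(\Omega)$ and $B^{(m)}$ is any fixed ball of volume $m$, testing the variational characterization \eqref{intro:lambda-s-p} of $\lambda^1_{s,p}(B^{(m)})$ with the rearrangement of $\phi_s$ gives this inequality.

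Next, I would subtract the common value at $s=0$. By the expansion \eqref{simple expansion} (or directly by the definition of $\lambda^1_{0,p}$ coming from $\lim_{s\to0^+}(-\Delta_p)^s u = |u|^{p-2}u$), we have $\lambda^1_{0,p}(\Omega)=\lambda^1_{0,p}(B^{(m)})=1$ for any set of positive finite measure, since the quadratic-type form degenerates at $s=0$ to $\|u\|_{L^p}^p$ under the constraint $\|u\|_{L^p(\Omega)}=1$. Hence for $s>0$,
\[
\frac{\lambda^1_{s,p}(B^{(m)})-\lambda^1_{0,p}(B^{(m)})}{s}\le \frac{\lambda^1_{s,p}(\Omega)-\lambda^1_{0,p}(\Omega)}{s}.
\]
Letting $s\to0^+$ and invoking Theorem~\ref{relation between evs of log p Lap and frac p Lap}, which asserts that $s\mapsto \lambda^1_{s,p}(\Omega)$ is differentiable at $s=0$ from the right with derivative $\lambda^1_{L,p}(\Omega)$ (and likewise for $B^{(m)}$, which is Lipschitz), the left side converges to $\lambda^1_{L,p}(B^{(m)})$ and the right side to $\lambda^1_{L,p}(\Omega)$. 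The inequality is preserved in the limit, which yields $\lambda^1_{L,p}(B^{(m)})\le\lambda^1_{L,p}(\Omega)$, as claimed.

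The main obstacle, if any, is ensuring the two quantitative inputs are applicable exactly in the stated generality: Theorem~\ref{relation between evs of log p Lap and frac p Lap} requires $\Omega$ (and $B^{(m)}$) to be open bounded Lipschitz, which matches the hypotheses of the corollary and is why the Lipschitz assumption appears; and one must confirm that $\lambda^1_{s,p}(\Omega)\to\lambda^1_{0,p}(\Omega)$ with the correct common limiting value so that the difference quotients are comparable. Both facts are either stated earlier in the excerpt or follow immediately from the variational formulas, so the proof is short. I would also remark that the choice of ball $B^{(m)}$ is immaterial because $\lambda^1_{L,p}$ is translation-invariant and, by the scaling relation \eqref{eigen scaling}, depends on a ball only through its radius, which is fixed once the volume $m$ is fixed.
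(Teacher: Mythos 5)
Your proposal is correct and takes essentially the same route as the paper: apply the Faber--Krahn inequality for $(-\Delta_p)^s$ for each $s\in(0,1)$, subtract the common value $\lambda^1_{0,p}=1$ from both sides, and pass to the limit $s\to 0^+$ using the differentiation identity in Theorem~\ref{relation between evs of log p Lap and frac p Lap}.
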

\begin{remark}
It remains an intriguing open question whether the inequality in Corollary \ref{faber krahn} is strict when $\Omega$ is different from a ball. This problem persists as unsolved, even in the linear case $p=2$, see \cite{CW19}.
\end{remark}
Noteworthy in Theorem \ref{relation between evs of log p Lap and frac p Lap} is the positivity of the extremal $u_1$. Since $\lambda_{L,p}^1(\Omega)$ may be negative for large $\Omega$ due to \eqref{eigen scaling}, the validity of a maximum principle is not clear and indeed will be false. It is worth mentioning that maximum principles and, more interestingly in the nonlinear setting, comparison principles are in general quite delicate, see the discussion in Remark \ref{discussion on scp}. To state our main results on the maximum and the comparison principles, we need to introduce some further notation. For $u,v\in X^p_0(\Omega)$, let
\begin{align*}
\cE_{L,p}(u,v)&=\frac{C_{N,p}}{2}\int_{\R^N}\int_{B_1(x)}\frac{|u(x)-u(y)|^{p-2}(u(x)-u(y))(v(x)-v(y))}{|x-y|^N}\,dydx\\
&\qquad +\int_{\R^N}\rho_{N}(p)|u(x)|^{p-2}u(x)v(x)\,dx\\
&\qquad+ \frac{C_{N,p}}{2}\int_{\R^N}\int_{\R^N\setminus B_1(x)}\frac{1}{|x-y|^N}\Bigg(|u(x)-u(y)|^{p-2}(u(x)-u(y))(v(x)-v(y))\\
&\qquad\qquad\qquad\qquad\qquad\qquad\qquad\qquad\qquad -|u(x)|^{p-2}u(x)v(x)-|u(y)|^{p-2}u(y)v(y)\Bigg)\,dydx.
\end{align*}

\begin{defi}
For $\Omega\subset \R^N$ open bounded set and $f\in L^{\frac{p}{p-1}}(\Omega)$, we say that a measurable function $u:\R^N\to\R$ satisfies weakly
$$
L_{\Delta_p}u\geq f\quad\text{in $\Omega$,}\qquad u=0\quad\text{in $\R^N\setminus \Omega$,}
$$
if $u\in X^p_0(\Omega)$ and for all nonnegative $\phi\in X^p_0(\Omega)$ it holds
$$
\cE_{L,p}(u,\phi)\geq \int_{\Omega}f\phi\,dx.
$$
Similarly, we define $L_{\Delta_p}u= f$ or $L_{\Delta_p}u\leq f$ in $\Omega$.
\end{defi}

We emphasize that we generalize this definition of \textit{supersolution} in Section \ref{dirichlet problem} below. Now, we are in position to state our results concerning strong maximum and comparison principles involving the logarithmic $p$-Laplacian.

\begin{thm}[Strong maximum principle]\label{intro:maximum principle}
Let $\Omega\subset \R^N$ be an open bounded set. If and only if $\lambda_{L,p}^1(\Omega)>0$ holds, the following is true: For any $u\in X^p_0(\Omega)$ satisfying weakly $L_{\Delta_p}u\geq 0$ in $\Omega$, $u=0$ in $\R^N\setminus \Omega$, it follows that either $u\equiv 0$ in $\Omega$ or  $u> 0$ in $\Omega$ in the sense that
$$
\essinf_{K}u>0\quad\text{for all compact sets $K\subset \Omega$}.
$$
\end{thm}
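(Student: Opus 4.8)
The statement is an equivalence, and the plan is to reduce everything to the localized form of the energy,
\begin{multline*}
\cE_{L,p}(u,v)=\frac{C_{N,p}}{2}\iint_{\Omega\times\Omega}\frac{|u(x)-u(y)|^{p-2}(u(x)-u(y))(v(x)-v(y))}{|x-y|^N}\,dx\,dy\\
+\int_\Omega\bigl(\rho_N(p)+h_\Omega(x)\bigr)|u(x)|^{p-2}u(x)v(x)\,dx,
\end{multline*}
which is valid on $X^p_0(\Omega)$ by Lemma~\ref{alternative2}, satisfies $\cE_{L,p}(-w,\phi)=-\cE_{L,p}(w,\phi)$, and for which $\cE_{L,p}(w,w)$ is precisely the Rayleigh functional of $\lambda^1_{L,p}(\Omega)$, so that $\cE_{L,p}(w,w)\ge\lambda^1_{L,p}(\Omega)\|w\|_{L^p(\Omega)}^p$ for every $w\in X^p_0(\Omega)$. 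For the \emph{necessity} of $\lambda^1_{L,p}(\Omega)>0$ I would argue by contraposition: if $\lambda^1_{L,p}(\Omega)\le0$ and $u_1\ge0$, $u_1\not\equiv0$, is the normalized first eigenfunction from Section~\ref{eigenfunction}, then, by the eigenvalue equation $\cE_{L,p}(u_1,\phi)=\lambda^1_{L,p}(\Omega)\int_\Omega u_1^{p-1}\phi\,dx$ and oddness, $v:=-u_1\in X^p_0(\Omega)$ satisfies $\cE_{L,p}(v,\phi)=-\lambda^1_{L,p}(\Omega)\int_\Omega u_1^{p-1}\phi\,dx\ge0$ for all nonnegative $\phi\in X^p_0(\Omega)$; hence $v$ weakly solves $L_{\Delta_p}v\ge0$ in $\Omega$, $v=0$ in $\R^N\setminus\Omega$, while $v\not\equiv0$ and $v\le0$, so the dichotomy fails.

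For the \emph{sufficiency}, assume $\lambda^1_{L,p}(\Omega)>0$ and let $u\in X^p_0(\Omega)$ weakly solve $L_{\Delta_p}u\ge0$ in $\Omega$, $u=0$ in $\R^N\setminus\Omega$. The first step is to show $u\ge0$ a.e. Since $t\mapsto t^{-}:=\max\{-t,0\}$ is $1$-Lipschitz and $u$ vanishes outside $\Omega$, the function $u^{-}$ again lies in $X^p_0(\Omega)$ and is an admissible test function, so $\cE_{L,p}(u,u^{-})\ge0$. On the other hand, the elementary pointwise inequalities
\[
|a-b|^{p-2}(a-b)(a^{-}-b^{-})\le-|a^{-}-b^{-}|^p\quad\text{and}\quad|a|^{p-2}a\,a^{-}=-|a^{-}|^p\qquad(a,b\in\R,\ p>1),
\]
verified by a case distinction on the signs of $a$ and $b$, applied with $a=u(x),b=u(y)$ in the kernel term and with $a=u(x)$ in the potential term, give $\cE_{L,p}(u,u^{-})\le-\cE_{L,p}(u^{-},u^{-})\le-\lambda^1_{L,p}(\Omega)\|u^{-}\|_{L^p(\Omega)}^p$. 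Since $\lambda^1_{L,p}(\Omega)>0$, this forces $u^{-}\equiv0$, i.e.\ $u\ge0$ a.e.

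Now assume in addition $u\not\equiv0$; I must prove $\essinf_K u>0$ for every compact $K\subset\Omega$. The argument has two ingredients. \emph{Support spreading}: if $u$ vanished a.e.\ on a nonempty open set $U\subseteq\Omega$, then testing the weak inequality with a nonnegative bump $\phi\in C^\infty_c(U)\setminus\{0\}$ and using $u\ge0$ yields, from the local form above,
\[
\cE_{L,p}(u,\phi)=-C_{N,p}\iint_{x\in U,\,y\in\Omega\setminus U}\frac{u(y)^{p-1}\phi(x)}{|x-y|^N}\,dy\,dx,
\]
which is strictly negative whenever $u\not\equiv0$ on $\Omega$ (the kernel is everywhere positive), contradicting $\cE_{L,p}(u,\phi)\ge0$; hence $u$ does not vanish a.e.\ on any nonempty open subset of $\Omega$ — this is where nonlocality of $L_{\Delta_p}$ enters, and it is also why the statement holds for disconnected $\Omega$. \emph{Infimum estimate}: for every ball $B_{2r}(x_0)$ with $\overline{B_{2r}(x_0)}\subset\Omega$ and $2r<1$, if $u$ does not vanish a.e.\ on $B_{2r}(x_0)$ then $\essinf_{B_r(x_0)}u>0$. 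Granting both, every point of $\Omega$ has a neighbourhood on which $u$ has a positive essential infimum, and a finite subcover of $K$ gives $\essinf_K u>0$.

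The infimum estimate is the crux. To prove it I would localize the weak inequality on $B_{2r}(x_0)$, where it says $u\ge0$ is a weak supersolution of a fractional-$p$-Laplacian-type operator with kernel comparable to $|x-y|^{-N}$ on $B_{2r}(x_0)\times B_{2r}(x_0)$ perturbed by the potential $\rho_N(p)+h_\Omega$, which is bounded on $B_{2r}(x_0)$ since $\overline{B_{2r}(x_0)}\subset\Omega$; then test with logarithmic functions $\phi=(u+\eps)^{1-p}\psi^p$, $\psi\in C^\infty_c(B_{2r}(x_0))$, let $\eps\downarrow0$, derive a logarithmic Caccioppoli inequality, and run a Moser/De~Giorgi iteration powered by the compact embedding and Poincar\'e inequality of Section~\ref{variational}. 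This iteration is the main obstacle: the kernel $1_{B_1}(z)|z|^{-N}$ is of \emph{logarithmic} order, so there is no genuine gain of differentiability and the Sobolev embedding that usually drives the iteration must be replaced by the order-zero (essentially logarithmic) estimates of Section~\ref{variational} together with a logarithmic self-improvement; the nonlinearity of $L_{\Delta_p}$ and the blow-up of $h_\Omega$ near $\partial\Omega$ add bookkeeping but are harmless on interior balls. Everything else — necessity, the nonnegativity step, and support spreading — is comparatively routine.
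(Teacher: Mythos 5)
Your necessity argument (contraposition via $v=-u_1$) and your nonnegativity step (test with $u^-$, then the pointwise inequalities $g(a-b)(a^--b^-)\le-|a^--b^-|^p$ and $g(a)a^-=-|a^-|^p$) are correct and match the paper's Theorem~\ref{maximum principle}, Lemma~\ref{the negative part} and Lemma~\ref{maximum principle-pre} in substance; the ``support spreading'' lemma is a valid nonlocal observation. The genuine gap is the part you yourself label the crux: the interior infimum estimate ``$u\ge0$, $u\not\equiv 0$ on $B_{2r}(x_0)\Rightarrow\essinf_{B_r(x_0)}u>0$'' is not proved but only outlined, and the route you propose — a logarithmic Caccioppoli inequality plus Moser/De~Giorgi iteration — does not go through with the tools the paper provides. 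For a kernel of order zero there is no Sobolev exponent gain; the compact embedding $X^p_0(\Omega)\hookrightarrow L^p(\Omega)$ and the Poincar\'e inequality of Section~\ref{variational} give no quantitative $L^p\to L^q$ improvement with $q>p$, and what you call ``logarithmic self-improvement'' is precisely the missing mechanism, not something supplied elsewhere in the paper. You have reduced the theorem to a weak Harnack inequality for a logarithmic-order $p$-L\'evy operator, which is not established in the paper or its references and is not a routine modification of the fractional case.

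The paper sidesteps this entirely. In Theorem~\ref{smp}, given $u\ge0$, $u\not\equiv0$, one fixes a compact set $K$ of positive measure with $\essinf_K u=\delta>0$, chooses a small ball $B\subset\Omega\setminus K$ on which $\lambda^1_{L,p}(B)$ is large enough to guarantee a maximum principle in $B$ (Lemma~\ref{maximum principle-pre} together with the scaling law $\lambda^1_{L,p}(rB)=\lambda^1_{L,p}(B)-p\ln r$ from Proposition~\ref{prop of lambda1}(iii), so $\lambda^1_{L,p}(B)\to+\infty$ as $|B|\to0$), and compares $u$ with the perturbed barrier $w_a=u-\tfrac1a f-\delta 1_K$, where $f$ is a smooth bump in $B$. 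Lemma~\ref{perturbation lemma} controls the error of the $\tfrac1a f$ perturbation in terms of $\max\{a^{-1},a^{-(p-1)}\}$, while the $\delta 1_K$ term produces a strictly positive nonlocal source in $B$ via Lemma~\ref{estimate-general-2}/\ref{estimate-general-3}; for $a$ large, $w_a$ is a supersolution in $B$ with $w_a\ge0$ outside $B$, and the weak maximum principle on $B$ yields $u\ge\tfrac1a$ on $B_{r/2}(x_0)$. This is a pure barrier/comparison argument that exploits nonlocality directly, needs nothing beyond the nonnegativity step you already have, and avoids iteration altogether. If you want to complete your proof, this is the replacement for your ``infimum estimate'': your Moser route is not a backfillable sketch here. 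One last small point: Theorem~\ref{smp} is stated for $u\in V(\Omega,\R^N)\cap L^{\infty}(\R^N)$, so for a general supersolution one also needs an a priori $L^\infty$ bound (or an adaptation of Lemma~\ref{perturbation lemma}); your proposal does not address this either.
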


\begin{thm}[Strong comparison principle]\label{intro:comparison principle}
Let $\Omega\subset \R^N$ be an open bounded set, $c\in L^{\infty}(\Omega)$ with
$$
c(x)\leq \rho_{N}(p)+h_{\Omega}(x)\quad\text{for a.e. $x\in \Omega$}.
$$
Suppose $u,v\in X^p_0(\Omega)$ are such that either $u\in L^{\infty}(\Omega)$ or $v\in L^{\infty}(\Omega)$ and it holds weakly
$$
L_{\Delta_p}u-c(x)|u|^{p-2}u\geq L_{\Delta_p}v-c(x)|v|^{p-2}v\quad\text{in $\Omega$},\qquad u=0=v\quad\text{in $\R^N\setminus \Omega$}.
$$
Then, either $u\equiv v$ in $\R^N$ or  $u> v$ in $\Omega$ in the sense that
$$
\essinf_{K}(u-v)>0\quad\text{for all compact sets $K\subset \Omega$}.
$$
\end{thm}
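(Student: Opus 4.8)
The plan is to establish the weak comparison $u\ge v$ a.e.\ first and then upgrade it to the stated dichotomy. Write $J_p(t):=|t|^{p-2}t$; this map is odd and, since $p>1$, strictly increasing, and set $w:=u-v$. Since $u$, $v$ and every admissible test function vanish on $\R^N\setminus\Omega$, the ``far'' correction terms in $\cE_{L,p}$ collapse, so the $h_\Omega$-representation recorded right after Theorem~\ref{derivative} shows that, after subtracting the corresponding identity for $v$, the hypothesis $L_{\Delta_p}u-c|u|^{p-2}u\ge L_{\Delta_p}v-c|v|^{p-2}v$ is equivalent to $\mathcal K(\phi)+\mathcal M(\phi)\ge0$ for all $0\le\phi\in X^p_0(\Omega)$, where
\begin{align*}
\mathcal K(\phi)&:=\frac{C_{N,p}}{2}\iint_{\Omega\times\Omega}\frac{\big(J_p(u(x)-u(y))-J_p(v(x)-v(y))\big)\big(\phi(x)-\phi(y)\big)}{|x-y|^{N}}\,dx\,dy,\\
\mathcal M(\phi)&:=\int_\Omega\big(\rho_N(p)+h_\Omega(x)-c(x)\big)\big(J_p(u(x))-J_p(v(x))\big)\phi(x)\,dx
\end{align*}
(all integrals being finite for $u,v,\phi\in X^p_0(\Omega)$ by H\"older's inequality and Theorem~\ref{intro:hardy}).

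\emph{Weak comparison ($u\ge v$).} Test with $\phi:=(v-u)^+\in X^p_0(\Omega)$ --- membership follows from $u,v\in X^p_0(\Omega)$ and $|(v-u)^+(x)-(v-u)^+(y)|\le|u(x)-u(y)|+|v(x)-v(y)|$ --- and set $A:=\{u<v\}\cap\Omega$. A four-case analysis on the signs of $w(x)$ and $w(y)$, using monotonicity of $J_p$, shows that the integrand of $\mathcal K\big((v-u)^+\big)$ is $\le0$ a.e.\ on $\Omega\times\Omega$; and on $A$ we have $J_p(u)-J_p(v)<0$ (strict monotonicity, $p>1$), $\rho_N(p)+h_\Omega-c\ge0$ by hypothesis, and $(v-u)^+>0$, so the integrand of $\mathcal M\big((v-u)^+\big)$ is also $\le0$ a.e.\ (and it vanishes off $A$). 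As their sum is $\ge0$, both $\mathcal K\big((v-u)^+\big)$ and $\mathcal M\big((v-u)^+\big)$ vanish. From $\mathcal K\big((v-u)^+\big)=0$ with its nonpositive integrand: if $|A|>0$ and $|\Omega\setminus A|>0$, then picking $x\in A$ and $y\in\Omega\setminus A$ forces $\phi(x)-\phi(y)=\phi(x)>0$, hence $J_p(u(x)-u(y))=J_p(v(x)-v(y))$, i.e.\ $w(x)=w(y)$ --- impossible since $w(x)<0\le w(y)$. Thus either $|A|=0$ (so $u\ge v$) or $u<v$ a.e.\ in $\Omega$; in the latter case the same argument applied to arbitrary $x,y\in\Omega$ shows $w$ is a.e.\ constant, say $w\equiv-\kappa$ with $\kappa>0$, and then $\mathcal M\big((v-u)^+\big)=0$ reads $\kappa\int_\Omega\big(\rho_N(p)+h_\Omega-c\big)\big(J_p(v-\kappa)-J_p(v)\big)\,dx=0$ with a nonpositive integrand. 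But for a bounded Lipschitz domain $h_\Omega(x)\to+\infty$ as $\delta_x\to0$ (see \cite[Section~4]{CW19}, \cite{JSW20,JSW23}), while $c\in L^\infty(\Omega)$, so $\rho_N(p)+h_\Omega-c>0$ on a boundary neighbourhood of positive measure, where the integrand is strictly negative --- a contradiction. Hence $u\ge v$ in $\R^N$. This is the decisive use of the Lipschitz hypothesis and of $c\in L^\infty$: the structural bound $c\le\rho_N(p)+h_\Omega$ is precisely what rules out the constant shift.

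\emph{Upgrade to the dichotomy.} Now $w\ge0$; assume $w\not\equiv0$. First a touching lemma: if $w$ vanishes on a set of positive measure, testing against truncations $\min(w,\delta)\,\eta$ (with $\eta\in C_c^\infty(\Omega)$, $0\le\eta\le1$, $\eta\equiv1$ near a density point of $\{w=0\}$) shows that at such points the kinetic contribution is strictly negative --- it ``feels'' the positive mass of $w$ elsewhere, since $J_p$ is strictly increasing and the kernel $\mathbf 1_{\{|x-y|<1\}}|x-y|^{-N}$ is strictly positive on $\Omega\times\Omega$ --- whereas $\mathcal M\ge0$ there (because $J_p(u)-J_p(v)\ge0$), contradicting $\mathcal K+\mathcal M\ge0$; hence $w>0$ a.e.\ in $\Omega$. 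To upgrade this to $\essinf_K w>0$ for every compact $K\subset\Omega$, one runs a quantitative variant: fixing a ball $B$ with $\overline{B}\subset\Omega$ and testing against $(\delta-w)^+\eta$ for small $\delta>0$, one splits $\Omega=\{w<2\delta\}\cup\{w\ge2\delta\}$ and uses a Simon-type quantitative strict monotonicity of $t\mapsto|t|^{p-2}t$ (distinguishing $p\ge2$ from $1<p<2$, the latter being where the $L^\infty$-bound on $u$ or $v$ is used, to keep the relevant increments bounded) to show that the $\{w\ge2\delta\}$-part of $\mathcal K$ dominates; letting $\delta\to0$ forces $w\ge c_B>0$ a.e.\ on $B$. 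Covering $K$ by finitely many such balls gives $\essinf_K w>0$, and together with the alternative $w\equiv0$ this is the dichotomy.

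\emph{Main obstacle.} The first two steps are essentially bookkeeping around the $h_\Omega$-representation and the boundary blow-up of $h_\Omega$ --- which is exactly why $c\le\rho_N(p)+h_\Omega$ is the right hypothesis. The genuine work lies in the last step, specifically in the passage from ``$w>0$ on a set of positive measure'' to the uniform lower bound on compact subsets: one must quantify how the nonlocal term spreads positivity, which forces the $p$-dependent case split in the elementary inequality for $t\mapsto|t|^{p-2}t$ and is exactly where the $L^\infty$-hypothesis enters (for $1<p<2$). An alternative route would first verify, from $c\le\rho_N(p)+h_\Omega$ together with the Lipschitz regularity of $\Omega$ and the compact embedding $X^p_0(\Omega)\hookrightarrow L^p(\Omega)$, that the $c$-perturbed first eigenvalue $\inf\{\tfrac{C_{N,p}}{2}\iint_{\Omega\times\Omega}|u(x)-u(y)|^p|x-y|^{-N}\,dx\,dy+\int_\Omega(\rho_N(p)+h_\Omega-c)|u|^p:\ \|u\|_{L^p(\Omega)}=1\}$ is strictly positive, and then import the strong-maximum-principle machinery behind Theorem~\ref{intro:maximum principle}; the obstacle is unchanged, since the nonlinearity prevents $w=u-v$ from satisfying a closed equation and one still argues directly with $u$ and $v$.
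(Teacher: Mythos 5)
Your proposal is correct in outline but diverges from the paper's proof in a substantive way, and the divergence is precisely where your argument is incomplete.

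\emph{Weak comparison step.} This part essentially reproduces the content of the paper's Lemma~\ref{wcp} (applied with $U=\Omega$), though you phrase it via a dichotomy that the paper packages differently. Two small remarks. First, the dichotomy ``either $u\geq v$ a.e.\ or $u<v$ a.e.\ with $w$ constant'' relies on the integrand of $\mathcal K((v-u)^+)$ vanishing for a.e.\ $(x,y)\in\Omega\times\Omega$, which is fine because in the $h_\Omega$-representation the kernel $|x-y|^{-N}$ is untruncated on $\Omega\times\Omega$ --- worth saying explicitly. Second, to rule out $w\equiv-\kappa$ you invoke $h_\Omega(\cdot)\to\infty$ as $\delta_x\to0$, citing Lipschitz regularity; the theorem does not assume $\Omega$ Lipschitz. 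This is fixable: for \emph{any} bounded open $\Omega$, choose $x_0\in\partial\Omega$ of maximal norm; near $x_0$ a whole half-ball lies in $\Omega^c$, so $k_\Omega$ (and hence $h_\Omega$, which differs from $k_\Omega$ by a term bounded on bounded sets) is unbounded. Combined with $c\in L^\infty(\Omega)$ this shows $\rho_N(p)+h_\Omega-c$ cannot vanish a.e., which is all you need. So this step goes through, but the stated justification should be replaced.

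\emph{Upgrade to the dichotomy.} This is the genuine gap, and you flag it yourself. Neither the ``touching lemma'' nor the ``quantitative variant'' is proven, and the touching lemma as stated is not even clearly correct: the integrand of $\mathcal K(\min(w,\delta)\eta)$ is not sign-definite, because $\eta$ disrupts the monotone pairing between $w(x)-w(y)$ and $\phi(x)-\phi(y)$. A direct De~Giorgi/Moser argument can presumably be made to work, but it would require the quantitative lower bounds on increments of $t\mapsto |t|^{p-2}t$ (the paper's Lemmas~\ref{estimate-general-2} and \ref{estimate-general-3}, the latter needing the $L^\infty$ hypothesis for $1<p<2$) plus a careful iteration. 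None of that is present.

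\emph{Comparison with the paper.} The paper (Proposition~\ref{scp}, modeled on Theorem~\ref{smp}) proves the strong comparison by \emph{localization plus perturbation}: fix a compact $K$ with $\essinf_K(u-v)=\delta>0$, a small ball $B\subset\Omega\setminus K$, a bump $f\in C^\infty_c(B)$, and show via Lemma~\ref{perturbation lemma} and the pointwise lower bound for $g(a+\delta 1_K)-g(a)$ that $w_a:=u-a^{-1}f+\delta 1_K$ is a supersolution of $L_{\Delta_p}w=c\,g(w)$ in $B$ for $a$ large; the weak comparison on $B$ then gives $u-v\geq a^{-1}f$ there. The $L^\infty$ hypothesis enters through Lemma~\ref{estimate-general-3} when $1<p<2$, exactly where you predict it should. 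Your assessment that the alternative route (``import the strong-maximum-principle machinery'') leaves the obstacle ``unchanged'' is therefore too pessimistic: the perturbation trick sidesteps the need for the quantitative positivity-propagation argument you are missing, by reducing the strong principle on an arbitrary ball to the already-established weak principle on that ball. That reduction is precisely the idea your proposal lacks.
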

We emphasize that both above Theorems on the strong maximum and strong comparison principles are special cases for more general types of solutions, which we introduce in Section \ref{dirichlet problem}. Let us also mention that the validity of a comparison principle is usually linked to the first eigenvalue, though in the case $p\neq 2$ this is not trivial. Here, this can be seen through Lemma \ref{how to ensure lambda1 positive}, which states that we have
$$
\lambda_{L,p}^1(\Omega)>0\quad\text{if}\quad \rho_{N}(p)+h_{\Omega}(x)\geq0.
$$

As a final main result, we show the boundedness of solutions to certain equations, which include inhomogeneous problems and eigenvalue type-problems involving $L_{\Delta_p}$.

\begin{thm}\label{bounded2}
Let $\Omega\subset \R^N$ be an open and bounded set, $f,c\in L^{\infty}(\Omega)$, and assume $u\in X^p_0(\Omega)$ satisfies weakly
$$
L_{\Delta_p}u=c(x)|u|^{p-2}u+f\quad\text{in $\Omega$},\qquad u=0 \quad\text{in $\R^N\setminus \Omega$}.
$$
Then $u\in L^{\infty}(\R^N)$.
\end{thm}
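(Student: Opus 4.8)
The plan is a De Giorgi--Stampacchia level-set iteration, carried out after collapsing the energy onto $\Omega\times\Omega$. Since $u$ and the test functions vanish outside $\Omega$, the form $\cE_{L,p}$ reduces (this is the polarization of the localized expression for $\lambda^1_{L,p}(\Omega)$ recorded in the Introduction) to
\[
\cE_{L,p}(u,v)=\frac{C_{N,p}}{2}\iint_{\Omega\times\Omega}\frac{|u(x)-u(y)|^{p-2}\bigl(u(x)-u(y)\bigr)\bigl(v(x)-v(y)\bigr)}{|x-y|^N}\,dx\,dy+\int_\Omega\bigl(\rho_N+h_\Omega(x)\bigr)|u|^{p-2}u\,v\,dx ,
\]
so the weak equation $L_{\Delta_p}u=c|u|^{p-2}u+f$ becomes
\[
\frac{C_{N,p}}{2}\iint_{\Omega\times\Omega}\frac{|u(x)-u(y)|^{p-2}\bigl(u(x)-u(y)\bigr)\bigl(v(x)-v(y)\bigr)}{|x-y|^N}\,dx\,dy=\int_\Omega\Bigl(\bigl(c-\rho_N-h_\Omega\bigr)|u|^{p-2}u+f\Bigr)v\,dx
\]
for all $v\in X^p_0(\Omega)$. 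From its definition, $h_\Omega(x)\ge-C_{N,p}\int_{\Omega\setminus B_1(x)}|x-y|^{-N}\,dy\ge-c_\Omega$ with $c_\Omega=c_\Omega(N,p,\Omega)$ because $\Omega$ is bounded, so the potential $c-\rho_N-h_\Omega$ is bounded from \emph{above} by a constant $C_0$, even though its positive part blows up near $\partial\Omega$. As the equation and the admissible test class are invariant under $(u,f)\mapsto(-u,-f)$, it suffices to bound $\esssup_{\Omega}u$.

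Fix $k\ge0$, set $u_k:=(u-k)^+\in X^p_0(\Omega)$ and $A_k:=\{u>k\}\cap\Omega$; then $u_k\ge0$ and $|A_k|\le k^{-p}\|u\|_{L^p(\Omega)}^p\to0$ as $k\to\infty$. Testing with $v=u_k$, using the elementary inequality $|a-b|^{p-2}(a-b)\bigl((a-k)^+-(b-k)^+\bigr)\ge|(a-k)^+-(b-k)^+|^p$ on the left and, on $A_k$, the bounds $(c-\rho_N-h_\Omega)|u|^{p-2}u\le C_0 u^{p-1}\le 2^{p-1}C_0(u_k^{p-1}+k^{p-1})$ together with $\|f\|_\infty$ and $\int_{A_k}u_k\le\|u_k\|_{L^p(\Omega)}|A_k|^{1-1/p}$, one obtains the Caccioppoli-type estimate
\[
\iint_{\Omega\times\Omega}\frac{|u_k(x)-u_k(y)|^p}{|x-y|^N}\,dx\,dy\le C\Bigl(\|u_k\|_{L^p(\Omega)}^p+(1+k^{p-1})\,\|u_k\|_{L^p(\Omega)}\,|A_k|^{1-1/p}\Bigr),
\]
with $C=C(N,p,\Omega,\|c\|_\infty,\|f\|_\infty)$. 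By the identification of $X^p_0(\Omega)$ with $\{v\in L^p(\Omega):\iint_{\Omega\times\Omega,\,|x-y|<1}|v(x)-v(y)|^p|x-y|^{-N}<\infty\}$ noted after Theorem~\ref{intro:hardy} (which rests on the logarithmic boundary Hardy inequality), the left-hand side controls $\|u_k\|_{X^p_0(\Omega)}^p$ up to an added multiple of $\|u_k\|_{L^p(\Omega)}^p$. Fixing any $q\in(p,\infty)$ and invoking the zero-order Sobolev-type embedding $X^p_0(\Omega)\hookrightarrow L^q(\Omega)$ (classical for $p=2$, cf.\ \cite{CW19}), applying it to $u_k$, using $\|u_k\|_{L^p(\Omega)}\le\|u_k\|_{L^q(\Omega)}|A_k|^{1/p-1/q}$, and absorbing the $\|u_k\|_{L^p}^p$-term once $k\ge k_0$ with $|A_{k_0}|$ small (possible since $|A_k|\to0$), we arrive at
\[
\|u_k\|_{L^q(\Omega)}\le D_k\,|A_k|^{\frac{1-1/q}{p-1}},\qquad D_k:=C'(1+k^{p-1})^{\frac1{p-1}},\qquad k\ge k_0 .
\]

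For $h>k\ge k_0$, since $u_k\ge h-k$ on $A_h$ one has $(h-k)|A_h|^{1/q}\le\|u_k\|_{L^q(A_h)}\le\|u_k\|_{L^q(\Omega)}$, whence the recursion $|A_h|\le D_k^{\,q}(h-k)^{-q}|A_k|^{\beta}$ with $\beta=\frac{q-1}{p-1}>1$. Choosing $k_n:=M(2-2^{-n})\in[M,2M]$ with $M$ large, one has $D_{k_n}\le D_{2M}$, $D_{2M}^{\,q}M^{-q}$ bounded independently of $M$, $k_{n+1}-k_n=M2^{-n-1}$, and $Y_0:=|A_M|\le M^{-p}\|u\|_{L^p(\Omega)}^p\to0$; the resulting recursion $Y_{n+1}\le\Lambda\,2^{qn}\,Y_n^{\beta}$ with $\Lambda$ independent of $M$ and $\beta>1$ then forces $Y_n\to0$ once $M$ is large enough, i.e.\ $|A_{2M}|=0$, so $\esssup_{\Omega}u\le2M$. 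Applying the same to $-u$ bounds $\esssup_{\Omega}(-u)$, and since $u\equiv0$ in $\R^N\setminus\Omega$ we conclude $u\in L^\infty(\R^N)$.

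\emph{Main obstacle.} The essential difficulty is that $L_{\Delta_p}$ has only logarithmic order, so there is no Sobolev embedding of positive order: one merely has $X^p_0(\Omega)\hookrightarrow L^q(\Omega)$ for every finite $q$, with constants that degenerate as $q\to\infty$, so that $L^\infty$ cannot be reached from the embedding alone. The point is that a \emph{single} fixed exponent $q>p$ nonetheless closes the Stampacchia scheme, precisely because then $\beta=(q-1)/(p-1)>1$ and the level-dependent constant $D_k$ grows only linearly in $k$, in step with the geometric spacing of the truncation levels. The remaining work is more routine but still requires care: the genuine nonlinearity and the weak singularity of the kernel are handled via the sharp algebraic inequalities for $t\mapsto|t|^{p-2}t$ (and their $1<p<2$ variants), and the unboundedness of $h_\Omega$ near $\partial\Omega$ forces one to use the one-sided bound on the potential together with the logarithmic boundary Hardy inequality behind the description of $X^p_0(\Omega)$ — which is exactly where Theorem~\ref{intro:hardy} enters — rather than any crude pointwise control on the zero-order coefficient.
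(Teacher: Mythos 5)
Your De Giorgi--Stampacchia scheme is a genuinely different route from the paper's: the paper deduces Theorem \ref{bounded2} from Theorem \ref{bounded1}, whose proof is a \emph{one-shot} truncation argument. There, the kernel $\mathbf{k}$ is split into a small-ball part $k_\delta$ and a mid-range part $q_\delta$, with $\delta$ chosen so that $\|q_\delta\|_{L^1}$ dominates $B+|\rho_N(p)|+1$; the resulting absorbing term $-\|q_\delta\|_{L^1}w^{p-1}v_t$ makes the right-hand side of the Caccioppoli estimate nonpositive once the truncation level $t$ is large enough, and the Poincar\'e inequality for $k_\delta$ then forces $(w-t)^+\equiv 0$. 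No higher integrability is ever invoked.

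Your iteration, however, has a genuine gap at its pivotal step: the claimed embedding $X^p_0(\Omega)\hookrightarrow L^q(\Omega)$ for a fixed $q>p$ does \emph{not} hold, and \cite{CW19} does not assert it (it only gives compact embedding into $L^p$). The kernel $|x-y|^{-N}1_{B_1}$ is only logarithmically more singular than a locally integrable one, so the seminorm $[\cdot]_{X^p_0(\Omega)}$ yields at most an Orlicz-scale gain over $L^p$, never a polynomial one. Concretely, for $\Omega=B_1$ and $\gamma>2/p$ the function $u(x)=|x|^{-N/p}\bigl(\log(1/|x|)\bigr)^{-\gamma}$ (cut off near $\partial B_1$) satisfies $u\in L^p(\Omega)$ and a direct computation shows $\iint_{|x-y|<1}|u(x)-u(y)|^p|x-y|^{-N}\,dx\,dy<\infty$, so $u\in X^p_0(\Omega)$; yet $\int_\Omega |u|^q=\infty$ for every $q>p$ because the power $|x|^{-Nq/p}$ is supercritical. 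Without the embedding you cannot obtain $\beta=(q-1)/(p-1)>1$ in the level-set recursion, so the contraction never starts and the iteration does not close; this is precisely the obstruction you identify in your ``Main obstacle'' remark, but then set aside by asserting an embedding that fails.
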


\subsection{Plan of the paper}
We begin to collect some preliminaries in Section \ref{preliminaries}, which include a priori estimates, the definitions of classical function spaces we need, and simple limiting behaviors of $(-\Delta_p)^s$ and its constant. In Section \ref{proof derivative} we give the Proof of Theorem \ref{derivative} and present pointwise properties of $L_{\Delta_p}$. Section \ref{variational} deals with the weak formulation of the logarithmic $p$-Laplace and the properties of the space $X^p_0(\Omega)$. The proof of the logarithmic boundary Hardy inequality is done in Section \ref{sec:hardy}. In Section \ref{dirichlet problem} we formulate a framework for weak supersolutions, which do not vanish outside of $\Omega$ and we give the proofs of Theorem \ref{bounded2}, Theorem \ref{intro:maximum principle}, and Theorem \ref{intro:comparison principle} alongside more general statements. Finally, in Section \ref{eigenfunction}, we give the proof of Theorem \ref{relation between evs of log p Lap and frac p Lap} and Corollary \ref{faber krahn}. We emphasize, moreover, the properties on $\lambda_{L,p}^1(\Omega)$ and $h_{\Omega}$ listed in Subsection \ref{sec:maximum principle} and Section \ref{eigenfunction}, which in particular lead to small volume type maximum principles such as Corollary \ref{small volume}. 

\section{Preliminaries and known results}\label{preliminaries}
\subsection{Notation} We use the following notation. For $U\subset \R^N$, $r>0$, let $B_r(U):=\{x\in \R^N\;:\; \dist(x,U)<r\}$, where $\dist(\cdot,U)$ denotes the distance of $x$ to $U$. If $U=\{x\}$ for some $x\in \R^N$, we also write $B_r(x)$ in place of $B_r(\{x\})$ to denote the ball of radius $r$ centered at $x$. Moreover, we put $B_r:=B_r(0)$. Throughout, we set $U^c:=\R^N\setminus U$. If $U$ is measurable, $|U|$ denotes the $N$-dimensional Lebesgue measure of $U$ and we put
$$
\omega_N:=\frac{2\pi^{\frac{N}{2}}}{\Gamma(\frac{N}{2})}
$$
for the $(N-1)$-dimensional volume of $\partial B_1$. $\Omega$ denotes throughout this work an open nonempty subset of $\R^N$, which may have further properties as stated. We let $\delta_x:=\delta(x):=\dist(x,\partial \Omega)$.\\
For a function $u:\R^N\to\R$, we denote $u^+=\max\{u,0\}$ for the positive part and $u^-:=\max\{-u,0\}$ for the negative part of $u$ so that $u=u^+-u^-$.\\
Finally, for $p> 1$, we set $g(a):=g_p(a):=|a|^{p-2}a$ for $a\in \R$. 

\subsection{Function spaces}\label{function spaces}

We use several different definitions of functions spaces ---classical and new. For the readers convenience we give here a list of the known spaces with their respective short definitions. We remark that our definitions might vary slightly, since we always consider functions to be defined on the whole~$\R^N$.

\noindent Let $U\subset \R^N$ be open. For $\alpha=k+\sigma>0$ with $k\in \N_0$ and $\sigma\in(0,1)$ let $C^{\alpha}(U)$ denote the space of functions $u:\R^N\to\R$, which in $U$ are $k$-times continuously differentiable and the derivatives up to order $k$ are $\sigma$-H\"older continuous in $U$. Moreover, we set $C^{k+1}(U):=C^{k,1}(U)$ as the space of functions $u:\R^N\to\R$, which in $U$ are $k$-times continuously differentiable and the derivatives up to order $k$ are Lipschitz continuous in $U$. Here, for $\sigma\in(0,1]$ and an arbitrary nonempty set $K\subset \R^N$ a function $u:\R^N\to\R$ is called $\sigma$-H\"older continuous (resp. Lipschitz continuous if $\sigma=1$) in $K$, if
$$
\sup_{x,y\in K}\frac{|u(x)-u(y)|}{|x-y|^{\sigma}}<\infty.
$$
As usual $C^{\infty}(U)=\bigcap_{\alpha>0} C^{\alpha}(U)$. We set, for an arbitrary $\alpha\in(0,\infty]$,
\begin{align*}
   C^{\alpha}(\overline{U})&:=\Big\{u\in C^{\alpha}(U)\;:\; \text{all derivatives of $u$ up to order $\lfloor \alpha\rfloor$ }\\
   &\qquad\qquad\qquad\qquad\qquad\qquad  \qquad\qquad  \text{ have a continuous extension to $\overline{U}$}\Big\},\\
   C^{\alpha}_{c}(U)&:=\Big\{u\in C^{\alpha}(U)\;:\; \text{$\supp\,u$ is a compact subset of $U$}\Big\},\\
   C^{\alpha}_{loc}(U)&:=\Big\{u:\R^N\to\R\;:\; u|_{K}\in C^{\alpha}(K)\ \text{for all nonempty compact subsets $K\subset U$}\Big\}.
\end{align*}
We use the above notation also for the space of continuous function $C(U)$.
Given $q>0$ and a measurable function $u:\R^N\to\R$, we let 
$$
\|u\|_{L^q(U)}:=\left(\,\int_{U}|u(x)|^q\,dx\right)^{\frac{1}{q}}
$$
and
\begin{align*}
L^q(U)&:=\Big\{u:\R^N\to \R\;:\; u1_{\R^N\setminus U}\equiv 0\ \text{and}\ \|u\|_{L^q(U)}<\infty\Big\},\\
L_{loc}^q(U)&:=\Big\{u:\R^N\to\R\;:\; 1_Au\in L^q(A)\ \text{for all measurable sets $A\subset U$ with $\overline{A}\subset U$}\Big\}.
\end{align*}
We also use analogous definitions for $L^{\infty}(U)$ and $L^{\infty}_{loc}(U)$. Moreover, for $t\in \R$, we let $L^q_t$ be  the space of functions $u\in L^q_{loc}(\R^N)$ such that
\begin{equation}\label{tail space}
\|u\|_{L^q_t}:=\Bigg(\,\,\int_{\R^N}\frac{|u|^{q}}{(1+|x|)^{N+t(q+1)}}\ dx\Bigg)^{\frac{1}{q}}<\infty.
\end{equation}
Note that $\|\cdot\|_{L^q(U)}$ and $\|\cdot\|_{L^q_t}$ are norms  only if $q\geq 1$, but the extension to $q\in(0,1)$ is convenient. 
Given $s\in(0,1)$ and $p\in[1,\infty)$, we let
$$
W^{s,p}(U)=\left\{u\in L^p(U)\;:\; \iint_{U\times U}\frac{|u(x)-u(y)|^p}{|x-y|^{N+sp}}\,dxdy<\infty\right\}
$$
to denote the usual fractional Sobolev space, see e.g. \cite{NPV12} for an introduction to such spaces. $W^{s,p}(U)$ is a Banach space with the norm
$$
\|u\|_{s,p,U}:=\Big(\|u\|_{L^p(U)}^p+[u]_{W^{s,p}(U)}^p\Big)^{\frac{1}{p}},
$$
where
$$ 
[u]_{W^{s,p}(U)}^p :=\frac{C_{N,s,p}}{2}\iint\limits_{U\times U}\frac{|u(x)-u(y)|^p}{|x-y|^{N+sp}}\,dxdy
$$
is called the Gagliardo seminorm, and $C_{N,s,p}$ denotes the \textit{normalization constant} for the fractional $p$-Laplace, see Section \ref{constant} below. Moreover, we let
$$
\cW^{s,p}_0(U):=\Big\{u\in W^{s,p}(\R^N)\;:\; u1_{\R^N\setminus U}\equiv 0\Big\},
$$
which is also a Banach space with the norm $\|\cdot\|_{s,p,\R^N}$, and 
$$
W^{s,p}_0(U):=\overline{C^{\infty}_c(U)}^{\|\cdot\|_{s,p,U}}.
$$
Finally, the main spaces we study in this work are $X^p_0(\Omega)$ as defined in \eqref{eq:first ev of frac p}, see Section \ref{variational}, and the space $V(\Omega,\R^N)$ in Section \ref{dirichlet problem} for supersolutions.

\subsection{Some useful inequalities }
We recall some elementary inequalities that are useful in proving our results.

\begin{lemma}[Lemmas 2 and 3, \cite{L16}]\label{estimate-general-1}
For all $a,b\in \R$, the following estimates hold:\\
If $p\in(1,2]$, then
$$
|g(a+b)-g(a)|\leq (3^{p-1}+2^{p-1})|b|^{p-1},
$$
and if $p\geq 2$, then
$$
|g(a+b)-g(a)|\leq (p-1)|b|\big(|a|+|b|\big)^{p-2}.
$$
\end{lemma}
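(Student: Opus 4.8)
The plan is to prove the two bounds separately, exploiting that $g=g_p$ is odd and, away from the origin, continuously differentiable with $g'(t)=(p-1)|t|^{p-2}$.

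\emph{The superquadratic case $p\ge2$.} First I would observe that for $p\ge2$ the function $g$ is in fact $C^1(\R)$, because $g'(t)=(p-1)|t|^{p-2}$ extends continuously by the value $0$ at $t=0$. Setting $h(t):=g(a+tb)$ and applying the fundamental theorem of calculus, $g(a+b)-g(a)=h(1)-h(0)=b\int_0^1 g'(a+tb)\,dt=(p-1)\,b\int_0^1|a+tb|^{p-2}\,dt$. Since $|a+tb|\le|a|+|b|$ for $t\in[0,1]$ and $p-2\ge0$, the integrand is bounded by $(|a|+|b|)^{p-2}$, and integrating gives $|g(a+b)-g(a)|\le(p-1)|b|\,(|a|+|b|)^{p-2}$, which is the asserted estimate.

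\emph{The subquadratic case $1<p\le2$.} Here the previous argument breaks down because $g'$ is singular at the origin, so one cannot integrate $g'$ across a sign change of $t\mapsto a+tb$; circumventing this is the one point of the proof requiring a little care. I would instead argue by a sign distinction. Write $q:=p-1\in(0,1]$; since $g$ is odd, replacing $(a,b)$ by $(-a,-b)$ if necessary (which changes neither $|b|$ nor $|g(a+b)-g(a)|$), I may assume $a+b\ge0$. The key elementary input is that $t\mapsto t^q$ is concave on $[0,\infty)$ and vanishes at $0$, hence subadditive, hence $q$-Hölder with constant $1$: $|s^q-t^q|\le|s-t|^q$ for all $s,t\ge0$. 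If $a\ge0$, then $g(a+b)-g(a)=(a+b)^q-a^q$ and this Hölder estimate yields $|g(a+b)-g(a)|\le|b|^q$. If $a<0$, then $a+b\ge0$ forces $b>0$ with $|a|\le|b|$ and $0\le a+b\le|b|$, so $g(a+b)-g(a)=(a+b)^q+|a|^q\le2|b|^q$. In either subcase $|g(a+b)-g(a)|\le2|b|^{p-1}$, and since $2^{p-1}\ge1$ and $3^{p-1}\ge1$ for $p\ge1$ one has $2\le 2^{p-1}+3^{p-1}$, giving the stated inequality.

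No step presents a genuine obstacle: the estimates are elementary, and the only subtlety — already flagged above — is organizing the subquadratic case around the sign of $a$ (given $a+b\ge0$) so as to replace the fundamental theorem of calculus, which is unavailable near the origin, by the subadditivity of $t\mapsto t^{p-1}$ on $[0,\infty)$.
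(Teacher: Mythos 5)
Your proof is correct. Note that the paper does not supply a proof of this lemma: it is a direct citation of Lemmas~2 and~3 in Lindgren's article \cite{L16}, so there is no internal argument to compare against. That said, your two-case treatment is a clean, self-contained verification. For $p\ge 2$ your argument is the standard one (write $g(a+b)-g(a)$ via the fundamental theorem of calculus, use that $g'(t)=(p-1)|t|^{p-2}$ extends continuously to $0$, and bound $|a+tb|\le|a|+|b|$). For $1<p\le 2$ your route — reduce to $a+b\ge0$ by oddness, then invoke the subadditivity of $t\mapsto t^{p-1}$ on $[0,\infty)$ and split on the sign of $a$ — is a genuinely elementary replacement for the mean-value computation, which fails because $g'$ is singular at the origin. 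In fact you obtain the stronger bound $|g(a+b)-g(a)|\le 2|b|^{p-1}$, and your observation that $2\le 2^{p-1}+3^{p-1}$ for $p>1$ correctly recovers the constant in the stated lemma; the cited constant $3^{p-1}+2^{p-1}$ is not sharp, and your argument makes that explicit. All the intermediate claims check out: concavity of $t\mapsto t^q$ on $[0,\infty)$ plus $0^q=0$ gives subadditivity, hence $|s^q-t^q|\le|s-t|^q$ for $s,t\ge0$; in the subcase $a<0$, $a+b\ge0$ indeed forces $|a|\le b=|b|$ and $0\le a+b\le|b|$, and $g(a)=-|a|^q$, so $g(a+b)-g(a)=(a+b)^q+|a|^q\le 2|b|^q$. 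No gaps.
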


\begin{lemma}[Section 2.2, \cite{IMS16}]\label{estimate-general-2}
Let $b>0$. Then
$$
    g(a+b)\leq \max\{1,2^{p-2}\}(a^{p-1}+b^{p-1})\quad\text{for all $a\geq 0$.}
$$
If, in addition, $p\geq 2$, then
$$
    g(a+b)-g(a)\geq 2^{2-p}b^{p-1}\quad \text{for all $a\in \R$.}
$$
\end{lemma}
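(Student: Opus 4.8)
The plan is to treat the two displayed inequalities separately, since they are independent elementary facts about the function $g(t)=|t|^{p-2}t$, and both are driven by the convexity (for $p\ge 2$) or concavity (for $1<p\le 2$) of the power $t\mapsto t^{p-1}$ on $[0,\infty)$. Note first that in the first inequality $a\ge 0$ and $b>0$, so $a+b>0$ and hence $g(a+b)=(a+b)^{p-1}$, while $a^{p-1},b^{p-1}\ge 0$; thus that inequality is purely about nonnegative powers.

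\textbf{First inequality.} If $1<p\le 2$, then $p-1\in(0,1]$, so $t\mapsto t^{p-1}$ is concave on $[0,\infty)$ with value $0$ at the origin, hence subadditive: $(a+b)^{p-1}\le a^{p-1}+b^{p-1}$. Since $2^{p-2}\le 1$ in this range, $\max\{1,2^{p-2}\}=1$ and the claim follows. If $p\ge 2$, then $p-1\ge 1$ and $t\mapsto t^{p-1}$ is convex, so Jensen's inequality gives $\bigl(\tfrac{a+b}{2}\bigr)^{p-1}\le\tfrac12\bigl(a^{p-1}+b^{p-1}\bigr)$, i.e. $(a+b)^{p-1}\le 2^{p-2}\bigl(a^{p-1}+b^{p-1}\bigr)$; here $\max\{1,2^{p-2}\}=2^{p-2}$. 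This proves the first claim for all $p>1$.

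\textbf{Second inequality ($p\ge 2$).} I would use that $g\in C^1(\R)$ with $g'(t)=(p-1)|t|^{p-2}$ (continuous since $p-2\ge 0$), so that for $b>0$ one has $g(a+b)-g(a)=(p-1)\Phi(a)$ with $\Phi(a):=\int_a^{a+b}|t|^{p-2}\,dt$. It then suffices to minimise $\Phi$ over $a\in\R$ and evaluate. Since $\Phi'(a)=|a+b|^{p-2}-|a|^{p-2}$ vanishes precisely when $|a+b|=|a|$, i.e. (because $b>0$) at $a=-b/2$, with $\Phi'<0$ for $a<-b/2$ and $\Phi'>0$ for $a>-b/2$ (and $\Phi\to\infty$ as $|a|\to\infty$ when $p>2$, while $\Phi\equiv b$ when $p=2$), the point $a=-b/2$ is the global minimiser. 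One computes $\Phi(-b/2)=\int_{-b/2}^{b/2}|t|^{p-2}\,dt=\tfrac{2}{p-1}(b/2)^{p-1}=\tfrac{2^{2-p}}{p-1}b^{p-1}$, so $g(a+b)-g(a)\ge(p-1)\Phi(-b/2)=2^{2-p}b^{p-1}$.

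An alternative, calculus-free route to the second inequality is to split on the signs of $a$ and $a+b$: if $a\ge 0$ or $a\le -b$, then $a$ and $a+b$ have the same sign and, writing the larger of $|a|,|a+b|$ as $x+b$ with $x\ge 0$, superadditivity of $t\mapsto t^{p-1}$ gives $g(a+b)-g(a)=(x+b)^{p-1}-x^{p-1}\ge b^{p-1}\ge 2^{2-p}b^{p-1}$; the only configuration where the genuine constant $2^{2-p}<1$ is needed is $-b<a<0$, where $g(a+b)-g(a)=(a+b)^{p-1}+|a|^{p-1}=b^{p-1}\bigl((1-t)^{p-1}+t^{p-1}\bigr)$ with $t=|a|/b\in(0,1)$, and $t\mapsto(1-t)^{p-1}+t^{p-1}$ attains its minimum $2^{2-p}$ on $[0,1]$ at $t=1/2$ since its derivative $(p-1)\bigl(t^{p-2}-(1-t)^{p-2}\bigr)$ changes sign there (using $p-2\ge 0$). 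This makes clear that there is no substantial obstacle: apart from correctly identifying the worst case $a=-b/2$ (equivalently $t=1/2$), where a one-line optimisation is required, every piece is immediate from the convexity or concavity of the power function.
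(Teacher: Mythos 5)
Your proof is correct, but note that the paper does not actually prove this lemma: it is stated with a citation to Section~2.2 of \cite{IMS16}, so there is no in-paper argument to compare against. Your derivation is a clean, self-contained one: the first inequality via subadditivity of $t\mapsto t^{p-1}$ when $1<p\le 2$ and Jensen's inequality when $p\ge 2$, and the second via the fundamental theorem of calculus, $g(a+b)-g(a)=(p-1)\int_a^{a+b}|t|^{p-2}\,dt$, together with the observation that this integral is minimised at $a=-b/2$. The worst-case computation $\int_{-b/2}^{b/2}|t|^{p-2}\,dt=\frac{2^{2-p}}{p-1}b^{p-1}$ is exactly right and shows the constant $2^{2-p}$ is sharp. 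Your alternative case split ($a\ge 0$, $a\le -b$, $-b<a<0$) is also correct and in spirit closer to the argument one finds in \cite{IMS16}; the only case where the constant $2^{2-p}<1$ is genuinely needed is the middle one, which you handle correctly by minimising $(1-t)^{p-1}+t^{p-1}$ on $[0,1]$.
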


\begin{lemma}[Section 2.1, \cite{J18}]\label{estimate-general-3}
Let $M>0$ and $p>1$. Then there is $C_1,C_2>0$ such that for all $a\in[-M,M]$, $b\geq 0$:
\begin{align*}
 g(a)-g(a-b)&\leq C_1\max\{b,b^{p-1}\},\\
    g(a+b)-g(a)&\geq C_2\min\{b,b^{p-1}\}.
\end{align*}
\end{lemma}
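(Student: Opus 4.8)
The plan is to prove the two inequalities separately, and within each to distinguish the ranges $p\ge 2$ and $1<p<2$. Throughout, recall that $g=g_p$ is strictly increasing on $\R$ because $p>1$, so $g(a)-g(a-b)\ge 0$ and $g(a+b)-g(a)\ge 0$ whenever $b\ge 0$; the case $b=0$ is trivial, so assume $b>0$. I will also use the trivial facts that $\max\{b,b^{p-1}\}$ dominates both $b$ and $b^{p-1}$, while $\min\{b,b^{p-1}\}$ is dominated by both; combined with a split at the threshold $b=M$, this turns estimates of the form $c\,b$ or $c\,b^{p-1}$ into the required ones.

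For the first inequality (the upper bound), the case $1<p\le 2$ is immediate: applying the first estimate of Lemma~\ref{estimate-general-1} with base point $a-b$ and increment $b$ gives
\[
0\le g(a)-g(a-b)=\bigl|g((a-b)+b)-g(a-b)\bigr|\le (3^{p-1}+2^{p-1})\,b^{p-1}\le (3^{p-1}+2^{p-1})\max\{b,b^{p-1}\},
\]
with no use of $M$. For $p\ge 2$, the second estimate of Lemma~\ref{estimate-general-1} (again with base point $a-b$) gives $g(a)-g(a-b)\le (p-1)\,b\,(|a-b|+b)^{p-2}$; since $|a-b|\le |a|+b\le M+b$ and $p-2\ge 0$ this is at most $(p-1)\,b\,(M+2b)^{p-2}$. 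Splitting at $b=M$ and bounding $(M+2b)^{p-2}$ by $(3M)^{p-2}$ for $b\le M$ and by $3^{p-2}b^{p-2}$ for $b\ge M$ yields $g(a)-g(a-b)\le (p-1)\max\{(3M)^{p-2},3^{p-2}\}\max\{b,b^{p-1}\}$. This settles the first inequality.

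For the second inequality (the lower bound), the case $p\ge 2$ is again immediate, this time from the second estimate of Lemma~\ref{estimate-general-2}: for every $a\in\R$ one has $g(a+b)-g(a)\ge 2^{2-p}b^{p-1}\ge 2^{2-p}\min\{b,b^{p-1}\}$. The genuinely nontrivial case is $1<p<2$. Here I would use the representation $g(a+b)-g(a)=(p-1)\int_a^{a+b}|t|^{p-2}\,dt$, valid because $g$ is absolutely continuous with $g'(t)=(p-1)|t|^{p-2}$ locally integrable (note $p-2>-1$). For every $t$ between $a$ and $a+b$ one has $|t|\le |a|+b\le M+b$, and since $p-2<0$ this forces $|t|^{p-2}\ge (M+b)^{p-2}$; hence $g(a+b)-g(a)\ge (p-1)(M+b)^{p-2}b$. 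Splitting at $b=M$: for $b\le M$, $(M+b)^{p-2}\ge (2M)^{p-2}$, so $g(a+b)-g(a)\ge (p-1)(2M)^{p-2}b\ge (p-1)(2M)^{p-2}\min\{b,b^{p-1}\}$; for $b\ge M$, $(M+b)^{p-2}\ge 2^{p-2}b^{p-2}$, so $g(a+b)-g(a)\ge (p-1)2^{p-2}b^{p-1}\ge (p-1)2^{p-2}\min\{b,b^{p-1}\}$. Taking $C_2:=(p-1)\min\{(2M)^{p-2},2^{p-2}\}$ finishes this case.

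The only step that requires any real care is the lower bound for $1<p<2$, and the hypothesis $a\in[-M,M]$ is essential precisely there: for large $|a|$ the derivative $g'(a)=(p-1)|a|^{p-2}$ degenerates (as $p-2<0$), so without a bound on $|a|$ the quantity $g(a+b)-g(a)$ can be made arbitrarily small relative to $\min\{b,b^{p-1}\}$. One could alternatively observe, using that $g$ is odd, that $g(a+b)-g(a)=g(-a)-g((-a)-b)$, so the two inequalities are in fact an upper and a lower bound for the single quantity $g(\alpha)-g(\alpha-b)$ with $\alpha\in[-M,M]$; but this reduction is not needed for the argument above.
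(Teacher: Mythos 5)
The paper does not prove this lemma; it only cites [Section~2.1, \cite{J18}], so there is no in-paper argument to compare against. Your proof is correct and self-contained. You obtain the upper bound directly from Lemma~\ref{estimate-general-1} (the $p\le 2$ case needs no use of $M$ at all, the $p\ge 2$ case needs only $|a-b|\le M+b$ plus a split at $b=M$ to trade $(M+2b)^{p-2}$ for $\max\{b,b^{p-1}\}$), and the lower bound for $p\ge 2$ from Lemma~\ref{estimate-general-2}, again without using $M$. The genuinely delicate part is the lower bound for $1<p<2$, and your treatment is right: the representation $g(a+b)-g(a)=(p-1)\int_a^{a+b}|t|^{p-2}\,dt$ is valid since $|t|^{p-2}$ is locally integrable for $p>1$, and because $p-2<0$ the integrand is decreasing in $|t|$, so $|t|\le M+b$ on the interval of integration yields $|t|^{p-2}\ge(M+b)^{p-2}$; the split at $b=M$ then produces $\min\{b,b^{p-1}\}$ with an explicit $C_2$. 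Your remark that the hypothesis $|a|\le M$ is essential precisely here (and nowhere else) is accurate, and the closing observation that $g(a+b)-g(a)=g(-a)-g((-a)-b)$ by oddness, so the two inequalities are an upper and a lower bound for the same object, is a nice structural point.
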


\subsection{On the normalization constant of the fractional \texorpdfstring{$p$}{p}-Laplacian}\label{constant}

Let $N\in \N$, $p>1$ and $s\in(0,1)$. Recall the definition of the fractional $p$-Laplacian $(-\Delta_p)^su$ for $u$ sufficiently regular in the introduction, where the normalization constant $C_{N,s,p}$ is given by
$$
C_{N,s,p}=\left\{\begin{aligned}&\frac{sp\,2^{2s-2}\,\Gamma\left(\frac{N+sp}{2}\right)}{\pi^{\frac{N-1}{2}}\Gamma(1-s)\Gamma\left(\frac{p+1}{2}\right)} && \text{ if } s>\frac{1}{2},\\
& \frac{sp\,2^{2s-1}\,\Gamma\left(\frac{N+sp}{2}\right)}{\pi^{\frac{N}{2}}\Gamma(1-s) } && \text{ if }s\leq \frac{1}{2}.
\end{aligned}\right.
$$
The following is well-known, however, we include its proof for the reader's convenience and completeness.

\begin{lemma}\label{limit s to 0}
Let $p> 1$. If $s\in\left(0,\frac{p-1}{p}\right)$, $\alpha\in\left(\frac{sp}{p-1},1\right]$, and $u\in C^{\alpha}_{c}(\R^N)$. Then $(-\Delta_p)^su(x)$ is well-defined for any $x\in \R^N$ and it holds
$$
\lim_{s\to0^+}(-\Delta_p)^su(x)=g(u(x))\quad\text{for all $x\in \R^N$.}
$$
\end{lemma}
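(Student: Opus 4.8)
The plan is to show that $(-\Delta_p)^s u(x) \to g(u(x))$ as $s \to 0^+$ for fixed $x$, by splitting the defining integral according to whether $u$ is regular near $x$ or merely controlled at infinity by the kernel, and then letting $C_{N,s,p} \to 0$ while controlling the divergence of the integral. First I would check that $(-\Delta_p)^s u(x)$ is well-defined: near $x$, since $u \in C^\alpha_c(\R^N)$, we have $|u(x)-u(y)|^{p-1} \leq [u]_{C^\alpha}^{p-1}|x-y|^{\alpha(p-1)}$, so the integrand is bounded by $|x-y|^{\alpha(p-1)-N-sp}$, which is integrable near $x$ precisely because $\alpha(p-1) > sp$ by hypothesis; away from $x$, $u$ is bounded and $|x-y|^{-N-sp}$ is integrable at infinity, so no principal value is actually needed outside a small ball and the full expression converges absolutely away from $x$.

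Next I would decompose $(-\Delta_p)^s u(x) = C_{N,s,p}(I_1(s) + I_2(s))$ where $I_1(s)$ is the principal-value integral over $B_1(x)$ and $I_2(s)$ is the integral over $\R^N \setminus B_1(x)$. For $I_2(s)$: writing the numerator as $g(u(x)-u(y)) = g(u(x)) + [g(u(x)-u(y)) - g(u(x))]$, the first piece contributes $g(u(x)) \int_{\R^N \setminus B_1(x)} |x-y|^{-N-sp}\,dy = g(u(x)) \cdot \frac{\omega_N}{sp}$ (using polar coordinates), and the remaining piece is $O(1)$ uniformly in $s$ for $s$ small, since $g(u(x)-u(y)) - g(u(x))$ is bounded (Lemma~\ref{estimate-general-1} or a direct estimate using boundedness of $u$) and $|x-y|^{-N-sp}$ is integrable on $\R^N \setminus B_1(x)$ with integral $\frac{\omega_N}{sp}$ — but actually here one should be slightly careful and note $\int_{\R^N \setminus B_1(x)} |g(u(x)-u(y))-g(u(x))|\,|x-y|^{-N-sp}\,dy$ need not be uniformly bounded if it is multiplied by something blowing up; however it is multiplied only by $C_{N,s,p} \to 0$, and $C_{N,s,p}$ vanishes like $s$, so $C_{N,s,p} \cdot O(1/s)$ stays bounded and in fact $C_{N,s,p} \cdot \frac{\omega_N}{sp} \to g(u(x)) \cdot (\text{the right limit})$ after plugging in the explicit constant. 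For $I_1(s)$: bound $|I_1(s)| \leq C \int_{B_1(x)} |x-y|^{\alpha(p-1)-N-sp}\,dy \leq C'$ uniformly for $s$ in a neighborhood of $0$ (the exponent stays $> -N$), so $C_{N,s,p} I_1(s) \to 0$.

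The computation then reduces to evaluating $\lim_{s\to 0^+} C_{N,s,p} \cdot \frac{\omega_N}{sp}$. Using the $s \leq \tfrac12$ branch $C_{N,s,p} = \frac{sp\,2^{2s-1}\Gamma(\frac{N+sp}{2})}{\pi^{N/2}\Gamma(1-s)}$ and $\omega_N = \frac{2\pi^{N/2}}{\Gamma(N/2)}$, we get $C_{N,s,p} \cdot \frac{\omega_N}{sp} = \frac{2^{2s-1} \cdot 2 \cdot \Gamma(\frac{N+sp}{2})}{\Gamma(1-s)\Gamma(\frac{N}{2})} = \frac{2^{2s}\Gamma(\frac{N+sp}{2})}{\Gamma(1-s)\Gamma(\frac N2)} \to \frac{1 \cdot \Gamma(\frac N2)}{1 \cdot \Gamma(\frac N2)} = 1$ as $s \to 0^+$, by continuity of $\Gamma$. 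Hence $C_{N,s,p} I_2(s) \to g(u(x)) \cdot 1 + \lim_{s\to 0^+} C_{N,s,p} \cdot O(1/s)$; the error term is $C_{N,s,p}$ times something of order $1/s$, i.e. order $s \cdot \frac1s = O(1)$ — so I should be more precise: the tail integral $\int_{\R^N\setminus B_1(x)}|g(u(x)-u(y)) - g(u(x))|\,|x-y|^{-N-sp}\,dy$ is bounded by a constant times $\int_{\R^N \setminus B_1(x)}|x-y|^{-N-sp}\,dy = \frac{\omega_N}{sp}$, so the error is $C_{N,s,p} \cdot C \cdot \frac{\omega_N}{sp} = C \cdot \frac{2^{2s}\Gamma(\frac{N+sp}{2})}{\Gamma(1-s)\Gamma(\frac N2)} \cdot \frac{sp \cdot \frac{1}{p}}{sp\cdot\frac1p}$... which stays bounded but does not obviously vanish. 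I expect the main obstacle to be precisely this: one cannot afford a crude bound on the tail. The fix is to not separate $g(u(x))$ out but instead write $C_{N,s,p}I_2(s) = C_{N,s,p}\int_{\R^N \setminus B_1(x)} g(u(x)-u(y))|x-y|^{-N-sp}\,dy$ and note $|g(u(x)-u(y))| \leq (2\|u\|_\infty)^{p-1}$, giving $|I_2(s)| \leq (2\|u\|_\infty)^{p-1}\frac{\omega_N}{sp}$, so $C_{N,s,p}I_2(s)$ is bounded; then to get the exact limit, use dominated convergence on $sp \cdot I_2(s) = \int_{\R^N\setminus B_1(x)} g(u(x)-u(y)) \cdot \frac{sp}{|x-y|^{N+sp}}\,dy$ — but $\frac{sp}{|x-y|^{N+sp}} \to 0$ pointwise for $|x-y|>1$ while having integral $\omega_N$, so this is an approximate-identity argument concentrating mass at $|x-y| = 1^+$: the limit of $sp \cdot I_2(s)$ is $\omega_N \cdot g(u(x))$ provided $g(u(x)-u(y)) \to g(u(x))$ as $|x-y| \to \infty$, which holds since $u$ has compact support so $u(y) = 0$ for $|y|$ large. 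Combining, $C_{N,s,p}I_2(s) = \frac{C_{N,s,p}}{sp}\cdot (sp\, I_2(s)) \to 1 \cdot g(u(x)) = g(u(x))$, and together with $C_{N,s,p}I_1(s)\to 0$ this finishes the proof.
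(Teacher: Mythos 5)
Your argument is correct in substance and follows the same basic strategy as the paper — split the defining integral into a piece near $x$ (which is uniformly bounded, so killed by $C_{N,s,p}\to 0$) and a tail piece (whose product with $C_{N,s,p}$ converges to $g(u(x))$ because of the normalization of the constant). The difference is the choice of split radius. The paper fixes $R>0$ with $\overline{\supp u}\subset B_R(x)$ and splits at $B_R(x)$, not at $B_1(x)$. Outside $B_R(x)$ one has $u(y)=0$ identically, so the tail integrand is \emph{exactly} $g(u(x))\,|x-y|^{-N-sp}$ and the tail contribution equals $g(u(x))\,C_{N,s,p}\frac{\omega_N}{sp}R^{-sp}\to g(u(x))$ by the explicit form of $C_{N,s,p}$. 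No error term arises, and no approximate-identity argument is needed.

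By splitting at $B_1(x)$ instead, you are left to handle the transition region $B_R(x)\setminus B_1(x)$, and here the description of your fix contains two inaccuracies, even though your final conclusion is correct. First, dominated convergence cannot be invoked to compute $\lim_{s\to 0^+} sp\cdot I_2(s)$: the pointwise limit of $\tfrac{sp}{|x-y|^{N+sp}}$ is zero for every fixed $y$ with $|x-y|>1$, so DCT would give $0$, not $\omega_N g(u(x))$. Second, the family of probability measures $\nu_s(dz)=\tfrac{sp}{\omega_N|z|^{N+sp}}1_{|z|>1}dz$ does \emph{not} concentrate at $|z|=1^+$ as $s\to 0^+$; rather, $\nu_s(B_R\setminus B_1)=1-R^{-sp}\to 0$ for every $R$, i.e.\ the mass \emph{escapes to infinity}. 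You actually stated the correct condition in the next breath — you need $g(u(x)-u(y))\to g(u(x))$ as $|x-y|\to\infty$, which is the behaviour at infinity — so the argument can be repaired by the obvious split of the tail at radius $R$: the part over $B_R^c(x)$ is exactly $g(u(x))\omega_N R^{-sp}$, and the part over $B_R(x)\setminus B_1(x)$ is bounded by a constant times $\nu_s(B_R\setminus B_1)\to 0$. But once you write that down, you have simply rediscovered the paper's split at $B_R(x)$, which is the cleaner choice from the start.
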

\begin{proof}
Note that, for $r>0$
$$
\int_{B_r}\frac{g(|y|^\alpha)}{|y|^{N+sp}}\ dy=\int_{B_r}|y|^{-N+(\alpha-s)p-\alpha}\ dy=\frac{\omega_N}{(\alpha-s)p-\alpha}r^{(\alpha-s)p-\alpha}=\frac{\omega_N}{\alpha(p-1)-sp}r^{\alpha(p-1)-sp}<\infty,
$$
and
$$
\int_{B_r^c}\frac{1}{|y|^{N+sp}}\ dy=\frac{\omega_N}{sp}r^{-sp}<\infty.
$$
Hence, $(-\Delta_p)^su$ is well-defined. Moreover, after fixing $x\in \R^N$, we have, with $R>0$ such that $\overline{\supp\,u}\subset B_R(x)$,
\begin{align*}
(-\Delta_p)^su(x)&=C_{N,s,p}\int_{B_R(x)}\frac{g(u(x)-u(y))}{|x-y|^{N+sp}}\,dy+ C_{N,s,p}\int_{B_R^c(x)}\frac{g(u(x)-u(y))}{|x-y|^{N+sp}}\,dy\\
&=C_{N,s,p}\int_{B_R(x)}\frac{g(u(x)-u(y))}{|x-y|^{N+sp}}\,dy+ g(u(x))C_{N,s,p}\int_{B_R^c(x)}|x-y|^{-N-sp}\,dy,
\end{align*}
where
$$
\Bigg|\int_{B_R(x)}\frac{g(u(x)-u(y))}{|x-y|^{N+sp}}\,dy\Bigg|\leq g(\|u\|_{C^{\alpha}(\R^N)})\frac{\omega_N}{(\alpha-s)p-\alpha}R^{(\alpha-s)p-\alpha},
$$
and
$$
\int_{B_R^c(x)}|x-y|^{-N-sp}\,dy=\int_{B_R^c}|y|^{-N-sp}\,dy=\frac{\omega_N}{sp}R^{-sp}.
$$
Thus by definition of $C_{N,s,p}$, it follows that
$$
\lim_{s\to 0^+}(-\Delta_p)^su(x)=g(u(x))\quad\text{for $x\in \R^N$.}
$$
as claimed.
\end{proof}

\begin{remark}
Let us add some further remarks concerning $(-\Delta_p)^s$.
\begin{enumerate}
\item Note that $(-\Delta_p)^s$ is also pointwisely well-defined for any $s\in(0,1)$ if $u$ is sufficiently regular, see e.g. \cite{IMS16,F23}. To be precise, if $u\in C^{2+\alpha}_c(\R^N)$ for some $\alpha>0$, then $(-\Delta_p)^su(x)$ is well-defined for all $x\in \R^N$ and $s\in(0,1)$.
\item The choice of the constant $C_{N,p,s}$ in our setting is rather artificial as there is no Fourier transform to justify a normalization constant as in the case $p=2$ (see e.g. \cite{NPV12}). With our choice, the constant agrees with the case $p=2$, the limit $s\to 0^+$ gives a kind of identity, and it holds, see for instance the discussions in \cite{TGV21,F23},
$$
\lim_{s\to1^-}(-\Delta_p)^su(x)=-\Delta_pu(x)=-\div(|\nabla u(x)|^{p-2}\nabla u(x))\quad\text{for all $x\in \R^N$.}
$$
However, the choices of $C_{N,s,p}$ for $s\geq\epsilon$ for some $\epsilon>0$ is indeed not relevant to our analysis.\\
In particular, any other choice of $C_{N,s,p}$, such that $\partial_sC_{N,s,p}$ exists at $s=0$ can be used and only changes the zero order part of the logarithmic $p$-Laplacian.
\end{enumerate}
\end{remark}

\section{Derivation of the logarithmic \texorpdfstring{$p$}{p}-Laplacian and some properties}\label{proof derivative}

The goal of this section is to prove Theorem \ref{derivative} and, in addition, to give several properties of the integral representation of the operator. We begin with the introduction and study of the integral operator
\begin{equation*}\label{integral representation2}
\begin{split}
   \cL_{\Delta_p} u(x)&:= C_{N,p}\int_{B_1(x)}\frac{|u(x)-u(y)|^{p-2}(u(x)-u(y))}{|x-y|^N}dy\\
        &\quad +C_{N,p}\int_{\rz^N\setminus B_1(x)}\frac{|u(x)-u(y)|^{p-2}(u(x)-u(y))-|u(x)|^{p-2}u(x)}{|x-y|^N}dy+\rho_N\,|u(x)|^{p-2}u(x),
\end{split}
\end{equation*}
for suitable $u:\R^N\to\R$, $x\in \R^N$, and with the constants $C_{N,p}$, $\rho_N$ as in Theorem \ref{derivative}.

In the first lemma, we collect some basic properties of $\cL_{\Delta_p}$ and provide an alternative integral representation of $\cL_{\Delta_p}$, which involves the function $h_{\Omega}$ defined in \eqref{h-omega function}.

\begin{lemma}\label{some properties}
Let $1<p<\infty$ and $u\in C^{\alpha}_{c}(\R^N)$ for some $\alpha \in (0,1)$. Then $\cL_{\Delta_p}u$ is well-defined and continuous. Moreover, the following holds.
\begin{enumerate}
    \item $\cL_{\Delta_p}$ is translation and rotation invariant in the sense that
    $$
    \cL_{\Delta_p}v(x)=\cL_{\Delta_p}u(Ox+x_0),
    $$
    where $v(x)=u(Ox+x_0)$ with $x_0\in \R^N$ and a  rotation $O$.
    \item $\cL_{\Delta_p}$ satisfies the following scaling property for $r>0$:
    $$
    \cL_{\Delta_p}v(x)=\cL_{\Delta_p}u(rx)+C_{N,p}\omega_N\ln(r)u(rx),
    $$
    where $v(x)=u(rx)$.
    \item If $\Omega\subset \R^N$ is an open bounded set and $x\in\Omega$, then
    \begin{align}
        \cL_{\Delta_p}u(x)&=C_{N,p} \int_{\Omega}\frac{g(u(x)-u(y))}{|x-y|^{N}}dy+C_{N,p}\int_{\R^N\setminus \Omega}\frac{g(u(x)-u(y))-g(u(x))}{|x-y|^N}\,dy\nonumber\\
        &\qquad\qquad  +\Big(\rho_N(p)+h_{\Omega}(x)\Big)g(u(x)), \label{LwithOmega}
    \end{align}
    with $h_{\Omega}$ defined as in \eqref{h-omega function}, that is, 
    $$
    h_{\Omega}(x)=C_{N,p}\int_{B_1(x)\setminus \Omega}|x-y|^{-N}\,dy-C_{N,p}\int_{\Omega\setminus B_1(x)}|x-y|^{-N}\,dy.
    $$
\end{enumerate}
\end{lemma}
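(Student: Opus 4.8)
The plan is to establish well-definedness and continuity first, and then treat parts (1)--(3) in increasing order of computational effort. For well-definedness: since $u\in C^\alpha_c(\R^N)$, near the singularity $y=x$ we have $|u(x)-u(y)|^{p-2}(u(x)-u(y))=g(u(x)-u(y))$ with $|g(u(x)-u(y))|\le \|u\|_{C^\alpha}^{p-1}|x-y|^{\alpha(p-1)}$, so the first integrand in $\cL_{\Delta_p}u$ is dominated by $|x-y|^{-N+\alpha(p-1)}$, which is integrable over $B_1(x)$ exactly as in the computation in the proof of Lemma \ref{limit s to 0}. For the outer integral over $\R^N\setminus B_1(x)$: using that $\supp u$ is compact, say contained in $B_R$, the numerator $g(u(x)-u(y))-g(u(x))$ vanishes identically for $y$ outside a bounded set (both terms equal $g(u(x))$ once $u(y)=0$), so the integrand has compact support in $y$ and is bounded there by Lemma \ref{estimate-general-1} applied with $a=u(x)$, $b=-u(y)$; hence the second integral converges absolutely. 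Continuity in $x$ then follows from dominated convergence, splitting the first integral by the substitution $z=x-y$ to move the $x$-dependence entirely into $u$, which is (uniformly) continuous, and using the just-described compact support to control the second integral, together with continuity of $x\mapsto h_\Omega(x)$ (which is part of what (3) will make transparent).

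For part (1), translation invariance is immediate after the change of variables moving $x_0$, and rotation invariance follows from the change of variables $y\mapsto Oy$ together with $|O(x-y)|=|x-y|$ and the rotation-invariance of Lebesgue measure and of the balls $B_1(\cdot)$; the zeroth-order term $\rho_N g(u(x))$ transforms correctly because it only sees the value $u$ at the (transformed) point. For part (2), I would substitute $v(x)=u(rx)$ directly into the definition. Writing $y=rz$ in each integral, the kernel $|x-y|^{-N}$ produces a factor $r^{-N}\cdot r^{N}=1$ from the Jacobian, but the domains of integration change: $B_1(x)$ in the $y$-variable becomes $B_{1/r}(x)$ in the $z$-variable, not $B_1(x)$. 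So $\cL_{\Delta_p}v(x)$ equals an expression with integration over $B_{1/r}(x)$ rather than $B_1(x)$; comparing to $\cL_{\Delta_p}u(rx)$ (integration over $B_1(rx)$, equivalently over $B_{1}(x)$ scaled) the discrepancy is exactly the integral of $g(u(rx))|rx-w|^{-N}$ over the symmetric difference of the two balls, and this difference equals $C_{N,p}\,g(u(rx))\int_{\{1/r<|x-z|<1\}\text{ or }\{1<|x-z|<1/r\}}|x-z|^{-N}\,dz$ with the appropriate sign. Since $\int_{A<|w|<B}|w|^{-N}\,dw=\omega_N\ln(B/A)$, this collapses to $C_{N,p}\omega_N\ln(r)\,g(u(rx))$. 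Here one must keep careful track that on the inner-ball integrand (where only $g(u(x)-u(y))$ appears, no $-g(u(x))$ correction) versus the outer-ball integrand (where the correction $-g(u(x))$ is present), the extra annular region picks up precisely the $-g(u(x))$ terms so that only the zeroth-order piece survives; this bookkeeping is the one genuinely fiddly point of part (2).

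For part (3), the idea is simply to take the defining formula for $\cL_{\Delta_p}u(x)$ with the splitting $\R^N=B_1(x)\cup(\R^N\setminus B_1(x))$ and re-split using $\Omega$ instead. Writing $B_1(x)=(B_1(x)\cap\Omega)\cup(B_1(x)\setminus\Omega)$ and $\R^N\setminus B_1(x)=(\Omega\setminus B_1(x))\cup(\R^N\setminus(\Omega\cup B_1(x)))$, one regroups the four pieces into ``integral over $\Omega$'' and ``integral over $\R^N\setminus\Omega$''. On $B_1(x)\setminus\Omega$ the original integrand is $g(u(x)-u(y))|x-y|^{-N}$ but in the target formula (integral over $\R^N\setminus\Omega$) we want $(g(u(x)-u(y))-g(u(x)))|x-y|^{-N}$; the difference is $+g(u(x))\,C_{N,p}\int_{B_1(x)\setminus\Omega}|x-y|^{-N}\,dy$, contributing the first term of $h_\Omega$. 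Symmetrically, on $\Omega\setminus B_1(x)$ the original integrand carries the $-g(u(x))$ correction but the target (integral over $\Omega$) should not, giving $-g(u(x))\,C_{N,p}\int_{\Omega\setminus B_1(x)}|x-y|^{-N}\,dy$, the second term of $h_\Omega$. Collecting the coefficient of $g(u(x))$ yields exactly $\rho_N(p)+h_\Omega(x)$. Both $\int_{B_1(x)\setminus\Omega}|x-y|^{-N}\,dy$ and $\int_{\Omega\setminus B_1(x)}|x-y|^{-N}\,dy$ are finite because the first is over a subset of $B_1(x)$ minus a neighborhood... actually it is over $B_1(x)\setminus\Omega$, and since $x\in\Omega$ is at positive distance $\delta_x$ from $\partial\Omega$, the set $B_1(x)\setminus\Omega$ avoids $B_{\delta_x}(x)$, so the integrand is bounded there; the second is over $\Omega\setminus B_1(x)\subset\Omega$ bounded, away from the singularity, hence also finite. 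I do not anticipate any serious obstacle in the lemma; the only care needed is the consistent treatment of the $-g(u(x))$ correction terms when the region of integration crosses between ``inside $B_1(x)$'' and ``outside $B_1(x)$'', which is the common thread in parts (2) and (3).
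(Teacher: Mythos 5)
Your plan matches the paper's proof in overall structure: the splitting into inner-ball and outer-ball pieces, the compact-support observation for the outer piece, the change of variables for (1), and the $\Omega$-versus-$B_1(x)$ regrouping for (3) are all the same in spirit. For (3) in particular, your regrouping with the $\pm g(u(x))$ corrections on $B_1(x)\setminus\Omega$ and $\Omega\setminus B_1(x)$ is exactly the paper's telescoping-of-sets argument, written additively instead of as chains of set differences; the two are equivalent.

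Where you genuinely diverge is the continuity argument. You shift $z=x-y$ to freeze the domain and then appeal to dominated convergence, using the pointwise majorant $\|u\|_{C^\alpha}^{p-1}|z|^{\alpha(p-1)}$ for $f_1$ and the compact-in-$z$ support for $f_2$. That is correct and is in fact more elementary than the paper's route, which instead establishes an explicit H\"older modulus of continuity for $f_1$, separating $p\ge 2$ and $p\in(1,2)$ and interpolating the two estimates of Lemma~\ref{estimate-general-1}. The paper's argument gives a quantitative modulus (useful as a hedge, though not needed for the stated conclusion), while yours gives continuity only but with fewer moving parts. One small slip: you invoke ``continuity of $x\mapsto h_\Omega(x)$'' in this step, but $h_\Omega$ does not appear in the defining representation of $\cL_{\Delta_p}u$ -- it only enters in part (3) -- so it plays no role in establishing continuity of $f_2$; the shift-plus-dominated-convergence device you already use for $f_1$ handles $f_2$ as well, with no reference to $h_\Omega$.

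For (2) the substitution is written incorrectly, although your final bookkeeping and answer are right. With $y=rz$ and $x$ held fixed, the set $\{y:|x-y|<1\}$ becomes $\{z:|z-x/r|<1/r\}=B_{1/r}(x/r)$, not $B_{1/r}(x)$, and the resulting integrand $g(u(rx)-u(rz))|x/r-z|^{-N}$ has the kernel centered at $x/r$ while $u$ is sampled at $rz$, so it is not yet comparable to $\cL_{\Delta_p}u(rx)$. The clean path (which the paper follows) is to translate first, $y\mapsto x+y$, which moves everything to integrals over $B_1$ and $B_1^c$ centered at the origin, and only then scale $z=ry$ to turn these into $B_r$ and $B_r^c$; the discrepancy between $B_r$ and $B_1$ is the annulus that produces $C_{N,p}\,g(u(rx))\int_{B_r\setminus B_1}|z|^{-N}\,dz=C_{N,p}\omega_N\ln(r)\,g(u(rx))$. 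Your description of why the log appears -- the annulus crossing from the inner (no $-g$) to the outer (with $-g$) kernel -- is the right idea, and your final expression involving $g(u(rx))$ agrees with what the paper's proof actually derives (the $u(rx)$ in the statement of (2) appears to be a typo for $g(u(rx))$); you just need to carry out the substitution in the correct order.
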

\begin{proof}
Let $x\in \R^N$ and fix $R>0$ such that $\supp\,u\subset B_R(x)$. Moreover, let $c>0$ such that
$$
|u(x)-u(y)|\leq c\min\{1,|x-y|^\alpha\}\quad \text{and}\quad |u(x)|\leq c\quad\text{for all $x,y\in \R^N$.} 
$$
Then
$$
\Bigg|\int_{B_1(x)}\frac{g(u(x)-u(y))}{|x-y|^N}\ dy\Bigg|\leq c^{p-1}\omega_N\int_0^1t^{-1+\alpha (p-1)}\ dt<\infty,
$$
and since $g(u(y)-u(x))-g(u(x))=0$ for $y\in B_R(x)^c$ 
\begin{align*}
\Bigg|\int_{B_1^c(x)}\frac{g(u(x)-u(y))-g(u(x))}{|x-y|^N}\ dy\Bigg|\leq c^{p-1}\int_{B_R(x)\setminus B_1(x)}\frac1{|x-y|^{N}}\, dy=c^{p-1}\omega_N\int_1^{R} t^{-1}\ dt<\infty.
\end{align*}
Hence, $\mathcal{L}_{\Delta_p}u$ is well-defined in $\R^N$. Next, let $f_1,f_2:\R^N\to\R$ be defined by
$$
f_1(x)=\int_{B_1(x)}\frac{g(u(x)-u(y))}{|x-y|^N}\,dy\quad\text{and}\quad f_2(x)=\int_{B_1^c(x)}\frac{g(u(x)-u(y))-g(u(x))}{|x-y|^N}\,dy+\rho_ng(u(x)).
$$
Then the continuity of $f_2$ follows analogously to the proof of $f_2$ being well-defined. To prove the continuity of $f_1$, we separate the cases $p\geq 2$ and $p\in(1,2)$.
\smallskip

\noindent\textbf{Case 1:} $p\geq 2$. Recall that, by Lemma \ref{estimate-general-1},
\begin{equation}\label{p greater 2 estimate}
\Big|g(a+b)-g(a)\Big|\leq (p-1)|b|(|a|+|b|)^{p-2}\quad \text{for all $a,b\in\R$.}
\end{equation}
Thus, for $x,z\in \R^N$ we have
\begin{align*}
|&g(u(x)-u(x+y))-g(u(z)-u(z+y))|\\
&\leq (p-1)|u(x)-u(z)+u(z+y)-u(x+y)|(|u(x)-u(z)+u(z+y)-u(x+y)|+|u(z)-u(z+y)|)^{p-2}\\
&\leq (6c)^{p-2}(p-1) 
\Big(|u(x)-u(x+y)|+|u(z+y)-u(z)|\Big)^{1/2} \Big(|u(x)-u(z)|+|u(z+y)-u(x+y)|\Big)^{1/2}\\
&\leq C |y|^{\alpha/2} |x-z|^{\alpha/2},
\end{align*}
with $C=2\cdot 6^{p-2} c^{p-1}(p-1)$.
Thus, for $x,z\in \R^N$ we have
\begin{align*}
\Big|f_1(x)-f_1(z)\Big|&\leq \int_{B_1}\frac{|g(u(x)-u(x+y))-g(u(z)-u(z+y))|}{|y|^N}\,dy\\
&\leq C|x-z|^{\alpha/2} \int_{B_1}|y|^{\alpha/2-N}\ dy
= \frac{2C\omega_N}{\alpha}|x-z|^{\alpha/2}.
\end{align*}
This gives that $f_1$ is continuous, and this implies the continuity of $\cL_{\Delta_p}$.
\smallskip

\noindent\textbf{Case 2:} $p\in(1,2)$. In this case it holds
Lemma \ref{estimate-general-1},
\begin{equation}\label{p less 2 estimate}
\Big|g(a+b)-g(a)\Big|\leq (3^{p-1}+2^{p-1})|b|^{p-1}\quad \text{for all $a,b\in\R$.}
\end{equation}
As in \textbf{Case 1}, we conclude that for $x,z\in \R^N$ we have 
\begin{align*}
|&g(u(x)-u(x+y))-g(u(z)-u(z+y))|\\
&\leq \min\{2c|y|^{(p-1)\alpha},\tilde{C}|x-z|^{(p-1)\alpha} \}
 \leq (2c\tilde{C})^{1/2} |y|^{(p-1)\alpha/2} |x-z|^{(p-1)\alpha/2},
\end{align*}
for some constant $\tilde{C}>0$ independent of $y$. The continuity of $\cL_{\Delta_p}$ now follows similarly to \textbf{Case 1}.\\
We continue using the notation $f_1=f_1(u)$ and $f_2=f_2(u)$. To see (1), let $Tx=Ox+x_0$ and note that
\begin{align*}
f_1(v)(x)&=\int_{B_1(x)}\frac{g(u(Tx)-u(Ty))}{|x-y|^N}\,dy=\int_{B_1(x)}\frac{g(u(Tx)-u(Ty))}{|Tx-Ty|^N}\,dy=\int_{T(B_1(x))}\frac{g(u(Tx)-u(y))}{|Tx-y|^N}\ dy\\
&=\int_{B_1(Tx))}\frac{g(u(Tx)-u(y))}{|Tx-y|^N}\ dy=f_1(u)(Tx),
\end{align*}
and similarly $f_2(v)(x)=f_2(u)(Tx)$.\\
To see (2), let first $r>1$. Then
\begin{align*}
\cL_{\Delta_p}v(x)&=C_{N,p}\int_{B_1}\frac{g(u(rx)-u(r(x+y)))}{|y|^N}\ dy+C_{N,p}\int_{B_1^c}\frac{g(u(rx)-u(rx+ry))-g(u(rx))}{|y|^N}\, dy+\rho_Ng(u(rx))\\
&=C_{N,p}\int_{B_r}\frac{g(u(rx)-u(rx+z)))}{|z|^N}\ dz+C_{N,p}\int_{B_r^c}\frac{g(u(rx)-u(rx+z))-g(u(rx))}{|z|^N}\, dz+\rho_Ng(u(rx))\\
&=C_{N,p}\int_{B_1}\frac{g(u(rx)-u(rx+z)))}{|z|^N}\ dz+C_{N,p}\int_{B_1^c}\frac{g(u(rx)-u(rx+z))-g(u(rx))}{|z|^N}\, dz\\
&\qquad +C_{N,p}g(u(rx))\int_{B_r\setminus B_1}|y|^{-N}\ dy+\rho_Ng(u(rx))\\
&=\cL_{\Delta_p}u(rx)+C_{N,p}\omega_N\ln(r)g(u(rx)).
\end{align*}
For $r<1$, we may take $\rho=1/r>1$ and $u(x)=v(\rho x)$, which allows us to use already proven
(2) with $\rho$ instead of $r$ and functions $u$ and $v$ interchanged.
This shows property (2).

To see the last statement, we let
\[
 F_1 = \frac{g(u(x)-u(y))}{|x-y|^{N}}, \qquad
 F_2 = \frac{g(u(x)-u(y))-g(u(x))}{|x-y|^N},
\]
and observe that the integrals in (3) are absolutely convergent. For the first one,
it follows from the fact that $|F_1| \leq c\min\{1, |x-y|^{\alpha(p-1)}\}$,
and for the second, from $|F_2|\leq 2c$ and $F_2=0$ if $y\not \in \supp\;u$.
These integrals are also absolutely convergent when $\Omega=B_1(x)$, because $B_1(x)$ satisfies all assumptions imposed on $\Omega$.
Therefore,
\begin{align*}
    \cL_{\Delta_p}u(x)&= C_{N,p}\int_{B_1(x)} F_1 \,dy +C_{N,p}\int_{B_1^c(x)} F_2 \,dy
    +\rho_N(p)g(u(x)) \\
    &=C_{N,p} \left( \int_\Omega - \int_{\Omega\setminus B_1(x)} + \int_{B_1(x) \setminus\Omega}
    \right) F_1 \,dy 
    +C_{N,p} \left(\int_{\Omega^c} - \int_{\Omega^c\setminus B_1^c(x)}+ \int_{B_1^c(x)\setminus \Omega^c}\right) F_2 \,dy + \rho_N(p)g(u(x)) \\
    &=C_{N,p}
        \left(\int_\Omega F_1 \,dy + \int_{\Omega^c} F_2 \,dy\right)+ \rho_N(p)g(u(x))
    +  C_{N,p}
    \left(\int_{\Omega\setminus B_1(x)}\!\!\!(F_2-F_1)\,dy +
    \int_{B_1(x)\setminus\Omega} \!\!\!(F_1-F_2)\,dy \right)\\
    &=C_{N,p}
    \left(\int_\Omega F_1 \,dy + \int_{\Omega^c} F_2 \,dy\right)+ 
    g(u(x))\left(\rho_N(p) - C_{N,p}\int_{\Omega\setminus B_1(x)} \frac{dy}{|x-y|^N}
    + C_{N,p}\int_{B_1(x) \setminus \Omega} \frac{dy}{|x-y|^N} \right),
\end{align*}
which proves (3).
\end{proof}

\begin{remark}\label{rem:Lomega}
In fact, in the proof of (3), we have used the fact that $u\in C_c^\alpha(\Omega)$ only
to show the convergence of the integrals. Therefore, the following alternative version
of (3) holds:
If $\Omega\subset \R^N$ is an open set, $x\in \Omega$ is such that $\cL_{\Delta_p}u(x)$
exists and is finite, and the integrals appearing in \eqref{LwithOmega} and in $h_{\Omega}$ are convergent,
then \eqref{LwithOmega} holds. For instance, this is also the case if $u$ is suitably Dini-continuous (see Lemma \ref{some properties 2} below) and $\Omega$ is a strip.
\end{remark}

In the following, we aim to extend the class of functions on which $\cL_{\Delta_p}$ can be applied. For this, we use the \textit{tail spaces} by $L^q_t$ with $q\in(0,\infty)$ and $t\in \R$ as introduced in \eqref{tail space} in Section \ref{function spaces}. By definition we have
\begin{equation*}\label{embed-lps simple}
\|u\|_{L^q_s}\leq \|u\|_{L^q_t}\quad\text{for all $u:\R^N\to\R$, $q\in(0,\infty)$, and $s\geq t$.}
\end{equation*}
\begin{lemma}\label{embed-lps}
Let $0\leq t<s$, $0< r\leq q<\infty$. Then, there is $c>0$ such that
$$
\|u\|_{L^r_s}\leq c \|u\|_{L^q_t}\quad\text{for all $u:\R^N\to\R$.}
$$
In particular, $L^q_t\subset L^r_s$.
\end{lemma}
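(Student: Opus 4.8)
The plan is to derive the estimate from a single weighted Hölder inequality, after disposing of the trivial endpoint $r=q$.

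\textbf{The case $r=q$.} Here the claim holds with $c=1$: since $s\geq t$ we have $N+s(q+1)\geq N+t(q+1)$, so the integrand defining $\|u\|_{L^q_s}$ is pointwise dominated by the one defining $\|u\|_{L^q_t}$. This is exactly the monotonicity estimate $\|u\|_{L^q_s}\le\|u\|_{L^q_t}$ recorded immediately before the statement.

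\textbf{The case $r<q$.} We may assume $\|u\|_{L^q_t}<\infty$, since otherwise there is nothing to prove. The idea is to write, for every $x$,
\[
\frac{|u(x)|^r}{(1+|x|)^{N+s(r+1)}}
= \left(\frac{|u(x)|^q}{(1+|x|)^{N+t(q+1)}}\right)^{\!r/q}\cdot (1+|x|)^{\beta},
\qquad \beta := \frac{r}{q}\bigl(N+t(q+1)\bigr)-N-s(r+1),
\]
and then apply Hölder's inequality with the conjugate exponents $q/r$ and $q/(q-r)$ to the two nonnegative factors. This gives
\[
\|u\|_{L^r_s}^{\,r}\ \leq\ \|u\|_{L^q_t}^{\,r}\,\left(\int_{\R^N}(1+|x|)^{\beta q/(q-r)}\,dx\right)^{\!(q-r)/q},
\]
so it remains only to check that the last integral is finite, i.e. that $\beta\cdot\tfrac{q}{q-r}<-N$. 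A short computation shows this is equivalent to $s>\dfrac{rt(q+1)}{q(r+1)}$; and since $r\leq q$ forces $\dfrac{r(q+1)}{q(r+1)}\leq 1$ while $t\geq 0$, we obtain $\dfrac{rt(q+1)}{q(r+1)}\leq t<s$, which is what we need. Once the displayed estimate is established, the inclusion $L^q_t\subset L^r_s$ is immediate, because $\|u\|_{L^q_t}<\infty$ then forces $\|u\|_{L^r_s}<\infty$.

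There is no genuine difficulty here; the only point requiring care is the bookkeeping of the exponent $\beta$ and the observation that the hypotheses $0\leq t<s$ and $r\leq q$ are precisely what is needed to push the power of $(1+|x|)$ strictly below $-N$ — the borderline situation being $r=q$ together with $t=0$, which is why the case $r=q$ is treated separately and why the assumption $t\geq 0$ (and not merely a lower bound on $t$) is used.
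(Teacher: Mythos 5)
Your proof is correct and follows the same strategy as the paper's, namely a weighted H\"older inequality applied to the splitting of the integrand into a factor matching the $L^q_t$ integrand and a power of $(1+|x|)$. Two small points in your favor: you handle the endpoint $r=q$ separately (where the conjugate exponent $q/(q-r)$ degenerates, so the pointwise monotonicity is the right argument), and the exponent bookkeeping you carry out is actually cleaner than the published display, which contains typographical slips (the outer power on the second integral should be $(q-r)/q$, the conjugate of $q/r$, rather than $1/r$, and the inner exponent should accordingly be $-\beta q/(q-r)$ rather than $N+(s-t)r$); your verification that $\beta q/(q-r)<-N$ precisely under the stated hypotheses $0\le t<s$, $r\le q$ is the correct form of the computation.
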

\begin{proof}
This follows immediately from H\"older's inequality noting that since $t>s$ and with $r=\frac{q}{q-r}$, we have
\begin{align*}
\int_{\R^N}\frac{|u|^r}{(1+|x|)^{N+s(r+1)}}\ dx&\leq \left(\,\,\,\int_{\R^N}\frac{|u|^q}{(1+|x|)^{N+t(q+1)}}\ dx
\right)^{\frac{r}{q}} \left(\,\,\,\int_{\R^N}\frac{1}{(1+|x|)^{N+(s-t)r}}\ dx\right)^{\frac{1}{r}}\\
&\leq c\|u\|_{L^q_t}^{r},
\end{align*}
for a constant $c=c(N,s,t,r,q)>0$.
\end{proof}

We next show the following modification of \cite[Lemma 2.1]{CW19}.

\begin{lemma}\label{simple property}
Let $u\in L^{q}_0$ for some $0<q<\infty$ and let $v:\R^N\to\R$ such that there is $c>0$ with $|v(x)|\leq c(1+|x|)^{-N}$. Then, for any $x\in \R^N$ we have
$$
\lim_{y\to x}\int_{\R^N}|u(x+z)-u(y+x)|^{q}v(z)\,dz=0.
$$
\end{lemma}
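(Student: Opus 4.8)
The plan is to substitute $w:=x+z$, write $h:=y-x$, and observe that (reading the second argument as $u(y+z)$) the integral in question equals $\int_{\R^N}|u(w)-u(w+h)|^q\,v(w-x)\,dw$, which by hypothesis is at most $c\int_{\R^N}|u(w)-u(w+h)|^q(1+|w-x|)^{-N}\,dw$. Since the weight $(1+|\cdot|)^{-N}$ is not integrable, dominated convergence does not apply directly; instead I would split the domain of integration into $\{|w|\le R\}$ and $\{|w|>R\}$ for a large radius $R$, bounding the outer piece by the decay encoded in $u\in L^q_0$ (uniformly in small $h$) and the inner piece by continuity of translations in $L^q$.

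For the outer piece, elementary triangle-inequality estimates give, for $R\ge 2|x|$ and $|h|\le1$, the bounds $(1+|w-x|)^{-N}\le 2^N(1+|w|)^{-N}$ and $(1+|w|)^{-N}\le 2^N(1+|w+h|)^{-N}$ on $\{|w|>R\}$. Combining these with the elementary inequality $|a-b|^q\le \max\{1,2^{q-1}\}(|a|^q+|b|^q)$ and the change of variables $w'=w+h$ (under which $\{|w|>R\}$ maps into $\{|w'|>R-1\}$), the outer piece is bounded by a fixed constant times $\int_{|w|>R-1}|u(w)|^q(1+|w|)^{-N}\,dw$. Because $u\in L^q_0$, this tends to $0$ as $R\to\infty$; hence, given $\varepsilon>0$, we may fix $R$ so that the outer piece is $<\varepsilon$ for all $|h|\le1$.

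For the inner piece, note that $v(w-x)\le c$ there, so it is at most $c\int_{|w|\le R}|u(w)-u(w+h)|^q\,dw$. Since $u\in L^q_0$ forces $u\in L^q_{loc}(\R^N)$ (on a bounded set the weight $(1+|\cdot|)^{-N}$ is bounded below by a positive constant), the truncation $\tilde u:=u\,1_{B_{R+2}}$ lies in $L^q(\R^N)$ and agrees with $u$ at every point $w$ and $w+h$ entering the inner integral when $|w|\le R$, $|h|\le1$. Thus the inner piece is at most $c\int_{\R^N}|\tilde u(w+h)-\tilde u(w)|^q\,dw$, which tends to $0$ as $h\to 0$ by continuity of translations in $L^q(\R^N)$. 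Choosing $|h|$ small enough makes this $<\varepsilon$, and adding the two estimates proves the claim.

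The only point requiring attention is the range $0<q<1$, where $L^q$ is merely a quasi-Banach space: there one uses the subadditivity $(a+b)^q\le a^q+b^q$ in place of the triangle inequality, and recalls that $C_c(\R^N)$ is still dense in $L^q(\R^N)$ and that $h\mapsto u(\cdot+h)$ remains continuous in the metric $d(f,g)=\int_{\R^N}|f-g|^q$ — both facts proved just as in the case $q\ge1$ by reduction to compactly supported continuous functions. This is the main (and rather mild) subtlety; the remainder is routine bookkeeping with the weight $(1+|\cdot|)^{-N}$.
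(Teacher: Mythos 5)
Your proof is correct, and your reading of the misprinted $u(y+x)$ as $u(y+z)$ is the intended one. It does, however, take a genuinely different route from the paper's. The paper first checks absolute convergence of the integral, then notes that for $u\in C_c(\R^N)$ the limit follows from dominated convergence, and finally invokes density of $C_c(\R^N)$ in the weighted tail space $L^q_0$ to transfer the result to general $u$; that last approximation step is stated in one line without the bookkeeping. You instead split the weighted integral into a far region $\{|w|>R\}$, where the $L^q_0$ tail condition gives smallness uniformly in the translation parameter $h$ with $|h|\le 1$, and a near region $\{|w|\le R\}$, where the weight is bounded and the conclusion reduces to continuity of translations in ordinary $L^q(\R^N)$ applied to a truncation of $u$. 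This is more explicit about how the two hypotheses interact and only requires density of $C_c(\R^N)$ in unweighted $L^q$ (hidden inside the continuity-of-translations theorem) rather than in the weighted space $L^q_0$. The care you take in the quasinorm range $0<q<1$ --- subadditivity $(a+b)^q\le a^q+b^q$, and the observation that continuity of translations persists for the metric $d(f,g)=\int|f-g|^q$ --- is genuine and correctly handled. Both routes are sound; yours unpacks the approximation mechanism that the paper compresses into a single appeal to density.
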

\begin{proof}
First, note that by Lemma \ref{estimate-general-2} it holds
$$
|u(x+z)-u(y+x)|^{q}\leq \max\{1,2^{q-1}\}\big(|u(x+z)|^{q}+|u(y+z)|^q\big),
$$
and $1+|z|\leq 1+|z-x|+|x|\leq (1+|x|)(1+|z-x|)$. Therefore, by assumption, for any $x\in \R^N$
\begin{align*}
\int_{\R^N}|u(x+z)|^{q}v(z)\,dz&\leq c\int_{\R^N}\frac{|u(z)|^q}{(1+|{z-x}|)^{N}}\,dz
\leq
c(1+|x|)^{{N}} \int_{\R^N}\frac{|u(z)|^q}{(1+|z|)^N}\,dz < \infty.
\end{align*}
Thus 
$$
\int_{\R^N}|u(x+z)-u(y+x)|^{q}v(z)\,dz
$$
is finite, for any $x,y\in \R^N$. Moreover, the claim follows immediately if $u\in C_c(\R^N)$ by the dominated convergence theorem. Finally, the space $C_c(\R^N)$ is also dense in $L^q_0$ and thus, the statement holds by approximation.
\end{proof}

\begin{lemma}\label{included}
Let $0<q<p<\infty$ and $u\in L^q_0\cap L^p_0$. Then, $u\in L^t_0$\,\,\,for any $t\in[q,p]$.
\end{lemma}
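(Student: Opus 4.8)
The plan is to reduce the statement to a single pointwise inequality and then integrate. Observe that for any real number $a$ and exponents $0<q\le t\le p$ one has the elementary bound $|a|^t\le |a|^q+|a|^p$: if $|a|\le 1$ then $|a|^t\le|a|^q$ since $t\ge q$, while if $|a|\ge 1$ then $|a|^t\le|a|^p$ since $t\le p$; in either case $|a|^t\le\max\{|a|^q,|a|^p\}\le |a|^q+|a|^p$. Applying this with $a=u(x)$, dividing by $(1+|x|)^N$, and integrating over $\R^N$ gives
\[
\int_{\R^N}\frac{|u(x)|^t}{(1+|x|)^N}\,dx \le \int_{\R^N}\frac{|u(x)|^q}{(1+|x|)^N}\,dx + \int_{\R^N}\frac{|u(x)|^p}{(1+|x|)^N}\,dx = \|u\|_{L^q_0}^q + \|u\|_{L^p_0}^p < \infty,
\]
where the hypothesis $u\in L^q_0\cap L^p_0$ is used in the last step. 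The endpoints $t=q$ and $t=p$ are of course trivial.

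It then remains only to verify the local-integrability clause $u\in L^t_{loc}(\R^N)$ in the definition of $L^t_0$. This is automatic: for any bounded measurable set $A$ with $\overline{A}$ compact, the weight $(1+|x|)^{-N}$ is bounded below on $A$ by a positive constant $c_A$, so
\[
\int_A |u|^t\,dx \le c_A^{-1}\int_A \frac{|u|^t}{(1+|x|)^N}\,dx \le c_A^{-1}\|u\|_{L^t_0}^t < \infty,
\]
the finiteness of $\|u\|_{L^t_0}$ being exactly what was established above. Hence $u\in L^t_0$ for every $t\in[q,p]$.

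There is essentially no obstacle here; the argument is a two-line interpolation. The only points deserving care are that the pointwise inequality is valid precisely for $t$ in the closed interval $[q,p]$, and that the ambient definition of $L^t_0$ carries a local-integrability requirement which, as noted, follows for free from the weighted bound. As an alternative one could invoke the standard interpolation inequality $\|u\|_{L^t(\mu)}\le\|u\|_{L^q(\mu)}^{\theta}\|u\|_{L^p(\mu)}^{1-\theta}$ with $\tfrac1t=\tfrac{\theta}{q}+\tfrac{1-\theta}{p}$ for the measure $d\mu=(1+|x|)^{-N}\,dx$, which is itself just H\"older's inequality applied to $|u|^t=|u|^{t\theta}|u|^{t(1-\theta)}$; but the pointwise route above is shorter and yields the same conclusion.
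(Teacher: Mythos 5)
Your proof is correct. The paper's own argument is a one-line application of H\"older's inequality: writing $t=\lambda q+(1-\lambda)p$ with $\lambda\in(0,1)$ and factoring $|u|^t=|u|^{\lambda q}\,|u|^{(1-\lambda)p}$, H\"older with exponents $1/\lambda$ and $1/(1-\lambda)$ against the measure $d\mu=(1+|x|)^{-N}\,dx$ yields the multiplicative bound $\|u\|_{L^t_0}^t\leq\|u\|_{L^q_0}^{\lambda q}\|u\|_{L^p_0}^{(1-\lambda)p}$. You instead use the pointwise estimate $|a|^t\leq|a|^q+|a|^p$ valid for $q\leq t\leq p$, which gives the additive bound $\|u\|_{L^t_0}^t\leq\|u\|_{L^q_0}^q+\|u\|_{L^p_0}^p$. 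Both routes are elementary and essentially equally short; your pointwise version avoids H\"older altogether and is, if anything, more self-contained. The alternative you sketch at the end (classical Lebesgue interpolation with $1/t=\theta/q+(1-\theta)/p$) is a third valid variant, though note it interpolates the reciprocals of the exponents whereas the paper interpolates the exponents themselves directly; for a fixed finite measure both produce the desired conclusion. One thing you do that the paper's terse proof does not: you explicitly verify the $L^t_{loc}$ requirement built into the definition of $L^t_0$, correctly observing that the weight $(1+|x|)^{-N}$ is bounded below on bounded sets. That is a worthwhile piece of care.
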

\begin{proof}
Noting that, there is $\lambda\in(0,1)$ such that $t=(1-\lambda) p+\lambda q$, the statement follows in a standard way by H\"older's inequality with exponent $\frac{1}{\lambda}$ and its conjugate exponent $\frac{1}{1-\lambda}$.
\end{proof}

\noindent Next, let $\Omega\subset \R^N$ be a measurable set and let $u:\Omega\to\R$ be a~measurable function, the module of continuity of $u$ at a point $x\in \Omega$ is given by
$$
\omega_{u,x,\Omega}:(0,\infty)\to [0,\infty),\quad \omega_{u,x,\Omega}(r):=\sup_{y\in B_r(x)\cap\Omega}|u(x)-u(y)|.
$$
For $q>0$, a function $u:\Omega\to \R$ is called \textit{$q$-Dini-continuous at $x\in \Omega$}, if 
$\int_0^1 \omega_{u,x,\Omega}(r)^{q} r^{-1}\,dr<\infty$ and the function is called \textit{uniformly $q$-Dini-continuous in $\Omega$} if
$$
\int_0^1\frac{\omega_{u,\Omega}(r)^q}{r}\ dr<\infty,\quad\text{where}\quad \omega_{u,\Omega}(r)=\sup_{x\in \Omega}\omega_{u,x,\Omega}(r).
$$

\begin{remark}\label{remark dini}
If $u\in L^{\infty}_{loc}(\R^N)$ is (uniformly) $q$-Dini continuous for some $q>0$, then it is also (uniformly) $r$-Dini continuous for any $r<q$. 
\end{remark}

\begin{lemma}\label{some properties 2}
Let $1<p<\infty$, $\Omega\subset \R^N$ open, and let $u\in L^{p-1}_0$. Assume, additionally, $u\in L^1_0\cap L^{\infty}(\R^N)$ for $p>2$.
\begin{enumerate}
    \item If $u$ is $(p-1)$-Dini-continuous at $x\in \Omega$, then $\cL_{\Delta_p}[u](x)$ is well-defined. 
    \item If $u$ is uniformly $(p-1)$-Dini-continuous in $\Omega$, then $\cL_{\Delta_p}u$ is continuous in $\Omega$.
\end{enumerate}
Here, $u$ is any fixed representative with the property of being (uniformly) $(p-1)$-Dini-continuous at $x\in \Omega$ (in $\Omega$).
\end{lemma}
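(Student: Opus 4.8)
The plan is to prove the two assertions separately; in each case I would split $\cL_{\Delta_p}u(x)$ into a near-diagonal piece on a small ball contained in $\Omega$, an intermediate piece on $B_1(x)$ with that ball removed, and a ``convolution-type'' tail on $\R^N\setminus B_1(x)$. For (1), I would fix $x\in\Omega$ and pick $r_0\in(0,1]$ with $B_{r_0}(x)\subset\Omega$; since $u$ is $(p-1)$-Dini-continuous at $x$, the nondecreasing function $r\mapsto\omega_{u,x,\Omega}(r)$ is finite on $(0,1)$ and tends to $0$ as $r\to0^{+}$, so $u(x)\in\R$ and $\rho_N g(u(x))$ is finite. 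In the first integral defining $\cL_{\Delta_p}u(x)$ I would split $B_1(x)=B_{r_0}(x)\cup(B_1(x)\setminus B_{r_0}(x))$: on $B_{r_0}(x)$, using $|u(x)-u(y)|\le\omega_{u,x,\Omega}(|x-y|)$ and polar coordinates, the contribution is at most $\omega_N\int_0^{r_0}\omega_{u,x,\Omega}(t)^{p-1}t^{-1}\,dt<\infty$ by Dini-continuity; on $B_1(x)\setminus B_{r_0}(x)$ one has $|x-y|\ge r_0$ and, by Lemma \ref{estimate-general-2} applied with $a=|u(x)|$ and $b=|u(y)|$, $|u(x)-u(y)|^{p-1}\le c_p(|u(x)|^{p-1}+|u(y)|^{p-1})$ with $c_p:=\max\{1,2^{p-2}\}$, so the contribution is finite since $u\in L^{p-1}_0$ is $(p-1)$-integrable on bounded sets. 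For the tail over $\R^N\setminus B_1(x)$ I would invoke Lemma \ref{estimate-general-1} with $a=u(x)$ and $b=-u(y)$: for $p\in(1,2]$ the integrand is at most $(3^{p-1}+2^{p-1})|u(y)|^{p-1}|x-y|^{-N}$, while for $p>2$ it is at most $(p-1)(2\|u\|_{L^{\infty}(\R^N)})^{p-2}|u(y)|\,|x-y|^{-N}$; the elementary estimate $|x-y|\ge\frac{1+|y|}{2(1+|x|)}$, valid whenever $|x-y|\ge1$, then lets me replace $|x-y|^{-N}$ by $2^{N}(1+|x|)^{N}(1+|y|)^{-N}$, so the tail is bounded by a constant multiple of $\|u\|_{L^{p-1}_0}^{p-1}$ if $p\le2$ and of $\|u\|_{L^1_0}$ if $p>2$, both finite by hypothesis; thus $\cL_{\Delta_p}u(x)$ is well-defined.

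For (2), I would fix $x_0\in\Omega$; since uniform $(p-1)$-Dini-continuity forces $\omega_{u,\Omega}(0^{+})=0$, $u$ is continuous at $x_0$. I would pick $r_0\in(0,1/3)$ with $B_{2r_0}(x_0)\subset\Omega$ and note $|u|\le M_0:=|u(x_0)|+\omega_{u,\Omega}(r_0)$ on $B_{r_0}(x_0)$. Writing $\cL_{\Delta_p}u=C_{N,p}f_1+C_{N,p}f_2+\rho_N\,g(u)$ with $f_1(x)=\int_{B_1(x)}g(u(x)-u(y))|x-y|^{-N}\,dy$ and $f_2(x)=\int_{\R^N\setminus B_1(x)}\big(g(u(x)-u(y))-g(u(x))\big)|x-y|^{-N}\,dy$, it suffices to prove that $f_1$ and $f_2$ are continuous at $x_0$ (continuity of $g(u)$ there being immediate from that of $u$ and $g$). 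For $f_1$, I would write $f_1(x)=\int_{B_{r_0}}g(u(x)-u(x+z))|z|^{-N}\,dz+\int_{B_1(x)\setminus B_{r_0}(x)}g(u(x)-u(y))|x-y|^{-N}\,dy$: in the first integral, for $x\in B_{r_0}(x_0)$ both $x$ and $x+z$ lie in $B_{2r_0}(x_0)\subset\Omega$, so $|g(u(x)-u(x+z))|\le\omega_{u,\Omega}(|z|)^{p-1}$ is dominated by the $x$-independent integrable function $z\mapsto\omega_{u,\Omega}(|z|)^{p-1}|z|^{-N}$ while the integrand converges pointwise by continuity of $u$ on $\Omega$ and of $g$; in the second integral $|x-y|\ge r_0$ and $B_1(x)\setminus B_{r_0}(x)\subset B_{1+r_0}(x_0)$, so the integrand is dominated by $c_p r_0^{-N}(M_0^{p-1}+|u(y)|^{p-1})1_{B_{1+r_0}(x_0)}(y)\in L^1(\R^N)$ and converges a.e.\ as $x\to x_0$ (using $u(x)\to u(x_0)$ and $1_{B_1(x)\setminus B_{r_0}(x)}\to 1_{B_1(x_0)\setminus B_{r_0}(x_0)}$ a.e.); dominated convergence then gives continuity of $f_1$ at $x_0$. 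For $f_2$, the two estimates from (1) dominate the integrand over $\R^N\setminus B_1(x)$, uniformly for $x\in B_{r_0}(x_0)$, by $C|u(y)|^{p-1}(1+|y|)^{-N}$ if $p\le2$ and by $C|u(y)|(1+|y|)^{-N}$ if $p>2$ with $C$ independent of $x$, and the integrand converges a.e.\ (now using $1_{\R^N\setminus B_1(x)}\to 1_{\R^N\setminus B_1(x_0)}$ a.e.), so dominated convergence gives continuity of $f_2$ at $x_0$. Hence $\cL_{\Delta_p}u$ is continuous on $\Omega$.

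The main obstacle is the tail over $\R^N\setminus B_1(x)$: one must produce a single integrable majorant valid uniformly for $x$ near a given point, although $u$ may be badly behaved far from $\Omega$. This is exactly where the kernel estimate $|x-y|\ge\frac{1+|y|}{2(1+|x|)}$ (for $|x-y|\ge1$) is decisive, since it converts the nonintegrable kernel $|x-y|^{-N}$ into the tail weight $(1+|y|)^{-N}$ up to a locally bounded factor, and where the dichotomy in Lemma \ref{estimate-general-1} matters: for $p>2$ the factor $(|u(x)|+|u(y)|)^{p-2}$ has to be absorbed into $\|u\|_{L^{\infty}(\R^N)}$, leaving only a linear dependence on $|u(y)|$, which is precisely why the extra hypothesis $u\in L^1_0\cap L^{\infty}(\R^N)$ is needed when $p>2$. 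The near-diagonal part, by contrast, is routine, controlled directly by the Dini integral after translating to the variable $z=y-x$ so that the region of integration is fixed.
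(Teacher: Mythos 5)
Your proof is correct, but it takes a genuinely different route from the paper's for part (2). For part (1) the two arguments are essentially the same in content, though the paper applies the Dini bound on all of $B_1(x)$ while you more carefully split $B_1(x)=B_{r_0}(x)\cup\big(B_1(x)\setminus B_{r_0}(x)\big)$ so that the Dini estimate is only invoked on the ball $B_{r_0}(x)\subset\Omega$ where it is actually justified, handling the annulus via local $(p-1)$-integrability; this is arguably cleaner. For part (2), the paper does \emph{not} run a dominated-convergence argument on the near-diagonal piece. Instead, it invokes the alternative representation of Lemma~\ref{some properties}(3) (via Remark~\ref{rem:Lomega}) with the small \emph{moving} ball $B_\epsilon(x)$ in place of $\Omega$, so that $\cL_{\Delta_p}u(x)=C_{N,p}f_1(x)+f_2(x)$ with $f_1(x)=\int_{B_\epsilon(x)}g(u(x)-u(y))|x-y|^{-N}\,dy$ and $f_2$ absorbing everything else (including a modified zero-order term $\big(\rho_N(p)-p\ln\epsilon\big)g(u(x))$); it then bounds the oscillation $|f_1(x)-f_1(x_0)|$ \emph{uniformly} by the small Dini tail $2\omega_N\int_0^\epsilon\omega^{p-1}_{u,\Omega}(r)r^{-1}\,dr$, proves continuity of $f_2$ via Lemma~\ref{simple property} and dominated convergence, and finally lets $\epsilon\to0^+$. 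You instead keep the original $B_1$-split, do a fixed near/far split of the $B_1(x)$-integral at radius $r_0$, and apply dominated convergence to all three pieces directly (with the majorants $\omega_{u,\Omega}(|z|)^{p-1}|z|^{-N}$ near the diagonal, a locally integrable bound on the annulus, and the tail-weight bound $|x-y|^{-N}\le 2^N(1+|x|)^N(1+|y|)^{-N}$ for $|x-y|\ge1$ at infinity). Both are valid; the paper's $\epsilon$-parameterised split cleverly avoids having to dominate the singular near-diagonal term and yields a quantitative modulus-of-continuity bound for $\cL_{\Delta_p}u$, while your argument is more elementary and does not rely on the alternative representation at all. One minor imprecision: your appeal to Lemma~\ref{estimate-general-2} for $|u(x)-u(y)|^{p-1}\le c_p\big(|u(x)|^{p-1}+|u(y)|^{p-1}\big)$ goes via the triangle inequality and then $g(a+b)\le c_p(a^{p-1}+b^{p-1})$; that is fine, but worth stating the triangle-inequality step explicitly.
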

\begin{proof}
For (1), first note that
\begin{align*}
\int_{B_1(x)}&\frac{|g(u(x)-u(y))|}{|x-y|^N}\,dy\leq \omega_N\int_0^1\frac{\omega_{u,x,\Omega}^{p-1}(r)}{r}\,dr<\infty.
\end{align*}
Next, for $p\in(1,2]$, we have as in \eqref{p less 2 estimate}
$$
\Big|g(a+b)-g(a)\Big|\leq (3^{p-1}+2^{p-1})|b|^{p-1}\quad\text{for all $a,b\in\R$.}
$$
Thus
\begin{align*}
\int_{B_1(x)^c}&\frac{|g(u(x)-u(y))-g(u(x))|}{|x-y|^N}dy\leq (3^{p-1}+2^{p-1})\int_{B_1(x)^c}\frac{|u(y)|^{p-1}}{|x-y|^{N}}\,dy\leq c\int_{\rz^N}\frac{|u(y)|^{p-1}}{1+|y|^{N}}\,dy < \infty
\end{align*}
for some $c=c(N,p,x)>0$. For $p>2$, we have by \eqref{p greater 2 estimate}
$$
\Big|g(a+b)-g(a)\Big|\leq (p-1)|b|(|a|+|b|)^{p-2}\quad \text{for all $a,b\in\R$.}
$$
Thus
\begin{align*}
\int_{B_1(x)^c}&\frac{|g(u(x)-u(y))-g(u(x))|}{|x-y|^N}dy\leq (p-1)\int_{B_1(x)^c}\frac{|u(y)|\big(|u(x)|+|u(y)|\big)^{p-2}}{|x-y|^N}dy\\
&\leq 2^{p-1}(p-1)\Bigg(\ \int_{\substack{B_1(x)^c\\ \{|u(x)|\leq |u(y)|\}}} \frac{|u(y)|^{p-1}}{|x-y|^N}\,dy+\|u\|^{p-2}_{L^{\infty}(\R^N)}\int_{\substack{B_1(x)^c\\ \{|u(x)|> |u(y)|\}}} \frac{|u(y)|}{|x-y|^N}\, dy\Bigg) < \infty.
\end{align*}
From here, (1) follows. Note that, the last computation above remains true, if $B_1(x)^c$ is replaced by $B_{\epsilon}(x)^c$ for any $\epsilon>0$.\\
For (2), let us fix $x_0\in \Omega$ and $0 < \varepsilon < \min\{1, \delta(x_0)\}$.
From the proof of (1), it follows that for every $x$ such that $|x-x_0|<\varepsilon$, integrals 
\[
\int_{B_{1}(x)}\frac{|g(u(x)-u(y))|}{|x-y|^N}\,dy
\qquad\textrm{and}\qquad
\int_{B_\varepsilon(x)^c}\frac{|g(u(x)-u(y))-g(u(x))|}{|x-y|^N}dy
\]
are convergent.
This allows us to use Remark~\ref{rem:Lomega} to obtain
\[
\cL_{\Delta_p}u(x)=C_{N,p}f_1(x)+f_2(x)
\]
with
\begin{align*}
f_1,f_2:\R^N\to\R,\quad &f_1(x)=\int_{B_\epsilon(x)}\frac{g(u(x)-u(y))}{|x-y|^N}\,dy\quad\text{and}\\
&f_2(x)=C_{N,p}\int_{\R^N\setminus B_\epsilon(x)}\frac{g(u(x)-u(y))-g(u(x))}{|x-y|^N}\,dy+\big(\rho_{N}(p)+h_{B_{\varepsilon}(x)}(x)\big)g(u(x)).
\end{align*}
Noting that, $h_{B_{\varepsilon}(x)}(x)=-p\ln(\varepsilon)$ with Lemma \ref{simple property} it follows that $f_2$ is continuous at $x_0$.
Furthermore, for $x\in \Omega$ with $|x-x_0| < \epsilon$, we have
\begin{align*}
|f_1(x)-f_1(x_0)|&\leq\Bigg|\,\int_{B_\epsilon}\frac{g(u(x)-u(x+z))-g(u(x_0)-u(x_0+z))}{|z|^N}\,dz\Bigg|\\
&\leq \int_{B_{\epsilon}}\frac{|g(u(x)-u(x+z))|+|g(u(x_0)-u(x_0+z)|}{|z|^N}\,dz\\
&\leq 2\int_{B_{\epsilon}}\frac{\omega_{u,\Omega}^{p-1}(|z|)}{|z|^N}\,dz=2\omega_N\int_0^{\epsilon}\frac{\omega_{u,\Omega}^{p-1}(r)}{r}\,dr.
\end{align*}
Therefore,
\begin{align*}
\limsup_{x\to x_0} \left| \cL_{\Delta_p}u(x) - \cL_{\Delta_p}u(x_0) \right|
&\leq C_{N,p} \limsup_{x\to x_0} |f_1(x)-f_1(x_0)| 
+ \limsup_{x\to x_0} |f_2(x)-f_2(x_0)| \\
&\leq 2C_{N,p}\omega_N\int_0^{\epsilon}\frac{\omega_{u,\Omega}^{p-1}(r)}{r}\,dr.
\end{align*}
Since $\epsilon\in(0,\delta(x_0))$ is arbitrary and the latter right-hand side converges to $0$ for $\eps\to 0$, the claim of (2) follows.
\end{proof}

\begin{remark}
 If $u$ is a function as in Lemma \ref{some properties 2}(1) or (2), then also the statements of Lemma \ref{some properties}(1)--(3) hold. This follows immediately from the respective proofs. 
\end{remark}

We close this section by showing that $L_{\Delta_p}u=\cL_{\Delta_p}u$ for $u\in C^{\alpha}_c(\R^N)$.
\begin{proof}[Proof of Theorem \ref{derivative}]
Let $u\in C_c^{\alpha}(\rz^{N})$. As, we consider the limit $s\to 0^+$, we may assume 
$$
0<s<\begin{cases}
    \text{min}\left\{\frac{1}{p},\,\frac{\alpha}{p}\right\}\,\, \text{ if }p\geq2,\\
    \text{min}\left\{\frac{p-1}{p},\,\frac{\alpha(p-1)}{p}\right\}\,\, \text{ if }1<p<2.
\end{cases}
$$
Note that with this, we have $s<\frac{1}{2}$ and $s<\frac{\alpha(p-1)}{p}$. In particular, we are in the setting of Lemma \ref{limit s to 0}. Next, choose $r>4$ such that $\text{supp}\,u\subset B_{\frac{r}{4}}(0)$. Then for $x\in\rz^N$, we have
\begin{equation*}
\begin{split}
(-\Delta_p)^s\,u
&=C_{N,s,p}\int_{B_r(x)}\frac{g(u(x)-u(y))}{|x-y|^{N+sp}}dy+C_{N,s,p}\int_{B^c_r(x)}\frac{g(u(x)-u(y))}{|x-y|^{N+sp}}dy
\\
&=A_r(s,p,x)+D_r(s,p,x),
\end{split}
\end{equation*}
where
$$
A_r(s,p,x):=C_{N,s,p}\int_{B_r(x)}\frac{g(u(x)-u(y))}{|x-y|^{N+sp}}dy-C_{N,s,p}\int_{B^c_r(x)}\frac{|u(x)-u(y)|^{p-2}u(y)}{|x-y|^{N+sp}}dy
$$
and
\begin{equation*}\label{D r s p basis}
D_r(s,p,x):=C_{N,s,p}\int_{B_r^c(x)}\frac{|u(x)-u(y)|^{p-2}u(x)}{|x-y|^{N+sp}} dy.   
\end{equation*}
If $x\in B_{\frac{r}{2}}$, $y\in B_r^c(x)$, then $|y|\geq |x-y|-|x|>\frac{r}{2}$, so that $y\notin \supp\,u$. It thus follows that
$$
D_r(s,p,x)=\left\{\begin{aligned}
& D_r(s,p) g(u(x)) && \text{if $x\in B_{\frac{r}{2}}$,}\\
& 0 && \text{if $x\in B_{\frac{r}{2}}^c$,}
\end{aligned}
\right.
$$
where
\begin{equation}\label{D r s p}
D_r(s,p):=C_{N,s,p}\int_{B_r^c(x)}\frac{dy}{|x-y|^{N+sp}}=C_{N,s,p}\frac{\omega_N}{sp}\,r^{-sp}. 
\end{equation}
Now, we estimate $A_r(s,p,x)$ in the following. 

\smallskip

\noindent\textbf{\textit{Case 1:}} If $|x|\geq r/2$ then $u(x)=0$ and $|x-y|\geq\frac{|x|}{2}>1$ provided $y\in\text{supp u}$. Then, we obtain
\begin{equation}\label{Ar est}
    |A_r(s,p,x)|\leq C_{N,s,p}\int_{\rz^N}\frac{|u(y)|^{p-1}}{|x-y|^{N+sp}}dy
    \leq2^{N}\,C_{N,s,p}\|u\|^{p-1}_{L^{p-1}(\rz^N)}|x|^{-N}.
\end{equation}
Hence, for any $1<q<\infty$, from \eqref{Ar est} we obtain
\begin{equation}\label{Lq norm est of Ar}
\int_{B_\frac{r}{2}^c}|A_r(s,p,x)|^q\,dx\leq\frac{2^{2qN-N}\,C_{N,s,p}^q\,\omega_N}{N-qN}\|u\|^{q(p-1)}_{L^{p-1}(\rz^N)}r^{N-qN},
\end{equation}
and 
\begin{equation*}\label{Linfty norm est of Ar}
\|A_r(s,p,\cdot)\|_{L^{\infty}(B_\frac{r}{2}^c)}\leq4^{N}\,C_{N,s,p}\|u\|^{p-1}_{L^{p-1}(\rz^N)}r^{-N}.
\end{equation*}
Recall the definition of the constant $C_{N,s,p}$
\begin{equation*}
C_{N,s,p}=\frac{sp\,2^{2s-1}\,\Gamma\left(\frac{N+sp}{2}\right)}{\pi^{\frac{N}{2}}\Gamma(1-s)}=:s\,d_{N,p}(s).
\end{equation*}
This entails
$$
d_{N,p}(0)=\frac{p\,\Gamma\left(\frac{N}{2}\right)}{2\,\pi^{\frac{N}{2}}}=C_{N,p}\quad\text{and}\quad d^{\prime}_{N,p}(0)=C_{N,p}\,\rho_N.
$$
It follows from \eqref{Lq norm est of Ar}, for any $q\in(1,\infty)$ we have
\begin{equation}\label{final est of Ar}
    \|A_{r}(s,p,\cdot)\|_{L^{q}(B^c_{\frac{r}{2}})}\leq sm_{p,q}\,r^{\frac{N}{q}-N},
\end{equation}
where $m_{p,q}$ is a positive constant depending only on $u$.
\smallskip

\noindent\textbf{\textit{Case 2:}} If $|x|< r/2$ then $|x-y|<r$ provided $y\in\text{supp } u$ and thus the second integral in the definition of $A_r(s,p,x)$ is zero. Since $u\in C_c^{\alpha}(\rz^N)$ and by dominated convergence theorem, we then obtain
\begin{equation}\label{limit of Ar divide s as s to 0}
\begin{split}
   \lim_{s\to0^+}\frac{A_r(s,p,x)}{s}
   &=\lim_{s\to0^+}d_{N,p}(s)\int_{B_r(x)}\frac{g(u(x)-u(y))}{|x-y|^{N+sp}}dy\\
   &=\tilde{A}_{r}(p,x)
   :=C_{N,p}\int_{B_r(x)}\frac{g(u(x)-u(y))}{|x-y|^{N}}dy, 
\end{split}
\end{equation}
and the above convergence is uniform when $|x|<r/2$. Now, from \eqref{D r s p} we obtain 
$$
\lim_{s\to0^+}D_r(s,p)=\frac{C_{N,p}\omega_N}{p}=1\,\,\text{ and } D^{\prime}_r(0,p)=\rho_{N}-p\ln r=:k_{r}(p).
$$
Thus using the fact $u\in C_c^{\alpha}(\rz^N)$, we obtain for $1<q<\infty$
\begin{equation}\label{final Lq limit of Dr s p}
\lim_{s\to0^+}\left|\left|\frac{D_r(s,p)g(u)-g(u)}{s}-k_r(p)g(u)\right|\right|_{L^q(\rz^N)}=0.
\end{equation}
Note that
$$\ln r=\frac{1}{\omega_N}\int_{B_r(x)\setminus B_1(x)}\frac{dy}{|x-y|^N}.$$
Therefore, using this and from the above definitions of $\tilde{A}_{r}(p,\cdot)$ and $k_r(p)$ we obtain, for $x\in\rz^N$ 
\begin{equation}\label{sum of tilde Ar and kr}
\begin{split}
&\tilde{A_r}(p,x)+k_r(p)g(u(x))\\
&=C_{N,p}\left(\int_{B_r(x)}\frac{g(u(x)-u(y))}{|x-y|^{N}}dy-g(u(x))\int_{B_r(x)\setminus B_1(x)}\frac{dy}{|x-y|^N}\right)+\rho_Ng(u(x))
\\
&=L_{\Delta_p}u(x)-F(x), \,\,\text{ where }F(x):=C_{N,p}\int_{B_r^c(x)}\frac{g(u(x)-u(y))-g(u(x))}{|x-y|^N}dy.
\end{split}
\end{equation}
Now for $x\in B_{r/2}(0)$,  we have $F(x)=0$ as $u(y)=0$, and on the other hand, for $|x|\geq\frac r2$, we then proceed as in \textbf{Case 1} for $F$ to obtain 
$$
\|F\|_{L^q(B^c_{\frac r2})}\leq M_{p,q}r^{\frac Nq-N}\,\,\,\text{ for all }\,q\in(1,\infty).
$$
Finally, combining \eqref{final est of Ar}, \eqref{limit of Ar divide s as s to 0}, \eqref{final Lq limit of Dr s p}, and \eqref{sum of tilde Ar and kr}, we obtain
\begin{equation*}
  \limsup_{s\to0^+}\left\|\frac{(-\Delta_p)^s\,u-g(u)}{s}-\cL_{\Delta_p}u\right\|_{L^q(\rz^N)}\leq(m_{p,q}+M_{p,q})r^{\frac{N}{q}-N}
\end{equation*}
for all $r>0$,\, $1<q<\infty$, where $\cL_{\Delta_p}u$ is well-defined, bounded, and continuous by Lemma~\ref{some properties}. Consequently this yields 
$$
\lim_{s\to0^+}\left\|\frac{(-\Delta_p)^s\,u-g(u)}{s}-\cL_{\Delta_p}u\right\|_{L^q(\rz^N)}=0.
$$
In particular, it follows $\cL_{\Delta_p}u=L_{\Delta_p}u$ in $L^q(\R^N)$ for any $q\in(1,\infty)$. Note that following the above proof for the case $q=\infty$, since $(-\Delta_p)^su,g(u),\cL_{\Delta_p}u\in C(\R^N)$ (for $s$ small enough), we also have 
$$
L_{\Delta_p}u(x)=\lim_{s\to0^+}\frac{(-\Delta_p)^su(x)-g(u(x))}{s}=\cL_{\Delta_p}u(x)\quad\text{for every $x\in \R^N$},
$$
and the claim follows.
\end{proof}

\begin{remark}
\begin{enumerate}
\item Due to Theorem \ref{derivative}, we may replace $\cL_{\Delta_p}$ with $L_{\Delta_p}$ in Lemmas \ref{some properties} and \ref{some properties 2}.
\item Note that, the proof of Theorem \ref{derivative} actually gives the local uniform convergence of the difference quotient. That is, for every compact $K\subset \R^N$, $u\in C^{\alpha}_c(\R^N)$ for some $\alpha>0$, we have
$$
\sup_{x\in K}\Bigg|\frac{(-\Delta_p)^su(x)-g(u(x))}{s} -\cL_{\Delta_p}u(x)\Bigg|\to 0\quad\text{for $s\to 0^+$.}
$$
\end{enumerate}
\end{remark}

\section{A variational framework}\label{variational}
In this section, we give the detail of a weak formulation of problems involving the logarithmic $p$-Laplacian. For this, we introduce here a suitable functional space and summarize known properties of it. Let
\begin{equation*}\label{kernel k}
    \textbf{k}:\rz^N\setminus\{0\}\to\rz \text{ defined by }\mathbf{k}(z)=C_{N,p}1_{B_1}(z)|z|^{-N},
\end{equation*}
and 
\begin{equation*}\label{kernel j}
    \textbf{j}:\rz^N\to\rz \text{ defined by }\mathbf{j}(z)=C_{N,p}1_{\R^N\setminus B_1}(z)|z|^{-N}.
\end{equation*}
Then, we can write the integral representation of $L_{\Delta_p}$ given by \eqref{integral representation} with the above kernel functions as follows
\begin{equation}\label{integral representation with kernels}
L_{\Delta_p}u(x)=\int_{\rz^N}g(u(x)-u(y))\mathbf{k}(x-y)\,dy+\int_{\rn}\left(g(u(x)-u(y))-g(u(x))\right)\mathbf{j}(x-y)\,dy+\rho_{N}(p)g(u)(x).
\end{equation}
Let $\Omega\subset\rz^N$ be an open set and $p\in[1,\infty)$. Recall, the space $X^p_0(\Omega)$ defined  by 
$$
X^p_0(\Omega):=\{u\in L^p(\Omega): 1_{\R^N\setminus \Omega}u\equiv 0\text{ and },\,[u]_{X^p_0(\Omega)}<\infty\}
$$
endowed with the norm 
$$
\|u\|_{X^p_0(\Omega)}=\left(\|u\|_{L^p(\Omega)}^p+[u]^p_{X^p_0(\Omega)}\right)^{1/p},
$$
where 
$$
[u]_{X^p_0(\Omega)}^p=\iint\limits_{\rz^N\times\rz^N}|u(x)-u(y)|^p\mathbf{k}(x-y)\,dxdy.
$$
The space $X^p_0(\Omega)$ is a reflexive Banach space with respect to the norm $\|\cdot\|_{X^p_0(\Omega)}$ for $1<p<\infty$, see \cite[Section 3]{F23}. If $\Omega$ is an open set with finite measure or it is bounded in one direction, then $[\cdot]_{X^p_0(\Omega)}$ gives an equivalent norm for $X^p_0(\Omega)$,  which  follows from the following fractional Poincar\'e type result.

\begin{prop}[\protect{see\cite[Theorems 7.2 and 7.4]{F23}}]\label{poincare}
Let $p\in[1,\infty)$ and $\Omega\subset\R^N$ be an open set such that one of the following is true:
\begin{enumerate}
    \item $|\Omega|<\infty$;
    \item $\Omega$ is bounded in one direction, that is, there is an affine function $T:\R^N\to\R^N$, $Tv=Ov+c$ with a rotation $O$ and $c\in \R^N$ such that $T(\Omega)\subset (-a,a)\times \R^{N-1}$ for some $a>0$.
\end{enumerate}
Then, there exists a constant $C=C(N, p,\Omega)>0$ such that
$$\int_{\Omega}|u(x)|^p\,dx\leq C\iint\limits_{\rz^N\times\rz^N}|u(x)-u(y)|^p\mathbf{k}(x-y)\,dxdy\quad\text{ for all }u\in X^p_0(\Omega).
$$
\end{prop}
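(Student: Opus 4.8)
Our plan (which recovers \cite[Theorems~7.2 and~7.4]{F23}) is to reduce Proposition~\ref{poincare} to a Poincar\'e inequality for the \emph{truncated} Gagliardo quantity and then to bridge length scales by exploiting the subadditivity of the $L^p$‑modulus of translation. Throughout, $u$ is extended by $0$ on $\R^N\setminus\Omega$, and we may assume $[u]_{X^p_0(\Omega)}<\infty$, the inequality being trivial otherwise. Since $|z|^{-N}\ge 1$ for $0<|z|\le 1$, we have $\mathbf{k}(z)\ge C_{N,p}\mathbf 1_{B_1}(z)$, so it suffices to find $C=C(N,p,\Omega)$ with
\[
\int_\Omega|u(x)|^p\,dx\le C\iint_{\substack{\R^N\times\R^N\\ |x-y|<1}}|u(x)-u(y)|^p\,dx\,dy=C\int_{B_1}\Phi(z)\,dz,
\]
where $\Phi(z):=\int_{\R^N}|u(x+z)-u(x)|^p\,dx=\|u(\cdot+z)-u\|_{L^p(\R^N)}^p$ and the last identity is Fubini's theorem. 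Working with $\Phi$ is convenient because $\Phi^{1/p}$ is subadditive and even, $\Phi(z_1+z_2)^{1/p}\le\Phi(z_1)^{1/p}+\Phi(z_2)^{1/p}$ and $\Phi(-z)=\Phi(z)$, both immediate from the triangle inequality in $L^p$ and translation invariance of the $L^p$‑norm.

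From these properties we extract a \emph{scale‑transfer lemma}: for every $R\ge 1$ there is $C=C(N,p,R)$, which can be taken polynomial in $R$, with $\int_{B_R}\Phi\le C\int_{B_1}\Phi$. Indeed, iterating $\Phi(2w)^{1/p}\le 2\Phi(w)^{1/p}$ along a dyadic chain gives $\Phi(z)\le 2^{pk}\Phi(z/2^k)$, and choosing $k\sim\log_2 R$ puts $z/2^k$ into $B_{1/2}$; one then replaces the single value $\Phi(w)$, $|w|\le 1/2$, by the average $\tfrac1{|B_{1/4}|}\int_{B_{1/4}(w/2)}\Phi$, using $\Phi(w)\le 2^{p-1}\big(\Phi(w-v)+\Phi(v)\big)$ and the invariance of $B_{1/4}(w/2)$ under $v\mapsto w-v$; since $B_{1/4}(w/2)\subset B_1$, integrating over $B_R$ yields the claim.

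It remains to bound $\int_\Omega|u|^p$ by $\int_{B_R}\Phi$ for a suitable radius. In case~(1), take $R=R(|\Omega|)\ge 1$ with $|B_R|\ge 2|\Omega|$; then $|B_R(x)\setminus\Omega|\ge|B_R|/2$ for every $x$, and since $u\equiv 0$ off $\Omega$,
\[
\tfrac{|B_R|}{2}\,|u(x)|^p\le\int_{B_R(x)\setminus\Omega}|u(x)-u(y)|^p\,dy\le\int_{B_R(x)}|u(x)-u(y)|^p\,dy;
\]
integrating in $x\in\Omega$ gives $\tfrac{|B_R|}{2}\int_\Omega|u|^p\le\int_{B_R}\Phi$, and the scale‑transfer lemma finishes this case. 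In case~(2), after the affine normalization (legitimate since $\mathbf{k}$ is rotation‑ and translation‑invariant) we may assume $\Omega\subset(-a,a)\times\R^{N-1}$. If $2a<1$ one argues directly: the spherical cap $\{z\in B_1: z_1>2a\}$ has positive measure $\delta=\delta(a,N)$, and for $x\in\Omega$ and $z$ there one has $(x+z)_1>a$, so $u(x+z)=0$ and $\delta|u(x)|^p\le\int_{B_1}|u(x)-u(x+z)|^p\,dz$; integrating yields the claim. For general $a$ one telescopes in the $e_1$‑direction with step $\tfrac12$: after $M$ steps, with $M$ bounded solely in terms of $a$, one leaves the slab, so $u(x)=\sum_{j=0}^{M-1}\big(u(x-\tfrac{j}{2}e_1)-u(x-\tfrac{j+1}{2}e_1)\big)$, and the discrete Jensen inequality, translation invariance and the averaging step of the lemma give $\int_\Omega|u|^p\le M^p\,\Phi(\tfrac12 e_1)\le C\int_{B_1}\Phi$.

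The genuine difficulty is that $\mathbf{k}$ is supported in $B_1$, so the seminorm $[\,\cdot\,]_{X^p_0(\Omega)}$ carries no information at length scales larger than $1$, whereas the natural scale of the problem --- the radius $R(|\Omega|)$ in case~(1) and the slab width in case~(2) --- may be arbitrarily large. Bridging this gap is exactly the scale‑transfer lemma, and the property that makes it cheap (only a polynomial loss in $R$) is the subadditivity of $z\mapsto\|u(\cdot+z)-u\|_{L^p}$, which converts one large translation into dyadically many small ones. The remaining ingredients --- the elementary ``empty space'' estimate for finite‑measure sets and the telescoping across a slab --- are routine.
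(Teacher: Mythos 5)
Your proof is correct and self-contained. Note that the paper itself does not prove Proposition~\ref{poincare}: it is quoted verbatim from \cite[Theorems~7.2 and~7.4]{F23}, so there is no ``paper proof'' to compare against line by line. Your argument is therefore best read as an independent, elementary re-derivation of Foghem's result.

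The two ingredients that make your argument work are exactly right and worth flagging. First, the reduction to $\Phi(z)=\|u(\cdot+z)-u\|_{L^p}^p$ and the scale-transfer estimate $\int_{B_R}\Phi\leq C(N,p,R)\int_{B_1}\Phi$ obtained from the subadditivity of $\Phi^{1/p}$ (valid precisely because $p\geq 1$, which the hypothesis grants) is what bridges the gap between the natural geometric scale of $\Omega$ and the unit-ball support of $\mathbf{k}$; your averaging step, using the involution $v\mapsto w-v$ of $B_{1/4}(w/2)$, is the correct way to convert a pointwise bound on $\Phi(w)$ into an integral one. Second, the ``empty-space'' estimate for $|\Omega|<\infty$ (taking $R$ so that $|B_R|\geq 2|\Omega|$, hence $|B_R(x)\setminus\Omega|\geq|B_R|/2$ uniformly) and the telescoping across the slab for case (2) are both sound; in the slab case the discrete Jensen step produces $M^{p-1}$, the $M$ terms of the sum each equal $\Phi(\tfrac12 e_1)$ by translation invariance, and $\Phi(\tfrac12 e_1)$ is controlled by $\int_{B_1}\Phi$ via your averaging lemma, so the $M^p$ is accounted for. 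I also checked that discarding $|z|^{-N}\geq 1$ from $\mathbf{k}$ loses only a constant, that the affine normalization in case (2) preserves both sides of the inequality since $\mathbf{k}$ is radial and the Jacobian of $T$ is one, and that no a priori $L^p$ integrability of $u$ is actually needed since the inequality is established with both sides possibly infinite. In short: a clean, direct route to the cited Poincar\'e inequality.
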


In order to study some variational problems related to $L_{\Delta_p}$ in some open sets $\Omega\subset \R^N$, the following compactness statement will play a pivotal role.

\begin{prop}[\protect{see\cite[Corollary 6.3]{F23}}]
Let $p\in(1,\infty)$ and $\Omega\subset\R^N$ open with $|\Omega|<\infty$. Then, the embedding $X^p_0(\Omega)\hookrightarrow L^p(\Omega)$ is compact.
\end{prop}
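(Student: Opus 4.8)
The plan is to apply the Fréchet–Kolmogorov compactness criterion in $L^p(\R^N)$. First note that by the Poincaré inequality (Proposition~\ref{poincare}, which applies since $|\Omega|<\infty$) every bounded subset $\mathcal F\subset X^p_0(\Omega)$ is bounded in $L^p(\R^N)$ and consists of functions that vanish outside $\Omega$; hence it suffices to prove, for every such $\mathcal F$: (i) the uniform $L^p$-equicontinuity $\lim_{h\to0}\sup_{u\in\mathcal F}\int_{\R^N}|u(x+h)-u(x)|^p\,dx=0$, and (ii) that $\{|u|^p:u\in\mathcal F\}$ is equi-integrable in $L^1(\R^N)$, which together with $|\Omega|<\infty$ and $\supp u\subset\Omega$ yields the tightness $\sup_{u\in\mathcal F}\int_{|x|>R}|u(x)|^p\,dx\to0$ as $R\to\infty$.

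Both (i) and (ii) I would reduce to a single estimate on the truncated seminorm. For $0<\delta\le1$ write $[u]_\delta^p:=\iint_{|x-y|<\delta}|u(x)-u(y)|^p|x-y|^{-N}\,dx\,dy$, so that $[u]_1^p=C_{N,p}^{-1}[u]_{X^p_0(\Omega)}^p$. Averaging $\int_{\R^N}|u(x+z)-u(x)|^p\,dx$ over $z$ in a small ball, bounding $|x-y|^N\le\delta^N$ on the resulting region, and then removing the ``bad directions'' by the subadditivity $\|u(\cdot+h)-u\|_{L^p}\le\|u(\cdot+h-z)-u\|_{L^p}+\|u(\cdot+z)-u\|_{L^p}$ (averaged over $z$ in a ball of radius $\tfrac12|h|$), one obtains
\[
\int_{\R^N}|u(x+h)-u(x)|^p\,dx\le C\,[u]_{\frac32|h|}^p\qquad\text{for }|h|<\tfrac23,\ C=C(N,p)>0.
\]
Similarly, mollifying with $\psi_r(x)=r^{-N}\psi(x/r)$, $\psi\in C_c^\infty(B_1)$, $\psi\ge0$, $\int\psi=1$, Jensen's inequality gives $\int_{\R^N}|u-u*\psi_r|^p\,dx\le C\,[u]_{2r}^p$, while $\|u*\psi_r\|_{L^\infty}\le c_r\|u\|_{L^p}$ makes $\{|u*\psi_r|^p:u\in\mathcal F\}$ equi-integrable; combining the two reduces (ii) to the same quantity. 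Thus everything comes down to the uniform decay
\[
\sup_{u\in\mathcal F}[u]_\delta^p\longrightarrow0\qquad(\delta\to0^+).
\]

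This last estimate is the step I expect to be the main obstacle. It is genuinely stronger than the uniform bound $\sup_{u\in\mathcal F}[u]_{X^p_0(\Omega)}^p<\infty$: for a \emph{fixed} $u$ the tail $[u]_\delta^p\to0$ by dominated convergence, but this need not be uniform over $\mathcal F$ — what is needed is an equi-integrability, near the diagonal, of the measures $|u(x)-u(y)|^p|x-y|^{-N}1_{\{|x-y|<1\}}\,dx\,dy$. What makes it true is the support restriction together with $|\Omega|<\infty$: a function supported in a fixed set of finite measure cannot carry a non-negligible portion of its $X^p_0$-seminorm at arbitrarily small scales, since ``seminorm mass at scale $\delta$'' forces interaction both with $\R^N\setminus\Omega$ and with coarser scales, at a total cost that diverges as $\delta\to0$ — this is exactly where the \emph{logarithmic} order of $L_{\Delta_p}$ is felt. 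Concretely I would prove it via a dyadic multiscale splitting $[u]_\delta^p=\sum_{2^{-j}\le\delta}a_j(u)$, $a_j(u)=\iint_{2^{-j-1}\le|x-y|<2^{-j}}|u(x)-u(y)|^p|x-y|^{-N}\,dx\,dy$, together with a scale-by-scale estimate exploiting $\supp u\subset\Omega$ and $|\Omega|<\infty$ to bound the tail of $\sum_j a_j(u)$ uniformly in $u\in\mathcal F$; in the linear case $p=2$ this is the Fourier-analytic fact, proved in \cite{CW19}, that the Fourier multiplier associated with $[\cdot]_{X^2_0}$ is bounded below by a constant times $\ln(2+|\xi|)\to\infty$. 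Once the uniform decay is established, (i) and (ii) hold and Fréchet–Kolmogorov yields that bounded sequences in $X^p_0(\Omega)$ are relatively compact in $L^p(\Omega)$; equivalently — since $X^p_0(\Omega)$ is reflexive for $1<p<\infty$ — every weakly convergent sequence in $X^p_0(\Omega)$ converges strongly in $L^p(\Omega)$, which is the assertion.
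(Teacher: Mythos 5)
The paper does not actually prove this proposition --- it is cited verbatim from \cite[Corollary 6.3]{F23} --- so there is no internal argument to compare against. Your Fr\'echet--Kolmogorov strategy is nevertheless the standard route to such results, and the scaffolding is sound: equi-integrability of $\{|u|^p\}$ together with $|\Omega|<\infty$ and $\supp u\subset\Omega$ does give tightness, and both equicontinuity and equi-integrability reduce, as you say, to an estimate on $\|u(\cdot+h)-u\|_{L^p}$. The gap is in the last reduction, and it is a real one.

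The claimed uniform decay $\sup_{u\in\mathcal F}[u]_\delta^p\to 0$ as $\delta\to0^+$ is false for bounded $\mathcal F\subset X^p_0(\Omega)$. Fix $\phi\in C_c^\infty(\Omega)$, $\phi\not\equiv0$, and put $u_n(x)=n^{-1/p}\sin(2^n x_1)\phi(x)$. Each dyadic shell $\{2^{-j-1}\le|x-y|<2^{-j}\}$ with $1\le j\le n$ contributes an amount comparable to $n^{-1}$ to $\iint_{|x-y|<1}|u_n(x)-u_n(y)|^p|x-y|^{-N}\,dx\,dy$, the shells with $j>n$ contribute a geometrically summable amount, and so $\{u_n\}$ is bounded in $X^p_0(\Omega)$ (and $u_n\to0$ in $L^p$, so the embedding does what it should). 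But for a fixed $\delta\in(0,1)$, cutting off the scales $|x-y|\ge\delta$ removes only $O(n^{-1}\log(1/\delta))$ of the seminorm, hence $[u_n]_\delta^p$ stays bounded away from $0$ as $n\to\infty$. In other words, seminorm mass may sit at arbitrarily small scales at no extra cost, and your heuristic ``mass at scale $\delta$ forces interaction with coarser scales at diverging cost'' does not apply.

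What actually drives the compactness is not smallness of $[u]_\delta^p$ but the logarithmic divergence $\int_{B_\delta\setminus B_\epsilon}|z|^{-N}\,dz=\omega_N\ln(\delta/\epsilon)\to\infty$, which you should use directly. For $0<|h|<\delta<1/2$, average the pointwise bound $|u(x+h)-u(x)|^p\le 2^{p-1}\bigl(|u(x+h)-u(x+z)|^p+|u(x+z)-u(x)|^p\bigr)$ over $z\in B_\delta\setminus B_{|h|}$ with the probability weight $|z|^{-N}/\bigl(\omega_N\ln(\delta/|h|)\bigr)$. The second term integrates against the seminorm directly. For the first, substitute $w=z-h$; on the shell one has $|h|\le|z|$, hence $|w|\le2|z|$, so $|z|^{-N}\le 2^N|w|^{-N}$ and $|w|\le2\delta<1$, and this term is again controlled by the seminorm. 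Altogether, writing $[u]^p$ for $[u]^p_{X^p_0(\Omega)}$,
\[
\int_{\rn}|u(x+h)-u(x)|^p\,dx\;\le\;\frac{2^{p-1}(1+2^{N})}{\omega_N C_{N,p}\,\ln(\delta/|h|)}\,[u]^p,
\]
and choosing $\delta=|h|^{1/2}$ (say, for $|h|<1/4$) yields $\|u(\cdot+h)-u\|_{L^p}^p\le C\bigl(\ln(1/|h|)\bigr)^{-1}[u]^p$, which does go to $0$ uniformly on bounded subsets of $X^p_0(\Omega)$. This is the real-space, $p$-general incarnation of the Fourier fact you quote for $p=2$ (the symbol growing like $\ln(2+|\xi|)$), and it replaces the false reduction: with it, your steps (i) and (ii) hold and the Fr\'echet--Kolmogorov criterion gives the conclusion.
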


As shown, in \cite[Theorem 3.66]{F20} it holds that $C^{\infty}_c(\R^N)$ is dense in $X^p_0(\R^N)$. And by \cite[Theorem 3.76]{F20}, if $\Omega$ is a bounded subset with continuous boundary then $C^{\infty}_c(\Omega)$ is dense in $X^p_0(\Omega)$. Similar to \cite[Theorem 3.6]{FJ23}, we have the following if $\partial \Omega$ is of Lipschitz class.

\begin{prop}\label{dense}
    Let $\Omega$ be a bounded Lipschitz domain in $\rz^N$. Then, the space $C_c^\infty(\Omega)$ is a dense subset of the space $X^p_0(\Omega)$. Moreover, if $u\in X^p_0(\Omega)$ is a non-negative function then 
    \begin{itemize}
        \item[i)] there exists a non-decreasing sequence $\{u_n\}\subset X^p_0(\Omega)\cap L^{\infty}(\Omega)$ of functions in $X^p_0(\Omega)$ such that $u_n\geq0$ for all $n$ and $[u_n-u]_{X^p_0(\Omega)}\to0$ as $n\to\infty$;
        \smallskip
        \item[ii)] there exists a sequence $\{u_n\}\subset C_c^\infty(\Omega)$ such that $u_n\geq0$ for all $n$ and $u_n\to u$ in $X^p_0(\Omega)$ as $n\to\infty.$
    \end{itemize}
\end{prop}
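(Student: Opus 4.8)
The plan is to build on the known density results quoted just before the statement — namely that $C^\infty_c(\Omega)$ is dense in $X^p_0(\Omega)$ whenever $\Omega$ is bounded with continuous (in particular Lipschitz) boundary, by \cite[Theorem 3.76]{F20} — and to upgrade this to the monotone/nonnegative approximations in (i) and (ii). For the bare density statement I would simply invoke \cite[Theorem 3.76]{F20} directly, so the real work is items (i) and (ii) for a fixed nonnegative $u \in X^p_0(\Omega)$. The overall strategy mirrors \cite[Theorem 3.6]{FJ23}: first produce a monotone bounded approximating sequence inside $X^p_0(\Omega)$ (this gives (i)), then mollify and cut off each bounded element to land in $C^\infty_c(\Omega)$ while preserving nonnegativity (this gives (ii)).

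For (i), the natural candidate is the truncation $u_n := \min\{u, n\}$ (equivalently $u_n = u - (u-n)^+$), which is nonnegative, bounded, nondecreasing in $n$, vanishes outside $\Omega$, and lies in $L^p(\Omega)$. The point to check is $[u_n - u]_{X^p_0(\Omega)} \to 0$, i.e. that the truncated pieces converge to $0$ in the Gagliardo-type seminorm associated with the kernel $\mathbf{k}(z) = C_{N,p}\mathbf{1}_{B_1}(z)|z|^{-N}$. Here I would use the elementary inequality $|(a\wedge n - b\wedge n)| \le |a-b|$ to get pointwise domination of the integrand of $[u_n]_{X^p_0(\Omega)}^p$ by that of $[u]_{X^p_0(\Omega)}^p$, combined with $u_n \to u$ a.e., and then apply dominated convergence on $\R^N \times \R^N$ against the finite measure $\mathbf{k}(x-y)\,dx\,dy$ restricted appropriately; a cleaner route is to note $u - u_n = (u-n)^+ \to 0$ in $X^p_0(\Omega)$, since the family $\{(u-n)^+\}$ is dominated in the relevant seminorm (using $|(a-n)^+ - (b-n)^+| \le |a-b|$) and tends to $0$ a.e., so that again dominated convergence closes the argument. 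This uses only that $u \in X^p_0(\Omega)$, so no boundary regularity of $\Omega$ is needed for (i).

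For (ii), I fix $u_n$ bounded and nonnegative as in (i) and approximate each $u_n$ by nonnegative $C^\infty_c(\Omega)$ functions. One approach: by the plain density result \cite[Theorem 3.76]{F20} there are $v_k \in C^\infty_c(\Omega)$ with $v_k \to u_n$ in $X^p_0(\Omega)$; then replace $v_k$ by $v_k^+$, noting that the positive-part map is $1$-Lipschitz on $X^p_0(\Omega)$ in the sense $[v_k^+ - u_n^+]_{X^p_0(\Omega)} \le [v_k - u_n]_{X^p_0(\Omega)}$ (again from $|a^+ - b^+| \le |a-b|$ pointwise, applied to the difference quotients) and $u_n = u_n^+$, so $v_k^+ \to u_n$ in $X^p_0(\Omega)$. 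The slight wrinkle is that $v_k^+$ need not be smooth, so I would then mollify: convolve $v_k^+$ with a standard nonnegative mollifier $\eta_\varepsilon$, which keeps the function nonnegative, smooth, and (for $\varepsilon$ small relative to $k$) compactly supported in $\Omega$ since $\supp v_k^+ \subset \supp v_k \Subset \Omega$; continuity of translation/mollification in $X^p_0(\Omega)$ (which for bounded Lipschitz $\Omega$ follows from the same machinery used for \cite[Theorem 3.76]{F20}) lets me push $\varepsilon \to 0$. A diagonal extraction over $(n,k,\varepsilon)$ then yields the desired single sequence in $C^\infty_c(\Omega)$ converging to $u$ in $X^p_0(\Omega)$ with all terms nonnegative.

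The main obstacle I anticipate is the interplay between mollification and the nonlocal seminorm near $\partial\Omega$: one must be sure that cutting off / mollifying a nonnegative element does not blow up $[\cdot]_{X^p_0(\Omega)}$, and this is exactly where the Lipschitz (rather than merely continuous) regularity of $\partial\Omega$ enters, allowing the standard extension-reflection argument of \cite[Theorem 3.6]{FJ23} to control the seminorm of the mollified functions uniformly. If a fully self-contained argument is wanted, the cleanest path is to redo the reflection construction of \cite{FJ23} but apply the positive-part operation at the end, using its $1$-Lipschitz property on $X^p_0(\Omega)$ to transfer convergence from the constructed smooth approximants to their positive parts, and then mollify once more; the nonnegativity is preserved at every stage precisely because $(\cdot)^+$ and convolution with a nonnegative kernel both respect the cone of nonnegative functions.
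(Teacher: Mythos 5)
Your item (i) is correct: the truncations $u_n=\min\{u,n\}=u-(u-n)^+$ are nonnegative, bounded, nondecreasing, and $[u_n-u]_{X^p_0(\Omega)}=[(u-n)^+]_{X^p_0(\Omega)}\to0$ by dominated convergence using $|(u-n)^+(x)-(u-n)^+(y)|\le|u(x)-u(y)|$ and $(u-n)^+\to0$ a.e.; and your remark that this needs no boundary regularity is apt. The gap is in item (ii). You claim that $(\cdot)^+$ is $1$-Lipschitz on $X^p_0(\Omega)$, i.e. $[v_k^+-u_n^+]_{X^p_0(\Omega)}\le[v_k-u_n]_{X^p_0(\Omega)}$, ``from $|a^+-b^+|\le|a-b|$ applied to the difference quotients.'' This is false. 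The pointwise inequality the claimed bound would need is
\[
\big|(a_1^+-b_1^+)-(a_2^+-b_2^+)\big|\ \le\ \big|(a_1-b_1)-(a_2-b_2)\big|,
\]
and it fails: take $a_1=0$, $b_1=-1$, $a_2=1$, $b_2=0$, giving left side $1$ and right side $0$. The bound $|a^+-b^+|\le|a-b|$ does give $[w^+]\le[w]$ for a single function, but it does not control the oscillation of a difference $v_k^+-u_n^+$ by that of $v_k-u_n$; in fractional Sobolev-type spaces the positive-part map is continuous but not Lipschitz.

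The conclusion $v_k^+\to u_n$ in $X^p_0(\Omega)$ is still true, but it requires a different argument. One route: pass to a subsequence with $v_k\to u_n$ a.e.; then $v_k^+\to u_n$ a.e., Fatou gives $\liminf_k[v_k^+]^p\ge[u_n]^p$, while $[v_k^+]^p\le[v_k]^p\to[u_n]^p$, so $[v_k^+]^p\to[u_n]^p$. Now apply Pratt's generalized dominated convergence to $h_k(x,y)=|(v_k^+-u_n)(x)-(v_k^+-u_n)(y)|^p\mathbf{k}(x-y)$ with dominating sequence $g_k=2^{p-1}\big(|v_k^+(x)-v_k^+(y)|^p+|u_n(x)-u_n(y)|^p\big)\mathbf{k}(x-y)$: $h_k\to0$ a.e., $g_k\to 2^p|u_n(x)-u_n(y)|^p\mathbf{k}(x-y)$ a.e. with convergence of integrals, hence $\iint h_k\to0$. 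A cleaner and more standard route, which is what a proof ``analogous to \cite[Theorem~3.6]{FJ23}'' would do, avoids positive parts altogether: use the Lipschitz boundary to construct maps (inward translations and/or dilations near boundary patches) that push $\supp u_n$ strictly inside $\Omega$, show that composing $u_n$ with these maps converges to $u_n$ in $X^p_0(\Omega)$, and then mollify. Both steps preserve nonnegativity, so one never needs to repair sign; this also gives a cleaner account of why Lipschitz (rather than merely continuous) boundary is used, which in your sketch is deferred to a vague appeal to ``the same machinery.''
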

\begin{proof}
The proof is analogous to the proof of \cite[Theorem 3.6]{FJ23}.
\end{proof}

To set up the weak formulation, we first observe the following.

\begin{lemma}\label{weak to strong}
  Let $1<p<\infty$ and $u,v\in C^{\infty}_c(\R^N)$. Then
  $$
  \cE_{L,p}(u,v)=\int_{\R^N}L_{\Delta_p}u(x)v(x)\,dx,
  $$
  where
  \begin{equation}\label{defi-bilinear}
  \cE_{L,p}(u,v):=\cE_{p}(u,v)+\cF_p(u,v)+\rho_{N}(p)\int_{\R^N}g(u(x))v(x)\,dx
  \end{equation}
  with
  \begin{align*}
  \cE_p(u,v)&:=\frac12\iint_{\R^N\times\R^N}g(u(x)-u(y))(v(x)-v(y))\mathbf{k}(x-y)\,dxdy,\\
  \cF_p(u,v)&:=\frac12\iint_{\R^N\times\R^N}\Big(g(u(x)-u(y))(v(x)-v(y))-g(u(x))v(x)-g(u(y))v(y)\Big)\mathbf{j}(x-y)\,dxdy.
  \end{align*}
\end{lemma}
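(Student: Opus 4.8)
The plan is to verify the identity by expanding $\int_{\R^N} L_{\Delta_p}u(x)v(x)\,dx$ using the representation \eqref{integral representation with kernels} and then symmetrizing each of the three pieces. Concretely, multiply \eqref{integral representation with kernels} by $v(x)$ and integrate in $x$ over $\R^N$; since $u,v\in C^\infty_c(\R^N)$, all the integrals involved are absolutely convergent (the $\mathbf{k}$-term has the weakly singular but integrable kernel $|z|^{-N}$ near the origin controlled by the $C^\alpha$-bound $|u(x)-u(y)|\le c|x-y|^\alpha$, and the $\mathbf{j}$-term is supported, after cancellation, in a compact set because $g(u(x)-u(y))-g(u(x))=0$ once $y\notin \supp u$ while $x$ ranges over a compact set; see the estimates already carried out in the proof of Lemma \ref{some properties}), so Fubini applies freely.

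The first step is the $\mathbf{k}$-term: $\int_{\R^N}\int_{\R^N} g(u(x)-u(y))\,v(x)\,\mathbf{k}(x-y)\,dy\,dx$. Since $\mathbf{k}$ is even ($\mathbf{k}(z)=\mathbf{k}(-z)$), swap the names $x\leftrightarrow y$ to get $\int\int g(u(y)-u(x))\,v(y)\,\mathbf{k}(x-y)\,dy\,dx = -\int\int g(u(x)-u(y))\,v(y)\,\mathbf{k}(x-y)\,dy\,dx$, using $g(-a)=-g(a)$. Averaging the two expressions gives $\tfrac12\iint g(u(x)-u(y))(v(x)-v(y))\mathbf{k}(x-y)\,dxdy=\cE_p(u,v)$. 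The second step handles the $\mathbf{j}$-term: $\int_{\R^N}\int_{\R^N}\big(g(u(x)-u(y))-g(u(x))\big)v(x)\,\mathbf{j}(x-y)\,dy\,dx$. Again $\mathbf{j}$ is even; swapping $x\leftrightarrow y$ yields $\int\int\big(g(u(y)-u(x))-g(u(y))\big)v(y)\,\mathbf{j}(x-y)\,dy\,dx = \int\int\big(-g(u(x)-u(y))-g(u(y))\big)v(y)\,\mathbf{j}(x-y)\,dy\,dx$. Averaging the original and swapped forms produces
$$
\frac12\iint\Big(g(u(x)-u(y))(v(x)-v(y))-g(u(x))v(x)-g(u(y))v(y)\Big)\mathbf{j}(x-y)\,dxdy=\cF_p(u,v),
$$
after collecting the symmetric pair of $g(u(x))v(x)$ and $g(u(y))v(y)$ terms. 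The third, zeroth-order term is immediate: $\int_{\R^N}\rho_N(p)g(u(x))v(x)\,dx$ is already in the desired form. Adding the three contributions gives exactly $\cE_{L,p}(u,v)$ as defined in \eqref{defi-bilinear}, which is the claim.

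The only genuine obstacle is bookkeeping the integrability so that Fubini and the symmetrization are legitimate, and confirming that the two $\mathbf{j}$-term corrections $g(u(x))v(x)$ and $g(u(y))v(y)$ recombine correctly rather than producing a spurious factor. For the integrability I would simply cite the pointwise bounds used in the proof of Lemma \ref{some properties}: splitting $\R^N\times\R^N$ into the region $|x-y|<1$ (where $|g(u(x)-u(y))|\,|v(x)|\,\mathbf{k}(x-y)\le C\,|x-y|^{\alpha(p-1)-N}\mathbf{1}_{\supp v}(x)$, integrable) and $|x-y|\ge 1$ (where the integrand $\big|g(u(x)-u(y))-g(u(x))\big||v(x)|\mathbf{j}(x-y)$ vanishes unless $x\in\supp v$ and $y\in\supp u$, a bounded region), makes everything finite. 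Once absolute convergence is in hand, the change of variables $(x,y)\mapsto(y,x)$ combined with the oddness of $g$ and evenness of $\mathbf{k},\mathbf{j}$ is purely algebraic, and no further subtlety arises. I would write this up in three short displayed computations, one per term, followed by a one-line assembly.
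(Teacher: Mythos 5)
Your proposal is correct and takes essentially the same approach as the paper: the paper's own proof is a one-liner (``this follows immediately from the notation in \eqref{integral representation with kernels}''), and you have simply spelled out the symmetrization via $(x,y)\mapsto(y,x)$, oddness of $g$, evenness of $\mathbf{k},\mathbf{j}$, and the integrability bookkeeping that the paper leaves implicit. No gaps.
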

\begin{proof}
This follows immediately from the notation in \eqref{integral representation with kernels}.
\end{proof}

It follows by the density statement that we also have
\begin{equation}\label{reltn btwn operator and bilinear}
 \cE_{L,p}(u,v)=\int_{\Omega}L_{\Delta_p}u(x)v(x)\,dx\quad\text{for all $u\in C^{\infty}_c(\R^N)$, $v\in X^p_0(\Omega)$,}   
\end{equation}
if $\Omega$ is bounded and has a continuous boundary. Indeed, in this case \eqref{reltn btwn operator and bilinear} follows from the density of $C^{\infty}_c(\R^N)$ in $X^p_0(\Omega)$ and in $L^p(\R^N)$ for the first and last summand in \eqref{defi-bilinear}. For the middle summand note that it holds for any $v\in L^p(\Omega)$, $u\in C^{\infty}_c(\Omega)$
\begin{align*}
 |\cF_p(u,v)|&\leq \iint_{\R^N\times\R^N}\Big|g(u(x)-u(y))-g(u(x))\Big||v(x)|\mathbf{j}(x-y)\,dxdy \\
 &= \int_{\R^N}|v(x)| \int_{\R^N}\frac{|g(u(x)-u(y))-g(u(x))|}{|x-y|^N}\, dy\,dx,
\end{align*}
where the inner integral is continuous and bounded as shown in Lemma \ref{some properties}. Since $\Omega$ is bounded, the approximation argument for \eqref{reltn btwn operator and bilinear} follows using the dominated convergence theorem for the middle term.

Similarly to the alternative representation of $L_{\Delta_p}$ given in Lemma \ref{some properties}(3), we can also rewrite $\cE_{L,p}$ in the following way.

\begin{prop}\label{alternative}
Let $1< p<\infty$ and let $\Omega$ be a bounded open subset of $\rn$ and $u,v\in X^p_0(\Omega)$. Then we have
\begin{equation*}\label{quadratic form in domain}
\cE_{L,p}(u,v)=\frac{C_{N,p}}{2}\iint_{\Omega\times\Omega}\frac{g(u(x)-u(y))(v(x)-v(y))}{|x-y|^N}\,dxdy+\int_{\Omega}\left(h_{\Omega}(x)+\rho_N(p)\right)g(u(x))v(x)\,dx.
\end{equation*}
In particular, we have
\begin{equation*}
\cE_{L,p}(u,u)=\frac{C_{N,p}}{2}\iint_{\Omega\times\Omega}\frac{|u(x)-u(y)|^p}{|x-y|^N}\,dxdy+\int_{\Omega}\left(h_{\Omega}(x)+\rho_N(p)\right)|u(x)|^p\,dx,
\end{equation*}
where $h_{\Omega}(x)$ is defined in \eqref{h-omega function}.
\end{prop}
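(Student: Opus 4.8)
The plan is to read this off as the integrated counterpart of the pointwise identity \eqref{LwithOmega} in Lemma \ref{some properties}(3), obtained by a direct bookkeeping of integration regions in the definition \eqref{defi-bilinear} of $\cE_{L,p}$. No smoothness of $u,v$ and no density argument are needed: the identity holds for arbitrary $u,v\in X^p_0(\Omega)$ once one checks that all the integrals involved converge absolutely, which is exactly where the boundedness of $\Omega$ is used (in particular no regularity of $\partial\Omega$ is needed).

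First I would collect the integrability facts. Writing $|g(u(x)-u(y))|=|u(x)-u(y)|^{p-1}$ and using H\"older's inequality with exponents $\tfrac{p}{p-1}$ and $p$ against the splitting
\[
\frac{|u(x)-u(y)|^{p-1}|v(x)-v(y)|}{|x-y|^N}1_{B_1}(x-y)
=\Big(\tfrac{|u(x)-u(y)|^p}{|x-y|^N}1_{B_1}(x-y)\Big)^{\frac{p-1}{p}}\Big(\tfrac{|v(x)-v(y)|^p}{|x-y|^N}1_{B_1}(x-y)\Big)^{\frac1p}
\]
gives absolute convergence of $\cE_p(u,v)$, with $|\cE_p(u,v)|\le \tfrac12[u]_{X^p_0(\Omega)}^{p-1}[v]_{X^p_0(\Omega)}$. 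For $\cF_p(u,v)$ one observes that, since $g$ is odd and $u,v$ vanish outside $\Omega$, the integrand is identically $0$ whenever $x\notin\Omega$ or $y\notin\Omega$; hence the integration is effectively over $\Omega\times\Omega$, where $|x-y|>1$ forces $|x-y|^{-N}\le 1$, and the integrand is dominated by $|u(x)-u(y)|^{p-1}|v(x)-v(y)|+|u(x)|^{p-1}|v(x)|+|u(y)|^{p-1}|v(y)|$, which is integrable over the bounded set $\Omega\times\Omega$ by H\"older. Finally $\rho_N(p)\int_{\R^N}g(u)v\,dx=\rho_N(p)\int_\Omega g(u)v\,dx$ is finite by H\"older.

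Next I would split $\cE_p$ and $\cF_p$. Decomposing $\R^N\times\R^N$ into the four blocks $\Omega\times\Omega$, $\Omega\times\Omega^c$, $\Omega^c\times\Omega$, $\Omega^c\times\Omega^c$ and using $u1_{\Omega^c}\equiv v1_{\Omega^c}\equiv0$: in $\cE_p$ the last block vanishes, on $\Omega\times\Omega^c$ the integrand equals $g(u(x))v(x)\mathbf k(x-y)$ and on $\Omega^c\times\Omega$ it equals $g(u(y))v(y)\mathbf k(x-y)$, and the symmetry $\mathbf k(z)=\mathbf k(-z)$ identifies the two cross blocks; together with $\int_{\Omega^c}\mathbf k(x-y)\,dy=C_{N,p}\int_{B_1(x)\setminus\Omega}|x-y|^{-N}\,dy$ this yields
\[
\cE_p(u,v)=\tfrac12\iint_{\Omega\times\Omega}g(u(x)-u(y))(v(x)-v(y))\mathbf k(x-y)\,dxdy
+\int_\Omega g(u(x))v(x)\,C_{N,p}\!\!\int_{B_1(x)\setminus\Omega}\!\!|x-y|^{-N}\,dy\,dx.
\]
For $\cF_p$ the decisive point is that on the cross blocks the integrand vanishes \emph{pointwise} (for $x\in\Omega$, $y\notin\Omega$ it equals $g(u(x))v(x)-g(u(x))v(x)=0$, and symmetrically for $x\notin\Omega$, $y\in\Omega$ using that $g$ is odd), so only the $\Omega\times\Omega$ block survives, and after the same symmetrization together with $\int_\Omega\mathbf j(x-y)\,dy=C_{N,p}\int_{\Omega\setminus B_1(x)}|x-y|^{-N}\,dy$ one gets
\[
\cF_p(u,v)=\tfrac12\iint_{\Omega\times\Omega}g(u(x)-u(y))(v(x)-v(y))\mathbf j(x-y)\,dxdy
-\int_\Omega g(u(x))v(x)\,C_{N,p}\!\!\int_{\Omega\setminus B_1(x)}\!\!|x-y|^{-N}\,dy\,dx.
\]
Adding these two identities and $\rho_N(p)\int_\Omega g(u)v\,dx$, using $\mathbf k(z)+\mathbf j(z)=C_{N,p}|z|^{-N}$ for $z\ne0$ and comparing the two potential terms with the definition \eqref{h-omega function} of $h_\Omega$, produces exactly the claimed formula; the ``in particular'' statement then follows by putting $v=u$ and using $g(t)t=|t|^p$.

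The one point that needs care — and the only real obstacle — is justifying the regroupings near $\partial\Omega$: the inner integral $C_{N,p}\int_{B_1(x)\setminus\Omega}|x-y|^{-N}\,dy$ is \emph{not} bounded on $\Omega$ (it grows like $\ln\tfrac1{\delta_x}$ near the boundary), so I must know that $x\mapsto g(u(x))v(x)\,C_{N,p}\int_{B_1(x)\setminus\Omega}|x-y|^{-N}\,dy$ lies in $L^1(\Omega)$ before splitting $\int_\Omega h_\Omega\,g(u)\,v$ into its two defining pieces. This is automatic: by Tonelli this function is the density of the absolutely convergent sub-block $\iint_{\Omega\times\Omega^c}g(u(x))v(x)\,\mathbf k(x-y)\,dxdy$ of $\cE_p$, hence integrable; combined with the trivial bound $C_{N,p}\int_{\Omega\setminus B_1(x)}|x-y|^{-N}\,dy\le C_{N,p}|\Omega|$ this gives $h_\Omega\,g(u)\,v\in L^1(\Omega)$, so the splitting of the potential term and the recombination of the two $\Omega\times\Omega$ double integrals (each absolutely convergent by the H\"older bounds above) are all legitimate.
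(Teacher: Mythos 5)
Your proof is correct and follows essentially the same route as the paper's: decompose $\cE_p$ and $\cF_p$ over the blocks $\Omega\times\Omega$, $\Omega\times\Omega^c$, $\Omega^c\times\Omega$, use that $u,v$ vanish outside $\Omega$ (so the $\cF_p$-integrand vanishes identically on the cross blocks, while the $\cE_p$ cross blocks produce the $\int_{B_1(x)\setminus\Omega}$ piece of $h_\Omega$), then peel the $-g(u)v-g(u(y))v(y)$ terms off the remaining $\cF_p$ block over the bounded set $\Omega\times\Omega$ to produce the $\int_{\Omega\setminus B_1(x)}$ piece. The paper states the two intermediate identities for $\cE_p$ and $\cF_p$ without comment on integrability; your extra paragraph verifying that $x\mapsto g(u(x))v(x)\int_{B_1(x)\setminus\Omega}|x-y|^{-N}\,dy$ is in $L^1(\Omega)$ (via Tonelli applied to the absolutely convergent cross block of $\cE_p$) is a welcome clarification that makes the regrouping near $\partial\Omega$ fully rigorous, but it is not a departure in method.
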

\begin{proof}
Since $u,v\in X^p_0(\Omega)$, we have
\begin{equation*}
    \cE_p(u,v)=\frac{C_{N,p}}{2}\iint_{\substack{x,\,y\in\Omega\\|x-y|<1}}\frac{g(u(x)-u(y))(v(x)-v(y))}{|x-y|^N}\,dxdy+C_{N,p}\int_{\Omega}g(u(x))v(x)\left(\int_{B_1(x)\setminus\Omega}\frac{dy}{|x-y|^N}\right)dx,
\end{equation*}
and 
\begin{equation*}
\cF_p(u,v)=\frac{C_{N,p}}{2}\iint_{\substack{x,\,y\in\Omega\\|x-y|>1}}\frac{g(u(x)-u(y))(v(x)-v(y))-g(u(x))v(x)-g(u(y))v(y)}{|x-y|^N}\,dxdy.
\end{equation*}
Now, we can split the above integral by using the fact the domain $\Omega$ is bounded, and thus we get
\begin{equation*}
\cF_p(u,v)=\frac{C_{N,p}}{2}\iint_{\substack{x,\,y\in\Omega\\|x-y|>1}}\frac{g(u(x)-u(y))(v(x)-v(y))}{|x-y|^N}\,dxdy-C_{N,p}\int_{\Omega}g(u(x))v(x)\left(\,\int_{\Omega\setminus B_1(x)}\frac{dy}{|x-y|^N}\right)dx.
\end{equation*}
Therefore, by definition of $\cE_{L,p}$, we get the desired result.
\end{proof}

\begin{remark}
Similar to Proposition \ref{alternative}, one can show the following alternative for $p=1$. However, we do not assign the corresponding operator in this case. Let 
  \begin{equation*}\label{defi-bilinear p=1}
  \cE_{L,1}(u,u):=\cE_{1}(u,u)+\cF_1(u,u)+\rho_{N,1}\int_{\R^N}|u(x)|\,dx
  \end{equation*}
  with
  \begin{align*}
\cE_1(u,u)&:=\frac12\iint_{\R^N\times\R^N}|u(x)-u(y)|\mathbf{k}(x-y)\,dxdy,\\
  \cF_1(u,u)&:=\frac12\iint_{\R^N\times\R^N}\Big(|u(x)-u(y)|-|u(x)|-|u(y)|\Big)\mathbf{j}(x-y)\,dxdy.
  \end{align*}
  Then, for $u\in X^1_0(\Omega)$ with $\Omega\subset\R^N$ being a bounded open subset, we have
  $$
  \cE_{L,1}(u,u)=\frac{C_{N,1}}{2}\iint_{\Omega\times\Omega}\frac{|u(x)-u(y)|}{|x-y|^N}\,dxdy+\int_{\Omega}\left(h_{\Omega}(x)+\rho_{N,1}\right)|u(x)|\,dx.
  $$
\end{remark}

\begin{lemma}\label{lower bound absolut value}
For any $u\in X^p_0(\Omega)$, it holds 
$$
\cE_{L,p}(u,u)\geq \cE_{L,p}(|u|,|u|)
$$
and the inequality is strict, if $u$ changes sign.
\end{lemma}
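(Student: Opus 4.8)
The proof rests on Proposition~\ref{alternative} together with the elementary inequality $\bigl||a|-|b|\bigr|\le |a-b|$ for all $a,b\in\R$, with equality precisely when $ab\ge 0$.

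First I would check that $|u|$ is an admissible competitor: if $u\in X^p_0(\Omega)$, then $|u|$ vanishes a.e.\ on $\R^N\setminus\Omega$, $\bigl\||u|\bigr\|_{L^p(\Omega)}=\|u\|_{L^p(\Omega)}$, and, since $\mathbf{k}\ge 0$ and $\bigl||u(x)|-|u(y)|\bigr|\le|u(x)-u(y)|$ pointwise, $[|u|]_{X^p_0(\Omega)}\le[u]_{X^p_0(\Omega)}$. Hence $|u|\in X^p_0(\Omega)$ and $\cE_{L,p}(|u|,|u|)$ is finite and well defined.

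Next, I would apply Proposition~\ref{alternative} to $u$ and, separately, to $|u|$. Since $h_\Omega$ does not depend on the function and $\bigl||u(x)|\bigr|^p=|u(x)|^p$, the zero-order integrals $\int_\Omega(h_\Omega(x)+\rho_N(p))|\cdot|^p\,dx$ are identical, so
\[
\cE_{L,p}(u,u)-\cE_{L,p}(|u|,|u|)=\frac{C_{N,p}}{2}\iint_{\Omega\times\Omega}\frac{|u(x)-u(y)|^p-\bigl||u(x)|-|u(y)|\bigr|^p}{|x-y|^N}\,dx\,dy .
\]
The integrand is pointwise nonnegative and $C_{N,p}>0$, which already gives $\cE_{L,p}(u,u)\ge\cE_{L,p}(|u|,|u|)$. (If one prefers not to invoke the boundedness of $\Omega$ used in Proposition~\ref{alternative}, the same conclusion follows straight from \eqref{defi-bilinear}: the term $\rho_N(p)\int_{\R^N}g(u)u$ equals $\rho_N(p)\int_{\R^N}|u|^p$ and is unchanged under $u\mapsto|u|$, while $\cE_p$ and $\cF_p$ do not increase because $\mathbf{k},\mathbf{j}\ge 0$ and the subtracted contributions $g(u(x))v(x)+g(u(y))v(y)$ in $\cF_p$ become $|u(x)|^p+|u(y)|^p$, hence are unaffected.)

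For the strict inequality, I would assume $u$ changes sign, so that $A:=\{u>0\}$ and $B:=\{u<0\}$ both have positive measure and, since $u=0$ a.e.\ outside $\Omega$, are contained in $\Omega$ up to null sets. On $A\times B$ one has $u(x)-u(y)=|u(x)|+|u(y)|>\bigl||u(x)|-|u(y)|\bigr|$ because $|u(x)|,|u(y)|>0$, whence $|u(x)-u(y)|^p-\bigl||u(x)|-|u(y)|\bigr|^p>0$ there; since $|A\times B|>0$ and $|x-y|^{-N}>0$ for a.e.\ $(x,y)$, the integral in the display is strictly positive, giving $\cE_{L,p}(u,u)>\cE_{L,p}(|u|,|u|)$. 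I do not expect any genuine obstacle here: the only point needing care is that $\cE_{L,p}(|u|,|u|)$ and the difference above are finite so that the subtraction is legitimate, and this is exactly what the admissibility step supplies; everything else is elementary.
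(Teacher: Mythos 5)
Your argument is correct and follows essentially the same route as the paper: apply Proposition~\ref{alternative}, observe that the zero-order terms are identical for $u$ and $|u|$, and use $\bigl||u(x)|-|u(y)|\bigr|\le|u(x)-u(y)|$ pointwise. The paper states the strict-inequality step more tersely; your explicit identification of the set $A\times B=\{u>0\}\times\{u<0\}$ on which the integrand is strictly positive, and your preliminary check that $|u|\in X^p_0(\Omega)$, merely fill in details the paper leaves implicit.
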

\begin{proof}
Note that, we have by Proposition \ref{alternative}
\begin{align*}
\cE_{L,p}(u,u)&=\frac{C_{N,p}}{2}\iint_{\Omega\times\Omega}\frac{|u(x)-u(y)|^p}{|x-y|^N}\,dxdy+\int_{\Omega}\left(h_{\Omega}(x)+\rho_N(p)\right)|u(x)|^p\,dx\\
&\geq \frac{C_{N,p}}{2}\iint_{\Omega\times\Omega}\frac{\Big||u(x)|-|u(y)|\Big|^p}{|x-y|^N}\,dxdy+\int_{\Omega}\left(h_{\Omega}(x)+\rho_N(p)\right)|u(x)|^p\,dx\\
&=\cE_{L,p}(|u|,|u|).
\end{align*}
If $u$ changes sign, we have indeed
$$
\iint_{\Omega\times\Omega}\frac{|u(x)-u(y)|^p}{|x-y|^N}\,dxdy>\iint_{\Omega\times\Omega}\frac{\Big||u(x)|-|u(y)|\Big|^p}{|x-y|^N}\,dxdy,
$$
and this gives the additional claim.
\end{proof}

\section{A logarithmic boundary Hardy inequality}\label{sec:hardy}
In this section, we prove Theorem \ref{intro:hardy}.
This proof is split into two parts. In the first one, we prove a logarithmic boundary Hardy inequality under some assumptions on the Whitney decomposition of the set. In Subsection~\ref{subs:plump}, we give a~simple sufficient condition for  these assumptions to hold. As an illustration, we also  prove these assumptions for the half-space, which allows us to obtain explicit ---but possibly not optimal--- constants in the  logarithmic boundary Hardy inequality in this case. The last subsection is devoted to applications.

\subsection{Whitney decomposition and logarithmic boundary Hardy inequality}
  We call a~cube $Q\subset \R^N$ \emph{dyadic} if its side length is equal to $2^m$ for some integer $m$ and all coordinates of its vertices are equal to an integer times $2^m$. We denote by $\cD_m$ the collection of all dyadic cubes in $\R^N$ with side length~$2^m$
  and put $\cD = \bigcup_{m\in \Z} \cD_m$.

  Let $\cW(\Omega)$ be a~Whitney decomposition of an open set $\Omega\subset \R^N$ into cubes like in \cite{St70}. In particular,
  $\cW(\Omega) \subset \cD$, and for each $Q\in \cW(\Omega)$,
  \[
  \diam(Q) \leq \dist(Q,\partial \Omega) \leq 4\diam(Q).
  \]
  For $m\in \Z$, let $\cW_m(\Omega) =\{ Q\in \cW(\Omega): \ell(Q) = 2^m\}$, where $\ell(Q)$ is the side length of the cube~$Q$.

\begin{thm}\label{hardy}
  Let $\Omega\subset \R^N$, $\Omega\neq \R^N$ be an open set.
  We assume that
   there exist constants $C_1$, $C_2$, $C_3$ and an integer $j_0\leq 0$  such that
  for each Whitney cube $Q\in \cW_k(\Omega)$ with $k<j_0$ and for each $j$ such that $k<j\leq j_0$, there exists a~Whitney cube $E(Q,j)$ with the following properties:
  \begin{enumerate}
  \item[(i)] $C_1 2^j \leq \ell(E(Q,j)) \leq C_2 2^j$,
  \item[(ii)] it holds $|x-y| < C_3 2^j$ for all $x\in Q$ and $y\in E(Q,j)$.
  \end{enumerate}
  Additionally, we assume that the following holds:
  \begin{enumerate}
  \item[(iii)] there exist constants $\lambda<N$ and $C_4$ such that for each cube $Q_0 \in \cW_n(\Omega)$, $n\in\Z$,
  \[
  \#\{(Q,j): Q\in \cW_m(\Omega) \text{ and } E(Q,j) = Q_0 \} \leq C_4 2^{\lambda(n-m)}\quad\text{for all $m\in \Z$}.
  \]
  \end{enumerate}
Let $0<p<\infty$.  Then, there is $c=c(C_1, C_2, C_3, C_4, j_0, \lambda, p)>0$ such that for every $u \in L^p(\Omega)$
  \begin{equation}\label{eq:hardy}
  \int_\Omega |u(x)|^p \ln^+\left(\frac{1}{\delta_x}\right) \,dx
  \leq c \left(\  \iint_{\substack{\Omega\times \Omega\\ |x-y|<1}}\frac{|u(x)-u(y)|^p}{|x-y|^N}\,dy\,dx
  + \int_\Omega |u(x)|^p\,dx \right).
  \end{equation}
\end{thm}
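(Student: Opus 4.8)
The plan is to run a Whitney-chain argument: localise the left-hand side to a Whitney decomposition of $\Omega$, trade the logarithmic weight on each small Whitney cube for the number of intermediate scales, and then transfer mass from each such cube to the larger companion cubes $E(Q,j)$. For $Q\in\cW_k(\Omega)$ we have $\ell(Q)=2^k$, and the Whitney property gives $\delta_x\ge\diam Q\ge\ell(Q)=2^k$ for $x\in Q$, hence $\ln^+(1/\delta_x)\le(-k)^+\ln 2$ on $Q$. Since the Whitney cubes tile $\Omega$ up to a null set,
\[
\int_\Omega|u(x)|^p\ln^+(1/\delta_x)\,dx\le(\ln2)\sum_{k<0}\ \sum_{Q\in\cW_k(\Omega)}(-k)\int_Q|u|^p\,dx.
\]
The cubes with $j_0\le k<0$ contribute at most $|j_0|\,\|u\|_{L^p(\Omega)}^p$, since there $(-k)\le|j_0|$. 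For $k<j_0$ I use $(-k)\le(1+|j_0|)(j_0-k)$ and $(j_0-k)=\#\{j\in\Z:k<j\le j_0\}$ to rewrite that part as $(1+|j_0|)\sum_{(Q,j)}\int_Q|u|^p$, the sum running over all \emph{admissible pairs} $(Q,j)$, i.e.\ $Q\in\cW_k(\Omega)$ with $k<j_0$ and $k<j\le j_0$. Replacing $j_0$ by a smaller integer if necessary (this only shrinks the family of admissible pairs and keeps (i)--(iii) with the same constants), I may also assume $C_32^{j_0}<1$.

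Next, fix an admissible pair $(Q,j)$ and put $E:=E(Q,j)$. Averaging $|u(x)|^p$ over $y\in E$ and using $|u(x)|^p\le c_p\bigl(|u(x)-u(y)|^p+|u(y)|^p\bigr)$ with $c_p:=\max\{1,2^{p-1}\}$, I obtain
\[
\int_Q|u|^p\,dx\le\frac{c_p}{|E|}\iint_{Q\times E}|u(x)-u(y)|^p\,dy\,dx+\frac{c_p\,|Q|}{|E|}\int_E|u|^p\,dy.
\]
By (i)--(ii), for $x\in Q$ and $y\in E$ one has $|x-y|<C_32^j<1$ and $|E|\ge(C_12^j)^N$, whence $|E|^{-1}\le(C_3/C_1)^N|x-y|^{-N}$; moreover $|Q|/|E|\le C_1^{-N}2^{(k-j)N}\le(2C_1)^{-N}$ since $k\le j-1$. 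Hence the first term is at most $c_p(C_3/C_1)^N\iint_{Q\times E}|u(x)-u(y)|^p|x-y|^{-N}\,dy\,dx$ and the second at most $c_p(2C_1)^{-N}\int_E|u|^p$.

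Now I sum over admissible pairs. For the energy terms: for a.e.\ $x$ there is a unique $Q\ni x$, and for that $Q$ at most $\lfloor\log_2(C_2/C_1)\rfloor+1$ values of $j$ can satisfy $y\in E(Q,j)$, since $y$ lies in a single Whitney cube whose side length pins down $2^j$ up to the factor $C_2/C_1$; thus $\sum_{(Q,j)}1_Q(x)1_{E(Q,j)}(y)$ is bounded by a constant, and since each $Q\times E(Q,j)\subset\{(x,y)\in\Omega\times\Omega:|x-y|<1\}$, the energy terms sum to a constant multiple of $\iint_{\Omega\times\Omega,\,|x-y|<1}|u(x)-u(y)|^p|x-y|^{-N}\,dy\,dx$. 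For the zero-order terms, group the admissible pairs by the value $Q_0:=E(Q,j)$: if $Q_0\in\cW_n(\Omega)$ and $Q\in\cW_m(\Omega)$ then $|Q|/|E|=2^{(m-n)N}$, admissibility forces $m-n<\log_2(1/C_1)$, and (iii) gives $\#\{(Q,j):Q\in\cW_m(\Omega),\,E(Q,j)=Q_0\}\le C_42^{\lambda(n-m)}$, so
\[
\sum_{(Q,j):\,E(Q,j)=Q_0}\frac{|Q|}{|E|}\le C_4\sum_{m<n+\log_2(1/C_1)}2^{(m-n)(N-\lambda)}=:C_5<\infty,
\]
the series converging because $\lambda<N$. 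Summing over $Q_0$ and using that the Whitney cubes tile $\Omega$ bounds the zero-order terms by a constant times $\|u\|_{L^p(\Omega)}^p$. Combining this with the $j_0\le k<0$ contribution yields \eqref{eq:hardy} with $c$ depending only on $C_1,C_2,C_3,C_4,j_0,\lambda,p$.

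The delicate point is the zero-order summation: the multiplicities $\#\{(Q,j):E(Q,j)=Q_0\}$ are in general unbounded, and it is only after weighting by the volume ratio $|Q|/|E|\approx2^{(m-n)N}$ that hypothesis (iii) — with the exponent $\lambda$ strictly below $N$, not merely below $0$ — makes the resulting geometric series summable. Everything else is routine bookkeeping of constants along the chain, together with the harmless reduction to $C_32^{j_0}<1$ that guarantees all the pairs $Q\times E(Q,j)$ stay in the region $\{|x-y|<1\}$.
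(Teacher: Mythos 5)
Your proof is correct and follows essentially the same Whitney-chain argument as the paper: bound $\ln^+(1/\delta_x)$ by $(-k)$ on a cube in $\cW_k(\Omega)$, trade $(-k)$ for the count of intermediate scales $j$, average $|u|^p$ over the companion cubes $E(Q,j)$, and then control the zero-order sum by grouping over the target cube $Q_0=E(Q,j)$ and invoking (iii) with the crucial exponent $\lambda<N$ so that the weighted geometric series in $2^{(m-n)(N-\lambda)}$ converges. The only substantive difference is cosmetic: you make the bounded-overlap count for the energy terms explicit (at most $\lfloor\log_2(C_2/C_1)\rfloor+1$ values of $j$ can give the same Whitney cube $E(Q,j)$), whereas the paper passes directly from $\sum_j\int_{E(Q_m,j)}$ to $\int_{\Omega\cap B_1(x)}$ without remarking on this multiplicity, which is a small but real omission your version repairs.
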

As we aim at giving an explicit estimate on the constants, we use the following notation
$$
a\vee b:=\max\{a,b\}\quad\text{for $a,b\in \R$.}
$$
We note here that the proof is partially based on ideas from \cite{AdRoSa}.

\begin{proof}
  By decreasing $j_0$ if needed, we may and do assume that $C_3 2^{j_0} < 1$.
  Let $m < j_0$ and let $Q_m\in \cW_m(\Omega)$.
  Then
  \begin{align*}
    \int_{Q_m} &|u(x)|^p\,dx
    =
      \frac{1}{|E(Q_m,j)|} \int_{E(Q_m,j)}  \int_{Q_m} |u(x)-u(y)+u(y)|^p \,dx\,dy \\
      &\leq
      \frac{2^{p-1}\vee 1}{|E(Q_m,j)|} \int_{E(Q_m,j)}  \int_{Q_m} |u(x)-u(y)|^p\,dx\,dy
      +
      \frac{(2^{p-1}\vee 1)|Q_m|}{|E(Q_m,j)|} \int_{E(Q_m,j)}  |u(y)|^p\,dy \\
    &\leq
      \frac{C_3^N (2^{p-1}\vee 1)}{C_1^N} \int_{E(Q_m,j)} \int_{Q_m} 1_{|x-y|< C_32^j}\, \frac{|u(x)-u(y)|^p}{|x-y|^N} \,dx\,dy
      +
      \frac{(2^{p-1}\vee 1) 2^{mN}}{C_1^N 2^{jN}} \int_{E(Q_m,j)}  |u(y)|^p\,dy.
   \end{align*}
  Summing over all $j$ such that $m < j\leq j_0$, we obtain
  \begin{align*}
  (j_0 - m) \int_{Q_m} |u(x)|^p\,dx
  &\leq
  \frac{C_3^N (2^{p-1}\vee 1)}{C_1^N} \int_{Q_m} \int_{\Omega\cap B_1(x)} \frac{|u(x)-u(y)|^p}{|x-y|^N} \,dy\,dx\\
  &+  \frac{2^{p-1}\vee 1}{C_1^N}
  \sum_{j=m+1}^{j_0} 2^{(m-j)N} \int_{E(Q_m,j)}  |u(y)|^p\,dy.
  \end{align*}
  We sum over all $Q_m$ and all $m<j_0$, obtaining
  \begin{align*}
   \sum_{m<j_0} \sum_{Q_m}  (j_0 - m) \int_{Q_m} |u(x)|^p\,dx
   &\leq
   \frac{C_3^N (2^{p-1}\vee 1)}{C_1^N}
   \int_{\Omega} \int_{\Omega\cap B_1(x)} \frac{|u(x)-u(y)|^p}{|x-y|^N} \,dy\,dx\\
  &+ \frac{2^{p-1}\vee 1}{C_1^N}
   \sum_{m<j_0} \sum_{Q_m}
   \sum_{j=m+1}^{j_0} 2^{(m-j)N} \int_{E(Q_m,j)}  |u(y)|^p\,dy\\
   &=: I + S.
  \end{align*}
  Next, we rearrange the last sum $S$.
  To this end, we make the following observations.

  First, if $m<j_0$ and $j\leq j_0$, then by our first assumption
  \[
  j+ \log_2 C_1 \leq \log_2 \ell(E(Q_m, j) \leq j+\log_2 C_2 \leq j_0 + \log_2 C_2,
  \]
  that is, $E(Q_m, j) \in \cW_k(\Omega)$ with $k \leq  j_0 + \log_2 C_2$.

  Next, a fixed cube $R\in \cW_k(\Omega)$ with $k \leq  j_0 + \log_2 C_2$ is equal
  to $E(Q, j)$ for some $Q \in \cW_m(\Omega)$ and some $j$ for at most $C_4 2^{\lambda(k-m)}$
  pairs $(Q, j)$.

  Finally, in the sum we take $m<j$, therefore if $R \in \cW_k(\Omega)$ and $R=E(Q, j)$ with
  $Q\in \cW_m(\Omega)$, then $m<j \leq k - \log_2 C_1$.
  
  With these observations, we have
  \begin{align*}
    S &\leq
    \frac{2^{p-1}\vee 1}{C_1^N}
    \sum_{k \leq j_0 + \log_2 C_2} \sum_{R \in \cW_k(\Omega)} 
    \left( \sum_{m<k-\log_2 C_1} C_4 2^{(k-m)\lambda}  2^{(m-k+\log_2 C_2)N}\right)
    \int_{R}  |u(y)|^p\,dy \\
    &\leq
    \frac{(2^{p-1}\vee 1) C_4 C_2^N}{(1-2^{\lambda-N})C_1^{N-\lambda}}
    \sum_{k \leq j_0 + \log_2 C_2} \sum_{R \in \cW_k(\Omega)} 
    \int_{R}  |u(y)|^p\,dy 
    \leq \frac{(2^{p-1}\vee 1) C_4 C_2^N}{(1-2^{\lambda-N})C_1^{N-\lambda}}
    \int_{\Omega \cap \{\delta_y \leq 5C_2 2^{j_0}\sqrt{N}\}}  |u(y)|^p\,dy,
  \end{align*}
  and we obtain
  \begin{align*}
   \sum_{m<j_0} \sum_{Q_m}  (j_0 - m) \int_{Q_m} |u(x)|^p\,dx
   &\leq
   \frac{C_3^N (2^{p-1}\vee 1)}{C_1^N}
   \int_{\Omega} \int_{\Omega\cap B_1(x)} \frac{|u(x)-u(y)|^p}{|x-y|^N} \,dy\,dx \\
  &+ \frac{(2^{p-1}\vee 1) C_4 C_2^N}{(1-2^{\lambda-N})C_1^{N-\lambda}}
  \int_{\Omega \cap \{\delta_y \leq 5C_2 2^{j_0}\sqrt{N}\} }  |u(y)|^p\,dy.
  \end{align*}
  By the properties of Whitney cubes, if $x\in Q_m$, then
  \[
  \delta_x \geq 5\diam(Q_m),
  \]
  therefore
  \[
  \ln\frac{1}{\delta_x} \leq \ln\frac{1}{5\diam(Q_m)} = -\ln 5 - m \ln 2\leq - m \ln 2.
  \]
  Hence,
  \begin{align*}
    \int_\Omega &|u(x)|^p \ln^+(\frac{1}{\delta_x}) \,dx
    =
    \sum_{m<0} \sum_{Q\in \cW_m(\Omega)} \int_Q |u(x)|^p \ln^+(\frac{1}{\delta_x}) \,dx
    \leq
    \ln 2 \sum_{m<0} \sum_{Q\in \cW_m(\Omega)}  (-m) \int_{Q\cap \{\delta_x<1\}} |u(x)|^p \,dx\\    
    &\leq
    \ln 2 \sum_{m<j_0} \sum_{Q\in \cW_m(\Omega)}  (j_0 -m) \int_Q |u(x)|^p \,dx
     + \ln 2 \sum_{m<0} \sum_{Q\in \cW_m(\Omega)}  (-j_0) \int_{Q\cap \{\delta_x<1\}} |u(x)|^p \,dx\\
    &\leq
 \frac{C_3^N (2^{p-1}\vee 1) \ln 2}{C_1^N}   \int_{\Omega} \int_{\Omega\cap B_1(x)} \frac{|u(x)-u(y)|^p}{|x-y|^N} \,dy\,dx \\
 &+  \left( \frac{(2^{p-1}\vee 1) C_4 C_2^N}{(1-2^{\lambda-N})C_1^{N-\lambda}}-j_0\right) \ln 2
 \int_{\Omega\cap \{\delta_y \leq 5C_22^{j_0}\sqrt{N} \vee 1\}}  |u(y)|^p\,dy.
  \end{align*}
  The claim follows.
\end{proof}

\begin{remark}\label{rem:const}
We note that in the proof of Theorem \ref{hardy} if one wants to track the constant explicitly, one should replace $j_0$ above
by
\[
\min\{j_0, \lceil -\log_2 C_3-1 \rceil \},
\]
because of our initial assumption that $C_32^{j_0}<1$.
Furthermore, perhaps after further decrease of $j_0$, we may assume that $5C_2 2^{j_0}\sqrt{N}<1$,
which lets us reduce the integration domain in
 $\int_\Omega |u(x)|^p\,dx$ on the right side of \eqref{eq:hardy} to~a set $\{x\in \Omega : \delta_x < 1\}$.
\end{remark}

\subsection{Example: The half-space}
\begin{prop}\label{halfspace}
  A half-space $\Omega=\R^N_+=\{(x_1,\cdots,x_N):x_N>0\}$, $N\geq 1$, satisfies assumptions (i), (ii) and (iii)
  of Theorem~\ref{hardy} with constants $C_1=C_2=C_4=1$, $C_3=\sqrt{17N-1}$, $j_0=0$ and $\lambda=N-1$.
\end{prop}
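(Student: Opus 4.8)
The plan is to write down the Whitney decomposition of $\R^N_+$ explicitly and to take $E(Q,j)$ to be the Whitney cube of scale $2^j$ lying directly above $Q$ at the same vertical level; assumptions (i)--(iii) then reduce to elementary geometry and a dyadic counting argument.

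First I would record the decomposition. Writing $x=(x',x_N)$ with $x'\in\R^{N-1}$, the scale-$2^m$ cubes of the Whitney decomposition of $\R^N_+$ are
\[
\cW_m(\R^N_+)=\Bigl\{\,S\times[a2^m,(a+1)2^m]\ :\ S\subset\R^{N-1}\text{ a dyadic cube of side }2^m,\ a\in\{\lceil\sqrt N\rceil,\dots,2\lceil\sqrt N\rceil-1\}\,\Bigr\}.
\]
I would check that this is indeed a Whitney decomposition in the sense of Theorem~\ref{hardy}: the cubes are dyadic; at scale $2^m$ they exactly fill the slab $\{\lceil\sqrt N\rceil 2^m\le x_N\le 2\lceil\sqrt N\rceil 2^m\}$, and since $2\lceil\sqrt N\rceil 2^m=\lceil\sqrt N\rceil 2^{m+1}$ consecutive scales abut, so the cubes tile $\R^N_+$ with pairwise disjoint interiors; and for $Q=S\times[a2^m,(a+1)2^m]$ we have $\diam Q=\sqrt N\,2^m$ and $\dist(Q,\partial\R^N_+)=a2^m$, whence, using $\lceil\sqrt N\rceil\le 2\sqrt N$ for $N\ge1$,
\[
\diam Q=\sqrt N\,2^m\ \le\ a2^m=\dist(Q,\partial\R^N_+)\ \le\ (2\lceil\sqrt N\rceil-1)\,2^m\ <\ 4\sqrt N\,2^m=4\diam Q.
\]

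Next I would define $E$ and check (i), (ii). Given $Q=S_Q\times[a_Q2^k,(a_Q+1)2^k]\in\cW_k(\R^N_+)$ with $k<0$ and an integer $j$ with $k<j\le 0$, let $S_Q^{(j)}$ be the unique dyadic cube of side $2^j$ in $\R^{N-1}$ containing $S_Q$, and set
\[
E(Q,j):=S_Q^{(j)}\times[a_Q\,2^j,(a_Q+1)2^j].
\]
Since $a_Q$ is admissible at every scale, $E(Q,j)\in\cW_j(\R^N_+)$ and $\ell(E(Q,j))=2^j$, which is (i) with $C_1=C_2=1$. For (ii), let $x=(x',x_N)\in Q$ and $y=(y',y_N)\in E(Q,j)$; then $x',y'\in S_Q^{(j)}$, so $|x'-y'|\le\diam S_Q^{(j)}=\sqrt{N-1}\,2^j$, while from $x_N\in[a_Q2^k,(a_Q+1)2^k]$, $y_N\in[a_Q2^j,(a_Q+1)2^j]$, $k\le j-1$ and $a_Q\ge1$ one gets $(a_Q+1)2^k\le(a_Q+1)2^{j-1}\le a_Q2^j$, hence $0<x_N\le y_N$ and $|x_N-y_N|=y_N-x_N<y_N\le(a_Q+1)2^j\le2\lceil\sqrt N\rceil 2^j\le 4\sqrt N\,2^j$. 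Combining,
\[
|x-y|^2=|x'-y'|^2+|x_N-y_N|^2<\bigl((N-1)+16N\bigr)2^{2j}=(17N-1)2^{2j},
\]
which is exactly (ii) with $C_3=\sqrt{17N-1}$.

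Finally I would verify (iii). Fix $Q_0=S_0\times[a_02^n,(a_0+1)2^n]\in\cW_n(\R^N_+)$. If $(Q,j)$ with $Q\in\cW_m(\R^N_+)$ satisfies $E(Q,j)=Q_0$, equality of side lengths forces $j=n$, equality of the vertical intervals forces the height index of $Q$ to be $a_0$, and equality of horizontal projections forces $S_Q^{(n)}=S_0$; hence $S_Q$ is one of the $2^{(N-1)(n-m)}$ dyadic subcubes of $S_0$ of side $2^m$, and $Q$ is then determined by $S_Q$. Thus $\#\{(Q,j):Q\in\cW_m(\R^N_+),\,E(Q,j)=Q_0\}\le 2^{(N-1)(n-m)}$, which is (iii) with $C_4=1$ and $\lambda=N-1<N$. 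Taking $j_0=0$ then verifies all hypotheses of Theorem~\ref{hardy}. The one point requiring care is to pin down the Whitney decomposition of the half-space precisely enough that the admissible vertical indices satisfy $a+1\le 2\lceil\sqrt N\rceil\le 4\sqrt N$ --- this is exactly what makes the clean value $C_3=\sqrt{17N-1}=\sqrt{(N-1)+(4\sqrt N)^2}$ come out; once that is in hand, (i)--(iii) are routine.
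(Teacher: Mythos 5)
Your proof is correct and follows essentially the same route as the paper: make the Whitney decomposition of $\R^N_+$ explicit via a vertical slab per scale, define $E(Q,j)$ by keeping the vertical index of $Q$ and passing to the unique scale-$2^j$ dyadic ancestor of the horizontal base, and verify (i)--(iii) by elementary geometry and dyadic counting. The only cosmetic difference is the parametrization of the slab height: the paper places the scale-$2^m$ cubes in the strip $[2^{m+d+1},2^{m+d+2}]$ with $2^d\le\sqrt N<2^{d+1}$ (which gives a purely dyadic family and yields the sharper bound $C_3=\sqrt{N-1+2^{2d+4}}$, reducing to $\sqrt{17N-1}$ after using $2^{2d}\le N$), whereas you place them in $[\lceil\sqrt N\rceil\,2^m,\,2\lceil\sqrt N\rceil\,2^m]$ and reach $\sqrt{17N-1}$ directly via $\lceil\sqrt N\rceil\le 2\sqrt N$; both families satisfy the Whitney bound $\diam Q\le\dist(Q,\partial\Omega)\le 4\diam Q$ and the argument for $E(Q,j)$, (ii), and (iii) is otherwise identical.
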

\begin{proof}
  First, we construct a~possible Whitney decomposition of $\Omega$.
  Let $d\in \Z$ be such that $2^d\leq \sqrt{N} < 2^{d+1}$.
  As $\cW_m(\Omega)$ we take the collection of all dyadic cubes with side length $2^m$ contained
  in a~strip $\{(x_1,\cdots,x_N): 2^{m+d+1} \leq x_N \leq 2^{m+d+2} \}$.
  It is easy to check that the whole collection $\bigcup_{m\in \Z} \cW_m(\Omega)$ is a~Whitney decomposition of $\Omega$.

  We take $j_0:=0$. Let $k<j\leq j_0$ and let $Q\in \cW_k(\Omega)$.
  Then $Q=\times_{i=1}^N [2^k t_i, 2^k (t_i+1)]$ for some  $t_1, \cdots, t_N\in \Z$
  with $2^{k+d+1}\leq 2^k t_N < 2^k (t_N+1) \leq 2^{k+d+2}$. As $E(Q,j)$ we take the cube
  $\times_{i=1}^N [2^j \tau_i, 2^j (\tau_i+1)]$ from $\cW_j(\Omega)$ such that
  the $(N-1)$ dimensional cube $\times_{i=1}^{N-1} [2^j \tau_i, 2^j (\tau_i+1)]$ contains
  $\times_{i=1}^{N-1} [2^k t_i, 2^k (\tau_j+1)]$ and that $\tau_N=t_N$, see Figure~\ref{fig}.

  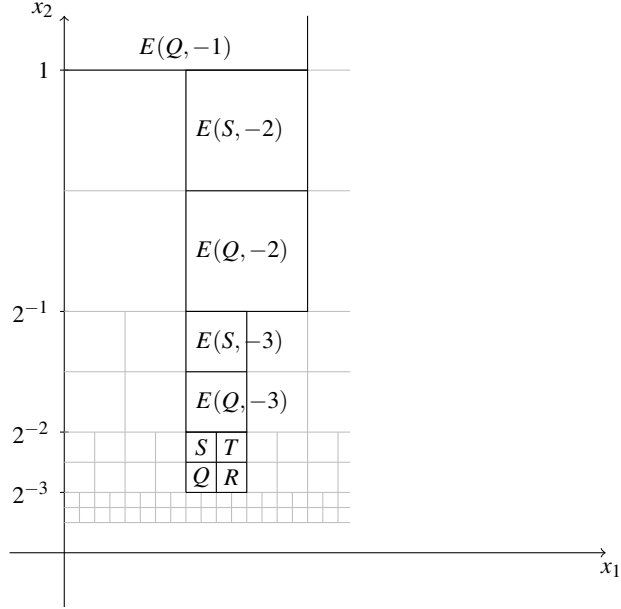
\begin{figure}
    \begin{tikzpicture}[x=1cm,y=1cm,scale=0.8]
      \draw[->,color=black] (-0.9,0) -- (8.9,0);
      \draw[->,color=black] (0,-0.9) -- (0,8.9);
      \draw[shift={(0,1)},color=black] (2pt,0pt) -- (-2pt,0pt) node[left]
           {\footnotesize $2^{-3}$};
      \draw[shift={(0,2)},color=black] (2pt,0pt) -- (-2pt,0pt) node[left]
           {\footnotesize $2^{-2}$};
      \draw[shift={(0,4)},color=black] (2pt,0pt) -- (-2pt,0pt) node[left]
           {\footnotesize $2^{-1}$};
      \draw[shift={(0,8)},color=black] (2pt,0pt) -- (-2pt,0pt) node[left]
           {\footnotesize $1$};
      \draw[shift={(0,9)},color=black] node[left]
           {\footnotesize $x_2$};
      \draw[shift={(9,0)},color=black] node[below]
           {\footnotesize $x_1$};
      \foreach \y in {0.5,0.75,1,1.5,2,3,4,6,8}
      \draw[color=lightgray] (0,\y) -- (4.7,\y);
      \foreach \x in {0.25,0.5,0.75,1,1.25,1.5,1.75,2,2.25,2.5,2.75,3,3.25,3.5,3.75,4,4.25,4.5}
      \draw[color=lightgray] (\x,0.5) -- (\x,1);

      \foreach \x in {0.5,1,1.5,2,2.5,3,3.5,4,4.5}
      \draw[color=lightgray] (\x,1) -- (\x,2);
      \foreach \x in {1,2,3,4}
      \draw[color=lightgray] (\x,2) -- (\x,4);
      \foreach \x in {2,4}
      \draw[color=lightgray] (\x,4) -- (\x,8);
      
      \draw[color=black] (2,1) -- (3,1) -- (3,2) -- (2,2) -- (2,1);
      \draw[color=black] (2.5,1) -- (2.5,2);
      \draw[color=black] (2,1.5) -- (3,1.5);
      \draw[color=black] (2,2) -- (3,2) -- (3,4) -- (2,4) -- (2,2);
      \draw[color=black] (2,3) -- (3,3);      
      \draw[color=black] (2,4) -- (4,4) -- (4,8) -- (2,8) -- (2,4);
      \draw[color=black] (2,6) -- (4,6);      

      \draw[shift={(2.25,1.25)},color=black] node {\footnotesize $Q$};
      \draw[shift={(2.75,1.25)},color=black] node {\footnotesize $R$};
      \draw[shift={(2.25,1.75)},color=black] node {\footnotesize $S$};
      \draw[shift={(2.75,1.75)},color=black] node {\footnotesize $T$};
      \draw[shift={(2,2.5)},color=black] node[right] {\footnotesize $E(Q,-3)$};
      \draw[shift={(2,3.5)},color=black] node[right] {\footnotesize $E(S,-3)$};
      \draw[shift={(2,5)},color=black] node[right] {\footnotesize $E(Q,-2)$};
      \draw[shift={(2,7)},color=black] node[right] {\footnotesize $E(S,-2)$};

      \draw[color=black] (0,8) -- (4,8) -- (4,8.9);      
      \draw[shift={(2,8)},color=black] node[above] {\footnotesize $E(Q,-1)$};
    \end{tikzpicture}
    \caption{Some Whitney cubes $Q$, $R$, $S$, $T\in \cW_{-4}(\R^2_+)$, see proof of Proposition~\ref{halfspace}. Here $d=0$, so these cubes lie in a~strip $2^{-3}\leq x_2 \leq 2^{-2}$.
  We have $E(Q,j)=E(R,j)$ and  $E(S,j)=E(T,j)$ for $j\in \{-3,-2,-1,0\}$ (depicted only for $j=-3$, $j=-2$ and partially $j=-1$).
    } \label{fig}
\end{figure}
  
  Since $E(Q,j) \in \cW_j(\Omega)$, condition (i) holds with $C_1=C_2=1$.

  To verify condition (ii), take $x\in Q$ and $y\in E(Q,j)$.
  Then by the choice of $E(Q,j)$, both points $(x_1,\cdots, x_{N-1})$  and $(y_1,\cdots, y_{N-1})$
  lie in the same $(N-1)$-dimensional cube of side length $2^j$. Therefore,
  \[
  \| (x_1,\cdots, x_{N-1}) -  (y_1,\cdots, y_{N-1}) \| \leq 2^j\sqrt{N-1}.
  \]
  Furthermore, $x_N, y_N \in (0, 2^{j+d+2}]$, hence $|x_N-y_N| \leq 2^{j+d+2}$.
This gives us a bound
\[
\|x-y\| \leq 2^j\sqrt{N-1 + 2^{2d+4}} \leq 2^j\sqrt{17N-1},
\]
hence (ii) follows with $C_3=\sqrt{17N-1}$ or, in fact, even with $C_3=\sqrt{N-1 + 2^{2d+4}}$.

Finally, to verify (iii), let $Q_0\in \cW_n(\Omega)$ and let $Q\in \cW_m(\Omega)$ be such
that $E(Q,j)=Q_0$. By our construction of $E(Q,j)$, this is only possible for  $j=n$ and $m<n$.
Therefore for $m\geq n$ the collection in (iii) is empty, hence there is nothing to prove,
and consequently, we may assume that $m<n$.
We note that the last coordinate of the vertices of $E(Q,j)$
is uniquely determined by $Q$, and vice versa, by the choice $\tau_N=t_N$. Therefore the number
of cubes in (iii) is the number of dyadic $(N-1)$-dimensional cubes of side length $2^m$
contained in a~fixed  dyadic $(N-1)$-dimensional cubes of side length $2^j=2^n$.
This number equals $2^{(N-1)(n-m)}$, hence (iii) holds with $C_4=1$ and $\lambda=N-1$.
\end{proof}

\begin{cor}
  Let $0<p<\infty$.  Then, for every $u \in L^p(\R^N_+)$
  \[
  \int_{\R^N_+} |u(x)|^p \ln^+\left(\frac{1}{x_N}\right) \,dx
  \leq c_1(p,N)  \iint_{\substack{{\R^N_+}\times {\R^N_+}\\ |x-y|<1}}\frac{|u(x)-u(y)|^p}{|x-y|^N}\,dy\,dx
  + c_2(p,N) \int_{\{x_N<1\}} |u(x)|^p\,dx,
  \]
  with\footnote{Here, $\lfloor x\rfloor$ denotes as usual the largest integer $n$ such that $n\leq x$.}
  \begin{align*}
    c_1(p,N) &= (17N-1)^{N/2}(2^{p-1}\vee 1)\ln 2,\\
    c_2(p,N) &= \left((2^{p}\vee 2) + 1 + \lfloor\frac{\log_2(17N-1)}{2}\rfloor\right) \ln 2.
  \end{align*}
  In the one or two-dimensional case, we may take
  \[
  c_1(p,1)=4(2^{p-1}\vee 1)\ln 2 \;,\quad
  c_1(p,2)=17(2^{p-1}\vee 1)\ln 2
  \quad \textrm{and} \quad  c_2(p,1)=c_2(p,2)=\left((2^{p}\vee 2) + 3 \right) \ln 2;
  \]
  in particular, $c_1(2,1)<6$, $c_1(2,2) < 24$ and $c_2(2,1)=c_2(2,2)<5$. 
\end{cor}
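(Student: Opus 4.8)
The plan is to apply Theorem~\ref{hardy} to the half-space $\Omega=\R^N_+$ and then propagate the constants through its proof, feeding in the data furnished by Proposition~\ref{halfspace}: the hypotheses (i)--(iii) hold with $C_1=C_2=C_4=1$, $C_3=\sqrt{17N-1}$, $j_0=0$ and $\lambda=N-1$, and, when the integer $d$ with $2^d\le\sqrt N<2^{d+1}$ equals $0$, even with the sharper value $C_3=\sqrt{N-1+2^{2d+4}}$. Since $\delta_x=\dist(x,\partial\R^N_+)=x_N$ on the half-space, Theorem~\ref{hardy} already yields an inequality of the asserted shape; the only task is to identify $c_1(p,N)$ and $c_2(p,N)$, for which I would simply re-run the last part of the proof of Theorem~\ref{hardy} (the steps after the summation over all $Q_m$ and all $m<j_0$) with the above data in place of abstract constants.

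Concretely, following Remark~\ref{rem:const} I would first replace $j_0=0$ by the largest integer satisfying $C_3 2^{j_0}<1$, namely
\[
j_0=-1-\left\lfloor\frac{\log_2(17N-1)}{2}\right\rfloor ,
\]
and then read off the two coefficients appearing in the final display of that proof. With $C_1=1$ the coefficient of the double integral is $C_3^N(2^{p-1}\vee1)\ln2=(17N-1)^{N/2}(2^{p-1}\vee1)\ln2$, which is exactly $c_1(p,N)$. With $C_1=C_2=C_4=1$ and $\lambda=N-1$ one has $1-2^{\lambda-N}=\frac12$ and $C_1^{N-\lambda}=1$, so the coefficient of the local $L^p$ term equals
\[
\left(2\,(2^{p-1}\vee1)-j_0\right)\ln2=\left((2^p\vee2)+1+\left\lfloor\frac{\log_2(17N-1)}{2}\right\rfloor\right)\ln2=c_2(p,N).
\]

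The last point to verify is that, with this $j_0$, the $L^p$-integration domain on the right collapses to $\{x_N<1\}$, as anticipated in Remark~\ref{rem:const}. Here I would exploit the explicit Whitney decomposition built in the proof of Proposition~\ref{halfspace}: each cube in $\cW_k(\R^N_+)$ lies in the strip $\{2^{k+d+1}\le x_N\le 2^{k+d+2}\}$, so every point $y$ of a Whitney cube of side length $2^k\le 2^{j_0}$ (the only ones surviving in the rearranged sum, since $C_2=1$) satisfies $y_N\le 2^{j_0+d+2}$. Because $2^d\le\sqrt N$ gives $2^{4+2d}=16\cdot4^d\le 16N\le 17N-1$, we obtain $\lfloor\log_2(17N-1)/2\rfloor\ge d+2$, hence $j_0+d+2\le-1$ and $y_N\le\frac12<1$; combining this with the remaining portion of the estimate, which is already supported on $\{\delta_x<1\}=\{x_N<1\}$, gives the stated form. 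For $N\in\{1,2\}$ one has $d=0$, and inserting the sharper constants $C_3=4$ (for $N=1$) and $C_3=\sqrt{17}$ (for $N=2$) produces $c_1(p,1)=4(2^{p-1}\vee1)\ln2$, $c_1(p,2)=17(2^{p-1}\vee1)\ln2$, while in both cases the corresponding largest admissible $j_0$ equals $-3$, whence $c_2(p,1)=c_2(p,2)=((2^p\vee2)+3)\ln2$; the numerical bounds $c_1(2,1)=8\ln2<6$, $c_1(2,2)=34\ln2<24$ and $c_2(2,1)=c_2(2,2)=7\ln2<5$ then follow from $\ln2<0.7$.

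The only genuinely delicate ingredient---the ``hard part''---is the bookkeeping around $j_0$: one must certify that a single integer $j_0$ simultaneously meets the hypothesis $C_3 2^{j_0}<1$ needed to invoke Theorem~\ref{hardy} and is small enough to force the $L^p$-domain down to $\{x_N<1\}$. This is precisely where the elementary dimensional inequality $16N\le17N-1$ (i.e.\ $N\ge1$) enters, through the comparison of $\lfloor\log_2(17N-1)/2\rfloor$ with $d+2$; everything else is routine substitution into the proof of Theorem~\ref{hardy}.
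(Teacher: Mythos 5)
Your proposal is correct and follows essentially the same route as the paper: invoke Theorem~\ref{hardy} via Proposition~\ref{halfspace}, shrink $j_0$ as in Remark~\ref{rem:const}, and read off the constants from the last display of Theorem~\ref{hardy}'s proof. The constant bookkeeping is right: with $C_1=C_2=C_4=1$, $\lambda=N-1$, the prefactor of the $L^p$-term is $2(2^{p-1}\vee 1)-j_0$, which with $j_0=-1-\lfloor\log_2(17N-1)/2\rfloor$ gives exactly $c_2(p,N)$, and for $N\in\{1,2\}$ the sharper $C_3=\sqrt{N-1+2^{2d+4}}$ and $j_0=-3$ recover the improved constants.

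One aspect of your write-up deserves emphasis, because it patches an imprecision in the paper's own one-line justification. To shrink the $L^p$-integration domain to $\{x_N<1\}$, the generic argument in Theorem~\ref{hardy} (cf.\ Remark~\ref{rem:const}) requires $5C_2 2^{j_0}\sqrt{N}<1$, whereas the paper's proof only records $5C_2 2^{j_0}\le 5/8<1$ — the factor $\sqrt{N}$ is dropped. That shortcut is harmless for $N\le 2$, but with $j_0=-1-\lfloor\log_2(17N-1)/2\rfloor$ the product $5\sqrt{N}\,2^{j_0}$ can exceed $1$ in higher dimensions (take $N=11$: $j_0=-4$ and $5\sqrt{11}\,2^{-4}\approx 1.04$). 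Your argument circumvents this by using the \emph{explicit} half-space Whitney structure — cubes in $\cW_k(\R^N_+)$ sit in $\{2^{k+d+1}\le x_N\le 2^{k+d+2}\}$ — and the elementary inequality $16\cdot 4^d\le 16N\le 17N-1$, which yields $j_0+d+2\le -1$ and hence $y_N\le 1/2<1$ on the support of the offending term, regardless of $N$. This is the right way to close the step and makes the claimed constants provably correct for all $N$.

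One tiny nit: you should also note, as your reasoning implicitly uses, that the terms of the sum indexed by $m<0$ (the second piece in the final splitting of Theorem~\ref{hardy}'s proof) are already supported on $\{\delta_x<1\}$ by construction, so it is only the rearranged sum $S$ whose domain needs the argument above; together they give precisely $\{x_N<1\}$.
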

\begin{proof}
  The first part follows from Theorem~\ref{hardy} and Proposition~\ref{halfspace} together with Remark~\ref{rem:const}; for the integration domain $\{x_N<1\}$, this is because of Remark~\ref{rem:const} together with inequality $5C_22^{j_0} \leq 5/8 < 1$. For the constants for $N\in \{1,2\}$ we use the fact that in Proposition~\ref{halfspace}
  we may in fact take $C_3=\sqrt{N-1 + 2^{2d+4}}$, where $d=0$ for $N\leq 2$, thus giving
  $C_3=4$ or  $C_3=\sqrt{17}$ for $N=1$ or $2$, respectively, and (modified) $j_0=-3$ for $N\leq 2$.
\end{proof}

\subsection{Sufficient conditions for logarithmic boundary Hardy inequality}\label{subs:plump}
As we will show in
this subsection, in Corollary~\ref{cor:hardyplump} below, for inequality \eqref{eq:hardy} to hold
it is enough for the open set $\Omega$ to be \emph{locally plump} in the sense of the following definition.

\begin{definition}
A set $A\subset \R^N$ is \emph{locally $\kappa$-plump}
with $\kappa\in (0,1)$ if, for each $0<r< 1$ and each $x\in \bar{A}$, there
is $z\in \bar B_r(x)$ such that
$B_{\kappa r}(z)\subset A$. We also say that $A$ is locally plump, if there is some $\kappa\in(0,1)$ such that $A$ is locally $\kappa$-plump.
\end{definition}

In the usual definition of plumpness, the same is asserted for $0<r<\diam(A)$ instead of $0<r<1$ in our definition. This makes
a~difference for unbounded sets only. For example, an infinite strip is locally plump, but not plump.

The following lemma is the main tool in this subsection.
  \begin{lemma}\label{assouad}
    Assume that an open set $\Omega \subset \R^N$, $\Omega\neq\R^N$ is locally $\kappa$-plump.
    Let $n\in \Z$ and $R>0$ satisfy $0<2^n \le R < 4$.
Then, for every $\omega\in \partial \Omega$
\[
\#  \{Q\in \cW_n(\Omega)\,:\, Q\subset B_R(\omega)\}
\le C \bigg(\frac{R}{2^n}\bigg)^\lambda\,,
\]
with constants $\lambda<N$ and $C$ depending only on $N$ and $\kappa$.
  \end{lemma}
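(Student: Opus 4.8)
The strategy is a standard volume/packing argument exploiting plumpness, carried out at a single scale. Fix $\omega \in \partial\Omega$ and the integer $n$ with $0 < 2^n \le R < 4$, and let $\cQ = \{Q \in \cW_n(\Omega) : Q \subset B_R(\omega)\}$. I want to bound $\#\cQ$. The two inputs are: (a) all cubes in $\cQ$ are pairwise (almost) disjoint dyadic cubes of the \emph{same} side length $2^n$, so $\#\cQ \cdot 2^{nN} = \sum_{Q\in\cQ} |Q|$; and (b) since each $Q \in \cW_n(\Omega)$ satisfies $\diam Q \le \dist(Q,\partial\Omega) \le 4\diam Q$, every such $Q$ lies within bounded distance $C 2^n$ of $\partial\Omega$, hence $Q \subset B_{R + C2^n}(\omega) \subset B_{C' R}(\omega)$ for an absolute constant $C'$ (using $2^n \le R$). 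So trivially $\#\cQ \le C'' (R/2^n)^N$, which is the estimate with $\lambda = N$; the whole point is to improve the exponent to some $\lambda < N$.

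To gain the improvement, I would use plumpness to show that the cubes of $\cW_n(\Omega)$ near $\partial\Omega$ cannot fill up a full $N$-dimensional ball: a positive fraction of the volume of $B_{C'R}(\omega)$ must be occupied by a ball $B_{\kappa r}(z) \subset \Omega$ that is ``deep'' — at scale $r$ comparable to $R$ — and therefore is tiled by Whitney cubes of side length $\gg 2^n$, not by cubes in $\cW_n$. More precisely, I would run a dyadic induction on scales between $2^n$ and $R$: at the top scale $\sim R$, plumpness (applied at $\omega$ with radius $r \sim R$) produces a ball $B_{\kappa r}(z) \subset \Omega$ inside $B_{C'R}(\omega)$ that contains \emph{no} cube of $\cW_n(\Omega)$ (because Whitney cubes inside it have side length comparable to $\dist(\cdot,\partial\Omega) \gtrsim \kappa r \gg 2^n$), so a definite fraction $\theta = \theta(N,\kappa) \in (0,1)$ of the volume at the top scale is ``forbidden'' for $\cW_n$-cubes. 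Iterating this over the roughly $\log_2(R/2^n)$ dyadic subscales — at each step plumpness removes a fixed proportion of the remaining admissible volume on each surviving subcube — the admissible volume for $\cW_n$-cubes inside $B_{C'R}(\omega)$ is at most $C R^N (1-\theta')^{\log_2(R/2^n)} = C R^N (R/2^n)^{-\log_2(1/(1-\theta'))}$. Dividing by $2^{nN}$ gives $\#\cQ \le C (R/2^n)^{N - \log_2(1/(1-\theta'))}$, i.e.\ the claim with $\lambda = N - \log_2(1/(1-\theta')) < N$ and $C,\lambda$ depending only on $N$ and $\kappa$. (This is the ``Assouad-type'' dimension bound for the boundary of a plump set, which is why the lemma is labelled \texttt{assouad}.)

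The main obstacle is bookkeeping the induction cleanly: one must choose the scales and the enlarged balls so that (i) the ball $B_{\kappa r}(z)$ furnished by plumpness genuinely excludes all of $\cW_n(\Omega)$ — this needs $2^n$ small compared to $\kappa r$, which is why the hypothesis $2^n \le R$ (and the upper cutoff $R < 4$, i.e.\ $r < 1$, matching the range in the definition of local plumpness) is essential — and (ii) the excluded balls at consecutive scales are nested properly so that the fractions of removed volume multiply rather than merely add. A convenient way to organize (ii): subdivide $B_{C'R}(\omega)$ dyadically; on each dyadic descendant cube that still meets $\partial\Omega$, apply plumpness at a boundary point inside it to carve out a definite-proportion ``hole'', and only cubes meeting $\partial\Omega$ at the next finer scale can still contain $\cW_n$-cubes; after $\sim \log_2(R/2^n)$ generations each surviving cube has side length $\sim 2^n$, and the surviving volume has been multiplied down by $(1-\theta')$ each generation. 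I would also need the elementary geometric fact, used repeatedly, that a Whitney cube contained in a ball $B_\rho(z) \subset \Omega$ with $B_\rho(z)$ itself at distance $\ge \rho$ from $\partial\Omega$ must have side length $\gtrsim \rho$; this follows immediately from $\diam Q \le \dist(Q,\partial\Omega)$ together with $\dist(Q,\partial\Omega) \ge \rho$ for $Q \subset B_\rho(z)$. Everything else is routine.
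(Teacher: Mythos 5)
Your plan is the right one and matches the paper's proof in its essentials (a dyadic carving induction that at each generation removes a definite proportion of the available region and thereby drives the packing exponent below $N$). One concrete slip: the elementary geometric fact you invoke at the end, ``a Whitney cube contained in a ball $B_\rho(z)\subset\Omega$ with $\dist(B_\rho(z),\partial\Omega)\geq\rho$ must have $\diam Q\gtrsim\rho$,'' does not follow from the inequality you cite. The inequality $\diam Q\leq\dist(Q,\partial\Omega)$ combined with $\dist(Q,\partial\Omega)\geq\rho$ gives only an \emph{upper} bound $\diam Q\leq\dist(Q,\partial\Omega)$ and says nothing about $\diam Q$ from below. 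What you actually need is the \emph{other} Whitney inequality, $\dist(Q,\partial\Omega)\leq 4\,\diam Q$: combined with $\dist(Q,\partial\Omega)\geq\rho$ it gives $\diam Q\geq\rho/4$. This is a one-line fix but, as written, your cited step would fail.

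On the bookkeeping issues (i) and (ii) you raise: the paper resolves them exactly in the spirit you propose, but with one organizational choice worth noting. It works not with halving but with a $K$-fold dyadic step, where $K=K(N,\kappa)$ is chosen large enough (roughly $2^K\gtrsim\sqrt{N}/\kappa$) so that the ball $B_{\kappa r}(z)$ produced by plumpness is guaranteed to swallow an entire dyadic subcube $R$ of side $2^{m-K}$ of the given cube $Q\in\cD_m$. This both makes the nesting automatic (the removed subcube is one of the $2^{KN}$ dyadic grandchildren at level $m-K$, so removed fractions multiply across generations) and gives a slightly stronger statement than your ``no $\cW_n$-cube'' version: because $R$ sits strictly inside a \emph{single} Whitney cube $T$ (the one containing the plumpness center $z$), and distinct Whitney cubes have disjoint interiors, $R$ contains no Whitney cube of \emph{any} scale. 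This is Lemma~\ref{plump:alt} in the paper, which feeds the counting Lemma~\ref{plump:lambda} with $\lambda=\tfrac{1}{K}\log_2(2^{KN}-1)<N$, and the final lemma follows by covering $B_R(\omega)$ with a bounded number of dyadic cubes at the top scale $m=n+sK\approx\log_2 R$. Your sketch captures all of this; the main thing missing is the explicit choice of the step $K$ to make the carving deterministic, and the corrected Whitney inequality.
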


 We note that if $\Omega$ is plump, then $\partial \Omega$ is porous and hence  the (upper) Assouad dimension of $\partial \Omega$ is strictly smaller than $N$,
 see \cite[Theorem 5.2]{MR1608518}. The latter in turn implies condition (T1) on \cite[page 685]{DyVa}. This allows one to use  \cite[Lemma 10]{DyVa}, which is very similar to our Lemma~\ref{assouad}.
 However, going that route would require checking what changes when one replaces plumpness
 by local plumpness, which we assume, and also checking if different versions of definitions are compatible. 
 Therefore, we prefer to keep this subsection self-contained and prove Lemma~\ref{assouad} directly. To this end, we need the following two simple lemmas. The proof of the first one resembles a~part of the proof of
 \cite[Theorem 5.2]{MR1608518}.

\begin{lemma}\label{plump:alt}
  Assume that an open set $\Omega \subset \R^N$, $\Omega\neq\R^N$ is locally $\kappa$-plump.
  Then there exists a~natural number $K=K(N,\kappa)$ with the following property:
  for any cube $Q \in \cD_m$ with $m\leq 1$, there exists a~cube $R\in \cD_{m-K}$, $R\subset Q$
  which does not contain any Whitney cube.
\end{lemma}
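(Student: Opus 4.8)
The plan is to find, inside an arbitrary $Q\in\cD_m$ with $m\leq 1$, a dyadic sub‑cube $R\in\cD_{m-K}$, $R\subseteq Q$, which is either disjoint from $\Omega$ or lies so deep in $\Omega$ that $\dist(R,\partial\Omega)>4\diam(R)$. Such an $R$ automatically contains no Whitney cube: if $Q'\in\cW(\Omega)$ with $Q'\subseteq R$, then $Q'\subseteq\Omega$, so $R\cap\Omega=\emptyset$ is impossible; and $\dist(Q',\partial\Omega)\leq 4\diam(Q')\leq 4\diam(R)$ together with $\dist(Q',\partial\Omega)\geq\dist(R,\partial\Omega)>4\diam(R)$ is impossible as well. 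So the whole task reduces to producing such an $R$.

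The first step is to produce a Euclidean ball $B_\rho(z)$ with $B_\rho(z)\subseteq Q$, with $\rho\geq\tfrac{\kappa}{8}\,2^m$, and with $B_\rho(z)$ either contained in $\Omega$ or disjoint from $\overline{\Omega}$. Let $x_Q$ be the centre of $Q$, so $B_{2^{m-1}}(x_Q)\subseteq Q$. If $\dist(x_Q,\partial\Omega)\geq 2^{m-3}$, then $B_{2^{m-3}}(x_Q)$ misses $\partial\Omega$; since $\R^N\setminus\partial\Omega=\Omega\sqcup\inn(\Omega^c)$ is a disjoint union of two open sets and the ball is connected, it lies entirely in $\Omega$ or entirely in $\inn(\Omega^c)$, and $2^{m-3}<2^{m-1}$ places it inside $Q$, so we take $z=x_Q$, $\rho=2^{m-3}$. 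If instead $\dist(x_Q,\partial\Omega)<2^{m-3}$, we pick $\omega\in\partial\Omega$ with $|\omega-x_Q|<2^{m-3}$ and apply local $\kappa$‑plumpness at $\omega\in\overline{\Omega}$ with scale $r=2^{m-3}$ (admissible since $m\leq 1$ forces $r\leq\tfrac14<1$), obtaining $z\in\overline{B_r(\omega)}$ with $B_{\kappa r}(z)\subseteq\Omega$; every point of $B_{\kappa r}(z)$ is within $(2+\kappa)r<3\cdot 2^{m-3}<2^{m-1}$ of $x_Q$, so $B_{\kappa r}(z)\subseteq Q$, and we take $\rho=\kappa r$. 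In either case $\rho\geq\tfrac{\kappa}{8}\,2^m$, using $\kappa<1$ in the first case.

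Finally I would fix $K=K(N,\kappa)$ to be any integer with $2^K>64\sqrt{N}/\kappa$ and choose a cube $R\in\cD_{m-K}$ with $z\in R$ (any one, if $z$ lies on a grid hyperplane). Then $\diam(R)=\sqrt{N}\,2^{m-K}$ and $4\diam(R)<\rho/2$ by the choice of $K$; hence on one hand $R\subseteq\overline{B_{\diam(R)}(z)}\subseteq B_{\rho/2}(z)\subseteq B_\rho(z)\subseteq Q$, and on the other hand, if $B_\rho(z)\cap\overline{\Omega}=\emptyset$ then $R\cap\Omega=\emptyset$, while if $B_\rho(z)\subseteq\Omega$ then $\dist(z,\partial\Omega)\geq\rho$, so $\dist(R,\partial\Omega)\geq\rho-\rho/2=\rho/2>4\diam(R)$. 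By the reduction in the first paragraph, $R$ contains no Whitney cube, which proves the lemma.

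The one genuine difficulty — and the only place local plumpness enters — is the case where the centre, and hence the bulk, of $Q$ sits within $\sim 2^m$ of $\partial\Omega$: there $\Omega\cap Q$ may contain Whitney cubes of every small scale, so no naive choice of sub‑cube works. Local plumpness is exactly what guarantees, at the scale $r\sim 2^m$, a ball $B_{\kappa r}(z)$ of $\Omega$ that still fits well inside $Q$; the rest is the bookkeeping of the radii $r$, $\rho$ and the exponent $K$ so that the chosen dyadic sub‑cube lands strictly inside $Q$ and strictly far from $\partial\Omega$.
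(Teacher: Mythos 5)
Your proof is correct, and it takes a route that differs in structure from the paper's. The paper first looks at a dyadic cube $S\in\cD_{m-K}$ containing the centre $z_0$ of $Q$: if $S$ misses $\Omega$ it is the desired $R$, and otherwise it picks $x\in S\cap\Omega$, applies plumpness at scale $r\approx 2^{m-1}$ to obtain a ball $B_{\kappa r}(y)\subset\Omega$ inside $Q$, identifies the Whitney cube $T\ni y$, shows $\ell(T)>2^{m-K}$ via $\kappa r\leq\delta_y\leq 5\diam T$, and finally takes $R$ to be a dyadic sub-cube of $T$ of side $2^{m-K}$; a proper sub-cube of a Whitney cube cannot contain another Whitney cube. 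You instead reduce the conclusion to an abstract dichotomy (a cube $R$ with $R\cap\Omega=\emptyset$ or $\dist(R,\partial\Omega)>4\diam R$ contains no Whitney cube), split on whether the centre $x_Q$ is within $2^{m-3}$ of $\partial\Omega$, and in either case manufacture a ball $B_\rho(z)\subset Q$ with $\rho\geq\frac{\kappa}{8}2^m$ that misses $\partial\Omega$ entirely -- by connectedness of the ball when $x_Q$ is far from the boundary, and by plumpness centred at a nearby boundary point when $x_Q$ is close. The tiny dyadic cube around $z$ then satisfies the dichotomy. The advantages are roughly symmetric: the paper's version makes the role of the Whitney cube $T$ and the lower bound on its size explicit (mirroring the argument in~\cite{MR1608518}), while yours isolates the ``misses $\partial\Omega$, fits in $Q$, has radius $\gtrsim 2^m$'' ball as the single object one needs and never mentions Whitney cubes until the final criterion. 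Both use plumpness once, at a single scale comparable to $\ell(Q)$, and both give $K$ of the same order $\log_2(\sqrt N/\kappa)+O(1)$ (the paper's constant is a bit sharper, but this is irrelevant).
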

\begin{proof}
  We will show that $K$ such that $2^K > 2(5\sqrt{N}/\kappa + 2)$ satisfies the property in the Lemma.
  To this end, let us fix an arbitrary integer $m\leq 1$ and an arbitrary cube $Q\in \cD_m$.
  Let $z_0\in Q$ be the centre of the cube~$Q$ and let $S\in \cD_{m-K}$ be a~cube
  containing $z_0$. If $S\cap \Omega = \emptyset$, then we may take $R=S$ and we are done.
  In the other case, choose a~point $x\in S\cap \Omega$, take $r=2^{m-1}-2^{m-K+1}$ and consider a~ball
  $B(x,r)$. Since $0<r<1$, by local plumpness it follows that there exists a~point $y\in \overline{B_r(x)}$ such that $B_{\kappa r}(y)\subset \Omega$.
  Let $T$ be a~Whitney cube containing $y$.

  First, we claim that $\ell(T)>2^{m-K}$. Indeed, since $T$ contains a~point $y$ with $\delta_y \geq \kappa r$, it holds
  \[
  \kappa r \leq \delta_y \leq \dist(T,\partial \Omega) + \diam(T) \leq 5 \diam(T),
  \]
  therefore $\diam(T) \geq \kappa r/5$, and $\ell(T) \geq \frac{\kappa r}{5\sqrt{N}} > 2^{m-K}$ by our choice of~$r$ and~$K$.

  Furthermore, we note that $\overline{B_r(x)}\subset \inn Q$, because $x$ and $z_0$ lie both in a~cube $S$ with
  $\ell(S)=2^{m-K}$ and $r + \ell(S) = 2^{m-1} - 2^{m-K} < \ell(Q)/2$. Therefore $y\in \inn Q$ and hence all dyadic cubes
  of side length $2^{m-K}$ containing $y$ lie inside $Q$ (usually there is just one such cube,
  unless $y$ lies on their boundary). As $R$ we take one of these cubes that is contained in~$T$
  and the lemma follows.
\end{proof}

\begin{lemma}\label{plump:lambda}
  Assume that an open set $\Omega \subset \R^N$, $\Omega\neq\R^N$ is locally $\kappa$-plump.
  There exists a~constant $0 < \lambda < N$ depending only on $N$ and $\kappa$, with the following property:
  for all cubes $Q \in \cD_m$ with $m\leq 1$ and all $s\in \{0,1,\cdots\}$,
  \begin{equation*}\label{plump:Wcount}
  \# \{ S \in \cW_{m-Ks}(\Omega) : S\subset Q \} \leq 2^{Ks \lambda},
  \end{equation*}
  where $K$ is as in Lemma~\ref{plump:alt}.
\end{lemma}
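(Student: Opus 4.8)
The plan is a short induction on $s$, with Lemma~\ref{plump:alt} supplying all the content. I would first fix the exponent once and for all by setting
\[
\lambda := \frac{\log_2\!\left(2^{NK}-1\right)}{K},
\]
equivalently $2^{K\lambda}=2^{NK}-1$, where $K=K(N,\kappa)$ is the integer from Lemma~\ref{plump:alt}. Since $K\ge 1$ and $1\le 2^{NK}-1<2^{NK}$, this choice gives $0<\lambda<N$ and depends only on $N$ and $\kappa$, as required. The base case $s=0$ is immediate: if $S\in\cW_m(\Omega)$ satisfies $S\subset Q$ with $Q\in\cD_m$, then $S$ and $Q$ are dyadic cubes of equal side length $2^m$ with one contained in the other, so $S=Q$, and the count is at most $1=2^{0}$.

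For the inductive step, assume the bound holds for $s$ and fix $Q\in\cD_m$ with $m\le 1$. Let $\cR$ be the collection of the $2^{NK}$ dyadic subcubes of $Q$ of side length $2^{m-K}$; note each $R\in\cR$ lies in $\cD_{m-K}$ with $m-K\le 0\le 1$. By Lemma~\ref{plump:alt} applied to $Q$ there is some $R_0\in\cR$ containing no Whitney cube, so $R_0$ contributes nothing below. Every $S\in\cW_{m-K(s+1)}(\Omega)$ with $S\subset Q$ has side length $2^{m-K(s+1)}\le 2^{m-K}$, hence by dyadic nesting is contained in a unique member of $\cR$; therefore
\[
\#\{S\in\cW_{m-K(s+1)}(\Omega):S\subset Q\}\ \le\ (2^{NK}-1)\,\max_{R\in\cR}\#\{S\in\cW_{m-K(s+1)}(\Omega):S\subset R\}.
\]
Since $m-K(s+1)=(m-K)-Ks$, the induction hypothesis applied to each $R\in\cR$ (a cube in $\cD_{m-K}$ with $m-K\le 1$) bounds each inner count by $2^{Ks\lambda}$, and using $2^{NK}-1=2^{K\lambda}$ we obtain $2^{K\lambda}\cdot 2^{Ks\lambda}=2^{K(s+1)\lambda}$, which closes the induction.

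The one point I would be careful about — and essentially the only thing beyond bookkeeping with dyadic scales — is that the cube $R_0$ produced by Lemma~\ref{plump:alt} genuinely contributes $0$, i.e.\ that "$R_0$ contains no Whitney cube" there means precisely $\#\{S\in\cW(\Omega):S\subset R_0\}=0$. This is clear from how $R_0$ is constructed in that proof: either $R_0\cap\Omega=\emptyset$, or $R_0$ is a proper dyadic subcube of a single Whitney cube $T$ with $\ell(T)>\ell(R_0)$, and since distinct Whitney cubes have pairwise disjoint interiors while $\inn R_0\subset\inn T$, no Whitney cube can lie inside $R_0$. Everything else is routine.
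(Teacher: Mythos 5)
Your proof is correct and follows essentially the same inductive approach as the paper: same choice of $\lambda$ with $2^{K\lambda}=2^{NK}-1$, and the same use of Lemma~\ref{plump:alt} to discard one of the $2^{NK}$ subcubes at each refinement step. The only cosmetic difference is that you run a single clean induction directly on $s$, whereas the paper fixes $s$ and performs an inner induction over the auxiliary collections $\cF_{m-Kj}$ of intermediate-scale cubes; the bookkeeping is identical in substance.
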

\begin{proof}
  Let $\lambda$ be such that $(2^{KN}-1)^{1/K} = 2^\lambda$; clearly, $0<\lambda<N$.
  Let us fix $s\geq 0$ and put
  \[
  \cF_{m-Kj} =  \{ S \in \cD_{m-Kj} : R \subset S \subset Q\textrm{ for some $R \in \cW_{m-Ks}(\Omega)$} \}.
  \]
  We will prove the following bound
  \begin{equation}\label{eq:ind}
  \# \cF_{m-Kj} \leq  2^{Kj \lambda}, \qquad j=0,1,\cdots, s,
  \end{equation}
  using induction by~$j$. For $j=0$ inequality \eqref{eq:ind} holds trivially, as $\cF_m$ may contain at most one cube, namely~$Q$.
  Assume that $\# \cF_{m-Kj} \leq  2^{Kj \lambda}$ for some $j\geq 0$ and $j<s$.
  Let $S\in \cF_{m-Kj}$. We decompose $S$ into a~union of $2^{KN}$ dyadic cubes from $\cD_{m-K(j+1)}$. By Lemma~\ref{plump:alt},
  at least one of those cubes  contains
  no Whitney cubes with side length $2^{m-Ks}$. Therefore $\cF_{m-K(j+1)}$ has at most $2^{KN}-1=2^{\lambda K}$ times more elements than $\cF_{m-Kj}$,
  which ends the proof of the induction step.

  The Lemma follows from \eqref{eq:ind} with $j=s$.
\end{proof}

  \begin{proof}[Proof of Lemma~\ref{assouad}]
     Let $K$ be as in Lemma~\ref{plump:alt} and $\lambda$ as in Lemma~\ref{plump:lambda}.
     Let  $\omega\in \partial \Omega$. Let  $n\in \Z$ and $R>0$ satisfy $0<2^n \le R < 4$.
     We choose $s\in \{0,1,\cdots\}$ such that
     \[
     2^{n+sK} \leq R < 2^{n+sK+K},
     \]
     and put $m=n+sK$. We define
     \[
     \cF = \{Q\in \cD_m : Q \cap B_R(\omega) \neq \emptyset \},
     \]
     i.e., $\cF$ is the minimal collection of dyadic cubes from $\cD_m$ that covers $B_R(\omega)$.
     Since $\frac{2R}{2^m} < 2^{K+1}$, it follows that $\# \cF \leq (2^{K+1}+1)^N$.

     To each cube from the collection $\cF$ we apply Lemma~\ref{plump:lambda} (here we use
     the assumption $R<4$, which implies $m\leq 1$). We conclude
     that each such a~cube contains at most $2^{sK\lambda}$ Whitney cubes from $\cW_{m-sK} = \cW_n$.
     Therefore
     \[
       \#  \{Q\in \cW_m(\Omega)\,:\, Q\subset B_R(\omega)\}
       \leq
       \# \cF \cdot 2^{sK\lambda}
       \leq (2^{K+1}+1)^N \bigg(\frac{R}{2^n}\bigg)^\lambda\,. \qedhere
       \]
  \end{proof}

\begin{prop}\label{plump}
 If an open set $\Omega \subset \R^N$ is locally $\kappa$-plump,
  then it satisfies the assumptions (i), (ii), and (iii) of Theorem~\ref{hardy}.
\end{prop}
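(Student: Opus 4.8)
The plan is to construct the cubes $E(Q,j)$ directly from local plumpness and then verify (i), (ii), (iii) one at a time; (iii) is the substantive step and will be reduced to Lemma~\ref{assouad}. Throughout we assume $\Omega\neq\R^N$, the case $\Omega=\R^N$ being vacuous for the purposes of the Whitney decomposition. Fix a negative integer $j_0$, to be specified at the very end (it will depend only on $N$ and $\kappa$ and satisfy $2^{j_0}<1$). Given $Q\in\cW_k(\Omega)$ with $k<j_0$, let $x_Q$ be the centre of $Q$; by the Whitney property $\delta_{x_Q}\le\dist(Q,\partial\Omega)+\diam Q\le 5\sqrt N\,2^k$, so we may pick $\omega_Q\in\partial\Omega$ with $|x_Q-\omega_Q|\le 5\sqrt N\,2^k$. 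For each $j$ with $k<j\le j_0$ we apply local $\kappa$-plumpness to the point $\omega_Q\in\overline\Omega$ and radius $r=2^j\in(0,1)$, obtaining $z=z(Q,j)\in\overline{B_{2^j}(\omega_Q)}$ with $B_{\kappa 2^j}(z)\subset\Omega$, and we define $E(Q,j)$ to be the Whitney cube of $\Omega$ that contains $z$.

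For (i): since $B_{\kappa 2^j}(z)\subset\Omega$ we have $\delta_z\ge\kappa 2^j$, while the Whitney property gives $\delta_z\le\dist(E(Q,j),\partial\Omega)+\diam E(Q,j)\le 5\diam E(Q,j)$, hence $\ell(E(Q,j))\ge\frac{\kappa}{5\sqrt N}2^j$; on the other hand $\diam E(Q,j)\le\dist(E(Q,j),\partial\Omega)\le\delta_z\le|z-\omega_Q|\le 2^j$, so $\ell(E(Q,j))\le 2^j$. Thus (i) holds with $C_1=\frac{\kappa}{5\sqrt N}$ and $C_2=1$. For (ii): for $x\in Q$ and $y\in E(Q,j)$, using $2^k<2^j$,
\[
|x-y|\le|x-x_Q|+|x_Q-\omega_Q|+|\omega_Q-z|+|z-y|\le\sqrt N\,2^k+5\sqrt N\,2^k+2^j+2^j\le(6\sqrt N+2)\,2^j,
\]
so (ii) holds with $C_3=6\sqrt N+2$.

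For (iii), the crux, fix $Q_0\in\cW_n(\Omega)$ and pick $\omega\in\partial\Omega$ within distance $5\sqrt N\,2^n$ of the centre of $Q_0$. Suppose $E(Q,j)=Q_0$ for some $Q\in\cW_m(\Omega)$ and some admissible $j$ (so $m<j\le j_0$). By (i), $2^n=\ell(Q_0)\in[C_1 2^j,\,2^j]$, so $n\le j\le n+\log_2(1/C_1)$; in particular $j$ ranges over at most $1+\log_2(5\sqrt N/\kappa)$ integers, and $2^m\le 2^{j-1}\le 2^n/(2C_1)$. Applying (ii) with $y$ the centre of $Q_0$ shows that every such $Q$ lies in $B_R(\omega)$ with $R:=(C_3/C_1+5\sqrt N)\,2^n=:C'2^n$. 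Since $C'>1/(2C_1)$ we get $2^m<R$, and $R\le C'2^{j_0}<4$ once $j_0$ is chosen so small that $C'2^{j_0}<4$; hence Lemma~\ref{assouad} applies with scale $2^m$ and radius $R$ and bounds, for each fixed $j$, the number of such $Q$ by $C\,(R/2^m)^\lambda=C\,(C')^\lambda\,2^{\lambda(n-m)}$, where $\lambda<N$ and $C$ are the constants from Lemma~\ref{assouad}. Summing over the admissible values of $j$ yields (iii) with this same $\lambda$ and $C_4=(1+\log_2(5\sqrt N/\kappa))\,C\,(C')^\lambda$. Finally one takes $j_0$ to be any negative integer with $C'2^{j_0}<4$ (which, since $C'\ge 5\sqrt N>4$, automatically forces $2^{j_0}<1$, as was used in the construction). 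The only delicate point is (iii): the geometric decay $2^{\lambda(n-m)}$ with $\lambda<N$ is exactly what makes the rearranged sum in the proof of Theorem~\ref{hardy} summable, and it rests entirely on Lemma~\ref{assouad}, i.e.\ on the porosity of the boundary of a locally plump set; everything else is bookkeeping with the Whitney constants.
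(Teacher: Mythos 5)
Your proof is correct and follows essentially the same route as the paper's: you construct $E(Q,j)$ by invoking local plumpness at scale comparable to $2^j$ near the Whitney cube $Q$, verify (i) and (ii) directly from the Whitney constants, and obtain (iii) from Lemma~\ref{assouad}. The only differences are bookkeeping choices that change the resulting constants but not the argument --- you center the plumpness ball at a boundary point $\omega_Q$ rather than at an interior point $x_0\in Q$ as the paper does, and for (iii) you apply Lemma~\ref{assouad} once with a $j$-uniform radius $R=C'2^n$ and multiply by the bounded number of admissible $j$, whereas the paper uses a $j$-dependent radius and sums a geometric series.
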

\begin{proof}
  We take an integer $j_0\leq 0$ such that $(17\sqrt{N}+1)2^{j_0}< 4$.
  Let  $Q\in \cW_k(\Omega)$ with $k<j_0$  and let $j$ be such that $k<j\leq j_0$.
  By our assumption on $j_0$, we have $2^j \sqrt{N} \leq 2^{j_0}\sqrt{N} < 1$.
  Therefore by local plumpness condition with $r=2^j \sqrt{N}$
  and $x_0\in Q$ such that $\dist(x_0,\partial\Omega)=\dist(Q,\partial\Omega)$, there exists a~ball $B_{\kappa r}(z)\subset \Omega$
  with $z\in  \bar B_r(x_0)$. As $E(Q,j)$ we select any Whitney cube $\tilde{Q}$ containing $z$ (usually such a~cube is unique, unless $z$ lies at the boundary of Whitney cubes).
  Observe that
  \[
  \kappa r \leq \dist(z, \partial\Omega) \leq 5\, \diam(\tilde{Q}),
  \]
  and
  \[
  \diam(\tilde{Q}) \leq \dist(z, \partial \Omega)
   \leq r + \dist(x_0, \partial \Omega) \leq r + 4\sqrt{N} 2^k 
  \leq 3r,
  \]
  therefore $\frac{\kappa r}{5} \leq \diam(\tilde{Q}) \leq 3r$.
  Hence (i) holds with $C_1 = \frac{\kappa}{5}$ and $C_2 = 3$.

  By triangle inequality, we observe that for all $x\in Q$ and $y\in E(Q,j)$ it holds
  \[
  |x-y| \leq \diam(Q) + |x_0 - z| + \diam(\tilde{Q}) < r + r + 3r = 5r.
  \]
  Therefore (ii) holds with $C_3 = 5\sqrt{N}$.

  Finally, to verify (iii), let us fix a cube $Q_0 \in \cW_n(\Omega)$ and let $m\in \Z$. If $Q_0 = E(Q,j)$ for some $Q\in \cW_m(\Omega)$ and some $j>m$, then
  \[
  2^n = \ell(Q_0) \geq C_1 2^j \geq C_1 2^{m+1},
  \]
  hence $m \leq n - 1 - \log_2 C_1$. In other words, if $m > n-1-\log_2 C_1$, then the set $\{(Q,j): Q\in \cW_m(\Omega) \text{ and } E(Q,j) = Q_0 \}$ is empty.
  Therefore we may assume that $m \leq n - 1 - \log_2 C_1$.

  Let us consider some $j>m$ and suppose that $E(Q, j)=Q_0$ for some $Q\in \cW_m(\Omega)$.
  Let $\omega\in \partial \Omega$ be such that $\dist(\omega, Q_0) = \dist(Q_0, \partial\Omega)$.
  Then for all $x\in Q$, by triangle inequality
\[
    |\omega-x| \leq \dist(\omega, Q_0)  + C_3 2^j 
    \leq
    4\,\diam(Q_0) + C_32^j \leq (4C_2 \sqrt{N} + C_3)2^j,
\]
which means that $Q\subset B_R(\omega)$ with $R= (4C_2 \sqrt{N} + C_3 + 1)2^j$.
Therefore, by Lemma~\ref{assouad}
(note that our assumption on~$j_0$ implies that $R<4$), the number of such cubes $Q$ cannot exceed
\[
C_A (14\sqrt N)^{N+\lambda}\bigg(\frac{R}{2^m}\bigg)^\lambda
= C_A (14\sqrt N)^{N+\lambda} (4C_2 \sqrt{N} + C_3 + 1)^\lambda 2^{(j-m)\lambda}
\,.
\]
On the other hand,
\[
2^n = \ell(Q_0) = \ell(E(Q,j)) \geq C_1 2^j,
\]
so $m<j\leq n - \log_2 C_1$. Hence, the number of pairs $(Q,j)$ in question cannot exceed
\begin{align*}
\sum_{j\in \Z:m<j\leq n - \log_2 C_1} & C_A (14\sqrt N)^{N+\lambda} (4C_2 \sqrt{N} + C_3 + 1)^\lambda 2^{(j-m)\lambda} \\
&\leq  C_A (14\sqrt N)^{N+\lambda} (4C_2 \sqrt{N} + C_3 + 1)^\lambda \frac{C_1^{-\lambda}}{1-2^{-\lambda}} 2^{(n-m)\lambda}.
\end{align*}
So also (iii) holds with $C_4 = C_A (14\sqrt N)^{N+\lambda} (4C_2 \sqrt{N} + C_3 + 1)^\lambda \frac{C_1^{-\lambda}}{1-2^{-\lambda}}$. 
\end{proof}

Theorem \ref{intro:hardy} follows immediately as a special case from the following.

\begin{cor}\label{cor:hardyplump}
  Let $\Omega\subset \R^N$, $\Omega\neq \R^N$ be an open, locally plump set.
  In particular, $\Omega$ may be a Lipschitz set (bounded or not).
  Let $0<p<\infty$.  Then, there is $c>0$ such that for every $u \in L^p(\Omega)$ inequality \eqref{eq:hardy} holds.
\end{cor}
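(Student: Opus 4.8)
The plan is simply to chain together the two preceding results. Assume first that $\Omega\subset\R^N$, $\Omega\neq\R^N$, is open and locally $\kappa$-plump. By Proposition~\ref{plump}, $\Omega$ then satisfies hypotheses (i), (ii) and (iii) of Theorem~\ref{hardy}, where the constants $C_1,C_2,C_3,C_4$, the integer $j_0$, and the exponent $\lambda<N$ all depend only on $N$ and $\kappa$. Theorem~\ref{hardy} now gives inequality~\eqref{eq:hardy} with a constant $c=c(N,\kappa,p)>0$, which is precisely the asserted conclusion. Hence the only point still requiring an argument is the parenthetical claim that a Lipschitz set, bounded or not, is locally plump.

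For this, recall that a Lipschitz open set $\Omega$ comes with constants $L>0$ and $r_0>0$, uniform over $\partial\Omega$ (this uniformity being what gives the notion content for unbounded $\Omega$), such that near each $\omega\in\partial\Omega$, after a rigid motion placing $\omega$ at the origin, $\Omega$ agrees in the cylinder $\{\,|x'|<r_0,\ |x_N|<r_0\,\}$ with the supergraph $\{\,x_N>\gamma(x')\,\}$ of a function $\gamma$ of Lipschitz constant at most $L$ with $\gamma(0)=0$. We may assume $r_0<1$. Fix $x\in\overline\Omega$ and $0<r<1$. If $B_{r/2}(x)\subset\Omega$, take $z=x$. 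Otherwise pick $\omega\in\partial\Omega$ with $|x-\omega|<r/2$, put $\varrho=\tfrac12\min\{r,r_0\}$, and in the rotated frame centered at $\omega$ let $z=\varrho(1+L^2)^{-1/2}(0,\ldots,0,1)$. Since the open cone $\{\,y:\ y_N>L|y'|\,\}$ lies in $\Omega$ throughout the cylinder, an elementary estimate gives $B_{c_0\varrho}(z)\subset\Omega$ for some $c_0=c_0(L)>0$, while $|x-z|\le|x-\omega|+\varrho<r$. Thus $z\in\overline{B_r(x)}$ and $B_{\kappa r}(z)\subset\Omega$ with $\kappa$ depending only on $L$ and $r_0$, so $\Omega$ is locally $\kappa$-plump, and the Corollary follows from the first paragraph.

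There is no genuine obstacle here: the analytic substance was already carried by Theorem~\ref{hardy} (the Whitney-decomposition version of the logarithmic boundary Hardy inequality) and by Proposition~\ref{plump} (which verifies its three hypotheses under local plumpness), while the passage to Lipschitz sets is the classical interior corkscrew/cone property. If desired, one could instead state the Corollary only for locally plump sets and record the Lipschitz case as a separate remark.
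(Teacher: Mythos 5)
Your proof is correct and follows exactly the paper's route: combine Proposition~\ref{plump} with Theorem~\ref{hardy}, then note that Lipschitz sets are locally plump. The only difference is that you spell out the (standard) cone/corkscrew argument showing that a uniformly Lipschitz set is locally plump, whereas the paper merely records this as known.
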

\begin{proof}
The corollary follows by combining Theorem~\ref{hardy} and Proposition~\ref{plump}, noting that Lipschitz sets are also locally plump.
\end{proof}

\subsection{Applications}
\begin{cor}
Let $\Omega$ be an open bounded Lipschitz subset of $\rn.$ Then there exists a positive constant $C=C(p,\Omega)$ such that for all $u\in C_c^\infty(\Omega)$ we have
$$
\int_{\Omega}|u(x)|^p k_{\Omega}(x)dx\leq C\iint_{\substack{x,\,y\in\Omega\\|x-y|<1}}\frac{|u(x)-u(y)|^p}{|x-y|^N}\,dxdy+C\int_{\Omega}|u(x)|^p\,dx,
$$
where $k_{\Omega}(x)=\int_{\rn\setminus\Omega}\textbf{k}(x-y)dy$ is the killing measure associated to the kernel $\textbf{k}$. 
\end{cor}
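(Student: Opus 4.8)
The plan is to deduce the corollary directly from the logarithmic boundary Hardy inequality (Theorem~\ref{intro:hardy}), the only new ingredient being a pointwise upper bound of the killing measure $k_{\Omega}$ by the logarithmic weight $\ln^+(1/\delta_x)$.

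\textbf{Step 1 (pointwise bound on $k_{\Omega}$).} First I would observe that for every $x\in\Omega$ the open ball $B_{\delta_x}(x)$ is contained in $\Omega$: it is connected, it contains the point $x\in\Omega$, and it is disjoint from $\partial\Omega$ by the very definition of $\delta_x=\dist(x,\partial\Omega)$. Hence $B_1(x)\setminus\Omega\subseteq B_1(x)\setminus B_{\delta_x}(x)$, and the latter set is empty as soon as $\delta_x\geq 1$. Since $\mathbf{k}(z)=C_{N,p}1_{B_1}(z)|z|^{-N}$ and $C_{N,p}\,\omega_N=p$, this gives
\[
k_{\Omega}(x)=C_{N,p}\int_{B_1(x)\setminus\Omega}\frac{dy}{|x-y|^{N}}
\leq C_{N,p}\int_{B_1(x)\setminus B_{\delta_x}(x)}\frac{dy}{|x-y|^{N}}
= C_{N,p}\,\omega_N\,\ln^+\!\left(\frac{1}{\delta_x}\right)
= p\,\ln^+\!\left(\frac{1}{\delta_x}\right)
\]
for all $x\in\Omega$; here the middle equality holds both when $\delta_x<1$ (by integrating in polar coordinates) and when $\delta_x\geq 1$ (both sides vanish).

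\textbf{Step 2 (apply Theorem~\ref{intro:hardy}).} Multiplying by $|u(x)|^p$ and integrating over $\Omega$, for any $u\in C_c^\infty(\Omega)\subset L^p(\Omega)$ I obtain
\[
\int_{\Omega}|u(x)|^p k_{\Omega}(x)\,dx\leq p\int_{\Omega}|u(x)|^p\,\ln^+\!\left(\frac{1}{\delta_x}\right)dx.
\]
Since $\Omega$ is a bounded Lipschitz set it is in particular locally plump, so Theorem~\ref{intro:hardy} (equivalently Corollary~\ref{cor:hardyplump}) applies and bounds the right-hand side by $c\,p$ times
\[
\iint_{\substack{x,\,y\in\Omega\\|x-y|<1}}\frac{|u(x)-u(y)|^p}{|x-y|^N}\,dx\,dy+\int_{\Omega}|u(x)|^p\,dx .
\]
Combining the two displays yields the assertion with $C=c\,p$, where $c=c(p,\Omega)$ is the constant furnished by Theorem~\ref{intro:hardy}.

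\textbf{Main obstacle.} There is no real obstacle: the only point requiring care is the elementary geometric observation in Step~1 that the open ball $B_{\delta_x}(x)$ lies inside $\Omega$, which is precisely what turns the killing measure $k_{\Omega}$ into a quantity dominated by the logarithmic weight. All the analytic difficulty has already been absorbed into Theorem~\ref{intro:hardy}.
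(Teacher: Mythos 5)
Your proof is correct and follows the same route as the paper: bound the killing measure $k_{\Omega}$ pointwise by the logarithmic weight and invoke Corollary~\ref{cor:hardyplump} (equivalently Theorem~\ref{intro:hardy}). Your Step~1 in fact supplies a cleaner estimate $k_{\Omega}(x)\leq p\ln^+(1/\delta_x)$ with an explicit constant, slightly sharper than the bound $k_\Omega(x)\leq C + C\ln^+(1/\delta_x)$ quoted in the paper, but the argument is otherwise the same.
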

\begin{proof}
It follows from Corollary \ref{cor:hardyplump} and the following estimate of the killing measure $k_{\Omega}$ for bounded Lipschitz set $\Omega$:
$$
k_\Omega(x)\leq C+C\ln^+\left(\frac{1}{\delta(x)}\right)\,\,\text{ for a constant }C>0\,\,\text{ and }x\in\Omega.
$$
\end{proof}

\begin{cor}\label{extend}
Let $\Omega \subset \R^N$ be a nonempty, open, locally plump set. Let $u\in L^p(\Omega)$ be a function satisfying 
$$
\iint_{\substack{\Omega\times\Omega\\ |x-y|<1}}\frac{|u(x)-u(y)|^p}{|x-y|^N}\,dxdy<\infty.
$$
Then, the trivial extension $\tilde{u}:\R^N\to\R$ of $u$ belongs to $X^p_0(\Omega)$ and there is $C>0$ such that
$$
\|u\|_{X^p_0(\Omega)}\leq C\left(\|u\|_{L^p(\Omega)}+\iint_{\substack{\Omega\times\Omega\\ |x-y|<1}}\frac{|u(x)-u(y)|^p}{|x-y|^N}\,dxdy\right).
$$
\end{cor}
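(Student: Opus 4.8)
The plan is to reduce everything to the logarithmic boundary Hardy inequality proved above. Since, by our convention, a function $u\in L^p(\Omega)$ is already defined on all of $\R^N$ and vanishes outside $\Omega$, the trivial extension $\tilde u$ coincides with $u$, so it suffices to show that the Gagliardo-type seminorm $[u]_{X^p_0(\Omega)}$ is finite and controlled as claimed; then membership in $X^p_0(\Omega)$ follows from its definition. If $\Omega=\R^N$ there is no boundary contribution and the assertion is immediate, so I would assume $\Omega\neq\R^N$.

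First I would split the defining double integral according to $\R^N=\Omega\cup(\R^N\setminus\Omega)$ and use that $u\equiv 0$ outside $\Omega$ together with the symmetry of $\mathbf{k}$, obtaining
\[
[u]_{X^p_0(\Omega)}^p
=C_{N,p}\iint_{\substack{\Omega\times\Omega\\|x-y|<1}}\frac{|u(x)-u(y)|^p}{|x-y|^N}\,dxdy
+2\int_\Omega |u(x)|^p\,k_\Omega(x)\,dx,
\]
where $k_\Omega(x):=\int_{\R^N\setminus\Omega}\mathbf{k}(x-y)\,dy=C_{N,p}\int_{(\R^N\setminus\Omega)\cap B_1(x)}|x-y|^{-N}\,dy$ is the killing measure associated to $\mathbf{k}$. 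The first term on the right is finite by hypothesis, so the whole argument comes down to bounding the boundary term.

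The second step is an elementary estimate for $k_\Omega$. Since $\Omega$ is open, the open ball $B_{\delta_x}(x)$ is contained in $\Omega$, hence every $y\notin\Omega$ satisfies $|x-y|\ge\delta_x$; consequently $(\R^N\setminus\Omega)\cap B_1(x)=\emptyset$ when $\delta_x\ge 1$ and is contained in the annulus $\{\,\delta_x\le|x-y|<1\,\}$ otherwise. Integrating in polar coordinates gives
\[
k_\Omega(x)\le C_{N,p}\,\omega_N\,\ln^+\left(\frac{1}{\delta_x}\right)\qquad\text{for every }x\in\Omega.
\]
Inserting this into the boundary term and applying Corollary \ref{cor:hardyplump} (which applies since $\Omega$ is locally plump and $\Omega\neq\R^N$) to the function $u\in L^p(\Omega)$ yields a constant $c>0$ such that the boundary term is bounded by
\[
2C_{N,p}\omega_N\,c\left(\iint_{\substack{\Omega\times\Omega\\|x-y|<1}}\frac{|u(x)-u(y)|^p}{|x-y|^N}\,dxdy+\|u\|_{L^p(\Omega)}^p\right).
\]
Combining this with the splitting above shows $[u]_{X^p_0(\Omega)}^p<\infty$, hence $u\in X^p_0(\Omega)$, together with an estimate of the form $\|u\|_{X^p_0(\Omega)}^p\le C\big(\|u\|_{L^p(\Omega)}^p+\iint_{\Omega\times\Omega,\,|x-y|<1}|u(x)-u(y)|^p|x-y|^{-N}\,dxdy\big)$. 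Taking $p$-th roots and using the subadditivity of $t\mapsto t^{1/p}$ (with an extra dimensional factor when $0<p<1$) gives the claimed inequality.

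I do not anticipate a genuine obstacle: the substantive work has already been done in Corollary \ref{cor:hardyplump}. The only points that need care are the elementary killing-measure bound — in particular noting that $k_\Omega(x)$ vanishes when $\delta_x\ge 1$, which is why $\ln^+$ rather than $\ln$ suffices — and, if one wants to be scrupulous in the range $0<p<1$, the passage from the $p$-th-power estimate to the norm estimate.
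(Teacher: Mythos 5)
Your proof is correct and follows essentially the same route as the paper's: split $[u]_{X^p_0(\Omega)}^p$ by symmetry into the interior double integral plus a boundary term weighted by the killing measure $k_\Omega$, bound $k_\Omega(x)\leq C\ln^+(1/\delta_x)$, and invoke Corollary~\ref{cor:hardyplump}. The only difference is that you derive the killing-measure bound directly (via the annulus $\{\delta_x\leq|x-y|<1\}$ and polar coordinates) rather than quoting \cite[eq.~(3.6)]{CW19}, which is a small self-contained improvement; your explicit handling of $\Omega=\R^N$ and the $0<p<1$ caveat are both correct observations.
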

\begin{proof}
By symmetry it holds
\begin{align*}
[u]_{X^p_0(\Omega)}&=C_{N,p}\iint_{\substack{\Omega\times\Omega\\ |x-y|<1}}\frac{|u(x)-u(y)|^p}{|x-y|^N}\,dxdy+ 2C_{N,p}\int_{\Omega}|u(x)|^p\int_{\R^N\setminus \Omega}1_{B_1(x)}(y)|x-y|^{-N}\,dy\,dx.
\end{align*}
As calculated in \cite[equation (3.6)]{CW19}, it holds 
\begin{equation}\label{cutting off}
\int_{\R^N\setminus \Omega}1_{B_1(x)}(y)|x-y|^{-N}\,dy\leq 2\ln(\delta_x^{-1})\quad\text{for $x\in \Omega$ with $\delta_x<1$}.
\end{equation}
In particular, there is $C>0$ such that
$$
\int_{\Omega}|u(x)|^p\int_{\R^N\setminus \Omega}1_{B_1(x)}(y)|x-y|^{-N}\,dy\,dx\leq C\left(\,\|u\|_{L^p(\Omega)}^p+\int_\Omega |u(x)|^p \ln^+\left(\frac{1}{\delta_x}\right) \,dx\right).
$$
The claim follows from here using Corollary \ref{cor:hardyplump}.
\end{proof}

\begin{remark}\label{cut off}
Let us mention that if $\Omega$ is a bounded open Lipschitz set, then in particular $\Omega$ satisfies the assumptions of Corollaries \ref{cor:hardyplump} and \ref{extend}. Moreover, inequality \eqref{cutting off} also holds reversed with a suitable constant, that is, $\int_{\R^N\setminus \Omega}1_{B_1(x)}(y)|x-y|^{-N}\,dy$ is comparable to $\ln(\delta_x^{-1})$ for $x\in B_1(\partial\Omega)\cap\Omega$.
\end{remark}

The next result connects the density result by  Proposition \ref{dense} with a density of a seemingly different norm in connection with Theorem \ref{hardy}.

\begin{cor}\label{density2}
Let $\Omega\subset \R^N$ be an open bounded Lipschitz set. Then, 
$X^p_0(\Omega)$ coincides with the closure of $C^{\infty}_c(\Omega)$ with respect to the norm
$$
\|u\|_{X^p(\Omega)}=\left(\int_\Omega |u(x)|^p \,dx+\iint_{\substack{\Omega\times\Omega\\ |x-y|<1}}\frac{|u(x)-u(y)|^p}{|x-y|^N}\,dxdy\right)^{\frac{1}{p}}.
$$
\end{cor}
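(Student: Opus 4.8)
The plan is to show that the two norms $\|\cdot\|_{X^p(\Omega)}$ and $\|\cdot\|_{X^p_0(\Omega)}$ are equivalent on $X^p_0(\Omega)$, and then to invoke the density statement of Proposition~\ref{dense}. First, recall from the proof of Corollary~\ref{extend} that, by symmetry of the kernel $\mathbf{k}$,
$$
[u]_{X^p_0(\Omega)}^p=C_{N,p}\iint_{\substack{\Omega\times\Omega\\ |x-y|<1}}\frac{|u(x)-u(y)|^p}{|x-y|^N}\,dxdy+2C_{N,p}\int_{\Omega}|u(x)|^p\int_{\R^N\setminus\Omega}1_{B_1(x)}(y)|x-y|^{-N}\,dy\,dx
$$
for every $u\in X^p_0(\Omega)$. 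The inequality $\|u\|_{X^p(\Omega)}\le C\|u\|_{X^p_0(\Omega)}$ is then immediate, since the first term on the right dominates, up to the factor $C_{N,p}$, the Gagliardo-type part of $\|u\|_{X^p(\Omega)}$, while the $L^p$-parts coincide.

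For the reverse inequality I would argue exactly as in the proof of Corollary~\ref{extend}: bound the killing term using \eqref{cutting off} by $C\bigl(\|u\|_{L^p(\Omega)}^p+\int_\Omega|u(x)|^p\ln^+(\delta_x^{-1})\,dx\bigr)$, and then apply the logarithmic boundary Hardy inequality (Corollary~\ref{cor:hardyplump}, which is applicable since a bounded Lipschitz set is locally plump) to estimate $\int_\Omega|u(x)|^p\ln^+(\delta_x^{-1})\,dx$ by a constant times the local double integral over $\{(x,y)\in\Omega\times\Omega:|x-y|<1\}$ plus $C\|u\|_{L^p(\Omega)}^p$. Combining these two steps yields $[u]_{X^p_0(\Omega)}^p\le C\|u\|_{X^p(\Omega)}^p$, hence $\|u\|_{X^p_0(\Omega)}\le C'\|u\|_{X^p(\Omega)}$.

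With the norm equivalence established, the conclusion follows quickly. By Corollary~\ref{extend}, the trivial extension identifies $X^p_0(\Omega)$ with the set of all $u\in L^p(\Omega)$ whose local double integral over $\{(x,y)\in\Omega\times\Omega:|x-y|<1\}$ is finite, which is precisely the set on which $\|\cdot\|_{X^p(\Omega)}$ is a finite norm; hence $\bigl(X^p_0(\Omega),\|\cdot\|_{X^p(\Omega)}\bigr)$ is a normed space, and by the equivalence just obtained together with completeness of $\bigl(X^p_0(\Omega),\|\cdot\|_{X^p_0(\Omega)}\bigr)$ it is in fact a Banach space. Since equivalent norms induce the same topology, the $\|\cdot\|_{X^p(\Omega)}$-closure and the $\|\cdot\|_{X^p_0(\Omega)}$-closure of $C^\infty_c(\Omega)$ inside this space coincide, and by Proposition~\ref{dense} the latter equals $X^p_0(\Omega)$.

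I do not expect a serious obstacle here, as all the analytic content is already packaged in Corollary~\ref{extend} and the Hardy inequality. The only point requiring a little care — the nearest thing to an obstacle — is to make sure the ambient space in which one takes the closure of $C^\infty_c(\Omega)$ under $\|\cdot\|_{X^p(\Omega)}$ is well-defined and complete, so that ``the closure'' is unambiguous; this is exactly what the identification in Corollary~\ref{extend} supplies, so no genuinely new argument is needed beyond citing it.
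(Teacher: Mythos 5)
Your proof is correct and takes essentially the same route as the paper: establish the equivalence of $\|\cdot\|_{X^p(\Omega)}$ and $\|\cdot\|_{X^p_0(\Omega)}$ on $X^p_0(\Omega)$ using the decomposition of $[\cdot]_{X^p_0(\Omega)}^p$ from Corollary~\ref{extend}, the upper bound \eqref{cutting off} on the killing term, and the logarithmic Hardy inequality (Corollary~\ref{cor:hardyplump}), then invoke the density of $C^\infty_c(\Omega)$ in $X^p_0(\Omega)$ from Proposition~\ref{dense}. The only small difference is cosmetic: the paper cites the two-sided comparability from Remark~\ref{cut off}, while you correctly observe that only the upper bound \eqref{cutting off} is needed; and you add a short (correct, if slightly superfluous) discussion pinning down the ambient space for the closure.
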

\begin{proof}
By the fact that $\int_{\R^N\setminus \Omega}1_{B_1(x)}(y)|x-y|^{-N}\,dy$ and $\ln(\delta_x^{-1})$ for $x\in B_1(\partial\Omega)\cap\Omega$ are comparable in $B_1(\partial\Omega)\cap \Omega$, see Remark \ref{cut off} or \cite[equations (3.6) and (3.7)]{CW19}, it follows that $\|\cdot\|_{X^p(\Omega)}$ and $\|\cdot \|_{X^p_0(\Omega)}$ are equivalent due to Corollary \ref{cor:hardyplump} with a similar calculation as in the proof of Corollary \ref{extend}. This finishes the proof.
\end{proof}

\begin{remark}
    Corollaries \ref{extend} and \ref{density2} should be seen as an analog to the fact that $W^{s,p}_0(\Omega)$, $W^{s,p}(\Omega)$, and $\cW^{s,p}_0(\Omega)$ coincide for $sp<1$. For the definition of these function spaces, see Section \ref{function spaces}.
\end{remark}

\section{The Dirichlet problem}\label{dirichlet problem}
Throughout this section, we assume $\Omega\subset \R^N$ is an open bounded set and $1<p<\infty$. Recall

$$
V(\Omega,\R^N)=\left\{u\in L^{p}_0\cap L^{\min\{1,p-1\}}_0\;:\; \int_{\Omega}\int_{\R^N}|u(x)-u(y)|^p\textbf{k}(x-y)\,dxdy<\infty\right\}.
$$
The space $V(\Omega,\R^N)$ is used to define supersolutions in the weak setting and uses that supersolutions must satisfy a certain minimal regularity across the boundary of $\Omega$, so that $X^p_0(\Omega)$ can be used as a test-function space. For the definition of the \textit{tail spaces} $L^{p}_0$, $L^{p-1}_0$, and $L^1_0$ see \eqref{tail space}. We begin by showing that $\cE_{L,p}$ is well-defined on $V(\Omega,\R^N)\times X^p_0(\Omega)$.

\begin{lemma}\label{lemma-welldefined}
Let $u\in V(\Omega,\R^N)$ and $v\in X^p_0(\Omega)$. Then $\cE_{L,p}(u,v)$ is well-defined
and finite.
\end{lemma}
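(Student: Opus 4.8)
The plan is to show that each of the three pieces in the decomposition \eqref{defi-bilinear},
\[
\cE_{L,p}(u,v)=\cE_p(u,v)+\cF_p(u,v)+\rho_N(p)\int_{\R^N}g(u(x))v(x)\,dx ,
\]
is \emph{absolutely} convergent. Two structural facts are used throughout: $v$ is supported in the bounded set $\Omega$, and $u\in L^p_0$ together with the boundedness of $\Omega$ forces $u|_\Omega\in L^p(\Omega)\subset L^{p-1}(\Omega)$. The zero-order term is then immediate from H\"older: $|\rho_N(p)|\int_\Omega|u|^{p-1}|v|\le |\rho_N(p)|\,\|u\|_{L^p(\Omega)}^{p-1}\|v\|_{L^p(\Omega)}<\infty$. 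The two nonlocal terms are the substance.

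For the near term $\cE_p$, observe that the integrand $g(u(x)-u(y))(v(x)-v(y))\mathbf{k}(x-y)$ vanishes unless $x\in\Omega$ or $y\in\Omega$. From $u\in V(\Omega,\R^N)$ we have $\int_\Omega\int_{B_1(x)}|u(x)-u(y)|^p|x-y|^{-N}\,dy\,dx<\infty$, and by the symmetry $x\leftrightarrow y$ this also bounds the same integral over the region $\{|x-y|<1\}\cap(\{x\in\Omega\}\cup\{y\in\Omega\})$; on the other hand $\iint_{|x-y|<1}|v(x)-v(y)|^p|x-y|^{-N}\,dx\,dy=[v]_{X^p_0(\Omega)}^p/C_{N,p}<\infty$. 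Applying H\"older's inequality with exponents $\tfrac{p}{p-1}$ and $p$ against the measure $\mathbf{k}(x-y)\,dx\,dy$ and using $|g(a)|=|a|^{p-1}$ then gives $|\cE_p(u,v)|<\infty$.

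For the far term $\cF_p$, the integrand $G(x,y):=g(u(x)-u(y))(v(x)-v(y))-g(u(x))v(x)-g(u(y))v(y)$ is symmetric in $x,y$ and vanishes when both $x,y\notin\Omega$, so one splits the domain into $\{x,y\in\Omega\}$ and $\{x\in\Omega,\ y\notin\Omega\}$ (the region $\{x\notin\Omega,\ y\in\Omega\}$ being dealt with by the symmetry of $G$ and of $\mathbf{j}$). On $\{x,y\in\Omega\}$, boundedness of $\Omega$ gives $\mathbf{j}(x-y)\le C_{N,p}$, and the crude bound $|u(x)-u(y)|^{p-1}\le C(|u(x)|^{p-1}+|u(y)|^{p-1})$ combined with $u|_\Omega\in L^p(\Omega)$, $v\in L^p(\Omega)$ and H\"older makes $\iint_{\Omega\times\Omega}|G(x,y)|\,dx\,dy$ finite. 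On $\{x\in\Omega,\ y\notin\Omega\}$ one has $v(y)=0$, so $G(x,y)=\bigl(g(u(x)-u(y))-g(u(x))\bigr)v(x)$; here Lemma~\ref{estimate-general-1} yields $|g(u(x)-u(y))-g(u(x))|\le C|u(y)|^{p-1}$ for $1<p\le 2$ and $|g(u(x)-u(y))-g(u(x))|\le C\bigl(|u(x)|^{p-2}|u(y)|+|u(y)|^{p-1}\bigr)$ for $p\ge 2$. Since $x\in\Omega$ is bounded and $|x-y|>1$, there is $c=c(\Omega)>0$ with $|x-y|\ge c(1+|y|)$, hence $\int_{|x-y|>1}|u(y)|^q|x-y|^{-N}\,dy\le c^{-N}\|u\|_{L^q_0}^q$ for $q\in\{1,p-1\}$; the required tail-space memberships hold by assumption ($q=p-1=\min\{1,p-1\}$ when $1<p\le2$) or by Lemma~\ref{included} ($u\in L^1_0\cap L^p_0$ when $p\ge 2$). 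Combining this with $\|v\|_{L^1(\Omega)}<\infty$ and $\int_\Omega|u(x)|^{p-2}|v(x)|\,dx\le\|u\|_{L^p(\Omega)}^{p-2}\|v\|_{L^p(\Omega)}<\infty$ closes the last region, so $\cF_p(u,v)$ is finite and the lemma follows.

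The main obstacle is precisely the region $\{x\in\Omega,\ y\notin\Omega\}$ of the far term: the kernel $z\mapsto|z|^{-N}$ is non-integrable at infinity and the nonlinearity obstructs a direct estimate, so one must exploit the cancellation $g(u(x)-u(y))-g(u(x))$ to extract a factor $|u(y)|$ (resp.\ $|u(y)|^{p-1}$) and then absorb it using exactly the tail-space regularity built into the definition of $V(\Omega,\R^N)$, together with the comparability of $|x-y|$ and $1+|y|$ for $x$ ranging over a bounded set. Everything else is routine bookkeeping with H\"older's inequality and the boundedness of $\Omega$.
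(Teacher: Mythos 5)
Your proof is correct and follows essentially the same route as the paper's: the same three-term decomposition of $\cE_{L,p}$, Hölder for the zero-order term, the $V$-seminorm against $[v]_{X^p_0(\Omega)}$ for $\cE_p$, and for $\cF_p$ the same split into $\Omega\times\Omega$ and the cross region $\{x\in\Omega,\ y\notin\Omega\}$, closed with the difference estimates of Lemma~\ref{estimate-general-1} (split into $p\le 2$ and $p>2$) together with the tail-space regularity $u\in L^{p-1}_0$ (and $L^1_0$ when $p>2$) built into $V(\Omega,\R^N)$ and Lemma~\ref{included}. The only cosmetic deviation is bounding the $\Omega\times\Omega$ portion of $\cF_p$ by the pointwise bound $|u(x)-u(y)|^{p-1}\le C(|u(x)|^{p-1}+|u(y)|^{p-1})$ rather than Hölder as the paper does; both yield the same conclusion.
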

\begin{proof}
Note that, $u\in L^p_{loc}(\R^N)$ by assumption and $u\in L^t_0$ for any $t\in[1,p]$, see Lemma \ref{included}. Thus, it holds 
$$
\int_{\R^N}|g(u(x))v(x)|\,dx=\int_{\Omega}|g(u(x))v(x)|\,dx\leq \|u\|_{L^{p}(\Omega)}^{p-1}\|v\|_{L^p(\Omega)},
$$
which bounds the last term in $\cE_{L,p}(u,v)$. For $\cF_p(u,v)$, note that
\begin{align*}
\iint_{\R^N\times\R^N}&\Big|g(u(x)-u(y))(v(x)-v(y))-g(u(x))v(x)-g(u(y))v(y))\Big|\textbf{j}(x-y)\,dxdy\\
&\leq \iint_{\Omega\times\Omega}\Big|g(u(x)-u(y))(v(x)-v(y))\Big|\textbf{j}(x-y)\,dxdy+2C_{N,p} |\Omega| \|u\|_{L^{p}(\Omega)}^{p-1}\|v\|_{L^p(\Omega)}\\
&\qquad + 2\int_{\Omega}\int_{\R^N\setminus \Omega}\Big|g(u(x)-u(y))-g(u(x))\Big||v(x)|\textbf{j}(x-y)\,dydx.
\end{align*}
Here, the first term can be bounded by
\begin{align*}
\iint_{\Omega\times\Omega}\Big|g(u(x)-u(y))&(v(x)-v(y))\Big|\textbf{j}(x-y)\,dxdy\\
&\leq \left(\,\,\iint_{\Omega\times\Omega}|u(x)-u(y)|^p\textbf{j}(x-y)\,dxdy\right)^{\frac{p-1}{p}}\left(\,\,\iint_{\Omega\times\Omega}|v(x)-v(y)|^p\textbf{j}(x-y)\,dxdy\right)^{\frac{1}{p}},
\end{align*}
where the right-hand side is finite, since
$$
\iint_{\Omega\times\Omega}|w(x)-w(y)|^p\textbf{j}(x-y)\,dxdy\leq 2^{p}C_{N,p}|\Omega|\|w\|_{L^p(\Omega)}^p
$$
for $w=u$ or $w=v$. For the last term in $\cF_p(u,v)$, we have to separately consider $p\in(1,2]$ and $p>2$.\\
\textbf{Case 1:} $p\in(1,2]$. By Lemma \ref{estimate-general-1} we have, for a constant $c=c(p)>0$
$$
|g(a-b)-g(a)|\leq c|b|^{p-1}\quad \text{for all $a,b\in \R$.}
$$
Thus,
\begin{align*}
\int_{\Omega}\int_{\R^N\setminus \Omega}\Big|g(u(x)-u(y))-g(u(x))\Big||v(x)|\textbf{j}(x-y)\,dydx&\leq c_p\int_{\Omega}|v(x)|\int_{\R^N\setminus \Omega}|u(y)|^{p-1}\textbf{j}(x-y)\,dydx\\
&\leq \tilde{c}\|v\|_{L^p(\Omega)} \|u\|_{L^{p-1}_0}^{p-1},
\end{align*}
for a constant $\tilde{c}=\tilde{c}(p,|\Omega|,N)>0$.\\
\textbf{Case 2:} $p>2$. By Lemmas \ref{estimate-general-1} and \ref{estimate-general-2}, we have for a constant $c=c(p)>0$
$$
|g(a-b)-g(a)|\leq c|b|(|b|^{p-2}+|a|^{p-2})\quad \text{for all $a,b\in \R$.}
$$
Thus, in a similar way
\begin{align*}
\int_{\Omega}&\int_{\R^N\setminus \Omega}\Big|g(u(x)-u(y))-g(u(x))\Big||v(x)|\textbf{j}(x-y)\,dydx\\
&\leq c\int_{\Omega}\int_{\R^N\setminus \Omega}|u(y)|^{p-1}|v(x)|\textbf{j}(x-y)\,dydx+c\int_{\Omega}\int_{\R^N\setminus \Omega}|u(x)|^{p-2}|v(x)||u(y)|\textbf{j}(x-y)\,dydx\\
&\leq\tilde{c}\|v\|_{L^p(\Omega)} \|u\|_{L^{p-1}_0}^{p-1}+\tilde{c}\|u\|_{L^{p-1}(\Omega)}^{p-2}\|v\|_{L^p(\Omega)}\|u\|_{L^{1}_0},
\end{align*}
for some constant $\tilde{c}=\tilde{c}(p,|\Omega|,N)>0$. It remains to bound $\cE_p(u,v)$. Using again that $v=0$ on $\R^N\setminus \Omega$, we have
\begin{align*}
\iint_{\substack{\R^N\times\R^N\\ |x-y|\leq 1}}\frac{|u(x)-u(y)|^{p-1}|v(x)-v(y)|}{|x-y|^N}\,dxdy&\leq2\int_{\Omega}\int_{B_1(x)}\frac{|u(x)-u(y)|^{p-1}|v(x)-v(y)|}{|x-y|^N}\,dxdy\\
&\leq 2\Bigg(\int_{\Omega}\int_{B_1(x)}\frac{|u(x)-u(y)|^{p}}{|x-y|^N}\,dxdy\Bigg)^{p-1}\cE_p(v,v)^{\frac{1}{p}}<\infty,
\end{align*}
and thus $\cE_{L,p}(u,v)$ is well-defined and finite.
\end{proof}

 Next, we show an alternative representation of $\cE_{L,p}$ as in Proposition \ref{alternative}, where one function is allowed to be in $V(\Omega,\R^N)$. 
\begin{prop}\label{alternative2}
Let $1<p<\infty$ and let $\Omega$ be a bounded open subset of $\rn$ and $u\in V(\Omega,\R^N)$ and $v\in X^p_0(\Omega)$. Then we have
\begin{equation*}\label{quadratic form in domain2}
\begin{split}
 \cE_{L,p}(u,v)&=\frac{C_{N,p}}{2}\iint_{\Omega\times\Omega}\frac{g(u(x)-u(y))(v(x)-v(y))}{|x-y|^N}\,dxdy+\int_{\Omega}\Big(\rho_N(p)+h_{\Omega}(x)\Big)g(u(x))v(x)\,dx\\
 &\qquad + C_{N,p}\int_{\Omega}v(x)\int_{\R^N\setminus \Omega}\frac{g(u(x)-u(y))-g(u(x))}{|x-y|^N}\,dy\,dx.
\end{split}
\end{equation*}
\end{prop}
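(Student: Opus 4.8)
The plan is to imitate the proof of Proposition~\ref{alternative}, the only new feature being that the terms with $y$ outside $\Omega$ no longer collapse, since $u$ is not assumed to vanish there. First I would return to the definition $\cE_{L,p}(u,v)=\cE_p(u,v)+\cF_p(u,v)+\rho_N(p)\int_{\R^N}g(u(x))v(x)\,dx$ and use only that $v\equiv 0$ on $\R^N\setminus\Omega$, that the integrands $g(u(x)-u(y))(v(x)-v(y))$ and $g(u(x)-u(y))(v(x)-v(y))-g(u(x))v(x)-g(u(y))v(y)$ are symmetric in $(x,y)$, and that $\mathbf{k},\mathbf{j}$ are symmetric. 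Decomposing $\R^N\times\R^N$ according to whether each of $x,y$ lies in $\Omega$: the part with both outside contributes $0$ (there $v(x)=v(y)=0$), and the two mixed parts are exchanged by $x\leftrightarrow y$. This yields
\[
\begin{aligned}
\cE_p(u,v)&=\frac{C_{N,p}}{2}\iint_{\substack{x,y\in\Omega\\|x-y|<1}}\frac{g(u(x)-u(y))(v(x)-v(y))}{|x-y|^N}\,dxdy
+C_{N,p}\int_{\Omega}v(x)\int_{B_1(x)\setminus\Omega}\frac{g(u(x)-u(y))}{|x-y|^N}\,dy\,dx,\\
\cF_p(u,v)&=\frac{C_{N,p}}{2}\iint_{\substack{x,y\in\Omega\\|x-y|>1}}\frac{g(u(x)-u(y))(v(x)-v(y))}{|x-y|^N}\,dxdy
-C_{N,p}\int_{\Omega}g(u(x))v(x)\int_{\Omega\setminus B_1(x)}\frac{dy}{|x-y|^N}\,dx\\
&\qquad+C_{N,p}\int_{\Omega}v(x)\int_{(\R^N\setminus\Omega)\setminus B_1(x)}\frac{g(u(x)-u(y))-g(u(x))}{|x-y|^N}\,dy\,dx.
\end{aligned}
\]

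Next I would write $g(u(x)-u(y))=\big(g(u(x)-u(y))-g(u(x))\big)+g(u(x))$ in the $B_1(x)\setminus\Omega$ integral appearing in $\cE_p$ and add everything up. The two Gagliardo-type integrals over $\Omega\times\Omega$ join into $\frac{C_{N,p}}{2}\iint_{\Omega\times\Omega}\frac{g(u(x)-u(y))(v(x)-v(y))}{|x-y|^N}\,dxdy$; the convolution-type pieces over $B_1(x)\setminus\Omega$ and $(\R^N\setminus\Omega)\setminus B_1(x)$ join into $C_{N,p}\int_\Omega v(x)\int_{\R^N\setminus\Omega}\frac{g(u(x)-u(y))-g(u(x))}{|x-y|^N}\,dy\,dx$; and the two remaining lower-order corrections assemble, by the very definition \eqref{h-omega function}, into $\int_\Omega h_\Omega(x)g(u(x))v(x)\,dx$. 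Adding $\rho_N(p)\int_{\R^N}g(u(x))v(x)\,dx=\rho_N(p)\int_\Omega g(u(x))v(x)\,dx$ (again because $v\equiv 0$ off $\Omega$) produces the factor $\rho_N(p)+h_\Omega(x)$ and hence exactly the asserted identity.

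The only genuine work is justifying the Fubini interchanges and, above all, that each of the pieces above is separately absolutely convergent — here the hypothesis $u\in V(\Omega,\R^N)$ and boundedness of $\Omega$ are used. The Gagliardo parts over $\Omega\times\Omega$ are finite because $\int_\Omega\int_{B_1(x)}\frac{|u(x)-u(y)|^p}{|x-y|^N}\,dy\,dx<\infty$ and $\int_{\Omega\setminus B_1(x)}|x-y|^{-N}\,dy$ is bounded on $\Omega$, together with Hölder's inequality, exactly as in Lemma~\ref{lemma-welldefined} and Proposition~\ref{alternative}. The convolution-type integral over $\R^N\setminus\Omega$ is controlled, separately for $p\le 2$ and for $p>2$, by the bounds on $\int_\Omega|v(x)|\int_{\R^N\setminus\Omega}|g(u(x)-u(y))-g(u(x))|\mathbf{j}(x-y)\,dy\,dx$ already established in the proof of Lemma~\ref{lemma-welldefined} (using $u\in L^{p-1}_0$, and additionally $u\in L^1_0\cap L^\infty$ when $p>2$). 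Finally, for the correction $\int_\Omega g(u(x))v(x)\int_{B_1(x)\setminus\Omega}|x-y|^{-N}\,dy\,dx$ one notes that $\int_{B_1(x)\setminus\Omega}|x-y|^{-N}\,dy\le\omega_N\ln^+(\delta_x^{-1})$ (since $B_{\delta_x}(x)\subset\Omega$, cf.\ \eqref{cutting off}) and then applies Hölder together with the logarithmic boundary Hardy inequality (Theorem~\ref{intro:hardy}) applied to both $u\in L^p(\Omega)$ and $v\in L^p(\Omega)$, giving $\int_\Omega|u|^p\ln^+(\delta_x^{-1})<\infty$ and $\int_\Omega|v|^p\ln^+(\delta_x^{-1})<\infty$. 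Once every piece is known to be finite, the rearrangement is pure bookkeeping; the main obstacle is thus this convergence audit, which is precisely why the space $V(\Omega,\R^N)$ is defined the way it is.
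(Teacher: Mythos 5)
Your decomposition and rearrangement are exactly the paper's: split $\cE_p$ and $\cF_p$ over $\Omega\times\Omega$ versus $\Omega\times\Omega^c$ using symmetry and $v\equiv 0$ outside $\Omega$, then shift the missing $g(u(x))$ from the $B_1(x)\setminus\Omega$ piece of $\cE_p$ so that it joins the two zero-order corrections into $h_\Omega$ and leaves the $\R^N\setminus\Omega$ convolution-type term. The paper compresses the final regrouping into ``by definition of $\cE_{L,p}$'', whereas you carry out the convergence audit in detail, so yours is essentially the same argument with the bookkeeping made explicit.

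One caveat in your convergence audit: you invoke Theorem~\ref{intro:hardy} to conclude $\int_\Omega|v|^p\ln^+(\delta_x^{-1})\,dx<\infty$ and $\int_\Omega|u|^p\ln^+(\delta_x^{-1})\,dx<\infty$, but that theorem requires $\Omega$ Lipschitz (or at least locally plump), while Proposition~\ref{alternative2} only assumes $\Omega$ bounded and open, so the appeal is not automatic. For $v$ it is also unnecessary: since $v\in X^p_0(\Omega)$ and $v\equiv 0$ off $\Omega$, expanding $[v]_{X^p_0(\Omega)}^p$ gives directly
\begin{equation*}
2C_{N,p}\int_\Omega |v(x)|^p\int_{B_1(x)\setminus\Omega}|x-y|^{-N}\,dy\,dx\leq [v]_{X^p_0(\Omega)}^p<\infty,
\end{equation*}
no Hardy inequality needed. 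For $u$, finiteness of $\int_\Omega|u|^p k_{1,\Omega}(x)\,dx$ (where $k_{1,\Omega}(x)=\int_{B_1(x)\setminus\Omega}|x-y|^{-N}\,dy$) does not follow as cleanly from $u\in V(\Omega,\R^N)$ alone, which is precisely why the splitting off of the $h_\Omega$ term is more delicate than your write-up suggests. The paper leaves this implicit as well, so this is not a defect of your argument relative to theirs, but you should either not cite the Hardy inequality under the stated hypotheses or make explicit that the three terms on the right-hand side are to be read as a single grouped expression, with only the regrouping (not the separate finiteness of each piece) being asserted.
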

\begin{proof}
Since $v\in X^p_0(\Omega)$, we have
\begin{equation*}
    \cE_p(u,v)=\frac{C_{N,p}}{2}\iint_{\substack{x,\,y\in\Omega\\|x-y|<1}}\frac{g(u(x)-u(y))(v(x)-v(y))}{|x-y|^N}\,dxdy+C_{N,p}\int_{\Omega}v(x)\left(\int_{B_1(x)\setminus\Omega}\frac{g(u(x)-u(y))}{|x-y|^N}\,dy\right)dx,
\end{equation*}
and 
\begin{align*}
\cF_p(u,v)&=\frac{C_{N,p}}{2}\iint_{\substack{x,\,y\in\Omega\\|x-y|>1}}\frac{g(u(x)-u(y))(v(x)-v(y))-g(u(x))v(x)-g(u(y))v(y)}{|x-y|^N}\,dxdy\\
&\quad +C_{N,p}\int_{\Omega} v(x)\left(\ \int_{(\R^N\setminus B_1(x))\setminus \Omega}\frac{g(u(x)-u(y))-g(u(x))}{|x-y|^N}\,dy\right)dx.
\end{align*}
Now, we can split the above first integral by using the fact that the domain $\Omega$ is bounded, and thus we get
\begin{align*}
\cF_p(u,v)&=\frac{C_{N,p}}{2}\iint_{\substack{x,\,y\in\Omega\\|x-y|>1}}\frac{g(u(x)-u(y))(v(x)-v(y))}{|x-y|^N}\,dxdy-C_{N,p}\int_{\Omega}g(u(x))v(x)\left(\,\int_{\Omega\setminus B_1(x)}\frac{dy}{|x-y|^N}\right)dx\\
&\qquad +C_{N,p}\int_{\Omega} v(x)\left(\int_{\R^N\setminus (B_1(x)\cup\Omega)}\frac{g(u(x)-u(y))-g(u(x))}{|x-y|^N}\,dy\right)dx.
\end{align*}
Therefore, by definition of $\cE_{L,p}$, we get the desired result.
\end{proof}

\begin{remark}\label{cut off2}
Let $u\in V(\Omega,\R^N)$, where $\Omega$ is a bounded open set. Then by Remark \ref{cut off} and Proposition \ref{poincare}, we immediately have that $u1_{\Omega}$ belongs to $X^p_0(\Omega)$. 
\end{remark}

\begin{definition}\label{weak solution def}
Given $f\in L^{\frac{p}{p-1}}(\Omega)$, a function $u\in V(\Omega,\R^N)$ is called a supersolution of $L_{\Delta_p}u=f$ in $\Omega$, if
$$
\cE_{L,p}(u,v)\geq \int_{\Omega} fv\,dx\quad\text{for all nonnegative $v\in X^p_0(\Omega)$,}
$$
where $\cE_{L,p}$ is given in \eqref{defi-bilinear}.
We call $u$ a subsolution of $L_{\Delta_p}u=f$ if $-u$ is a supersolution of this equation. A super- and subsolution is called a solution. We also say $u$ satisfies weakly $L_{\Delta_p}u\geq f$ in $\Omega$ if $u$ is a supersolution (resp. $\leq$ and $=$ for subsolutions and solutions).
\end{definition}

\begin{lemma}\label{restriction}
Let $\Omega\subset \R^N$ be a bounded open set. Let $f\in L^{\frac{p}{p-1}}(\Omega)$ and let $u\in V(\Omega,\R^N)$ satisfy weakly $L_{\Delta_p}u\geq f$ in $\Omega$. Then, the function $v=1_{\Omega}u$ belongs to $X^p_0(\Omega)$ and satisfies weakly\footnote{We emphasize that the right-hand side does in general not belong to $L^{\frac{p}{p-1}}(\Omega)$, but it belongs to the dual of $X^p_0(\Omega)$ and the inequality is to be understood with the usual generalization of weak solutions for the right-hand side.}
$$
L_{\Delta_p}v\geq f-C_{N,p}\int_{\R^N\setminus \Omega}\frac{g(v(x)-u(y))-g(v(x))}{|x-y|^N}\,dy\quad\text{in $\Omega$.}
$$
\end{lemma}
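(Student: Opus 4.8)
The first assertion, $v=1_{\Omega}u\in X^p_0(\Omega)$, is precisely the content of Remark~\ref{cut off2}. In particular, by Lemma~\ref{lemma-welldefined} the form $\cE_{L,p}(u,\phi)$ is well-defined and finite for every $\phi\in X^p_0(\Omega)$, and so is $\cE_{L,p}(v,\phi)$ since $v\in X^p_0(\Omega)$; both are linear in $\phi$, hence $\phi\mapsto\cE_{L,p}(u,\phi)-\cE_{L,p}(v,\phi)$ is a bounded linear functional on $X^p_0(\Omega)$.

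The plan for the second assertion is to compare these two forms on a nonnegative test function $\phi\in X^p_0(\Omega)$ by means of the alternative representations already established. Applying Proposition~\ref{alternative2} to $\cE_{L,p}(u,\phi)$ and Proposition~\ref{alternative} to $\cE_{L,p}(v,\phi)$, and using that $u$ and $v$ coincide on $\Omega$ — so that $g(u(x)-u(y))=g(v(x)-v(y))$ and $g(u(x))=g(v(x))$ for $x,y\in\Omega$ — the interior contributions
\[
\frac{C_{N,p}}{2}\iint_{\Omega\times\Omega}\frac{g(w(x)-w(y))(\phi(x)-\phi(y))}{|x-y|^{N}}\,dxdy
\qquad\text{and}\qquad
\int_{\Omega}\bigl(\rho_N(p)+h_{\Omega}(x)\bigr)g(w(x))\phi(x)\,dx
\]
take the same value for $w=u$ and for $w=v$. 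Subtracting the two representations, and using once more that $u(x)=v(x)$ for $x\in\Omega$ inside the surviving exterior integral, I obtain
\[
\cE_{L,p}(u,\phi)=\cE_{L,p}(v,\phi)+C_{N,p}\int_{\Omega}\phi(x)\int_{\R^N\setminus\Omega}\frac{g(v(x)-u(y))-g(v(x))}{|x-y|^{N}}\,dy\,dx .
\]

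Since $u$ satisfies weakly $L_{\Delta_p}u\ge f$ in $\Omega$, we have $\cE_{L,p}(u,\phi)\ge\int_{\Omega}f\phi\,dx$ for every nonnegative $\phi\in X^p_0(\Omega)$. Inserting this into the displayed identity gives
\[
\cE_{L,p}(v,\phi)\ge\int_{\Omega}\Bigl(f(x)-C_{N,p}\int_{\R^N\setminus\Omega}\frac{g(v(x)-u(y))-g(v(x))}{|x-y|^{N}}\,dy\Bigr)\phi(x)\,dx
\]
for every nonnegative $\phi\in X^p_0(\Omega)$, which is exactly the claimed weak inequality — the right-hand side being understood, as in the footnote to the statement, as the bounded linear functional $\phi\mapsto\int_{\Omega}f\phi\,dx-\bigl(\cE_{L,p}(u,\phi)-\cE_{L,p}(v,\phi)\bigr)$ on $X^p_0(\Omega)$.

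I expect the only delicate point to be precisely this last interpretation: the right-hand side need not belong to $L^{p/(p-1)}(\Omega)$, so one must check that it defines an element of the dual of $X^p_0(\Omega)$ and that the exterior double integral above converges absolutely, which is what makes the subtraction of the two representations legitimate. Both are already guaranteed by Proposition~\ref{alternative2} together with the estimates in the proof of Lemma~\ref{lemma-welldefined}; beyond this bookkeeping the argument is a purely algebraic rearrangement and presents no real obstacle.
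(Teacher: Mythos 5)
Your proposal is correct and follows essentially the same route as the paper: obtain $v\in X^p_0(\Omega)$ from Remark~\ref{cut off2}, then use Proposition~\ref{alternative2} (together with Proposition~\ref{alternative}, which identifies the first two terms as $\cE_{L,p}(v,\phi)$) and the fact that $u=v$ on $\Omega$ to rewrite $\cE_{L,p}(u,\phi)$, and finally insert the supersolution inequality. The paper's proof is more compact but rests on exactly the same algebraic rearrangement.
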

\begin{proof}
By Remark \ref{cut off2}, it follows that $v\in X^p_0(\Omega)$. Moreover, with Proposition \ref{alternative2} we have for any nonnegative $\phi\in X^p_0(\Omega)$:
\begin{align*}
\int_{\Omega}f(x)\phi(x)\,dx&\leq\cE_{L,p}(u,\phi)\\
&=\frac{C_{N,p}}{2}\iint_{\Omega\times\Omega}\frac{g(v(x)-v(y))(\phi(x)-\phi(y))}{|x-y|^N}\,dxdy+\int_{\Omega}\Big(\rho_N(p)+h_{\Omega}(x)\Big)g(v(x))\phi(x)\,dx\\
 &\qquad + C_{N,p}\int_{\Omega}\phi(x)\int_{\R^N\setminus \Omega}\frac{g(v(x)-u(y))-g(v(x))}{|x-y|^N}\,dy\,dx\\
 &=\cE_{L,p}(v,\phi)+C_{N,p}\int_{\Omega}\phi(x)\int_{\R^N\setminus \Omega}\frac{g(v(x)-u(y))-g(v(x))}{|x-y|^N}\,dydx.
 \end{align*}
The claim follows from here.
\end{proof}

\begin{lemma}[Scaling behavior of solutions]\label{weak scaling}
Let $f\in L^{\frac{p}{p-1}}(\Omega)$ and let $u\in V(\Omega,\R^N)$ be a supersolution of $L_{\Delta_p}u=f$ in $\Omega$. Then, the function $u_r:\R^N\to\R$, $u_r(x)=u(x/r)$ for $r>0$ belongs to $V(r\Omega,\R^N)$ and is a supersolution of $L_{\Delta_p}u_r=f_r-p\ln(r)g(v)$ in $r\Omega$, where $f_r(x)=f(x/r)$.
\end{lemma}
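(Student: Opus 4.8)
The plan is to deduce the statement from a change of variables in the weak formulation, with the logarithmic correction $-p\ln r$ arising from the dilation of the truncation ball $B_1$ in the kernel $\mathbf k$; here and below I write $g(u_r)=|u_r|^{p-2}u_r$ for the term appearing on the right-hand side. As a first step I would record that dilation preserves the relevant function spaces: if $w\in L^q_t$ then $x\mapsto w(x/r)$ is again in $L^q_t$, and $\psi\in X^p_0(r\Omega)$ if and only if $\psi(r\,\cdot)\in X^p_0(\Omega)$. Both follow from the substitution $x=r\xi$ together with the identity $\mathbf k\bigl((x-y)/r\bigr)=r^N\,C_{N,p}1_{B_r}(x-y)|x-y|^{-N}$: depending on whether $r<1$ or $r>1$ one compares the balls $B_r$ and $B_1$, the resulting error being a $\mathbf k$-type integral over an annulus, which is finite because $\Omega$ is bounded and all functions in sight lie in $L^p_{loc}(\R^N)$. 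Applying this to the three conditions defining $V(\Omega,\R^N)$ shows $u_r\in V(r\Omega,\R^N)$, and the same substitution gives $\|f_r\|_{L^{p/(p-1)}(r\Omega)}^{p/(p-1)}=r^N\|f\|_{L^{p/(p-1)}(\Omega)}^{p/(p-1)}$ and $\|g(u_r)\|_{L^{p/(p-1)}(r\Omega)}^{p/(p-1)}=r^N\|u\|_{L^p(\Omega)}^p<\infty$ (recalling $u1_\Omega\in L^p(\Omega)$ by Remark~\ref{cut off2}); hence $f_r-p\ln(r)g(u_r)\in L^{p/(p-1)}(r\Omega)$, so Definition~\ref{weak solution def} applies to the asserted inequality.

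The second ingredient is the scaling identity
\[
h_{r\Omega}(rx)=h_\Omega(x)-p\ln r\qquad\text{for }x\in\Omega,\ r>0.
\]
Substituting $y=r\eta$ in the two integrals of \eqref{h-omega function} defining $h_{r\Omega}(rx)$ produces the two integrals defining $h_\Omega(x)$, but with the ball $B_1(x)$ replaced by $B_{1/r}(x)$; their difference equals $-C_{N,p}\int_{B_1(x)\setminus B_{1/r}(x)}|x-\eta|^{-N}\,d\eta$ when $r\ge1$ and $C_{N,p}\int_{B_{1/r}(x)\setminus B_1(x)}|x-\eta|^{-N}\,d\eta$ when $r<1$, and in both cases this is $-C_{N,p}\omega_N\ln r=-p\ln r$, using $C_{N,p}\omega_N=p$.

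With these in hand, fix a nonnegative $\psi\in X^p_0(r\Omega)$ and set $\phi:=\psi(r\,\cdot)$, a nonnegative element of $X^p_0(\Omega)$. I would apply Proposition~\ref{alternative2} on $r\Omega$ to the pair $(u_r,\psi)$ and substitute $x=r\xi$, $y=r\eta$ in each of its three terms: the Jacobian $r^{2N}$ and the factor $|x-y|^{-N}=r^{-N}|\xi-\eta|^{-N}$ combine to an overall $r^N$, while in the zero-order term $h_{r\Omega}(r\xi)$ turns into $h_\Omega(\xi)-p\ln r$ by the identity above. These manipulations are legitimate since all the integrals involved are absolutely convergent (Lemma~\ref{lemma-welldefined}). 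Comparing with Proposition~\ref{alternative2} on $\Omega$ applied to $(u,\phi)$ yields
\[
\cE_{L,p}(u_r,\psi)=r^N\,\cE_{L,p}(u,\phi)-r^N\,p\ln r\int_\Omega g(u(\xi))\phi(\xi)\,d\xi .
\]
Since $u$ is a supersolution of $L_{\Delta_p}u=f$ and $\phi\ge0$, we have $\cE_{L,p}(u,\phi)\ge\int_\Omega f\phi\,d\xi$; substituting back $\xi=x/r$ in the two integrals over $\Omega$ converts $r^N\int_\Omega f\phi\,d\xi$ into $\int_{r\Omega}f_r\psi\,dx$ and $r^N\int_\Omega g(u)\phi\,d\xi$ into $\int_{r\Omega}g(u_r)\psi\,dx$. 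Thus $\cE_{L,p}(u_r,\psi)\ge\int_{r\Omega}\bigl(f_r-p\ln(r)\,g(u_r)\bigr)\psi\,dx$ for every nonnegative $\psi\in X^p_0(r\Omega)$, which is the assertion.

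The computation is essentially bookkeeping; the only genuine content is the scaling identity for $h_\Omega$, which is exactly where the $B_1$-truncation of $\mathbf k$ interacts with the dilation to produce the $-p\ln r$ term — the weak-formulation analogue of the pointwise scaling already established in Lemma~\ref{some properties}(2). The one point that needs care is that $u_r$ does not vanish outside $r\Omega$ in general, so one cannot work in $X^p_0(r\Omega)$ with Proposition~\ref{alternative} directly; one must pass to the larger space $V(r\Omega,\R^N)$ and use Proposition~\ref{alternative2}, checking (as in the first paragraph) that $u_r$ indeed lies in $V(r\Omega,\R^N)$.
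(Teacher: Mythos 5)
Your proof is correct. The route you take differs from the paper's: you reduce the computation to Proposition~\ref{alternative2} and the scaling identity $h_{r\Omega}(rx)=h_\Omega(x)-p\ln r$ (which the paper records separately as Lemma~\ref{properties homega}(3)), whereas the paper's own proof works directly with the unsplit forms $\cE_p,\cF_p$, writes $r^N\mathbf{k}(rz)=\mathbf{k}(z)\pm C_{N,p}1_{B_{1/r}\setminus B_1}(z)|z|^{-N}$ and $r^N\mathbf{j}(rz)=\mathbf{j}(z)\mp C_{N,p}1_{B_{1/r}\setminus B_1}(z)|z|^{-N}$, and observes that in $\cE_p+\cF_p$ the annulus contributions cancel except for the term $C_{N,p}\int g(u)\psi\int_{B_{1/r}\setminus B_1}|x-y|^{-N}\,dy\,dx=-p\ln(r)\int g(u)\psi$. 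The two arguments are essentially equivalent once one realizes that Proposition~\ref{alternative2} is itself obtained by precisely that kernel decomposition; yours has the advantage of shorter bookkeeping, since the annulus cancellation has already been packaged into $h_\Omega$, but it relies on the scaling of $h_\Omega$ which the paper only proves later. Your choice of normalization for the test function ($\phi=\psi(r\cdot)$ rather than the paper's $\psi(x)=r^N\phi(rx)$) is immaterial and only shifts an overall factor of $r^N$ that cancels in the final inequality. The remaining ingredients — the check $u_r\in V(r\Omega,\R^N)$ by dilating the integrability conditions ball-by-ball, the conversion $r^N\int_\Omega f\phi=\int_{r\Omega}f_r\psi$, and the observation that the right-hand side $f_r-p\ln(r)g(u_r)$ lies in $L^{p/(p-1)}(r\Omega)$ via Remark~\ref{cut off2} — are all carried out correctly.
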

\begin{proof}
Let $r>0$. By substitution we have
\begin{align*}
\int_{r\Omega}\int_{B_1(y)}\frac{|u(x/r)-u(y/r)|^p}{|x-y|^N}\,dxdy&=\int_{\Omega}\int_{B_{1}(r\tilde{y})}\frac{|u(x/r)-u(\tilde{y})|^p}{|x-r\tilde{y}|^N}r^N\,dxd\tilde{y}\\
&=\int_{\Omega}\int_{r B_{1/r}(y)}\frac{|u(x/r)-u(\tilde{y})|^p}{|x-r\tilde{y}|^N}r^N\,dxd\tilde{y}\\
&=r^N\int_{\Omega}\int_{B_{1/r}(\tilde{y})}\frac{|u(\tilde{x})-u(\tilde{y})|^p}{|\tilde{x}-\tilde{y}|^N}\,dxd\tilde{y}.
\end{align*}
If $r\geq 1$, then 
$$
\int_{r\Omega}\int_{B_1(y)}\frac{|u(x/r)-u(y/r)|^p}{|x-y|^N}\,dxdy\leq r^N\int_{\Omega}\int_{B_1(y)}\frac{|u(x)-u(y)|^p}{|x-y|^N}\,dxdy<\infty,
$$
and if $r<1$, note that
$$
\int_{\Omega}\int_{B_{1/r}(y)\setminus B_1(y)}\frac{|u(x)-u(y)|^p}{|x-y|^N}\,dxdy\leq \int_{\Omega}\int_{B_{1/r}(y)\setminus B_1(y)}|u(x)-u(y)|^p\,dxdy<\infty,
$$
since $u\in L^p_{loc}(\R^N)$. This shows $u_r\in V(r\Omega,\R^N)$.
Next, note that differently to the linear case, we cannot argue with Lemma \ref{dense}, Proposition \ref{weak to strong}, and Lemma \ref{some properties}(2) due to the nonlinearity. Let $\phi\in X(r\Omega)$ be a nonnegative function and note that $\psi:\R^N\to\R$, $\psi(x)=r^{N}\phi(rx)$ belongs to $X( \Omega)$ with a similar argument as above and it is also nonnegative. Then
\begin{equation}\label{eq:scaling right}
\int_{r\Omega} f_r(x)\phi(x)\,dx=\int_{\Omega}f(x)\psi(x)\,dx,
\end{equation}
and similarly
\begin{align}\label{eq:scaling left}
\int_{\R^N}g(u_r(x))\phi(x)\,dx&=\int_{\R^N}g(u(x))\psi(x)\,dx.
\end{align}
Moreover,
\begin{align*}
\cE_p(u_r,\phi)&=\frac12\iint_{\R^N\times\rn}(g(u(x))-g(u(y)))(\psi(x)-\psi(y))r^{N}{\bf k}(r(x-y))\,dxdy,
\end{align*}
and
\begin{align*}
\cF_p(u_r,\phi)&=\frac12\iint_{\R^N\times\rn}\Big(g(u(x)-u(y))(\psi(x)-\psi(y))-g(u(x))\psi(x)-g(u(y))\psi(y)\Big)r^{N}{\bf j}(r(x-y))\,dxdy.
\end{align*}
Since 
$$
\frac{r^N{\bf k}(rz)}{C_{N,p}}=1_{B_{\frac{1}{r}}}(z)|z|^{-N}\quad\text{and}\quad \frac{r^N{\bf j}(rz)}{C_{N,p}}=1_{B_{\frac{1}{r}}^c}(z)|z|^{-N},
$$
we may proceed as follows. If $r\in(0,1)$, then
$$
r^N{\bf k}(rz)={\bf k}(z)+C_{N,p}1_{[B_{\frac{1}{r}}\setminus B_1]}(z)|z|^{-N}\quad\text{and}\quad r^N{\bf j}(rz)={\bf j}(z)-C_{N,p}1_{[B_{\frac{1}{r}}\setminus B_1]}(z)|z|^{-N},
$$
and using the symmetry of ${\bf j}$
\begin{align*}
\cE_p(u_r,\phi)+\cF_p(u_r,\phi)&=\cE_p(u,\psi)+\cF_p(u,\psi)+C_{N,p}\int_{\R^N}g(u(x))\psi(x)\int_{B_{\frac1r}(x)\setminus B_1(x)}|x-y|^{-N}\,dy\,dx\\
&=\cE_p(u,\psi)+\cF_p(u,\psi)-C_{N,p}\omega_N\ln(r)\int_{\R^N}g(u(x))\psi(x)\,dx.
\end{align*}
Thus, with \eqref{eq:scaling right}, \eqref{eq:scaling left}, and using that $C_{N,p}\omega_N=p$
\begin{align*}
\cE_{L_{\Delta_p}}(u_r,\phi)&=\cE_{L_{\Delta_p}}(u,\psi)-C_{N,p}\omega_N\int_{\R^N}\ln(r)g(u(x))\psi(x)\,dx\\
&\geq \int_{\R^N}f(x)\psi(x)-p\ln(r)g(u(x))\psi(x)\,dx\\
&=\int_{\R^N}\Big(f_r(x)-p\ln(r)g(u_r(x))\Big)\phi(x)\,dx
\end{align*}
as claimed. The case $r>1$ follows similarly.
\end{proof}

\subsection{On \texorpdfstring{$L^{\infty}$}{L-infty} bounds}
\begin{thm}\label{bounded1}
Let $A,B>0$ and assume $u\in V(\Omega,\R^N)$ satisfies
$$
\cE_{L,p}(u,v)\leq \int_{\Omega}(A+B|u|^{p-1})v\,dx\quad\text{for all $v\in X^p_0(\Omega)$.}
$$
If $u^+\in L^{\infty}(B_1(\Omega)\setminus \Omega)$, then $u^+\in L^{\infty}(\Omega)$. More precisely, there is $C=C(N,p,\Omega,B)>0$ such that
\begin{enumerate}
    \item[] if $p\in(1,2]$, we have 
    $$
    \|u^+\|_{L^{\infty}(\Omega)}\leq C\Big(A^{\frac{1}{p-1}}+ \|u\|_{L^{\infty}(B_1(\Omega)\setminus \Omega)}+\|u\|_{L^{p}(\Omega)}+\|u\|_{L^{p-1}_0}\Big),
    $$
    \item[] and if $p>2$, we have
    $$
    \|u^+\|_{L^{\infty}(\Omega)}\leq C\Big(1+A+\|u^+\|_{L^{\infty}(B_1(\Omega)\setminus \Omega)}+\|u\|_{L^p(\Omega)}+\|u\|_{L^1_0}+\|u\|_{L^{p-1}_0}\Big).
    $$
\end{enumerate}
\end{thm}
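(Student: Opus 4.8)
The plan is to prove Theorem~\ref{bounded1} by a De Giorgi--Stampacchia level-set iteration, the one genuine difficulty being that $L_{\Delta_p}$ has logarithmic order, so that $X^p_0(\Omega)$ admits \emph{no} Sobolev gain (no embedding into $L^q$ with $q>p$) to drive the iteration. First I would fix a level $\ell\ge\ell_0$, where $\ell_0$ is to be chosen large and in any case $\ell_0\ge 1+\|u^+\|_{L^\infty(B_1(\Omega)\setminus\Omega)}$, and test the weak inequality with $v_\ell:=1_\Omega(u-\ell)^+=(u1_\Omega-\ell)^+$. This is admissible: $u1_\Omega\in X^p_0(\Omega)$ by Remark~\ref{cut off2} (which uses Corollary~\ref{extend}, hence the boundary Hardy inequality Corollary~\ref{cor:hardyplump}), and truncation is contractive in $X^p_0(\Omega)$, so $v_\ell\in X^p_0(\Omega)$, $v_\ell\ge0$; crucially $v_\ell$ is supported in the superlevel set $A_\ell:=\{x\in\Omega:u(x)>\ell\}$. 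Using the representation of Proposition~\ref{alternative2}, the pointwise inequality $g(a-b)\big((a-\ell)^+-(b-\ell)^+\big)\ge\big|(a-\ell)^+-(b-\ell)^+\big|^p$ (trivial once one assumes $a\ge b$, since then $g(a-b)\ge0$ and $0\le(a-\ell)^+-(b-\ell)^+\le a-b$), the splitting $h_\Omega=h_\Omega^{\mathrm{kill}}-h_\Omega^{\mathrm{far}}$ into the nonnegative killing weight of $[v_\ell]^p_{X^p_0(\Omega)}$ and a bounded remainder, and the elementary fact $g(u)v_\ell=u^{p-1}(u-\ell)\ge v_\ell^p$ on $A_\ell$, one gets
\[
\tfrac12[v_\ell]^p_{X^p_0(\Omega)}\ \le\ \cE_{L,p}(u,v_\ell)+\cR(\ell)\ \le\ \int_{A_\ell}\big(A+Bu^{p-1}\big)(u-\ell)\,dx+\cR(\ell),
\]
where $\cR(\ell)$ collects a bounded-potential term $\lesssim\int_{A_\ell}u^{p-1}(u-\ell)$ and the exterior interaction $C_{N,p}\int_{\Omega}v_\ell(x)\int_{\R^N\setminus\Omega}\frac{g(u(x)-u(y))-g(u(x))}{|x-y|^N}\,dy\,dx$.

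Next I would bound $\cR(\ell)$, essentially as in the proof of Lemma~\ref{lemma-welldefined}. For $y$ far from $\Omega$ the elementary estimates of Lemmas~\ref{estimate-general-1} and \ref{estimate-general-2} reduce $|g(u(x)-u(y))-g(u(x))|$ to $|u(y)|^{p-1}$ for $1<p\le2$ and to $|u(y)|(|u(x)|+|u(y)|)^{p-2}$ for $p>2$, producing after integration the tail norms $\|u\|_{L^{p-1}_0}$ and, for $p>2$, $\|u\|_{L^1_0}$ (this is the source of the case split and of precisely these norms in the statement). The contribution of $y\in(\R^N\setminus\Omega)\cap B_1(x)$ is estimated by H\"older, using that $\int_\Omega\int_{(\R^N\setminus\Omega)\cap B_1(x)}|u(x)-u(y)|^p|x-y|^{-N}\,dy\,dx<\infty$ since $u\in V(\Omega,\R^N)$, together with $\int_\Omega v_\ell^p h_\Omega^{\mathrm{kill}}\le\tfrac12[v_\ell]^p_{X^p_0(\Omega)}$ and a Young inequality, so that this part is absorbed into $\tfrac14[v_\ell]^p_{X^p_0(\Omega)}$ plus constants depending on $u$ and $\Omega$ --- here the boundary Hardy inequality is exactly what makes the near-boundary weight $h_\Omega^{\mathrm{kill}}\sim\ln^+(1/\delta_x)$ compatible with the scheme. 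Finally, on $A_\ell$ one writes $u\le\ell+(u-\ell)$ and $u^{p-1}\le c_p\big((u-\ell)^{p-1}+\ell^{p-1}\big)$ and applies H\"older to reduce the right-hand side to the quantities $|A_\ell|^{1-1/p}\big(\int_{A_\ell}v_\ell^p\big)^{1/p}$ and $\int_{A_\ell}v_\ell^p$.

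The hard part will be replacing the missing Sobolev gain. Since $v_\ell$ is supported in $A_\ell$, the killing part of $[v_\ell]^p_{X^p_0(\Omega)}$ alone gives $[v_\ell]^p_{X^p_0(\Omega)}\ge 2\big(\inf_{A_\ell}h^{\mathrm{kill}}_{A_\ell}\big)\int_{A_\ell}v_\ell^p$, and a direct estimate --- split $B_1(x)$ into $B_r(x)$ and its complement and optimize $r=|A_\ell|^{1/N}$ --- shows $\inf_{A_\ell}h^{\mathrm{kill}}_{A_\ell}\ge\tfrac pN\ln(1/|A_\ell|)-p$ once $|A_\ell|<1$; equivalently the effective Poincar\'e constant of $X^p_0(A_\ell)$ is $O\big(1/\ln(1/|A_\ell|)\big)$ and vanishes as $|A_\ell|\to0$ (this is also consistent with the logarithmic scaling \eqref{eigen scaling} and the Faber--Krahn inequality, Corollary~\ref{faber krahn}). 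Feeding this back, using Chebyshev's inequality $|A_h|(h-\ell)^p\le\int_{A_\ell}v_\ell^p$ to pass to the next level, and iterating over the dyadic levels $\ell_j=\ell_0+M(1-2^{-j})$ with $Y_j:=\int_{A_{\ell_j}}((u-\ell_j)^+)^p$, one is led to a recursion of the (non-standard) form
\[
Y_{j+1}\ \le\ \frac{C\,b^{\,j}}{\ln\big(1/|A_{\ell_{j+1}}|\big)}\,Y_j,\qquad |A_{\ell_{j+1}}|\le \Big(\tfrac{2^{j+1}}{M}\Big)^p Y_j,
\]
with $C$, $b$ depending on $N$, $p$, $B$ and the data of $u$. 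Because the logarithmic factor blows up super-geometrically once $Y_j\to0$, this iteration converges to $0$ provided $Y_0=\int_\Omega((u-\ell_0)^+)^p$ is small enough, which is arranged by taking $\ell_0$ of the size announced in the theorem ($\|u\|_{L^p(\Omega)}$, plus $A^{1/(p-1)}$ resp.\ $A$, plus the tail and exterior terms); then $|A_{\ell_0+M}|=\lim_j|A_{\ell_j}|=0$, i.e.\ $\|u^+\|_{L^\infty(\Omega)}\le\ell_0+M$. Tracking the thresholds and constants through this iteration yields the two stated bounds, the distinction $1<p\le2$ versus $p>2$ being inherited from Lemmas~\ref{estimate-general-1}--\ref{estimate-general-2}. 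The real subtlety throughout is that, the gain being only logarithmic, the iteration must be run in the super-geometric regime, and each lower-order term --- the exterior interaction, the bounded potential, and the inhomogeneity --- has to be carried along with the correct dependence on the level.
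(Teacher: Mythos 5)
Your opening moves are sound and parallel the paper's: truncating $u1_\Omega$ gives an admissible test function (Remark~\ref{cut off2} plus the pointwise monotonicity of $g$), the exterior interaction term is controlled exactly via Lemmas~\ref{estimate-general-1}--\ref{estimate-general-3} and accounts for the tail norms $\|u\|_{L^{p-1}_0}$ (and $\|u\|_{L^1_0}$ when $p>2$), and you have correctly identified the central obstruction: $X^p_0(\Omega)$ has no Sobolev gain over $L^p$.

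The gap is in the iteration step, and it is fatal. Your observation $\inf_{A_\ell}h^{\mathrm{kill}}_{A_\ell}\gtrsim \ln(1/|A_\ell|)$ is correct (it is Lemma~\ref{properties homega}(2) in disguise), but the resulting recursion
\[
Y_{j+1}\le \frac{Cb^j}{\ln(1/|A_{\ell_{j+1}}|)}\,Y_j,\qquad \ln(1/|A_{\ell_{j+1}}|)\le -\ln Y_j + O(j),
\]
does \emph{not} drive $Y_j\to 0$ for any choice of $Y_0>0$. Set $Z_j=-\ln Y_j$; the recursion gives $Z_{j+1}\ge Z_j+\ln(Z_j-O(j))-j\ln b-\ln C$. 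The gain per step is $\ln Z_j$, which is logarithmically small in $Z_j$, while the cost is $j\ln b$, which grows \emph{linearly} in $j$. Hence the increments become negative as soon as $j\gtrsim \ln Z_j/\ln b$, i.e.\ after only $O(\ln Z_0)$ steps; $Z_j$ peaks at $Z_0+O((\ln Z_0)^2)$ and then falls, so $Y_j$ stops decreasing and the bound becomes vacuous. Your phrase ``the logarithmic factor blows up super-geometrically'' is the misstep: $\ln(1/Y_j)$ grows only linearly in $Z_j$, which is far too slow to cancel the genuinely geometric factor $b^j$ coming from Chebyshev and the dyadic levels $\ell_j$. A De Giorgi scheme needs a superlinear recursion $Y_{j+1}\le Cb^jY_j^{1+\eps}$ for some $\eps>0$; a gain of the form $Y_j/\ln(1/Y_j)$ is strictly weaker than $Y_j^{1+\eps}$ for every $\eps>0$ and cannot close the iteration.

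The paper sidesteps the iteration entirely with a one-step truncation, and the device that makes this possible is a kernel splitting rather than a measure-theoretic gain. After cutting off $u$ to $w=\phi u$ (so that $\cE_p(u,v)=\cE_p(w,v)$ for $v\in X^p_0(\Omega)$, since $\supp\,\mathbf{k}\subset\overline{B_1}$), the kernel is split as $\mathbf{k}=k_\delta+q_\delta$ with $k_\delta=1_{B_\delta}|\cdot|^{-N}$, $q_\delta=1_{B_1\setminus B_\delta}|\cdot|^{-N}$, and $\delta$ is chosen so small that $\|q_\delta\|_{L^1(\R^N)}\ge B+|\rho_N(p)|+1$. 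The $q_\delta$-part of the form then contributes an absorption term $-\|q_\delta\|_{L^1}\,g(w)$ to the right-hand side, so the effective zero-order coefficient becomes $(B+|\rho_N(p)|)|w|^{p-1}-\|q_\delta\|_{L^1}g(w)\le -w^{p-1}$ on $\{w>0\}$. Testing with $(w-t)^+$ and using the Poincar\'e inequality (Proposition~\ref{poincare}) then shows directly that $(w-t)^+\equiv 0$ once $t^{p-1}$ exceeds $A$ plus the explicit lower-order contributions from $q_\delta$ and $\mathbf{j}$. The decisive point --- entirely absent from your proposal --- is that the near singularity of $\mathbf{k}$ at the origin makes $\|q_\delta\|_{L^1}$ unbounded as $\delta\to0$, providing an arbitrarily strong, \emph{level-independent} killing term that plays the role of the missing Sobolev gain. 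If you want to salvage your approach, replace the degenerating Poincar\'e constant of $A_\ell$ by this $\delta$-dependent absorption; with $\delta$ fixed once for all in terms of $B$ and $\rho_N(p)$, no iteration is needed.
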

\begin{remark}
By definition, it follows that the assumption in Theorem \ref{bounded1} implies that $u$ satisfies weakly
$$
L_{\Delta_p}u\leq A+B|u|^{p-1}\quad\text{in $\Omega$.}
$$
In particular, Theorem \ref{bounded1} holds for any function $u\in V(\Omega,\R^N)$ which satisfies weakly  
$$
L_{\Delta_p}u\leq A+Bg(u)\quad\text{in $\Omega$,}\qquad u\leq 0\quad\text{in $\R^N\setminus \Omega$},
$$
for some constants $A,B>0$.
\end{remark}
\begin{proof}[Proof of Theorem \ref{bounded1}]
We begin by estimating $\cF_p(u,v)$, where $v\in X^p_0(\Omega)$ is a nonnegative function. By symmetry, it holds
\begin{align}
\cF_p(u,v)&=\frac12\iint_{\Omega\times\Omega}\Big(g(u(x)-u(y))(v(x)-v(y))-g(u(x))v(x)-g(u(y))v(y)\Big)\textbf{j}(x-y)\,dxdy\notag\\
&\qquad+ \int_{\Omega}\int_{\R^N\setminus \Omega}\Big(g(u(x)-u(y))(v(x)-v(y))-g(u(y))v(y)\Big)\textbf{j}(x-y)\,dxdy\notag\\
&=\int_{\R^N}\int_{\Omega}\Big(g(u(x)-u(y))-g(u(x))\Big)v(x)\textbf{j}(x-y)\,dxdy.\label{bdd est f}
\end{align}
Consider next $w=\phi u$, where $\phi\in C^{\infty}_c(\R^N)$ is such that $\phi=1$ in $B_{1/2}(\Omega)$ and $\phi=0$ on $\R^N\setminus B_1(\Omega)$. Then, for $v\in X^p_0(\Omega)$, $v\geq0$ we have, using the fact that $\supp\,\textbf{k}\subset \overline{B_1(0)}$
\begin{align*}
\cE_p(u,v)&=\cE_p(w+(1-\phi)u,v)\\
&=\frac12\int_{B_1(\Omega)}\int_{B_1(\Omega)}g(w(x)-w(y))(v(x)-v(y))\textbf{k}(x-y)\,dxdy\\
&\quad+\int_{\R^N\setminus B_1(\Omega)}\int_{\Omega}g(w(x)-w(y)-(1-\phi(y))u(y))v(x)\textbf{k}(x-y)\,dxdy\\
&=\frac12\int_{B_1(\Omega)}\int_{B_1(\Omega)}g(w(x)-w(y))(v(x)-v(y))\textbf{k}(x-y)\,dxdy=\cE_p(w,v).
\end{align*}
From here, we split $\textbf{k}$ into
$$
k_{\delta}(z)=1_{B_\delta}(z)|z|^{-N}\quad\text{and}\quad q_{\delta}(z)=1_{B_1\setminus B_\delta}(z)|z|^{-N},
$$
where $\delta\in(0,1)$ is such that
$$
\|q_{\delta}\|_{L^1(\R^N)}\geq B+|\rho_{N}(p)|+1.
$$
We write $\cE_p(w,v)=\cE_\delta(w,v)+\cG_{\delta}(w,v)$, where
\begin{align*}
\cE_{\delta}(w,v)&=\frac12\iint_{\R^N\times\rn}g(w(x)-w(y))(v(x)-v(y))k_{\delta}(x-y)\,dxdy\quad\text{and}\\
\cG_{\delta}(w,v)&=\frac12\iint_{\R^N\times\rn}g(w(x)-w(y))(v(x)-v(y))q_{\delta}(x-y)\,dxdy\\
&=\int_{\R^N}\int_{\Omega}g(w(x)-w(y))v(x)q_{\delta}(x-y)\,dxdy.
\end{align*}
Then
\begin{align*}
   \int_{\Omega}(A+B|u|^{p-1})v\,dx&=\int_{\Omega}(A+B|w|^{p-1})v\,dx\geq \cE_{L,p}(w+(1-\phi)u,v)\\
   &=\cE_{\delta}(w,v)+\cG_{\delta}(w,v)+\cF_p(u,v)+\rho_{N}(p)\int_{\Omega}g(w(x))v(x)\,dx.
\end{align*}
So after rearranging and using \eqref{bdd est f}, we have 
\begin{align}
\cE_{\delta}(w,v)&\leq \int_{\Omega}(A+(B+|\rho_{N}(p)|)|u|^{p-1})v\,dx-\cG_{\delta}(w,v)-\cF_p(u,v)\notag\\
&=\int_{\Omega}\Big(A+(B+|\rho_{N}(p)|)|w(x)|^{p-1}-\|q_{\delta}\|_{L^1(\R^N)}g(w(x))\Big)v(x)\,dx\notag\\
&\qquad -\int_{\R^N}\int_{\Omega}\Big(g(w(x)-w(y))-g(w(x)))v(x)q_{\delta}(x-y)\,dxdy\notag\\
&\qquad-\int_{\R^N}\int_{\Omega}\Big(g(w(x)-u(y))-g(w(x))\Big)v(x)\textbf{j}(x-y)\,dxdy,\label{bdd est 3}
\end{align}
noting that $u(x)=w(x)$ for $x\in \Omega$.
Next, we choose as a testfunction $v=v_t=(w-t)^+$, where 
$$
t\geq \|u^+\|_{L^{\infty}(B_1(\Omega)\setminus \Omega)}
$$
is to be chosen. Note that $v\in X^p_0(\Omega)$ by Remark \ref{cut off2} using that $v1_{\Omega}=v$ for any such $t$. Notice that $w\geq t$ in $\supp\,v_t$ and we have
\begin{equation}\label{bdd est 2}
    \begin{split}
     \cE_{\delta}(w,v_t)&=\cE_{\delta}(v_t,v_t)\\
&\quad+\frac12\iint_{\R^N\times\R^N}\Big(g(w(x)-w(y))-g(v_t(x)-v_t(y))  \Big)(v_t(x)-v_t(y))k_{\delta}(x-y)\,dxdy.   
    \end{split}
\end{equation}
Note that with
$$
Q(x,y)=(p-1)\int_0^1\Big|v_t(x)-v_t(y)+\tau\Big(w(x)-w(y)-v_t(x)+v_t(y)\Big)\Big|^{p-2}\,d\tau,
$$
we have
\begin{align*}
&\left(g(w(x)-w(y))-g(v_t(x)-v_t(y))\right)(v_t(x)-v_t(y))\\
&=Q(x,y)\Big(w(x)-w(y)-v_t(x)+v_t(y)\Big)(v_t(x)-v_t(y))\\
&=-Q(x,y)((w(x)-t)^--(w(y)-t)^-)(v_t(x)-v_t(y))\\
&=-Q(x,y)\Big(-(w(x)-t)^-(x)v_t(y)-(w(y)-t)^-v_t(x)\Big)\geq0.
\end{align*}
Thus, by the Poincar\'e inequality, Proposition \ref{poincare}, we have for some $C>0$ (depending also on $\delta$) combined with \eqref{bdd est 2} and \eqref{bdd est 3} 
\begin{align}
0\leq C\|v_t\|_{L^p(\Omega)}^p&\leq \cE_{\delta}(v_t,v_t)\leq \cE_{\delta}(w,v_t)
\notag\\
&\leq\int_{\Omega}\Big(A-w^{p-1}(x)\Big)v_t(x)\,dx-\int_{\R^N}\int_{\Omega}\Big(g(w(x)-w(y))-w^{p-1}(x)\Big)v_t(x)q_{\delta}(x-y)\,dxdy\label{bdd est 4}\\
&\qquad-\int_{\R^N}\int_{\Omega}\Big(g(w(x)-u(y))-g(w(x))\Big)v_t(x)\textbf{j}(x-y)\,dxdy.\notag
\end{align}
We discuss now separately the cases $p\in(1,2]$ and $p>2$.\\
\smallskip
\textbf{Case 1:} $p\in(1,2]$. By Lemma \ref{estimate-general-1}, there is $c=c(p)>0$ such that
$$
g(a-b)-g(a)\geq -c|b|^{p-1}\quad\text{for all $a,b\in \R$.}
$$
With this, we have from \eqref{bdd est 4}
\begin{align*}
 C\|v_t\|_{L^p(\Omega)}^p&\leq  \int_{\Omega}\Big(A-w^{p-1}(x)\Big)v_t(x)\,dx\notag\\
&\qquad +c\int_{\Omega}v_t(x) \Bigg(\,\,\int_{\R^N}|w(y)|^{p-1}q_{\delta}(x-y)\,dy +|u(y)|^{p-1}\textbf{j}(x-y)\,dy\Bigg)\,dx\\
&\leq  \int_{\Omega}\Big(A-w^{p-1}(x)\Big)v_t(x)\,dx\notag\\
&\qquad +c\int_{\Omega}v_t(x) \Bigg(\,\|w\|_{L^p(\R^N)}^{p-1}\|q_{\delta}\|_{L^p(\R^N)} +\tilde{c}\|u\|_{L^{p-1}_0}^{p-1}\Bigg)\,dx,
\end{align*}
for a constant $\tilde{c}>0$ independent of $u$. Now, choose  $t\geq\|u^+\|_{L^{\infty}(B_1(\Omega)\setminus\Omega)}$ such that also
$$
A+c\|w\|_{L^p(\R^N)}^{p-1}\|q_{\delta}\|_{L^p(\R^N)} +c\tilde{c}\|u\|_{L^{p-1}_0}^{p-1}\leq t^{p-1}.
$$
Then it follows $\|v_t\|_{L^p(\Omega)}=0$. Thus, choosing $t$ such that
$$
t^{p-1}=A+c\|w\|_{L^p(\R^N)}^{p-1}\|q_{\delta}\|_{L^p(\R^N)} +c\tilde{c}\|u\|_{L^{p-1}_0}^{p-1}+\|u^+\|_{L^{\infty}(B_1(\Omega)\setminus\Omega)}^{p-1},
$$
we have for a.e. $x\in \Omega$,
$$
u(x)\leq t\leq C\Big(A^{\frac{1}{p-1}}+ \|u^+\|_{L^{\infty}(B_1(\Omega)\setminus \Omega)}+\|u\|_{L^{p}(\Omega)}+\|u\|_{L^{p-1}_0}\Big),
$$
where $C=C(N,p,\Omega,B)>0$, as claimed.\\
\smallskip
\textbf{Case 2:} $p>2$. We use the second inequality in Lemma \ref{estimate-general-1}, which gives
$$
g(a-b)-g(a)\geq -(p-1)\Big(|b|^{p-1}+|b||a|^{p-2}\Big)\quad\text{for all $a,b\in \R$.}
$$
With this, we have from \eqref{bdd est 4}
\begin{align*}
 C\|v_t\|_{L^p(\Omega)}^p&\leq  \int_{\Omega}\Big(A-w^{p-1}(x)\Big)v_t(x)\,dx\notag\\
&\qquad +(p-1)\int_{\Omega}v_t(x) \Bigg(\,\,\int_{\R^N}|w(y)|^{p-1}q_{\delta}(x-y)\,dy +|u(y)|^{p-1}\textbf{j}(x-y)\,dy\Bigg)\,dx\\
&\qquad +(p-1)\int_{\Omega}v_t(x)w(x)^{p-2} \Bigg(\,\,\int_{\R^N}|w(y)| q_{\delta}(x-y)\,dy +|u(y)|\textbf{j}(x-y)\,dy\Bigg)\,dx\\
&\leq  \int_{\Omega}\Big(A-w^{p-1}(x)\Big)v_t(x)\,dx\notag\\
&\qquad +(p-1)\int_{\Omega}v_t(x) \Bigg(\,\|w\|_{L^p(\R^N)}^{p-1}\|q_{\delta}\|_{L^p(\R^N)} +\tilde{c}\|u\|_{L^{p-1}_0}^{p-1}\Bigg)\,dx\\
&\qquad +(p-1)\int_{\Omega}v_t(x)w(x)^{p-2} \Bigg(\,\|w\|_{L^p(\R^N)}\|q_{\delta}\|_{L^{p'}(\R^N)} +\tilde{c}\|u\|_{L^{1}_0}\Bigg)\,dx\\
&=  \int_{\Omega}\Bigg(A+(p-1)\Big(\,\|w\|_{L^p(\R^N)}^{p-1}\|q_{\delta}\|_{L^p(\R^N)} +\tilde{c}\|u\|_{L^{p-1}_0}^{p-1}\Big)\Bigg)v_t(x)\,dx\notag\\
&\qquad -\int_{\Omega}v_t(x)w(x)^{p-2}\Bigg(w(x) - (p-1)\Big(\,\|w\|_{L^p(\R^N)}\|q_{\delta}\|_{L^{p'}(\R^N)} +\tilde{c}\|u\|_{L^{1}_0}\Big)\Bigg)\,dx,
\end{align*}
for a constant $\tilde{c}>0$ independent of $u$. From here, choosing 
$$
t\geq C\Big(1+A+\|u^+\|_{L^{\infty}(B_1(\Omega)\setminus \Omega)}+\|u\|_{L^p(\Omega)}+\|u\|_{L^1_0}+\|u\|_{L^{p-1}_0}\Big),
$$
for a constant $C=C(N,p,\Omega,B)>0$ entails the claim.
\end{proof}

\begin{proof}[Proof of Theorem \ref{bounded2}]
The statement follows immediately from Theorem \ref{bounded1} by an application on $-u$ and $u$.
\end{proof}

\begin{lemma}[Perturbation of supersolutions]\label{perturbation lemma}
Let $\Omega\subset U\subset \R^N$ be open and bounded sets, $f\in L^{\frac{p}{p-1}}(\Omega)$, and let $u\in V(\Omega,\R^N)\cap L^{\infty}(\R^N)$ be a supersolution of $L_{\Delta_p}u=f$ in $\Omega$ such that $\supp\,u\subset U$. Let $\psi\in C^{\infty}_c(\Omega)$. Then, there is $C>0$ such that $u+\psi\in V(\Omega,\R^N)$ satisfies weakly
$$
L_{\Delta_p}(u+\psi)\geq f-C\max\{\|\psi\|_{L^{\infty}(\Omega)}^{p-1},\|\psi\|_{L^{\infty}(\Omega)}\}\quad\text{in $\Omega$}.
$$
\end{lemma}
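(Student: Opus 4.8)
The plan is to compare $\cE_{L,p}(u+\psi,v)$ with $\cE_{L,p}(u,v)$ term by term using the alternative representation of Proposition~\ref{alternative2}, and to show that the correction produced by $\psi$ is controlled by $\max\{\|\psi\|_{L^{\infty}(\Omega)}^{p-1},\|\psi\|_{L^{\infty}(\Omega)}\}\int_\Omega v\,dx$.

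First I would check that $w:=u+\psi\in V(\Omega,\R^N)$. Since $\psi\in C_c^\infty(\Omega)$ is bounded with compact support it lies in every tail space and in $X^p_0(\Omega)$; as $u\in V(\Omega,\R^N)$ and the (quasi-)norms $\|\cdot\|_{L^p_0}$, $\|\cdot\|_{L^{\min\{1,p-1\}}_0}$ and $u\mapsto\big(\int_\Omega\int_{\R^N}|u(x)-u(y)|^p\mathbf{k}(x-y)\,dxdy\big)^{1/p}$ obey (quasi-)triangle inequalities, $w\in V(\Omega,\R^N)$. Fix a nonnegative $v\in X^p_0(\Omega)$. Because $\supp\psi\subset\Omega$ we have $w\equiv u$ on $\R^N\setminus\Omega$, so applying Proposition~\ref{alternative2} to $(w,v)$ and to $(u,v)$ and subtracting gives $\cE_{L,p}(w,v)-\cE_{L,p}(u,v)=D_1+D_2+D_3$ with
\begin{align*}
D_1&=\tfrac{C_{N,p}}{2}\iint_{\Omega\times\Omega}\frac{\big(g(\xi+b)-g(\xi)\big)\big(v(x)-v(y)\big)}{|x-y|^N}\,dxdy,\qquad \xi:=u(x)-u(y),\ \ b:=\psi(x)-\psi(y),\\
D_2&=\int_{\Omega}\big(\rho_N(p)+h_\Omega(x)\big)\big(g(u(x)+\psi(x))-g(u(x))\big)v(x)\,dx,\\
D_3&=C_{N,p}\int_{\Omega}v(x)\int_{\R^N\setminus\Omega}\frac{\big(g(w(x)-u(y))-g(w(x))\big)-\big(g(u(x)-u(y))-g(u(x))\big)}{|x-y|^N}\,dy\,dx.
\end{align*}
Since $u$ is a supersolution, $\cE_{L,p}(u,v)\ge\int_\Omega fv\,dx$, so it suffices to prove $D_1+D_2+D_3\ge -C\max\{\|\psi\|_{L^{\infty}(\Omega)}^{p-1},\|\psi\|_{L^{\infty}(\Omega)}\}\int_\Omega v\,dx$; then $w$ is a weak supersolution of $L_{\Delta_p}w=f-C\max\{\dots\}$ in the sense of Definition~\ref{weak solution def}, the right-hand side lying in $L^{p/(p-1)}(\Omega)$ because $\Omega$ is bounded.

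The terms $D_2$ and $D_3$ are routine. Their integrands vanish for $x\notin\supp\psi$ (there $g(w(x))=g(u(x))$), and the inner integrand of $D_3$ also vanishes once $u(y)=0$, so that $y$-integral effectively runs over the bounded set $U\setminus\Omega$, where $|x-y|^{-N}\le\dist(\supp\psi,\R^N\setminus\Omega)^{-N}$; moreover $\rho_N(p)+h_\Omega(x)$ is bounded on the compact set $\supp\psi\subset\Omega$. Using the elementary bound $|g(a+c)-g(a)|\le C\max\{|c|,|c|^{p-1}\}$ valid for $|a|$, $|c|$ bounded, a consequence of Lemmas~\ref{estimate-general-1} and~\ref{estimate-general-3}, with $a=u(x)-u(y)$ or $a=u(x)$ and $c=\psi(x)$, one obtains $|D_2|+|D_3|\le C\max\{\|\psi\|_{L^{\infty}(\Omega)}^{p-1},\|\psi\|_{L^{\infty}(\Omega)}\}\int_\Omega v\,dx$ with $C$ depending only on $N,p,\Omega,U,\|u\|_{L^\infty(\R^N)}$.

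The main point is $D_1$. Applying the same elementary bound with $a=\xi$ and $c=b$ (note $|\xi|\le 2\|u\|_{L^\infty(\R^N)}$ and $|b|\le 2\|\psi\|_{L^{\infty}(\Omega)}$) gives $|g(\xi+b)-g(\xi)|\le C|b|^{\min\{1,p-1\}}$, which together with $|v(x)-v(y)|\le v(x)+v(y)$ reduces the estimate of $|D_1|$ to bounding $\iint_{\Omega\times\Omega}|b|^{\min\{1,p-1\}}\big(v(x)+v(y)\big)|x-y|^{-N}\,dxdy$. On $\{|x-y|\ge1\}$ the kernel is bounded and $|b|\le 2\|\psi\|_{L^{\infty}(\Omega)}$, giving a contribution $\le C\|\psi\|_{L^{\infty}(\Omega)}^{\min\{1,p-1\}}\int_\Omega v\,dx$; on $\{|x-y|<1\}$ I would instead use the Lipschitz bound $|b|\le\|\nabla\psi\|_{L^\infty}|x-y|$, so the integrand is $\le C\|\nabla\psi\|_{L^\infty}^{\min\{1,p-1\}}|x-y|^{\min\{1,p-1\}-N}\big(v(x)+v(y)\big)$, and since $\min\{1,p-1\}>0$ the inner integral $\int_{B_1(x)}|x-y|^{\min\{1,p-1\}-N}\,dy$ is finite; Tonelli then yields a contribution $\le C\int_\Omega v\,dx$. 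Hence $|D_1|\le C_\psi\int_\Omega v\,dx$ with $C_\psi$ depending on $N,p,\Omega,\|u\|_{L^\infty(\R^N)}$ and $\psi$, and since $\max\{\|\psi\|_{L^{\infty}(\Omega)}^{p-1},\|\psi\|_{L^{\infty}(\Omega)}\}$ is a fixed positive quantity this can be written in the required form (in particular, for $\psi$ replaced by $t\psi$ with $t\le 1$ the Lipschitz contribution scales like $t^{\min\{1,p-1\}}\asymp\max\{\|t\psi\|^{p-1},\|t\psi\|\}$, so the constant is uniform in $t$). The only genuine obstacle is this near-diagonal piece of $D_1$: the non-integrable kernel $|z|^{-N}$ forces one to trade the $L^\infty$-bound on $\psi$ for its Lipschitz bound, and this is precisely where the smoothness of $\psi$ enters.
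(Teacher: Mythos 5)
Your proposal is correct and follows essentially the same route as the paper's proof: the same decomposition via Proposition~\ref{alternative2} into the three differences $D_1,D_2,D_3$, the same use of Lemmas~\ref{estimate-general-1} and~\ref{estimate-general-3}, and the same key step of trading the $L^\infty$-bound on $\psi$ for its Lipschitz bound to neutralize the $|x-y|^{-N}$ singularity near the diagonal in $D_1$. The only difference is cosmetic: to dispose of the factor $v(x)-v(y)$ in $D_1$ the paper exploits the antisymmetry of the integrand under $(x,y)\mapsto(y,x)$ (via the $Q$-identity) to reduce exactly to $2\int_\Omega v(x)\int_\Omega(\cdots)\,dy\,dx$, whereas you use the cruder inequality $|v(x)-v(y)|\leq v(x)+v(y)$ (available since $v\geq0$) together with Tonelli, which yields the same estimate.
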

\begin{proof}
Clearly, $u+\psi\in V(\Omega,\R^N)$ and also $L_{\Delta_p}\psi\in L^{\infty}(\Omega)$ by Lemma \ref{some properties 2}. Let $v\in X^p_0(\Omega)$, $v\geq 0$. Then, by Proposition \ref{alternative2}
\begin{align}
&\cE_{L,p}(u+\psi,v)-\int_{\Omega}fv\,dx\notag\\
&\geq \cE_{L,p}(u+\psi,v)-\cE_{L,p}(u,v)\notag\\
\begin{split}\label{eq:perturbation}
&= \frac{C_{N,p}}{2}\iint_{\Omega\times \Omega}\frac{\Big(g(u(x)-u(y)+\psi(x)-\psi(y))-g(u(x)-u(y))\Big)(v(x)-v(y))}{|x-y|^N}\,dxdy\\
&\quad +\int_{\Omega}(\rho_{N}(p)+h_{\Omega}(x))\Big(g(u(x)+\psi(x))-g(u(x))\Big)v(x)\,dx\\
&\quad+ C_{N,p}\int_{\Omega}v(x)\int_{\R^N\setminus \Omega}\frac{g(u(x)+\psi(x)-u(y))-g(u(x)+\psi(x))-g(u(x)-u(y))+g(u(x))}{|x-y|^N}\,dydx.
\end{split}
\end{align}
In the following, let 
$$
P:=\max\{\|\psi\|_{L^{\infty}(\Omega)},\|\psi\|_{L^{\infty}(\Omega)}^{p-1}\}.
$$
For the first integral in \eqref{eq:perturbation}, let $Q(a,b)$ for $a,b\in \R$ be defined by
$$
g(a)-g(b)=Q(a,b)(a-b),\quad\text{that is,}\quad Q(a,b)=(p-1)\int_0^1|b+t(a-b)|^{p-2}\,dt.
$$
Then, with $a=a(x,y)=u(x)-u(y)+\psi(x)-\psi(y)$ and $b=b(x,y)=u(x)-u(y)$, we have $Q(a(x,y),b(x,y))=Q(a(y,x),b(y,x))$, and thus
\begin{align*}
&\iint_{\Omega\times \Omega}\frac{\Big(g(u(x)-u(y)+\psi(x)-\psi(y))-g(u(x)-u(y))\Big)(v(x)-v(y))}{|x-y|^N}\,dxdy\\
&=\iint_{\Omega\times \Omega}\frac{Q(a,b)(\psi(x)-\psi(y))(v(x)-v(y))}{|x-y|^N}\,dxdy\\
&=2\int_{\Omega} v(x)\int_{\Omega}\frac{Q(a,b)(\psi(x)-\psi(y))}{|x-y|^N}\,dydx\\
&=2\int_{\Omega} v(x)\int_{\Omega}\frac{g(u(x)-u(y)+\psi(x)-\psi(y))-g(u(x)-u(y))}{|x-y|^N}\,dydx.
\end{align*}
Then by Lemma \ref{estimate-general-3}, using the boundedness of $u$, there is $c=c_p>0$ such that
$$
g(u(x)+\psi(x))-g(u(x))\geq -cP\quad\text{for all $x\in \Omega$},
$$
and for $x,y\in \Omega$
\begin{align*}
g(u(x)-u(y)+\psi(x)-\psi(y))-g(u(x)-u(y))
&\geq -c\max\{|\psi(x)-\psi(y)|,|\psi(x)-\psi(y)|^{p-1}\}\\
&\geq -c\tilde{c}P \max\{|x-y|,|x-y|^{p-1}\},
\end{align*}
for some constant $\tilde{c}$ depending on $\psi$. With this, and the previous estimate we have
\begin{align*}
\frac{C_{N,p}}{2}&\iint_{\Omega\times \Omega}\frac{\Big(g(u(x)-u(y)+\psi(x)-\psi(y))-g(u(x)-u(y))\Big)(v(x)-v(y))}{|x-y|^N}\,dxdy\\
&\geq -C_{N,p}c\tilde{c}P\int_{\Omega} v(x)\int_{\Omega} \max\{|x-y|^{1-N},|x-y|^{p-1-N}\}\,dydx\geq -C_1P\int_{\Omega} v(x)\,dx.
\end{align*}
Moreover, $h_{\Omega}$ is bounded in $\supp\,\psi$, since $\psi$ is compactly supported in $\Omega$, and thus also
\begin{align*}
 &\int_{\Omega}(\rho_{N}(p)+h_{\Omega}(x))\Big(g(u(x)+\psi(x))-g(u(x))\Big)v(x)\,dx\\
 &\geq -c\sup_{x\in \supp\,\psi}|\rho_{N}(p)+h_{\Omega}(x)|P\int_{\Omega}v(x)\,dx=-C_2P\int_{\Omega}v(x)\,dx,
\end{align*}
which bounds the second integral in \eqref{eq:perturbation}. To bound the last integral in \eqref{eq:perturbation}, first note that 
\begin{align*}
\int_{\Omega}&v(x)\int_{\R^N\setminus \Omega}\frac{g(u(x)+\psi(x)-u(y))-g(u(x)+\psi(x))-g(u(x)-u(y))+g(u(x))}{|x-y|^N}\,dydx
\\
&=\int_{\supp\,\psi}v(x)\int_{U\setminus \Omega}\frac{g(u(x)+\psi(x)-u(y))-g(u(x)+\psi(x))-g(u(x)-u(y))+g(u(x))}{|x-y|^N}\,dydx\\
&\geq -c'\int_{\Omega}v(x)\int_{U\setminus \Omega} \Big|g(u(x)+\psi(x)-u(y))-g(u(x)+\psi(x))-g(u(x)-u(y))+g(u(x))\Big| \,dydx,
\end{align*}
for a constant $c'$ depending on $N$ and $\psi$. Again, with Lemma \ref{estimate-general-3} we have 
$$
\Big|g(u(x)+\psi(x)-u(y))-g(u(x)+\psi(x))-g(u(x)-u(y))+g(u(x))\Big|\geq -2cP,
$$
and thus
\begin{align*}
C_{N,p}\int_{\Omega}&v(x)\int_{\R^N\setminus \Omega}\frac{g(u(x)+\psi(x)-u(y))-g(u(x)+\psi(x))-g(u(x)-u(y))+g(u(x))}{|x-y|^N}\,dydx\\
&\geq -C_3P\int_{\Omega}v(x)\,dx,
\end{align*}
for some constant $C_3>0$. The claim thus follows with $C=C_1+C_2+C_3$.
\end{proof}

\subsection{Maximum and comparison principles}
\label{sec:maximum principle}

\begin{lemma}\label{the negative part}
Let $\Omega\subset \R^N$ be an open bounded set and let $c\in L^{\infty}(\Omega)$. Let $u\in V(\Omega,\R^N)$ be a supersolution of $L_{\Delta_p}u=c(x)g(u)$ in $\Omega$ with $u\geq 0$ in $\R^N\setminus \Omega$. Then, $u^-\in X^p_0(\Omega)$ and 
$$
\cE_{L,p}(u^-,u^-)\leq \|c^+\|_{L^{\infty}(\Omega)}\int_{\Omega}|u^-(x)|^p\,dx.
$$
\end{lemma}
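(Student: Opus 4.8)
The plan is to test the weak supersolution inequality with $v=u^-$ and to compare $\cE_{L,p}(u,u^-)$ with $\cE_{L,p}(u^-,u^-)$ by means of elementary pointwise inequalities for $g(t)=|t|^{p-2}t$.

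First I would verify that $u^-$ is an admissible test function. Since $u\ge 0$ on $\R^N\setminus\Omega$, we have $u^-=(1_\Omega u)^-$, and $1_\Omega u\in X^p_0(\Omega)$ by Remark~\ref{cut off2}; as taking the negative part increases neither the $L^p$-norm nor the seminorm (because $|u^-(x)-u^-(y)|\le|u(x)-u(y)|$), it follows that $u^-\in X^p_0(\Omega)$, and $u^-$ is nonnegative. Plugging $v=u^-$ into Definition~\ref{weak solution def} gives $\cE_{L,p}(u,u^-)\ge\int_\Omega c(x)g(u(x))u^-(x)\,dx$.

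The heart of the argument is the comparison $\cE_{L,p}(u,u^-)\le-\cE_{L,p}(u^-,u^-)$. I would derive it from two pointwise facts: the identity $g(a)\,a^-=-(a^-)^p$ for all $a\in\R$ (immediate by distinguishing $a\ge 0$ and $a<0$), and the inequality
\[
g(a-b)\big(a^- - b^-\big)\le-|a^- - b^-|^p\qquad\text{for all }a,b\in\R.
\]
For the latter, write $a-b=\alpha-\beta$ with $\alpha=a^+-b^+$ and $\beta=a^- - b^-$; since $\alpha\beta=-(a^+b^-+b^+a^-)\le 0$, the numbers $\alpha$ and $-\beta$ have the same sign, so $|\alpha-\beta|=|\alpha|+|\beta|\ge|\beta|$ while $\alpha-\beta$ and $\beta$ have opposite signs, whence $g(a-b)(a^- - b^-)=g(\alpha-\beta)\beta=-|\alpha-\beta|^{p-1}|\beta|\le-|\beta|^p$. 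Inserting these under the integral signs in the decomposition $\cE_{L,p}=\cE_p+\cF_p+\rho_N(p)\int g(u)v$ of \eqref{defi-bilinear}, and using $\mathbf{k},\mathbf{j}\ge 0$, yields $\cE_p(u,u^-)\le-\cE_p(u^-,u^-)$, $\cF_p(u,u^-)\le-\cF_p(u^-,u^-)$ (the two diagonal terms of $\cF_p$ being treated with $g(a)a^-=-(a^-)^p$), and $\rho_N(p)\int_{\R^N}g(u)u^-=-\rho_N(p)\int_{\R^N}g(u^-)u^-$; summing gives the asserted comparison. Combining it with the previous step and using $-g(u)u^-=(u^-)^p$ once more, together with $c\le c^+\le\|c^+\|_{L^\infty(\Omega)}$,
\[
\cE_{L,p}(u^-,u^-)\le-\int_\Omega c(x)g(u(x))u^-(x)\,dx=\int_\Omega c(x)|u^-(x)|^p\,dx\le\|c^+\|_{L^\infty(\Omega)}\int_\Omega|u^-(x)|^p\,dx.
\]

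The one point requiring care is the legitimacy of manipulating the integrands term by term: one needs each of $\cE_p(u,u^-)$, $\cF_p(u,u^-)$, $\cE_p(u^-,u^-)$, $\cF_p(u^-,u^-)$ and the zeroth-order integral to be absolutely convergent, so that the pointwise inequalities may be added and integrated. This is exactly what Lemma~\ref{lemma-welldefined} provides, applied to $(u,u^-)\in V(\Omega,\R^N)\times X^p_0(\Omega)$ and to $(u^-,u^-)$ (here $u^-\in X^p_0(\Omega)\subset V(\Omega,\R^N)$ since $\Omega$ is bounded), together with Proposition~\ref{alternative}. Beyond this bookkeeping, everything reduces to the elementary algebra above, and I expect the pointwise inequality for $g(a-b)(a^- - b^-)$ to be the only genuinely substantive ingredient.
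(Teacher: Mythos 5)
Your proof is correct and follows essentially the same route as the paper: test the supersolution inequality with $v=u^-$, split $\cE_{L,p}$ into $\cE_p$, $\cF_p$, and the zero-order piece, use $g(a)a^-=-(a^-)^p$, and conclude $\cE_{L,p}(u,u^-)\le-\cE_{L,p}(u^-,u^-)$ from the sign of $(u^+(x)-u^+(y))(u^-(x)-u^-(y))$. The only difference is cosmetic: you establish the pointwise estimate $g(a-b)(a^--b^-)\le-|a^--b^-|^p$ by direct sign analysis of $\alpha=a^+-b^+$ and $\beta=a^--b^-$, whereas the paper factors $g(a-b)+g(b)=Q(a,b)\,a$ via a mean-value integral and then invokes $Q\ge0$ together with the same sign observation.
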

\begin{proof}
It is easy to check that $u^-\in V(\Omega,\R^N)$. Thus $u^-\in X^p_0(\Omega)$, see Remark \ref{cut off2}. Testing with $u^-$ implies
\begin{align*}
-\|c^+\|_{L^{\infty}(\Omega)}\int_{\Omega}|u^-(x)|^p\,dx&\leq \int_{\Omega} c(x)g(u(x))u^-(x)\,dx\leq \cE_{L,p}(u,u^-)\\
&=\cE_p(u,u^-)+\cF_p(u,u^-)-\rho_{N}(p)\int_{\R^N}|u^-(x)|^p\,dx.
\end{align*}
Writing for $a,b\in \R$
$$
g(a-b)+g(b)=g(a-b)-g(-b)=(p-1)\int_0^1|b+ta|\,dta=:Q(a,b)a,
$$
we find with $a=a(x,y):=u^+(x)-u^+(y)$ and $b=b(x,y):=u^-(x)-u^-(y)$
\begin{align*}
\cF_p(u,u^-)&=-\cF_p(u^-,u^-)\\
&\quad +\frac12\iint_{\R^N\times\rn}\Big(g(u(x)-u(y))+g(u^-(x)-u^-(y))\Big)(u^-(x)-u^-(y))\mathbf{j}(x-y)\,dxdy\\
&=-\cF_p(u^-,u^-)\\
&\quad +\frac12\iint_{\R^N\times\rn}Q(a(x,y),b(x,y))(u^+(x)-u^+(y))(u^-(x)-u^-(y))\mathbf{j}(x-y)\,dxdy,
\end{align*}
and
\begin{align*}
\cE_p(u,u^-)&=-\cE_p(u^-,u^-)\\
&\quad +\frac12\iint_{\R^N\times\rn}\Big(g(u(x)-u(y))+g(u^-(x)-u^-(y))\Big)(u^-(x)-u^-(y))\mathbf{k}(x-y)\,dxdy\\
&=-\cE_p(u^-,u^-)\\
&\quad +\frac12\iint_{\R^N\times\rn}Q(a(x,y),b(x,y))(u^+(x)-u^+(y))(u^-(x)-u^-(y))\mathbf{k}(x-y)\,dxdy.
\end{align*}
Since  $Q\geq0$ and
$$
(u^+(x)-u^+(y))(u^-(x)-u^-(y))=-u^+(x)u^-(y)-u^+(y)u^-(x)\leq 0,
$$
it follows 
$$
-\|c^+\|_{L^{\infty}(\Omega)}\int_{\Omega}|u^-(x)|^p\,dx\leq \cE_{L,p}(u,u^-)\leq -\cE_{L,p}(u^-,u^-),
$$
as claimed.
\end{proof}

\begin{lemma}\label{maximum principle-pre}
Let $\Omega\subset \R^N$ be an open bounded set and $c\in L^{\infty}(\Omega)$. If
$$
\lambda_{L,p}^1(\Omega):=\inf\left\{\cE_{L,p}(u,u):u\in X^p_0(\Omega)\text{ and }\|u\|_{L^p(\Omega)}=1\right\}>\|c^+\|_{L^{\infty}(\Omega)},
$$ 
then $L_{\Delta_p}-c(x)$ satisfies the maximum principle in $\Omega$. Here, we say $L_{\Delta_p}-c(x)$ satisfies the maximum principle in $\Omega$, if for all supersolutions $v\in V(\Omega,\R^N)$ of $L_{\Delta_p}v=c(x)g(v)$ in $\Omega$ and with $v\geq 0$ in $\R^N\setminus \Omega$ it follows that $v\geq 0$ (a.e.) in $\R^N$.
\end{lemma}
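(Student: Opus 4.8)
The plan is to combine the energy estimate for the negative part from Lemma~\ref{the negative part} with the variational characterisation of $\lambda_{L,p}^1(\Omega)$. So let $v\in V(\Omega,\R^N)$ be a supersolution of $L_{\Delta_p}v=c(x)g(v)$ in $\Omega$ with $v\ge 0$ in $\R^N\setminus\Omega$. Since $v\ge 0$ outside $\Omega$ means $v^-\equiv 0$ there, it suffices to prove that $v^-=0$ a.e.\ in $\Omega$; then $v^-=0$ a.e.\ in $\R^N$ and hence $v\ge 0$ a.e.\ in $\R^N$, which is exactly the maximum principle claimed.

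First I would invoke Lemma~\ref{the negative part} directly: it gives $v^-\in X^p_0(\Omega)$ together with the bound
$$
\cE_{L,p}(v^-,v^-)\le \|c^+\|_{L^{\infty}(\Omega)}\int_{\Omega}|v^-(x)|^p\,dx=\|c^+\|_{L^{\infty}(\Omega)}\|v^-\|_{L^p(\Omega)}^p.
$$
Next, I would record the matching lower bound coming from the definition of $\lambda_{L,p}^1(\Omega)$. Since $\cE_{L,p}(tw,tw)=t^p\,\cE_{L,p}(w,w)$ for $t>0$ by the $p$-homogeneity of $g$, applying the infimum in the definition of $\lambda_{L,p}^1(\Omega)$ to $v^-/\|v^-\|_{L^p(\Omega)}$ (assuming $v^-\not\equiv 0$, otherwise we are already done) yields
$$
\cE_{L,p}(v^-,v^-)\ge \lambda_{L,p}^1(\Omega)\,\|v^-\|_{L^p(\Omega)}^p.
$$

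Combining the two displays gives $\lambda_{L,p}^1(\Omega)\,\|v^-\|_{L^p(\Omega)}^p\le \|c^+\|_{L^{\infty}(\Omega)}\,\|v^-\|_{L^p(\Omega)}^p$. If $\|v^-\|_{L^p(\Omega)}>0$, dividing by this positive quantity forces $\lambda_{L,p}^1(\Omega)\le \|c^+\|_{L^{\infty}(\Omega)}$, contradicting the hypothesis $\lambda_{L,p}^1(\Omega)>\|c^+\|_{L^{\infty}(\Omega)}$. Hence $\|v^-\|_{L^p(\Omega)}=0$, i.e.\ $v\ge 0$ a.e.\ in $\R^N$, which proves the lemma. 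The only non-routine ingredient here is Lemma~\ref{the negative part} (whose proof relies on the sign of the cross term $(v^+(x)-v^+(y))(v^-(x)-v^-(y))\le 0$ against the nonnegative kernels $\mathbf{k},\mathbf{j}$); once that is available, the remaining argument is just the homogeneity scaling and a one-line contradiction, so I do not expect any genuine obstacle beyond correctly quoting the earlier lemma.
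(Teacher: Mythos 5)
Your argument is correct and coincides with the paper's proof: apply Lemma~\ref{the negative part} for the upper bound $\cE_{L,p}(v^-,v^-)\le \|c^+\|_{L^\infty(\Omega)}\|v^-\|_{L^p(\Omega)}^p$, combine it with the $p$-homogeneous lower bound $\cE_{L,p}(v^-,v^-)\ge\lambda_{L,p}^1(\Omega)\|v^-\|_{L^p(\Omega)}^p$ coming from the definition of $\lambda_{L,p}^1(\Omega)$, and conclude $v^-\equiv 0$. You only spell out a few steps (the normalisation by $\|v^-\|_{L^p}$ and the degenerate case $v^-\equiv 0$) that the paper leaves implicit.
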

\begin{proof}
Let $v\in V(\Omega,\R^N)$ be a supersolution of $L_{\Delta_p}v=0$ in $\Omega$ with $v\geq 0$ in $\R^N\setminus \Omega$. Then, Lemma \ref{the negative part} implies
$$
\lambda_{L,p}^1(\Omega)\|v^-\|_{L^p(\Omega)}^p\leq \cE_{L,p}(v^-,v^-)\leq \|c^+\|_{L^{\infty}(\Omega)}\|v^-\|_{L^p(\Omega)}^p.
$$
Thus, $v^-=0$ a.e. in $\Omega$ and the claim follows.
\end{proof}

$\lambda_{L,p}^1(\Omega)$ is called the first eigenvalue, which we investigate in detail in the next section. For our investigation of the maximum principle, however, we need the following properties of $\lambda_{L,p}^1(\Omega)$.

\begin{prop}\label{prop of lambda1}
Let $\Omega\subset \R^N$ be an open bounded set and $p>1$. Then, the following properties hold for the first eigenvalue $\lambda_{L,p}^1(\Omega)$.
\begin{enumerate}
\item There is a nonnegative $L^p$-normalized function $u\in X^p_0(\Omega)$ such that $\lambda_{L,p}^1(\Omega)=\cE_{L,p}(u,u)$. Moreover, $u$ satisfies in weak sense
$$
L_{\Delta_p}u=\lambda_{L,p}^1(\Omega)g(u)\quad\text{in $\Omega$},\quad u=0\quad\text{in $\R^N\setminus \Omega$}.
$$
 \item $\lambda^{1}_{L,p}(U)\geq \lambda^{1}_{L,p}(\Omega)$ for $U\subset\Omega$.
    \item $\lambda_{L,p}^1(r\Omega)=\lambda_{L,p}^1(\Omega)-p\ln(r)$ for $r>0$. 
\end{enumerate}
\end{prop}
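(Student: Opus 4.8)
The statement splits into three parts of rather different character; I treat them in turn.

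\emph{Part (1): existence of a nonnegative extremal.} The plan is the direct method of the calculus of variations in the reflexive Banach space $X^p_0(\Omega)$. Take a minimizing sequence $(u_n)\subset X^p_0(\Omega)$ with $\|u_n\|_{L^p(\Omega)}=1$ and $\cE_{L,p}(u_n,u_n)\to\lambda^1_{L,p}(\Omega)$; by Lemma~\ref{lower bound absolut value} we may replace $u_n$ by $|u_n|$ and assume $u_n\ge 0$. For coercivity I would use the identity $\cE_p(u,u)=\cE_{L,p}(u,u)-\cF_p(u,u)-\rho_N(p)\|u\|_{L^p(\Omega)}^p$ together with the elementary bound $|\cF_p(u,u)|\le c(N,p,\Omega)\|u\|_{L^p(\Omega)}^p$, which holds because $\cF_p(u,u)$ vanishes off $\Omega\times\Omega$ for $u\in X^p_0(\Omega)$ and the kernel $\mathbf{j}$ is bounded on the bounded set $\Omega\times\Omega$; since $[u]_{X^p_0(\Omega)}^p=2\cE_p(u,u)$, this bounds $(u_n)$ in $X^p_0(\Omega)$. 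By reflexivity and the compact embedding $X^p_0(\Omega)\embed L^p(\Omega)$ recalled above, a subsequence satisfies $u_n\weakto u$ in $X^p_0(\Omega)$ and $u_n\to u$ in $L^p(\Omega)$, so $u\ge 0$ and $\|u\|_{L^p(\Omega)}=1$. The one genuinely delicate point is weak lower semicontinuity of $\cE_{L,p}$ along this sequence: arguing from Proposition~\ref{alternative} is awkward because $h_\Omega$ grows logarithmically at $\partial\Omega$, so instead I would use the global representation $\cE_{L,p}=\cE_p+\cF_p+\rho_N(p)\|\cdot\|_{L^p}^p$, where $\cE_p(\cdot,\cdot)=\tfrac12[\cdot]_{X^p_0(\Omega)}^p$ is weakly l.s.c.\ as a power of a convex continuous seminorm, while $\cF_p(u_n,u_n)\to\cF_p(u,u)$ and $\|u_n\|_{L^p}^p\to\|u\|_{L^p}^p$ by the strong $L^p(\Omega)$ convergence (again since the $\mathbf{j}$-kernel is bounded on the bounded set $\Omega\times\Omega$). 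Hence $\cE_{L,p}(u,u)\le\liminf_n\cE_{L,p}(u_n,u_n)=\lambda^1_{L,p}(\Omega)$, so $u$ is an extremal.

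\emph{Euler--Lagrange equation and Part (2).} For $v\in X^p_0(\Omega)$ the functions $t\mapsto\cE_{L,p}(u+tv,u+tv)$ and $t\mapsto\|u+tv\|_{L^p(\Omega)}^p$ are differentiable at $t=0$, with derivatives $p\,\cE_{L,p}(u,v)$ and $p\int_\Omega g(u)v\,dx$ respectively (differentiation under the integral sign, justified by Lemmas~\ref{estimate-general-1} and \ref{estimate-general-3}). Since $\cE_{L,p}$ is $p$-homogeneous, $\cE_{L,p}(w,w)\ge\lambda^1_{L,p}(\Omega)\|w\|_{L^p(\Omega)}^p$ for all $w\in X^p_0(\Omega)$, with equality at $w=u$; hence $t=0$ minimises $t\mapsto\cE_{L,p}(u+tv,u+tv)-\lambda^1_{L,p}(\Omega)\|u+tv\|_{L^p(\Omega)}^p$ and comparing derivatives gives $\cE_{L,p}(u,v)=\lambda^1_{L,p}(\Omega)\int_\Omega g(u)v\,dx$ for \emph{all} $v\in X^p_0(\Omega)$. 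As $u\in X^p_0(\Omega)\subset V(\Omega,\R^N)$ and $g(u)\in L^{\frac p{p-1}}(\Omega)$, this is precisely the weak formulation of $L_{\Delta_p}u=\lambda^1_{L,p}(\Omega)g(u)$ in $\Omega$, $u=0$ in $\R^N\setminus\Omega$, in the sense of Definition~\ref{weak solution def} (both inequalities hold since $v$ is arbitrary). Part (2) is then immediate: the form $\cE_{L,p}$ and the constraint $\|u\|_{L^p}=1$ do not depend on the domain, and if $U\subset\Omega$ then every function vanishing a.e.\ off $U$ vanishes a.e.\ off $\Omega$ and has the same seminorm, so $X^p_0(U)\subset X^p_0(\Omega)$; taking the infimum of the same functional over a smaller class can only increase it, whence $\lambda^1_{L,p}(U)\ge\lambda^1_{L,p}(\Omega)$.

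\emph{Part (3): logarithmic scaling.} For $r>0$ consider $u\mapsto v_r$, $v_r(x):=r^{-N/p}u(x/r)$, which is a bijection between the $L^p$-normalized elements of $X^p_0(\Omega)$ and those of $X^p_0(r\Omega)$: a change of variables shows $\|v_r\|_{L^p(r\Omega)}=\|u\|_{L^p(\Omega)}$, and finiteness of the seminorm is preserved because replacing the cut-off radius $1$ by $1/r$ only alters a bounded-kernel integral over the bounded set $\Omega\times\Omega$. Using Proposition~\ref{alternative}, $\cE_{L,p}(v_r,v_r)=\tfrac{C_{N,p}}2\iint_{r\Omega\times r\Omega}\tfrac{|v_r(x)-v_r(y)|^p}{|x-y|^N}\,dx\,dy+\int_{r\Omega}\bigl(h_{r\Omega}(x)+\rho_N(p)\bigr)|v_r(x)|^p\,dx$; the first integral is invariant under $x=r\xi$, $y=r\eta$ since $|z|^{-N}$ is homogeneous of degree $-N$, so it equals $\tfrac{C_{N,p}}2\iint_{\Omega\times\Omega}\tfrac{|u(\xi)-u(\eta)|^p}{|\xi-\eta|^N}\,d\xi\,d\eta$, while the same substitution turns the second into $\int_\Omega\bigl(h_{r\Omega}(r\xi)+\rho_N(p)\bigr)|u(\xi)|^p\,d\xi$. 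It remains to insert the identity $h_{r\Omega}(r\xi)=h_\Omega(\xi)-p\ln r$, obtained by changing variables directly in \eqref{h-omega function} (the excess over the $|x-y|$-annulus contributes $-C_{N,p}\int_{B_1\setminus B_{1/r}}|z|^{-N}\,dz=-C_{N,p}\omega_N\ln r=-p\ln r$, using $C_{N,p}\omega_N=p$), which is consistent with \eqref{eigen scaling} and with the scaling in Lemma~\ref{weak scaling}. Combining this with $\|u\|_{L^p(\Omega)}=1$ gives $\cE_{L,p}(v_r,v_r)=\cE_{L,p}(u,u)-p\ln r$, and taking the infimum over all such $u$ yields $\lambda^1_{L,p}(r\Omega)=\lambda^1_{L,p}(\Omega)-p\ln r$. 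The main obstacle throughout is the weak lower semicontinuity step in Part (1), and the key idea is to route it through the decomposition $\cE_{L,p}=\cE_p+\cF_p+\rho_N(p)\|\cdot\|_{L^p}^p$ rather than through the $h_\Omega$-representation.
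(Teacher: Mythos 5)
Your argument is correct and, for Parts (1) and (2), essentially coincides with the paper's. For Part (1) you both run the direct method in the reflexive space $X^p_0(\Omega)$: boundedness of the minimizing sequence via the a priori bound $|\cF_p(u_n,u_n)|\le c(N,p,\Omega)\|u_n\|_{L^p(\Omega)}^p$ (the paper writes the same estimate as $(2^{p-1}+1)C_{N,p}\omega_N\ln R$ with $R>\diam\Omega$), compact embedding into $L^p(\Omega)$, weak lower semicontinuity of $\cE_p$, dominated convergence for $\cF_p$ and the zero-order term, and nonnegativity via Lemma~\ref{lower bound absolut value}; the Euler--Lagrange step is the same Lagrange-multiplier computation phrased slightly differently (the paper differentiates the Rayleigh quotient, you minimize the penalised functional $t\mapsto\cE_{L,p}(u+tv,u+tv)-\lambda^1_{L,p}(\Omega)\|u+tv\|_{L^p(\Omega)}^p$). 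Part (2) is immediate from the inclusion $X^p_0(U)\subset X^p_0(\Omega)$ in both treatments. Where you genuinely diverge is Part (3). The paper takes the extremal $u$ from Part (1), invokes Lemma~\ref{weak scaling} to show $v_r(x)=r^{-N/p}u(x/r)$ solves $L_{\Delta_p}v_r=(\lambda^1_{L,p}(\Omega)-p\ln r)g(v_r)$ weakly in $r\Omega$, and then asserts ``the claim follows'' --- which tacitly needs a two-sided step (test with $v_r$ to get $\le$, then swap $r\leftrightarrow 1/r$ and $\Omega\leftrightarrow r\Omega$ for $\ge$). You instead observe that $u\mapsto v_r$ is a bijection between the $L^p$-normalized elements of $X^p_0(\Omega)$ and $X^p_0(r\Omega)$ which shifts the Rayleigh quotient of Proposition~\ref{alternative} by exactly $-p\ln r$, using the scaling identity $h_{r\Omega}(rx)=h_\Omega(x)-p\ln r$ (this is Lemma~\ref{properties homega}(3), proved later and separately in the paper; you rederive it on the spot), and then take infima. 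Your route for (3) is self-contained, makes the sandwich explicit, and does not rely on Part (1) or on the solution-level scaling lemma, so it is arguably cleaner; the paper's route is shorter once Lemma~\ref{weak scaling} is in hand. One cosmetic slip: $\mathbf{j}$ is in fact bounded by $C_{N,p}$ on all of $\R^N$ (it vanishes near the origin), not merely on $\Omega\times\Omega$; what boundedness of $\Omega$ buys is the finiteness of $\int_\Omega\int_{\Omega\setminus B_1(x)}|x-y|^{-N}\,dy\,dx$, but the conclusions you draw from it are right.
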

\begin{proof}
Let $\{u_n\}\subset X^p_0(\Omega)$ be a minimizing sequence for $\lambda^1_{L,p}(\Omega)$ such that 
$$
\|u_n\|_{L^p(\Omega)}=1\text{ for all }n,\,\, \text{ and }\,\,\cE_{L,p}(u_n,u_n)\to\lambda^1_{L,p}(\Omega)\,\,\text{ as }n\to\infty.
$$
Note that,
\begin{equation*}
\begin{split}
\cF_p(u_n,u_n)&:=\frac{C_{N,p}}{2}\iint_{|x-y|>1}\frac{|u_n(x)-u_n(y)|^p-|u_n(x)|^p-|u_n(y)|^p}{|x-y|^N}dxdy
\\
&=\frac{C_{N,p}}{2}\iint_{\substack{x,y\in\Omega\\|x-y|>1}}\frac{|u_n(x)-u_n(y)|^p-|u_n(x)|^p-|u_n(y)|^p}{|x-y|^N}dxdy,
\end{split}
\end{equation*}
Since, $\|u_n\|_{L^p(\Omega)}=1$ then from above it follows that 
$$
|\cF_p(u_n,u_n)|\leq(2^{p-1}+1)C_{N,p}\int_{\Omega}|u_n(x)|^p\int_{B_R(x)\setminus B_1(x)}|x-y|^{-N}\,dydx=(2^{p-1}+1)C_{N,p}\omega_N\ln(R),
$$
where $R>\text{diam}(\Omega)$. This implies 
$$
C:=\sup_{n}|\cF_p(u_n,u_n)|<\infty.
$$
Thus, for any $n$ we have
$$
|\cE_p(u_n,u_n)|\leq|\cE_{L,p}(u_n,u_n)|+C+\rho_N(p)<\infty.
$$
Therefore, we conclude that $\{u_n\}$ is a bounded sequence in $X^p_0(\Omega)$. Hence, by the reflexivity property of $X^p_0(\Omega)$, we get up to a subsequence $u_n \rightharpoonup u$ in $X^p_0(\Omega)$ as $n\to\infty$ and the compact embedding $X^p_0(\Omega)\hookrightarrow L^p(\Omega)$, we get $u_n\to u$ in $L^p(\Omega)$ as $n\to\infty$. This gives that $\|u\|_{L^p(\Omega)}=1.$ By the generalized dominated convergence theorem, we have 
$$
\lim_{n\to\infty}\cF_p(u_n,u_n)=\cF_p(u,u),
$$
and by the weak lower semincontinuity of the norm, we obtain
$$
[u_n]_{X^p_0(\Omega)}^p=\cE_p(u,u)\leq\liminf_{n\to\infty}\cE_p(u_n,u_n).
$$
Thus, we have
$$
\lambda^1_{L,p}(\Omega)\leq\cE_{L,p}(u,u)\leq\liminf_{n\to\infty}\cE_{L,p}(u_n,u_n)=\lambda^1_{L,p}(\Omega).
$$
Hence, $\lambda^1_{L,p}(\Omega)$ is achieved by a function $u\in X^p_0(\Omega)$ with $\|u\|_{L^p(\Omega)}=1$. Note that by Lemma \ref{lower bound absolut value}, we have
\begin{align*}
\lambda_{L,p}^1(\Omega)&=\cE_{L,p}(u,u)\geq \cE_{L,p}(|u|,|u|)\geq \lambda_{L,p}^1(\Omega),
\end{align*}
implying $u\geq 0$ in $\Omega$, since otherwise the above inequality would be strict. Next, we show that the minimizer $u$ satisfies weakly the claimed associated equation. For this, consider the following function 
$$\phi(x,t)=u(x)+tv(x),\,\,v\in X^p_0(\Omega),\,t>0.$$
Clearly, $\phi(\cdot,t)\in X^p_0(\Omega)$. Since $u$ is the minimizer for $\lambda^1_{L,p}$, we then have
$$
\frac{d}{dt}\left\{\frac{\cE_{L,p}(\phi(\cdot,t),\phi(\cdot,t))}{\displaystyle\int_{\Omega}|\phi(x,t)|^p\,dx}\right\}=0\,\text{ at }\,t=0.
$$
This gives that 
$$
\cE_{p}(u,v)+\cF_p(u,v)+\rho_{N}(p)\int_{\rz^N}g(u)(x)v(x)\,dx=\lambda^1_{L,p}\int_{\Omega}|u|^{p-2}uv\,dx.
$$
and the (1) follows. The second property follows immediately from the definition of the first eigenvalue. For the third statement, let $u\in X^p_0(\Omega)$ be given by (1) and let $v_r:\R^N\to\R$ be given by $v_r(x)=r^{-N/p}u(x/r)$. Then $v_r$ is nonnegative, $L^p$-normalized and by Lemma \ref{weak scaling}, we have $v_r\in X^p_0(r\Omega)$, and in weak sense, for $x\in r\Omega$
 \begin{align*}
 L_{\Delta_p}v_r(x)&=r^{-\frac{N}{p}(p-1)}\Big(\lambda_{L,p}^1(\Omega)-p\ln(r)\Big)(u(x/r))^{p-1}\\  
 &=\Big(\lambda_{L,p}^1(\Omega)-p\ln(r)\Big)(v_r(x))^{p-1}.
 \end{align*}
 The claim follows.
\end{proof}

\begin{thm}[Strong maximum principle]\label{smp}
Let $\Omega\subset \R^N$ be an open set and let $c\in L^{\infty}(\Omega)$. Let $u\in V(\Omega,\R^N)\cap L^{\infty}(\R^N)$ be a nonnegative supersolution of $L_{\Delta_p}u=c(x)g(u)$ in $\Omega$, such that that $\supp\,u$ is compact in $\R^N$. Then, either $u=0$ in $\Omega$ or $u>0$ in $\Omega$ in the sense that
$$
\essinf_{K}u>0\quad\text{for all compact sets $K\subset \Omega$}.
$$
\end{thm}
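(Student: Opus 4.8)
The plan is to prove the dichotomy in two stages: first that $u$ is a.e.\ positive in $\Omega$ whenever $u\not\equiv0$, and then, granted this, that $u$ is bounded below by a positive constant on every compact subset. As a preliminary reduction, by working on each connected component of $\Omega$ separately I may assume $\Omega$ connected, and since $\dist(K,\partial\Omega)>0$ for any compact $K\subset\Omega$, covering $K$ by finitely many balls $B_R(x_0)$ with $\overline{B_{4R}(x_0)}\subset\Omega$ reduces the claim to proving $\essinf_{B_R(x_0)}u>0$ for one such ball, under the assumption $u\not\equiv0$. Throughout, I will use the local form of $\cE_{L,p}$ from Proposition~\ref{alternative2} relative to a small ball $B\subset\subset\Omega$, which turns the supersolution inequality into a local fractional-$p$-energy inequality, plus the lower-order weight $\rho_N(p)+h_B$ (bounded on balls strictly inside $B$), plus a nonlocal remainder over $\R^N\setminus B$; since $u$ is bounded with compact support and the test functions will be supported well inside $B$, this remainder is a bounded function and may be absorbed as a lower-order term.

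\emph{Stage 1: $u>0$ a.e.} Here I would run the nonlinear logarithmic test-function argument, in the spirit of the logarithmic lemma of Di~Castro--Kuusi--Palatucci for the fractional $p$-Laplacian. Fix a ball $B_{2r}(x_0)\subset\Omega$, a cutoff $\eta\in C_c^\infty(B_{2r}(x_0))$ with $0\le\eta\le1$ and $\eta\equiv1$ on $B_r(x_0)$, and $d\in(0,1]$, and insert $v=\eta^p(u+d)^{1-p}$ (nonnegative, bounded, and in $X^p_0(\Omega)$ because $t\mapsto(t+d)^{1-p}$ is Lipschitz on $[0,\infty)$ and $u\in V(\Omega,\R^N)$) into $\cE_{L,p}(u,v)\ge\int c\,g(u)v$. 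The elementary inequalities of Lemmas~\ref{estimate-general-1}--\ref{estimate-general-3} bound the $\mathbf k$-part of $\cE_{L,p}(u,v)$ from below by $c_1\iint\big|\log\tfrac{u(x)+d}{u(y)+d}\big|^p(\eta(x)\wedge\eta(y))^p\,\mathbf k(x-y)\,dx\,dy$ minus $c_2\iint|\eta(x)-\eta(y)|^p\mathbf k$; the terms $\int c\,g(u)v$ and $\rho_N(p)\int g(u)v$ are bounded by a constant uniformly in $d$ since $u^{p-1}(u+d)^{1-p}\le1$; and the $\mathbf j$-part $\cF_p(u,v)=\int v(x)\int(g(u(x)-u(y))-g(u(x)))\,\mathbf j(x-y)\,dy\,dx$ is bounded \emph{above} by a constant uniformly in $d$, because the contribution of $\{u(y)\le u(x)\}$ is controlled by $v(x)u(x)^{p-1}\le1$ times $\int_{\supp u\setminus B_1(x)}|x-y|^{-N}\,dy<\infty$, while the contribution of $\{u(y)>u(x)\}$ — which carries the only potentially large factor, of order $d^{1-p}$ — has the favourable sign $g(u(x)-u(y))-g(u(x))<0$. (Here the hypothesis $u\ge0$ on all of $\R^N$ is used exactly as the nonnegativity of the tail is used in the fractional case.) This yields $\iint_{B_r(x_0)\times B_r(x_0)}\big|\log\tfrac{u(x)+d}{u(y)+d}\big|^p\mathbf k(x-y)\,dx\,dy\le C$ uniformly in $d\in(0,1]$. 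Letting $d\to0^+$ and using merely that $|x-y|^{-N}$ is bounded below on $B_r(x_0)\times B_r(x_0)$, no set of positive measure in $B_r(x_0)$ can simultaneously carry $\{u=0\}$ and $\{u\ge\tau\}$ for any $\tau>0$; since $u\not\equiv0$ and $\Omega$ is connected, a covering/connectedness argument (the sets of points near which $u\equiv0$ a.e., respectively $u>0$ a.e., are open, disjoint, and cover $\Omega$) then gives $u>0$ a.e.\ in $\Omega$.

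\emph{Stage 2: the pointwise lower bound.} Granted $u>0$ a.e., I would run a De~Giorgi-type iteration (``expansion of positivity'') dual to the $L^\infty$ estimate of Theorem~\ref{bounded1}. Fix $B_R(x_0)$ with $\overline{B_{4R}(x_0)}\subset\Omega$; positivity a.e.\ gives $\tau>0$ with $|\{u\ge\tau\}\cap B_{2R}(x_0)|\ge\tfrac12|B_{2R}|$. Testing the supersolution inequality with $v_k=(l_k-u)^+\eta_k^p$ along decreasing levels $l_k\downarrow\kappa\tau$ and cutoffs $\eta_k$ shrinking to $B_R(x_0)$, one derives a Caccioppoli-type estimate for the truncations $(l_k-u)^+$ in the energy $\iint|\cdot|^p\mathbf k$, into which the one-sided bounds of Lemmas~\ref{estimate-general-1}--\ref{estimate-general-3}, the bounded weight $\rho_N(p)+h_{B_{4R}(x_0)}$ on balls inside $B_{4R}(x_0)$, the term $c\,g(u)$, and the bounded nonlocal remainder are all absorbed. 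The crucial point is that at logarithmic order the energy gives \emph{no} gain of integrability (no fractional Sobolev embedding with exponent $>p$), so the iteration does not close by itself; instead it must be organised around the logarithmic boundary Hardy inequality of Theorem~\ref{hardy}, applied on the sub-level sets $\{u<l_k\}$ (whose distance to the boundary is controlled by the levels), which supplies the missing logarithmic gain. The resulting super-geometric recursion forces $(\kappa\tau-u)^+\equiv0$ on $B_R(x_0)$ for $\kappa$ small enough, i.e.\ $\essinf_{B_R(x_0)}u\ge\kappa\tau>0$; together with the covering reduction this completes the argument.

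I expect Stage~2 to be the main obstacle. The absence of a Sobolev gain at order zero means the classical De~Giorgi scheme cannot be used off the shelf and must be rebuilt around the logarithmic Hardy inequality of Theorem~\ref{hardy}; additional care is needed because the nonlinearity only gives the one-sided inequalities of Lemmas~\ref{estimate-general-1}--\ref{estimate-general-3} rather than an exact chain rule, because the weight $h_B$ blows up logarithmically near $\partial B$ (so the iteration must stay on balls strictly inside $B$), and because the nonlocal remainder, though bounded here thanks to the compact support and boundedness of $u$, must be tracked at every step. Stage~1, by contrast, is a relatively direct adaptation of the known logarithmic test-function estimate once one checks, as above, that $\cF_p(u,v)$ has the right sign.
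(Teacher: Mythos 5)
Your approach is genuinely different from the paper's, and \emph{Stage 2} has a gap you already sense but that is in fact fatal as formulated, so the proof does not close.

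Stage~1 (the Di~Castro--Kuusi--Palatucci logarithmic test function $v=\eta^p(u+d)^{1-p}$) is plausible here: the $\mathbf k$-kernel, though only logarithmic, is bounded below on $B_r\times B_r$, which is all the ``$\{u=0\}$ and $\{u\ge\tau\}$ cannot both have positive measure'' dichotomy needs, and your sign observation for the $\mathbf j$-part can in fact be sharpened — since $u\ge0$ everywhere and $v\ge0$, one has $g(u(x)-u(y))-g(u(x))\le0$ for \emph{every} $y$, so $\cF_p(u,v)\le0$ outright. But Stage~1 only delivers $u>0$ a.e., which is strictly weaker than the asserted $\essinf_Ku>0$ and is anyway a by-product of any proof of the latter.

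Stage~2 is where the argument breaks. A De~Giorgi expansion-of-positivity iteration needs a quantitative integrability gain at each step, and, as you note, the logarithmic energy $\iint|\cdot|^p\mathbf k$ provides none — there is no analogue of the fractional Sobolev embedding $W^{s,p}\hookrightarrow L^{p^*_s}$ at order zero. Your proposed remedy, invoking Theorem~\ref{hardy} ``on the sub-level sets $\{u<l_k\}$,'' does not work: Theorem~\ref{hardy} and Corollary~\ref{cor:hardyplump} require the domain to be open and locally plump (or satisfy the Whitney conditions (i)--(iii)), and a sub-level set of a merely measurable $u\in X^p_0(\Omega)$ has no reason to be open, let alone plump. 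There is also no mechanism in that inequality to convert decay of measure of $\{u<l_k\}\cap B_R$ into a closing recursion. So the iteration has no engine. This is not a technicality to be ``rebuilt around'' — it is the absence of the very tool the scheme relies on.

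The paper's proof sidesteps both stages by a direct nonlocal propagation (barrier) argument and is much shorter. Since $u\not\equiv0$, pick a compact $K\subset\Omega$ of positive measure with $\essinf_Ku=\delta>0$. For any $x_0\in\Omega\setminus K$ choose $r$ so small that $B:=B_r(x_0)\subset\Omega\setminus K$ and $\lambda^1_{L,p}(B)>\|c^+\|_{L^\infty(\Omega)}$ (possible by Proposition~\ref{prop of lambda1}). Set $w_a:=u-\tfrac1af-\delta\,1_K$ with $f\in C_c^\infty(B)$, $0\le f\le1$, $f\equiv1$ on $B_{r/2}(x_0)$. Then $w_a\ge0$ outside $B$, and $\cE_{L,p}(w_a,\phi)$ picks up, via Lemma~\ref{estimate-general-2} (or \ref{estimate-general-3} for $p<2$), a strictly positive term $\int_B\phi(x)\int_K c\,\delta^{p-1}|x-y|^{-N}\,dy\,dx$ coming from the $\delta 1_K$ shift, while the subtraction of $\tfrac1af$ costs at most $C\max\{a^{-1},a^{1-p}\}\int_B\phi$ by the perturbation Lemma~\ref{perturbation lemma}. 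Taking $a$ large makes the gain dominate, so $w_a$ is a supersolution of $L_{\Delta_p}w=c(x)g(w)$ in $B$; the weak maximum principle (Lemma~\ref{maximum principle-pre}) on $B$ gives $w_a\ge0$ a.e.\ there, hence $u\ge\tfrac1a$ a.e.\ on $B_{r/2}(x_0)$. Covering any compact $K'\subset\Omega$ by finitely many such half-balls completes the proof. The moral is that the nonlocal tail already transmits the positivity of $u$ on $K$ to every interior ball with an explicit rate, so no iteration — and in particular no Sobolev-type gain — is ever needed.
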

\begin{proof}
Assume $u\not\equiv 0$ in $\Omega$. Then, there is compact set $K\subset \Omega$ with $|K|>0$ and such that 
$$
\essinf_{K}u=\delta>0.
$$
Let $x_0\in \Omega\setminus K$, $r>0$ such that $B:=B_r(x_0)\subset \Omega\setminus K$. Let $f\in C^{\infty}_c(\R^N)$ be a nonnegative function with $\supp f\subset B$, $0\leq f\leq 1$, and $f\equiv 1$ in $B_{r/2}(x_0)$. In the following, we may suppose that $\lambda_{L,p}^1(B)>\|c^+\|_{L^{\infty}(\Omega)}$ by making $r$ smaller, if necessary, and applying Proposition \ref{prop of lambda1}. Then, by Lemma \ref{maximum principle-pre}, $L_{\Delta_p}-c(x)$ satisfies the maximum principle in $B$. Consider next the function
$$
w_a:=u_a-\delta 1_K\quad\text{with}\quad u_a:=u-\frac{1}{a}f,
$$
for $a>0$. Then $w_a\in V(B,\R^N)$,  $w_a\geq 0$ in $\R^N\setminus B$ by construction, and we claim that there is $a>0$ such that $w_a$ is a supersolution of $L_{\Delta_p}w=0$ in $B$. Indeed, let $\phi\in X^p_0(B)$, $\phi\geq 0$. 
Then, by Proposition \ref{alternative2}
\begin{align*}
\cE_{L,p}(w_a,\phi)&=\frac{C_{N,p}}{2}\iint_{B\times B}\frac{g(w_a(x)-w_a(y))(\phi(x)-\phi(y))}{|x-y|^N}\,dxdy+\int_{B}\Big(\rho_N(p)+h_{\Omega}(x)\Big)g(w_a(x))\phi(x)\,dx\\
&\qquad +C_{N,p}\int_{B}\phi(x)\int_{\R^N\setminus B}\frac{g(w_a(x)-w_a(y))-g(w_a(x))}{|x-y|^N}\,dydx\\
&=\frac{C_{N,p}}{2}\iint_{B\times B}\frac{g(u_a(x)-u_a(y))(\phi(x)-\phi(y))}{|x-y|^N}\,dxdy+\int_{B}\Big(\rho_N(p)+h_{\Omega}(x)\Big)g(u_a(x))\phi(x)\,dx\\
&\qquad+C_{N,p}\int_{B}\phi(x)\int_{\R^N\setminus B}\frac{g(u_a(x)-u_a(y)+\delta 1_K(y))-g(u_a(x))}{|x-y|^N}\,dydx.
\end{align*}
First, we consider the case $p\geq 2$. Then, by Lemma \ref{estimate-general-2} 
$$
g(u_a(x)-u_a(y)+\delta1_K(y))-g(u_a(x)-u_a(y))\geq c\delta^{p-1}1_K(y)\quad\text{for all $x\in B$, $y\in \R^N\setminus B$},
$$
where $c=2^{2-p}$. Thus, with Lemma \ref{perturbation lemma}, we have for some $C>0$
\begin{align*}
\cE_{L,p}(w_a,\phi)&=\cE_{L,p}(u_a,\phi)+\int_B\phi(x)\int_{K}\frac{c\delta^{p-1} C_{N,p}}{|x-y|^{N}}\,dy\,dx\\
&\geq \int_{\Omega}\phi(x)\left(\int_{K}\frac{c\delta^{p-1} C_{N,p}}{|x-y|^{N}}\,dy-\frac{C}{\min\{a,a^{p-1}\}}\right)\,dx.
\end{align*} 
Since $|K|>0$, we may thus choose $a>0$ large enough such that
$$
\inf_{x\in \Omega}\int_{K}\frac{c\delta^{p-1} C_{N,p}}{|x-y|^{N}}\,dy-\frac{C}{\min\{a,a^{p-1}\}}\geq \|c^+\|_{L^{\infty}(\Omega)}\|u+f\|_{L^{\infty}(\Omega)}^{p-1}.
$$
It thus follows that in weak sense
$$
L_{\Delta_p}w_a\geq c(x)g(w_a)\quad\text{in $B$}, \qquad w_a\geq 0\quad\text{in $\R^N\setminus B$.}
$$
By Lemma \ref{maximum principle-pre}, it follows that $w_a\geq 0$ a.e. in $B$ and thus, in particular, $u\geq \frac1{a}$ a.e. in $B_{r/2}(x_0)$. The claim follows.\\
For the case $p\in(1,2)$, we use Lemma \ref{estimate-general-3} with $M=2(\|u\|_{L^{\infty}(\R^N)}+\|f\|_{L^{\infty}(\R^N)})$. Then, for some $c=c(M,p)>0$
$$
g(u_a(x)-u_a(y)+\delta1_K(y))-g(u_a(x)-u_a(y))\geq c\delta1_K(y)\quad\text{for all $x\in B$, $y\in \R^N\setminus B$}.
$$
Similar to the case $p\geq 2$, we find 
\begin{align*}
\cE_{L,p}(w_a,\phi)&\geq \int_{\Omega}\phi(x)\Big(\int_{K}\frac{c\delta C_{N,p}}{|x-y|^{N}}\,dy-\frac{C}{a^{p-1}}\Big)\,dx.
\end{align*} 
Choosing $a>0$ large such that
$$
\inf_{x\in \Omega}\int_{K}\frac{c\delta  C_{N,p}}{|x-y|^{N}}\,dy-\frac{C}{a^{p-1}}\geq 0.
$$
The claim follows analogously. 
\end{proof}

\begin{lemma}[Weak comparison principle]\label{wcp}
Let $\Omega\subset U\subset \R^N$ be an open bounded sets and let $c\in L^{\infty}(\Omega)$. Suppose 
$$
c(x)\leq \rho_{N}(p)+h_{U}(x)\quad\text{for a.e. $x\in \Omega$}.
$$
Let $u,v\in V(\Omega,\R^N)$ be such that in weak sense
$$
L_{\Delta_p}u- c(x)g(u)\geq L_{\Delta_p}v- c(x)g(v)\quad\text{in $\Omega$ with}\quad u\geq v\quad\text{in $\R^N\setminus \Omega$},
$$
and $u,v=0$ on $\R^N\setminus U$. Then $u\geq v$ a.e. in $\R^N$.
\end{lemma}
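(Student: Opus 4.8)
The plan is to test the weak differential inequality with $w:=(v-u)^{+}$. I would first check $w\in X^{p}_{0}(\Omega)$: by Remark~\ref{cut off2} both $u1_{\Omega}$ and $v1_{\Omega}$ lie in $X^{p}_{0}(\Omega)$, and since $u\ge v$ on $\R^{N}\setminus\Omega$ we have $w=(v1_{\Omega}-u1_{\Omega})^{+}$, which stays in $X^{p}_{0}(\Omega)$ because the pointwise bound $|\max(a,b)(x)-\max(a,b)(y)|\le|a(x)-a(y)|+|b(x)-b(y)|$ controls the seminorm and $w$ still vanishes off $\Omega$. Plugging $w$ into the definition of $L_{\Delta_{p}}u-c(x)g(u)\ge L_{\Delta_{p}}v-c(x)g(v)$ in $\Omega$ gives
\[
\cE_{L,p}(u,w)-\cE_{L,p}(v,w)\ \ge\ \int_{\Omega}c(x)\bigl(g(u(x))-g(v(x))\bigr)w(x)\,dx .
\]

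Next I would expand $\cE_{L,p}(u,w)$ and $\cE_{L,p}(v,w)$ via Proposition~\ref{alternative2} (legitimate since $u,v\in V(\Omega,\R^{N})$ and $w\in X^{p}_{0}(\Omega)$), using $u=v=0$ on $\R^{N}\setminus U$ to replace every exterior integral $\int_{\R^{N}\setminus\Omega}$ by $\int_{U\setminus\Omega}$. The crucial bookkeeping is the identity $h_{\Omega}(x)=h_{U}(x)+C_{N,p}\int_{U\setminus\Omega}|x-y|^{-N}\,dy$, immediate from \eqref{h-omega function} because $\Omega\subset U$: it turns $\rho_{N}(p)+h_{\Omega}(x)$ into $\rho_{N}(p)+h_{U}(x)$ at the cost of the extra term $C_{N,p}\int_{\Omega}g(u(x))w(x)\int_{U\setminus\Omega}|x-y|^{-N}\,dy\,dx$, which exactly cancels the $-g(u(x))$ in the exterior integrand of Proposition~\ref{alternative2}; doing the same for $v$ and subtracting yields $\cE_{L,p}(u,w)-\cE_{L,p}(v,w)=I+II+III$ with
\[
I=\tfrac{C_{N,p}}{2}\iint_{\Omega\times\Omega}\frac{\bigl(g(u(x)-u(y))-g(v(x)-v(y))\bigr)\bigl(w(x)-w(y)\bigr)}{|x-y|^{N}}\,dx\,dy,\quad
II=\int_{\Omega}\bigl(\rho_{N}(p)+h_{U}(x)\bigr)\bigl(g(u(x))-g(v(x))\bigr)w(x)\,dx,
\]
\[
III=C_{N,p}\int_{\Omega}w(x)\int_{U\setminus\Omega}\frac{g(u(x)-u(y))-g(v(x)-v(y))}{|x-y|^{N}}\,dy\,dx,
\]
all three finite by Lemma~\ref{lemma-welldefined} and Proposition~\ref{alternative2}.

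Then I would read off signs. With $\psi:=v-u$, so $w=\psi^{+}$: since $(u(x)-u(y))-(v(x)-v(y))=\psi(y)-\psi(x)$, strict monotonicity of $g$ gives $g(u(x)-u(y))-g(v(x)-v(y))$ the sign of $\psi(y)-\psi(x)$, while $w(x)-w(y)=\psi^{+}(x)-\psi^{+}(y)$ has the opposite sign or is $0$ (monotonicity of $t\mapsto t^{+}$); hence $I\le0$. On $\{w>0\}$ one has $v(x)>u(x)$, and on $U\setminus\Omega\subset\R^{N}\setminus\Omega$ one has $u(y)\ge v(y)$, so $u(x)-u(y)<v(x)-v(y)$ and the inner integrand of $III$ is $<0$; thus $III\le0$, strictly unless $w\equiv0$ when $|U\setminus\Omega|>0$. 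Rewriting the tested inequality as $I+III+\int_{\Omega}\bigl[(\rho_{N}(p)+h_{U}(x))-c(x)\bigr]\bigl(g(u(x))-g(v(x))\bigr)w(x)\,dx\ \ge\ 0$, the last integral is $\le0$ too, because on $\{w>0\}$ we have $g(u)-g(v)<0$ while $(\rho_{N}(p)+h_{U}(x))-c(x)\ge0$ by hypothesis. A sum of three nonpositive quantities being $\ge0$, each vanishes; from $III=0$ (if $|U\setminus\Omega|>0$), or from $I=0$ with the pointwise analysis above — which forces $w(x)=w(y)$ a.e.\ on $\{|x-y|<1\}$, hence $w$ constant on each component of $\Omega$ and therefore, as $w$ vanishes outside the bounded set $\Omega$, identically $0$ — we get $w\equiv0$, i.e.\ $u\ge v$ a.e.\ in $\R^{N}$.

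The main obstacle is the second step: because the kernel $\mathbf{j}$ is not integrable, the exterior contributions to $\cE_{L,p}(u,w)-\cE_{L,p}(v,w)$ cannot be manipulated termwise, and one must route everything through Proposition~\ref{alternative2} and the $h_{\Omega}$--$h_{U}$ identity. This is precisely what forces the hypothesis $c\le\rho_{N}(p)+h_{U}$ rather than the weaker-looking $c\le\rho_{N}(p)+h_{\Omega}$: since the identity shows $h_{U}\le h_{\Omega}$ on $\Omega$, it is exactly this bound that makes the zeroth-order term nonpositive after the reduction. A secondary, more routine point is the final ``constant implies zero'' step when $U=\Omega$, which needs a connectedness argument for $\Omega$.
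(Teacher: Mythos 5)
Your proof follows the same strategy as the paper's: test the inequality with $w=(v-u)^{+}=(u-v)^{-}\in X^{p}_{0}(\Omega)$, rewrite both bilinear forms via the alternative representation so that the zeroth-order weight becomes $\rho_{N}(p)+h_{U}$, and read off signs. The decomposition $I+II+III$ you obtain is literally the same thing the paper writes, since your $I+III$ equals $\tfrac{C_{N,p}}{2}\iint_{U\times U}\dots$ once one notes the integrand vanishes for $x,y\in U\setminus\Omega$ (as $w$ is supported in $\Omega$). Your bookkeeping --- applying Proposition~\ref{alternative2} over $\Omega$ and then invoking the identity $h_{\Omega}(x)=h_{U}(x)+C_{N,p}\int_{U\setminus\Omega}|x-y|^{-N}\,dy$ --- arrives at this in a slightly longer but equally valid way; in fact it is cleaner in one respect, since it only requires $u,v\in V(\Omega,\R^{N})$, which is the stated hypothesis, rather than applying the representation over the larger set~$U$. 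The sign analyses of $I$, $II$, $III$ are all correct.

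The problem is the closing step when $|U\setminus\Omega|=0$. From $I=0$ you conclude (correctly; in fact the conclusion is stronger than you state, since $I$ is taken over all of $\Omega\times\Omega$, not only $|x-y|<1$) that $w$ is a.e.\ constant on $\Omega$, and then assert that ``as $w$ vanishes outside the bounded set $\Omega$, [$w$ is] identically~$0$.'' This does not follow: a nonzero constant multiple of $1_{\Omega}$ does belong to $X^{p}_{0}(\Omega)$ for reasonable $\Omega$ (e.g.\ Lipschitz, since $\int_{\Omega}\ln^{+}(1/\delta_{x})\,dx<\infty$), so there is no measure-theoretic reason for a constant test function to vanish, and a connectedness argument does not help. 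The ingredient that actually closes this case is the vanishing of the zeroth-order term $II$, which you only ever use for its sign. Since the integrand $\bigl(\rho_{N}(p)+h_{U}(x)-c(x)\bigr)\bigl(g(u(x))-g(v(x))\bigr)w(x)$ is nonpositive and the factor $(g(u)-g(v))w$ is strictly negative wherever $w>0$, the vanishing of $II$ forces $\rho_{N}(p)+h_{U}(x)=c(x)$ a.e.\ on $\{w>0\}$; if $w$ were a nonzero constant on $\Omega$ this would require $\rho_{N}(p)+h_{U}=c$ a.e.\ in $\Omega$, which is incompatible with $c\in L^{\infty}(\Omega)$ once one uses that $h_{U}$ is unbounded near $\partial U$. (The paper's own write-up is compressed at exactly this point --- ``thus $\phi\equiv 0$'' --- but the mechanism is this zeroth-order term, not the gradient term you are appealing to.)
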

\begin{proof}
First, note that $\phi=(u-v)^-$ belongs to $X^p_0(\Omega)\subset X^p_0(U)$. Moreover, we have with Lemma \ref{alternative2}
\begin{align*}
0&\leq \cE_{L,p}(u,\phi)-\cE_{L,p}(v,\phi)-\int_{\Omega}c(x)\Big(g(u(x))-g(v(x))\Big)\phi(x)\,dx\\
&=\frac{C_{N,p}}{2}\iint_{U\times U}\frac{\Big(g(u(x)-u(y))-g(v(x)-v(y))\Big)(\phi(x)-\phi(y))}{|x-y|^N}\,dxdy\\
&\quad+\int_{\Omega}\Big(\rho_{N}(p)+h_{U}(x)-c(x)\Big)\Big(g(u(x))-g(v(x))\Big)\phi(x)\,dx.
\end{align*}
In the following, note that for $a,b\in \R$ we have
$$
g(a)-g(b)=Q(a,b)(a-b)\quad\text{with}\quad Q(a,b):=(p-1)\int_0^1|b+t(a-b)|^{p-2}\,dt\geq0.
$$
Note that $Q(a,b)=Q(b,a)$, by the substitution $t=1-\tau$, and we have $Q(-a,-b)=Q(a,b)$ as also $Q(a,-b)=Q(-a,b)$. With this notation and putting $w:=u-v$, we have
\begin{align*}
0&\leq \cE_{L,p}(u,\phi)-\cE_{L,p}(v,\phi)-\int_{\Omega}c(x)\Big(g(u(x))-g(v(x))\Big)\phi(x)\,dx\\
&=\frac{C_{N,p}}{2}\iint_{U\times U}\frac{Q(u(x)-u(y),v(x)-v(y))(w(x)-w(y))(\phi(x)-\phi(y))}{|x-y|^N}\,dxdy\\
&\quad +\int_{\Omega}\Big(\rho_{N}(p)+h_{U}(x)-c(x)\Big)Q(u(x),v(x))w(x)\phi(x)\,dx\leq0,
\end{align*}
where we used $w(x)\phi(x)=-\phi^2(x)\leq 0$ and
$$
(w(x)-w(y))(\phi(x)-\phi(y))=-(\phi(x)-\phi(y))^2-w^+(x)\phi(y)-w^+(y)\phi(x)\leq 0.
$$
This entails that the above integrands are nonpositive, while the integrals are zero. Thus $\phi\equiv 0$.
\end{proof}

\begin{prop}[Strong comparison principle]\label{scp}
  Let $\Omega\subset U\subset \R^N$ be an open bounded sets and let $c\in L^{\infty}(\Omega)$. Suppose 
$$
c(x)\leq \rho_{N}(p)+h_{U}(x)\quad\text{for a.e. $x\in \Omega$}.
$$
Let $u,v\in V(\Omega,\R^N)$ be such that
$$
L_{\Delta_p}u- c(x)g(u)\geq L_{\Delta_p}v- c(x)g(v)\quad\text{in $\Omega$ with}\quad u\geq v\quad\text{in $\R^N\setminus \Omega$},
$$
and $u,v=0$ on $\R^N\setminus U$. Suppose further that either $u\in L^{\infty}(\Omega)$ or $v\in L^{\infty}(\Omega)$. Then, either $u\equiv v$ in $\Omega$ or $u>v$ in $\Omega$ in the sense that 
$$
\essinf_K(u-v)>0\quad\text{for all compact sets $K\subset \Omega$}.
$$
\end{prop}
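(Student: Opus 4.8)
The plan is to mimic the proof of Theorem~\ref{smp}, with the weak maximum principle (Lemma~\ref{maximum principle-pre}) replaced by the weak comparison principle (Lemma~\ref{wcp}) and with the single supersolution replaced by the pair $(u,v)$. First, by Lemma~\ref{wcp} we already know that $w:=u-v\geq 0$ a.e.\ in $\R^N$, so, assuming $w\not\equiv 0$ in $\Omega$, the task is to upgrade this to $\essinf_K(u-v)>0$ for every compact $K\subset\Omega$. Since $w\not\equiv 0$, there is a compact $K_0\subset\Omega$ with $|K_0|>0$ and $\delta:=\essinf_{K_0}w>0$, and after replacing $\delta$ by $\min\{\delta,1\}$ we may assume $\delta\leq 1$. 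As at the end of the proof of Theorem~\ref{smp}, it then suffices to show that for every $x_0\in\Omega\setminus K_0$ there are $r>0$ with $\overline{B_r(x_0)}\subset\Omega\setminus K_0$ and $a>0$ such that $w\geq\tfrac1a$ a.e.\ on $B_{r/2}(x_0)$; the conclusion follows by a finite covering of $K$ (points of $K$ lying in $K_0$ already satisfy $w\geq\delta$), exactly as there.

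I would treat the case $v\in L^\infty(\Omega)$; the case $u\in L^\infty(\Omega)$ is symmetric (one perturbs $u$ downwards instead). Fix $x_0$ and $r$ as above, pick $f\in C^\infty_c(\R^N)$ with $0\leq f\leq 1$, $\supp f\subset B_{3r/4}(x_0)$, $f\equiv 1$ on $B_{r/2}(x_0)$, set $B:=B_r(x_0)$, and for $a\geq 1$ put
\[
v_a:=v+\tfrac1a f+\delta\,1_{K_0}\in V(B,\R^N).
\]
Then $u\geq v_a$ on $\R^N\setminus B$ (on $K_0$ one has $u-v_a=w-\delta\geq 0$ a.e., and $u-v_a=w\geq 0$ off $B\cup K_0$), and $v_a=v=0=u$ on $\R^N\setminus U$ because $f$ and $1_{K_0}$ are supported in $\Omega\subset U$. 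The key point is that, for $a$ large enough, $v_a$ is a weak subsolution relative to $u$ on $B$, i.e.\ $\cE_{L,p}(u,\phi)-\int_B c\,g(u)\phi\,dx\geq\cE_{L,p}(v_a,\phi)-\int_B c\,g(v_a)\phi\,dx$ for all nonnegative $\phi\in X^p_0(B)$. Granting this, Lemma~\ref{wcp} applied on $B$ in place of $\Omega$ (same $U$, using $c\leq\rho_N(p)+h_U$ on $B\subset\Omega$) yields $u\geq v_a$ a.e.\ in $\R^N$, whence $w=u-v\geq\tfrac1a f$ a.e.\ on $B$, so $w\geq\tfrac1a$ a.e.\ on $B_{r/2}(x_0)$.

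To establish the key point one starts from $\cE_{L,p}(u,\phi)-\int_B c\,g(u)\phi\,dx\geq\cE_{L,p}(v,\phi)-\int_B c\,g(v)\phi\,dx$ (valid since $B\subset\Omega$) and controls the difference $\cE_{L,p}(v_a,\phi)-\cE_{L,p}(v,\phi)$, noting that on $B$ one has $v_a=v+\tfrac1a f$, so $\int_B c\,g(v_a)\phi=\int_B c\,g(v+\tfrac1a f)\phi$. The additive term $\delta\,1_{K_0}$, being supported in $K_0\subset\R^N\setminus B$, changes the form only through the cross interaction with $x\in B$; a direct computation from \eqref{defi-bilinear} shows this change equals
\[
C_{N,p}\int_{B}\phi(x)\int_{K_0}\frac{g\big((v+\tfrac1a f)(x)-v(y)-\delta\big)-g\big((v+\tfrac1a f)(x)-v(y)\big)}{|x-y|^N}\,dy\,dx,
\]
which, by monotonicity of $g$ together with Lemma~\ref{estimate-general-2} for $p\geq 2$ and Lemma~\ref{estimate-general-3} for $1<p<2$ (the latter using that $v$ is bounded on $B\cup K_0\subset\Omega$ and $\delta\leq1$), is at most $-c_*\min\{\delta,\delta^{p-1}\}\,C_{N,p}\,\kappa\int_B\phi\,dx$ with $c_*>0$ and $\kappa:=\inf_{x\in B}\int_{K_0}|x-y|^{-N}\,dy>0$. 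The perturbation $\tfrac1a f$ contributes, as in Lemma~\ref{perturbation lemma}, at most $C\max\{a^{-1},a^{-(p-1)}\}\int_B\phi\,dx$ with $C$ independent of $a$ (and similarly for the induced change in $\int_B c\,g(\cdot)\phi$). Choosing $a$ so large that $C\max\{a^{-1},a^{-(p-1)}\}\leq c_*\,\kappa\,C_{N,p}\min\{\delta,\delta^{p-1}\}$ makes the net change nonpositive, which is the assertion.

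The hardest part will be the estimate of the $\tfrac1a f$-perturbation, because Lemma~\ref{perturbation lemma} is stated only for functions in $L^\infty(\R^N)$, whereas here $v$ need be neither bounded nor zero on $U\setminus\Omega$. I would re-run the proof of that lemma exploiting that $\supp f$ is compactly contained in $B$, which is compactly contained in $\Omega$, so $d_0:=\dist(\overline{\supp f},\R^N\setminus\Omega)>0$: the far-field contributions, where $v$ may be large, are handled by the \emph{uniform} inequality $|g(a+b)-g(a)|\leq(3^{p-1}+2^{p-1})|b|^{p-1}$ of Lemma~\ref{estimate-general-1} when $p\leq 2$, and, when $p>2$, by $|g(a+b)-g(a)|\leq(p-1)|b|(|a|+|b|)^{p-2}$ together with $|x-y|\geq d_0$ and $v\in L^{p-2}_0$ (which follows from $v\in L^p_0\cap L^1_0$ by the tail-space embeddings, Lemmas~\ref{embed-lps} and~\ref{included}); since $v$ has bounded support, all resulting integrals are finite with bounds independent of $a$. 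The final covering/propagation step, and the routine checks that $v_a\in V(B,\R^N)$ and that all integrals converge, go through as in the proof of Theorem~\ref{smp}.
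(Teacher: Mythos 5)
Your proof is correct and follows essentially the same strategy as the paper's proof of Proposition~\ref{scp}: perturb one of the two functions on a small ball $B\subset\Omega\setminus K_0$ by a compactly supported bump $\tfrac{1}{a}f$ together with a far-field booster $\delta 1_{K_0}$, apply the weak comparison principle (Lemma~\ref{wcp}) on $B$, and propagate the resulting pointwise lower bound by a covering argument. Modifying $v$ upward rather than $u$ downward is equivalent via the symmetry $(u,v)\mapsto(-v,-u)$, under which the relative supersolution inequality is preserved and $u\in L^\infty(\Omega)\Leftrightarrow -v\in L^\infty(\Omega)$. Beyond this, you have correctly identified and remedied a subtlety that the paper's terse ``proceeding as in the proof of Theorem~\ref{smp}'' does not explicitly address: Lemma~\ref{perturbation lemma} assumes the perturbed function lies in $L^\infty(\R^N)$ with compact support, whereas Proposition~\ref{scp} only gives $L^\infty(\Omega)$, so the function may be unbounded on $U\setminus\Omega$. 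Your re-derivation of the perturbation estimate --- using that $\supp f\Subset B\Subset\Omega$ gives $|x-y|\geq d_0>0$ in the problematic far-field integrals, then the uniform inequality from Lemma~\ref{estimate-general-1} for $1<p\leq 2$ and, for $p>2$, the bound $|g(a+b)-g(a)|\leq(p-1)|b|(|a|+|b|)^{p-2}$ together with the $L^{p-2}(U)$ integrability of $v$ (which follows since $U$ is bounded and $v\in V(\Omega,\R^N)$) --- is a sound way to close this gap.
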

\begin{proof}
Without loss of generality, we may assume $u\in L^{\infty}(\Omega)$. Assume further $u\not \equiv v$ in $\Omega$. Let $K\subset \Omega$ be a compact set with positive measure and such that 
$$
\essinf_K (u-v)=\delta>0.
$$
Proceeding as in the proof of Theorem \ref{smp}, we fix some ball $B\subset \Omega\setminus K$ and a nonnegative function $f\in C^{\infty}_c(B)$ and replace $u$ with the function $w_a=u-\frac{1}{a}f+\delta 1_K$. Then we can find $a>0$ such that
$$
L_{\Delta_p}w_a\geq c(x)w_a\quad\text{in $B$}.
$$
Then $w_a\geq v$ in $\R^N\setminus B$ and $w_a$ and $v$ are respectively super- and subsolution of $L_{\Delta_p}w=c(x)w$ in $B$. By Lemma \ref{wcp}, the claim follows.
\end{proof}

\begin{proof}[Proof of Theorem \ref{intro:comparison principle}]
This follows immediately from Proposition \ref{scp}.
\end{proof}

\begin{remark}\label{discussion on scp}
As the comparison principle for nonlinear problems involving the $p$-Laplace or the fractional $p$-Laplace are quite interesting and more involved than in the linear case (see e.g. \cite{PS04,RS07,J18,IMP23,CFGQ23}), let us state some remarks concerning the weak and strong comparison principle stated in Lemma \ref{wcp} and Proposition \ref{scp}.
\begin{enumerate}
    \item Both statements are in particular of interest for $U=\Omega$. Note that $h_{U}$ is not necessarily positive and might be negative for some $x\in \Omega$ if $U$ is large.
    \item It is tempting to only assume $u\geq v$ in $\R^N\setminus \Omega$ instead of assuming $u=v=0$ in $\R^N\setminus U$. In view of Lemma \ref{restriction}, however, this is quite delicate as the the values in the exterior have an influence on the interior. In the particular case, where $u\geq 0\geq v$ in $\R^N\setminus U$, however, it holds
    $$
    -\int_{\R^N\setminus U}\frac{g(u(x)-u(y))-g(u(x))}{|x-y|^N}\,dy\geq 0\geq  -\int_{\R^N\setminus U}\frac{g(v(x)-v(y))-g(v(x))}{|x-y|^N}\,dy
    $$
    for any $x\in \Omega$, using the monotonicity of $g$. Thus, Lemma \ref{wcp} easily also holds if one assumes $u\geq 0\geq v$ in $\R^N\setminus U$ in place of $u=v=0$ in $\R^N\setminus U$. An analogous assumption can be used in Proposition \ref{scp}.
\end{enumerate} 
\end{remark}

\section{The Dirichlet eigenvalue problem}\label{eigenfunction}

Consider the following nonlinear Dirichlet eigenvalue problem on a bounded open set $\Omega$ in $\rz^N$:
\begin{equation}\label{ev problem}
  \begin{cases}
      L_{\Delta_p}u=\lambda|u|^{p-2}u\,\,\text{ in }\Omega,\\
      u=0\,\,\text{ in }\rz^N\setminus\Omega. 
  \end{cases}
\end{equation}
By Corollary \ref{bounded2}, we already know that any solution of \eqref{ev problem} is bounded. Moreover, we have collected some simple preliminary properties of $\lambda^1_{L,p}(\Omega)$ as defined in Lemma \ref{maximum principle-pre}. We emphasize that the validity of maximum principles is strongly entwined with the positivity of $\lambda^1_{L,p}(\Omega)$ as discussed in the previous section. Here, we will now investigate in detail properties of the first eigenvalue and the corresponding first eigenfunction and their relation to the respective parts for $s>0$. We start with the following.

\begin{thm}\label{thm:first eigenfunction main}
    Let $\Omega$ be a bounded open set in $\rz^N$ and $1<p<\infty$. Consider the following minimization problem
    \begin{equation}\label{first ev def}
       \lambda^1_{L,p}:=\lambda^1_{L,p}(\Omega)=\inf\left\{\cE_{L,p}(u,u):u\in X^p_0(\Omega)\text{ and }\|u\|_{L^p(\Omega)}=1\right\}.
    \end{equation}
Then the following hold:
\begin{enumerate}
    \item[i)] The quantity $\lambda^1_{L,p}$ is the eigenvalue and the extremal function $u$ of \eqref{first ev def} is the eigenfunction of \eqref{ev problem} corresponding to $\lambda^1_{L,p}$.
    \smallskip
    \item[ii)] The eigenfunction $u$ corresponding to $\lambda^1_{L,p}$ is strictly positive in $\Omega$. Moreover, $\lambda^1_{L,p}$ is simple in the sense that if $u,\,v\in X^p_0(\Omega)$ are the two eigenfunctions corresponding to $\lambda^1_{L,p}$ then $u=c\,v$ for some $c\in\re.$ 
\end{enumerate}
\end{thm}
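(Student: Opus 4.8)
The plan is to treat (i) and (ii) separately, leaning heavily on Proposition \ref{prop of lambda1} and Lemma \ref{lower bound absolut value}, which already contain the core of the variational machinery. For part (i), I would argue exactly as in Proposition \ref{prop of lambda1}(1): take an $L^p$-normalized minimizing sequence $\{u_n\}\subset X^p_0(\Omega)$, use the bound on $\cF_p(u_n,u_n)$ (controlled by $\omega_N\ln R$ with $R>\diam\Omega$) to see that $\{u_n\}$ is bounded in $X^p_0(\Omega)$, extract a weakly convergent subsequence $u_n\rightharpoonup u$, invoke the compact embedding $X^p_0(\Omega)\hookrightarrow L^p(\Omega)$ to get $\|u\|_{L^p(\Omega)}=1$, and combine the generalized dominated convergence theorem for $\cF_p$ with weak lower semicontinuity of $\cE_p$ to conclude $\cE_{L,p}(u,u)=\lambda^1_{L,p}$. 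The Euler--Lagrange computation — differentiating the Rayleigh-type quotient $t\mapsto \cE_{L,p}(u+tv,u+tv)/\|u+tv\|_{L^p(\Omega)}^p$ at $t=0$ for arbitrary $v\in X^p_0(\Omega)$ — then shows $u$ satisfies \eqref{ev problem} weakly with $\lambda=\lambda^1_{L,p}$; conversely, if $(u,\lambda)$ solves \eqref{ev problem} weakly with $u$ $L^p$-normalized, testing with $u$ itself gives $\cE_{L,p}(u,u)=\lambda$, so every eigenvalue is at least $\lambda^1_{L,p}$ and $\lambda^1_{L,p}$ is indeed an eigenvalue. Much of this can simply be cited from Proposition \ref{prop of lambda1}(1).

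For the positivity in (ii), I would first note that by Lemma \ref{lower bound absolut value} we may take the extremal $u$ to be nonnegative (if $u$ changed sign, $\cE_{L,p}(|u|,|u|)<\cE_{L,p}(u,u)$ would contradict minimality), so $u\geq 0$ in $\Omega$ and $u\not\equiv 0$. Next I would apply the strong maximum principle, Theorem \ref{smp}: $u$ is a nonnegative weak supersolution of $L_{\Delta_p}u=c(x)g(u)$ in $\Omega$ with $c\equiv\lambda^1_{L,p}\in L^\infty(\Omega)$, it lies in $V(\Omega,\R^N)$ with compact support, and by Theorem \ref{bounded2} (equivalently Corollary \ref{bounded2}) it is in $L^\infty(\R^N)$ — so all hypotheses of Theorem \ref{smp} hold. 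Since $u\not\equiv 0$, Theorem \ref{smp} gives $\essinf_K u>0$ for every compact $K\subset\Omega$, i.e. $u>0$ in $\Omega$.

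The main obstacle is the simplicity statement, which for $p\neq 2$ cannot be obtained by the linear "division" trick. The standard route here is the \emph{hidden convexity}/Díaz--Saá argument: given two positive normalized extremals $u,v$, set $w_t=((1-t)u^p+tv^p)^{1/p}$ and use that, for the Gagliardo-type seminorm with kernel $\mathbf{k}$, the map $t\mapsto \cE_{L,p}(w_t,w_t)$ is convex — this follows because $(a,b)\mapsto |a^{1/p}-b^{1/p}|^p$ is convex on $[0,\infty)^2$ (a classical lemma, applied to $a=u(x)^p$, $b=u(y)^p$ etc.), while the zeroth-order term $\int_\Omega(h_\Omega+\rho_N)|w_t|^p$ is affine in $t$ since $|w_t|^p=(1-t)u^p+tv^p$. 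One then shows $w_t$ is again admissible ($\|w_t\|_{L^p(\Omega)}=1$, and $w_t\in X^p_0(\Omega)$ by Corollary \ref{extend} together with a pointwise estimate bounding $|w_t(x)-w_t(y)|^p$ by a convex combination of $|u(x)-u(y)|^p$ and $|v(x)-v(y)|^p$), so $\cE_{L,p}(w_t,w_t)\geq\lambda^1_{L,p}=\cE_{L,p}(w_0,w_0)=\cE_{L,p}(w_1,w_1)$; convexity then forces $t\mapsto\cE_{L,p}(w_t,w_t)$ to be constant, and equality in the convexity lemma forces $u$ and $v$ to be proportional a.e. in $\Omega$, hence $u=cv$. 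The delicate points are justifying that $w_t$ lies in $X^p_0(\Omega)$ with finite energy (here the boundedness of $u,v$ from (ii)/Theorem \ref{bounded2} and the boundary Hardy inequality of Corollary \ref{extend}/Theorem \ref{intro:hardy} are what make the cutting-off across $\partial\Omega$ work) and checking the equality case of the convexity inequality carefully; both are by now routine in the fractional $p$-Laplacian literature and I would adapt those arguments.
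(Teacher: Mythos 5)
Your treatment of part (i) and the positivity half of part (ii) matches the paper's proof almost exactly: both cite Proposition~\ref{prop of lambda1}(1) for existence of a minimizer and the Euler--Lagrange identity, Lemma~\ref{lower bound absolut value} to reduce to nonnegative minimizers, Corollary~\ref{bounded2} for boundedness, and Theorem~\ref{smp} for strict positivity.

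For simplicity, however, you take a genuinely different route. The paper's argument is the discrete Picone inequality of \cite[Proposition~4.2]{BrFr}: it defines $\phi_n = u^p/(v+1/n)^{p-1}$, shows $\phi_n\in X^p_0(\Omega)$ using $u\in L^\infty$, integrates the pointwise Picone inequality against the kernel, passes to the limit $n\to\infty$ by Fatou and dominated convergence, and concludes $L(u,v)=0$ a.e. Your proposal instead uses the hidden-convexity/Díaz--Saá device: the interpolant $w_t=((1-t)u^p+tv^p)^{1/p}$, the joint convexity of $(a,b)\mapsto|a^{1/p}-b^{1/p}|^p$ on $[0,\infty)^2$, the fact that $|w_t|^p$ is literally affine in $t$ (so the zeroth-order term $\int_\Omega(h_\Omega+\rho_N)|w_t|^p$ contributes exactly linearly), and then a sandwich argument: $\lambda^1_{L,p}\leq\cE_{L,p}(w_t,w_t)\leq(1-t)\lambda^1_{L,p}+t\lambda^1_{L,p}=\lambda^1_{L,p}$, forcing pointwise equality and proportionality. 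Both arguments are standard in the fractional $p$-Laplace literature, and in fact both come from the same Brasco--Franzina circle of ideas; the Picone route requires an $\epsilon$-regularization and a limit passage, which your route avoids, while your route requires verifying the convexity lemma and its equality case (which you sketch but would need to state precisely). Both crucially need Proposition~\ref{alternative} to isolate the zeroth-order term $h_\Omega+\rho_N$, and both need the $L^\infty$ bound. One small remark: you invoke Corollary~\ref{extend} to put $w_t$ in $X^p_0(\Omega)$, but that corollary requires $\Omega$ locally plump, which is not assumed in the theorem; fortunately it is unnecessary, since the pointwise convexity bound $|w_t(x)-w_t(y)|^p\leq(1-t)|u(x)-u(y)|^p+t|v(x)-v(y)|^p$ holds for all $x,y\in\R^N$ (being an equality when one of the points lies outside $\Omega$), so $[w_t]_{X^p_0(\Omega)}^p\leq(1-t)[u]_{X^p_0(\Omega)}^p+t[v]_{X^p_0(\Omega)}^p<\infty$ directly.
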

\begin{proof}
The first part follows immediately from Lemma \ref{prop of lambda1}(i). For (ii), let $u\in X^p_0(\Omega)$ be any $L^p$-normalized minimizer for $\lambda_{L,p}^1$. Then, similarly to the proof of Lemma \ref{prop of lambda1}(i), $u$ satisfies 
\begin{align*}
\lambda^1_{L,p}&=\cE_{L,p}(|u|,|u|)=\cE_{L,p}(u,u)\quad\text{and, in weak sense,}\\
L_{\Delta_p}u&=\lambda^1_{L,p}g(u)\quad\text{in $\Omega$,}\quad u=0\quad\text{in $\R^N\setminus \Omega$.}
\end{align*}
Thus, $u$ can be assumed to be nonnegative (see also Lemma \ref{lower bound absolut value}). But then $u>0$ in $\Omega$ by Theorem \ref{smp}, using that $u\in L^{\infty}(\Omega)$ by Corollary \ref{bounded2} (for the case $p\in(1,2)$). Hence we have:
\begin{equation}\label{sign for eigenfunction}
\text{Any eigenfunction corresponding to $\lambda^1_{L,p}$ is either positive or negative in $\Omega$.}
\end{equation}
Suppose next, $u,\,v\in X^p_0(\Omega)$ are the eigenfunctions of \eqref{ev problem} corresponding to the eigenvalue $\lambda^1_{L,p}$ then we may assume $u,\,v>0$ in $\Omega$ by \eqref{sign for eigenfunction}. For each $n\in\N$, define $\phi_n=\frac{u^p}{v_n^{p-1}}$, where $v_n=v+1/n$ and $\phi=\frac{u^p}{v^{p-1}}$. By Corollary \ref{bounded2}, we have the eigenfunction $u\in L^{\infty}(\Omega)$ then it is easy to see that $\phi_n\in X^p_0(\Omega)$ for all $n.$
Then by discrete Picone's inequality (see \cite[Proposition~4.2]{BrFr}), we have 
$$
0\leq|u(x)-u(y)|^p-g(v_n(x)-v_n(y))(\phi_n(x)-\phi_n(y))=:L(u,v_n)(x,y),
$$
and consequently this yields
\begin{equation*}
\begin{split}
0&\leq\frac{C_{N,p}}{2}\iint_{\Omega\times\Omega}\frac{L(u,v_n)(x,y)}{|x-y|^N}dxdy
\\
&=\frac{C_{N,p}}{2}\iint_{\Omega\times\Omega}\frac{|u(x)-u(y)|^p}{|x-y|^N}dxdy-\frac{C_{N,p}}{2}\iint_{\Omega\times\Omega}\frac{g(v(x)-v(y))}{|x-y|^N}(\phi_n(x)-\phi_n(y))dxdy
\\
&=\cE_{L,p}(u,u)-\int_{\Omega}\left(h_{\Omega}(x)+\rho_{N}(p)\right)|u(x)|^pdx-\cE_{L,p}(v,\phi_n)+\int_{\Omega}\left(h_{\Omega}(x)+\rho_{N}(p)\right)g(v(x))\phi_n(x)dx
\\
&=\lambda^1_{L,p}\int_{\Omega}|u(x)|^pdx-\lambda^1_{L,p}\int_{\Omega}g(v(x))\phi_n(x)dx+\int_{\Omega}\left(h_{\Omega}(x)+\rho_{N}(p)\right)\left[g(v(x))\phi_n(x)-|u(x)|^p\right]dx,
\end{split}
\end{equation*}
where in the above we used Proposition \ref{alternative} and definitions of $u,v$. Then, by Fatou's lemma and the dominated convergence theorem, we obtain
$$
\iint_{\Omega\times\Omega}\frac{L(u,v)(x,y)}{|x-y|^N}dxdy=0.
$$
Therefore, we have $L(u,v)(x,y)=0$ a.e. in $\Omega\times\Omega$. Hence, again by discrete Picone's inequality we get $u=c\,v$ for some $c>0$.
\end{proof}

\smallskip

In order to state the next result, let us first recall the structure of the Dirichlet eigenvalue problem for the fractional $p$-Laplace operator. Let $\Omega$ be an open set in $\rn$ and $0<s<1,\,p\in(1,\infty)$ and recall the definition of fractional Sobolev space $\cW^{s,p}_0(\Omega)$ in Section \ref{function spaces} with zero \textit{nonlocal exterior data}. Note that by definition $\cW^{s,p}_0(\Omega)$ is a closed subspace of $W^{s,p}(\rn)$ and if $\Omega\subset\rn$ is a bounded set with Lipschitz boundary, then $C_c^\infty(\Omega)$ is a dense subset of $\cW^{s,p}_0(\Omega)$. A non-zero function $u\in \cW^{s,p}_0(\Omega)$ is called a weak solution of the nonlocal Dirichlet eigenvalue problem
\begin{equation}\label{frac p ev problem}
        (-\Delta_p)^su=\lambda|u|^{p-2}u\quad\text{ in }\Omega,\quad
        u=0\quad \text{ in }\rn\setminus\Omega,
\end{equation}
if for all $v\in \cW^{s,p}_0(\Omega)$ we have
\begin{equation*}
\cE(u,v)=\lambda\left\langle g(u),v\right\rangle:=\lambda\int_{\Omega}|u|^{p-2}u(x)v(x)dx,
\end{equation*}
where 
$$
\cE(u,v):=\left\langle(-\Delta_p)^su,v\right\rangle=\frac{C_{N,s,p}}{2}\iint_{\rn\times\rn}\frac{g(u(x)-u(y))(v(x)-v(y))}{|x-y|^{N+sp}}dxdy,
$$
and $\langle\cdot,\cdot\rangle$ denotes the duality action. Any such function $u$ is called the eigenfunction corresponding to the eigenvalue $\lambda$ of \eqref{frac p ev problem}. The first eigenvalue $\lambda^1_{s,p}(\Omega)$ of \eqref{frac p ev problem} can be characterized as \eqref{intro:lambda-s-p}.

Next, we need the following $\Gamma$-convergence type result which is useful in the proof of Theorem \ref{relation between evs of log p Lap and frac p Lap}.
\begin{lemma}\label{conv btwn functional}
Let $\Omega$ be a bounded open set in $\rn$, $1<p<\infty$. Suppose $\{u_{s_n}\}$ is a sequence in $\cW^{s_n,\,p}_0(\Omega)$ which is bounded in $X^p_0(\Omega)$.
Then, there exist $u\in X^p_0(\Omega)$ and a subsequence $\{u_{s_{n_k}}\}$ of $\{u_{s_n}\}$ such that 
$$
\lim_{k\to\infty} u_{s_{n_{k}}}=u\quad\text{in $L^p(\Omega)$},
$$
and
$$
\lim_{k\to\infty}\frac{1}{s_{n_k}}\left[\cE(u_{s_{n_k}},v)-\int_{\Omega}g(u_{s_{n_k}})v\,dx\right]=\cE_{L,p}(u,v)\,\,\,\text{ for all }v\in C_c^\infty(\Omega). 
$$
\end{lemma}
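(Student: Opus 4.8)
The plan is to extract a weak limit from the bounded sequence and identify it via compactness, and then to pass to the limit in the difference quotient $\tfrac{1}{s}\big(\cE(u_s,v)-\int_\Omega g(u_s)v\big)$ term by term, splitting the integration region into the near-diagonal ball $B_1$ and its complement, exactly as in the pointwise computation underlying Theorem~\ref{derivative}. First, since $\{u_{s_n}\}$ is bounded in $X^p_0(\Omega)$, reflexivity of $X^p_0(\Omega)$ gives a subsequence converging weakly in $X^p_0(\Omega)$, and the compact embedding $X^p_0(\Omega)\embed L^p(\Omega)$ upgrades this to strong $L^p(\Omega)$-convergence to some $u\in X^p_0(\Omega)$; passing to a further subsequence we may also assume $u_{s_{n_k}}\to u$ a.e.\ in $\rz^N$ (all functions vanish outside $\Omega$). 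This disposes of the first claim and fixes the limit $u$ for the second.

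For the convergence of the difference quotients, fix $v\in C_c^\infty(\Omega)$ and recall that, up to the normalization $C_{N,s,p}=s\,d_{N,p}(s)$ with $d_{N,p}(0)=C_{N,p}$ and $d_{N,p}'(0)=C_{N,p}\rho_N$, we have
\[
\frac{1}{s}\Big(\cE(w,v)-\int_\Omega g(w)v\,dx\Big)
= \frac{d_{N,p}(s)}{2}\iint_{\rz^N\times\rz^N}\frac{g(w(x)-w(y))(v(x)-v(y))}{|x-y|^{N+sp}}\,dxdy - \frac{1}{s}\int_\Omega g(w)v\,dx.
\]
The plan is to split the Gagliardo-type integral over $\{|x-y|<1\}$ and $\{|x-y|\ge1\}$. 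On $\{|x-y|<1\}$ one writes $|x-y|^{-N-sp}=|x-y|^{-N}\big(1+O(s\log\tfrac1{|x-y|})\big)$ and uses that $g(w(x)-w(y))(v(x)-v(y))$ is controlled by $|w(x)-w(y)|^{p-1}|v(x)-v(y)|$, which by Hölder is integrable against $|x-y|^{-N}1_{B_1}$ uniformly in $n$ because $v$ is smooth and $[u_{s_n}]_{X^p_0}$ is bounded; combined with a.e.\ convergence of $w=u_{s_{n_k}}$ and a Vitali/generalized-dominated-convergence argument this term tends to $\cE_p(u,v)$. On $\{|x-y|\ge1\}$, because $v$ has compact support, $(v(x)-v(y))$ forces one variable into $\supp v\subset\Omega$, so one combines the far integral with the $-\tfrac1s\int_\Omega g(w)v$ term to form the difference quotient of $\tfrac{1}{s}\int_{\rz^N}(g(w(x)-w(y))-g(w(x)))v(x)\,\mathbf{j}(x-y)\,dydx$ minus the $\ln|x-y|$ correction that, after adding $\rho_N$, reassembles $\cF_p(u,v)+\rho_N(p)\int g(u)v$; here the integrand is dominated using Lemmas~\ref{estimate-general-1} and~\ref{estimate-general-2} by $c(|w(y)|^{p-1}+|w(y)|)1_{\supp v}(x)|x-y|^{-N}$ on an annulus $B_R(\supp v)\setminus B_1$, uniformly in $n$ since $\{u_{s_n}\}$ is bounded in $L^p(\Omega)$ (hence in $L^{p-1}_0\cap L^1_0$). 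Passing to the limit with a.e.\ convergence gives $\cF_p(u,v)+\rho_N(p)\int_\Omega g(u)v\,dx$, and summing the two pieces yields $\cE_{L,p}(u,v)$ by \eqref{defi-bilinear}.

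The main obstacle is obtaining the uniform-in-$n$ domination needed to justify passing the limit inside the integrals: the a.e.\ convergence of $u_{s_{n_k}}$ comes for free, but the integrands involve the nonlinearity $g$ of the differences $u_{s_n}(x)-u_{s_n}(y)$, which are only bounded in the seminorm $[\cdot]_{X^p_0(\Omega)}$, not pointwise. One therefore cannot use plain dominated convergence; instead one applies the generalized dominated convergence theorem (as already used in the proof of Proposition~\ref{prop of lambda1}) with dominating functions built from $|u_{s_n}(x)-u_{s_n}(y)|^{p-1}|v(x)-v(y)|$ on $\{|x-y|<1\}$ and from the tail-space bounds on $\{|x-y|\ge1\}$, using that convergence in $L^p(\Omega)$ gives convergence of these dominating functions in the relevant $L^1$ norms. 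A secondary technical point is the careful bookkeeping of the $s$-expansion of $d_{N,p}(s)$ and of $|x-y|^{-sp}$ so that the logarithmic terms combine correctly into $\rho_N(p)$ and into the $\ln|x-y|$ that converts the Gagliardo integral over $B_1^c$ into the $\mathbf{k},\mathbf{j}$-form; this mirrors exactly the computation in the proof of Theorem~\ref{derivative} and requires no new idea, only care.
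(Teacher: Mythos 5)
Your overall plan matches the paper's proof: extract a subsequence via reflexivity and compact embedding to get $L^p$ and a.e.\ convergence, then split the difference quotient into a near-diagonal piece on $\{|x-y|<1\}$, a far off-diagonal piece on $\{|x-y|\geq 1\}$ with the zeroth-order terms subtracted, and a scalar piece that produces $\rho_N(p)\int g(u)v\,dx$, and pass to the limit in each by the generalized dominated convergence theorem. The pieces reassemble into $\cE_{L,p}(u,v)$ exactly as you say.

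The place where your sketch is genuinely too thin is the domination for the near-diagonal integral $I_{1,n}$, which is the crux of the whole lemma. You first say the integrand $|u_{s_n}(x)-u_{s_n}(y)|^{p-1}|v(x)-v(y)|\,|x-y|^{-N}$ is ``integrable uniformly in $n$ because $[u_{s_n}]_{X^p_0}$ is bounded'' --- but uniform boundedness of the integrals is not a dominating sequence, and in fact the $X^p_0$-seminorm bound plays no role at all in the paper's domination argument here. You then say the dominating functions ``converge in $L^1$ by $L^p$-convergence of $u_{s_n}$,'' but that claim is not automatic for a function of the \emph{difference} $u_{s_n}(x)-u_{s_n}(y)$; it requires a decoupling step that you never perform. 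The paper applies Young's inequality with a carefully tuned exponent parameter $\alpha\in\big(\tfrac{N(p-1)}{p}-1,\tfrac{N(p-1)}{p}\big)$ to produce a dominating function of the form
\[
C\Big(\frac{|u_{s_n}(x)|^p+|u_{s_n}(y)|^p}{|x-y|^{\alpha p/(p-1)}}+\frac{|v(x)-v(y)|^p}{|x-y|^{Np+s_1p^2-\alpha p}}\Big),
\]
where the first summand converges in $L^1$ by continuity of convolution (since $\alpha p/(p-1)<N$ and $u_{s_n}\to u$ in $L^p$), while the second is a fixed $L^1$ function because $v$ is smooth with compact support. Only after this decoupling can the generalized DCT be invoked. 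A correct execution of your plan must supply this step (or an equivalent decoupling such as $|a-b|^{p-1}\leq c(|a|^{p-1}+|b|^{p-1})$ combined with $|v(x)-v(y)|\leq C|x-y|$).

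A secondary point: Taylor-expanding $|x-y|^{-N-sp}=|x-y|^{-N}(1+O(s\log\tfrac1{|x-y|}))$ on $\{|x-y|<1\}$ is not what the paper does and is awkward there, since the remainder is not uniformly small near the diagonal. The paper simply bounds $|x-y|^{-N-s_np}\leq|x-y|^{-N-s_1p}$ for $s_n<s_1$ in the domination, and lets the kernel converge pointwise to $|x-y|^{-N}$ under GDCT; the logarithmic expansion is reserved for the far region $\{|x-y|\geq 1\}$ where it is harmless. Your description of how the far piece and the $-\tfrac1s\int g(w)v$ term recombine is also somewhat garbled (you speak of ``the difference quotient of $\tfrac1s\int(g(w(x)-w(y))-g(w(x)))v(x)\mathbf{j}(x-y)\,dydx$,'' but that expression is already $s$-independent); the paper instead keeps the zeroth-order compensation as a separate scalar term $I_{3,n}=\big(\tfrac{C_{N,s_n,p}\omega_N}{s_n^2 p}-\tfrac1{s_n}\big)\int_\Omega g(u_{s_n})v\,dx\to\rho_N(p)\int_\Omega g(u)v\,dx$, which is cleaner. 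These are fixable presentation issues, but the missing decoupling in $I_{1,n}$ is a real gap.
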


\begin{remark}
Note that, similar to \eqref{simple expansion}, the above lemma can be viewed as the asymptotic expansion of the Gagliardo seminorm at $s=0$. For other results about the asymptotic expansion of the Gagliardo seminorm for $p=2$ in the sense of $\Gamma$-convergence as $s\to0^+$ and $s\to1^-$, see \cite{CrLuKuNiPo, KuPaTr}, and for the pointwise convergence of the Gagliardo seminorm as $s\to0^+$, see \cite{MaSh}  .
\end{remark}
\begin{proof}[Proof of Lemma \ref{conv btwn functional}]
Since $\{u_{s_n}\}$ is bounded in $X^p_0(\Omega)$, by reflexivity of $X^p_0(\Omega)$ there is $u\in X^p_0(\Omega)$ such that $u_{s_n}\rightharpoonup u$ in $X^p_0(\Omega)$ for $n\to \infty$, after passing to a subsequence (which we keep on denoting by $\{u_{s_{n}}\}$). Moreover, by compact embedding and passing to a further subsequence (also still denoted by $\{u_{s_{n}}\}$), we have $u_{s_n}\to u$ in $L^p(\Omega)$ and also $u_{s_n}(x)\to u(x)$ a.e. in $\rn$. Now by definition, we have
\begin{equation}\label{derivative of bilinear}
\begin{split}
\frac{1}{s_n}\left[\cE(u_{s_n},\,v)-\int_{\Omega}g(u_{s_n})v\,dx\right]
&=\frac{C_{N,s_n,p}}{2s_n}\iint_{\rn\times\rn}\frac{g(u_{s_n}(x)-u_{s_n}(y))(v(x)-v(y))}{|x-y|^{N+s_np}}dxdy-\frac{1}{s_n}\int_{\Omega}g(u_{s_n})v\,dx
\\
&=I_{1,n}+I_{2,n}+I_{3,n},
\end{split}
\end{equation}
where with
\begin{align*}
   E_n(x,y)&:= \frac{C_{N,s_n,p}}{2s_n}\frac{g(u_{s_n}(x)-u_{s_n}(y))(v(x)-v(y))}{|x-y|^{N+s_np}}\quad\text{and}\\
   F_n(x,y)&:=\frac{C_{N,s_n,p}}{2s_n}\frac{g(u_{s_n}(x)-u_{s_n}(y))(v(x)-v(y))-g(u_{s_n}(x))v(x)-g(u_{s_n}(y))v(y)}{|x-y|^{N+s_np}},
\end{align*}
for $x,y\in \R^N$, $x\neq y$, we let
\begin{equation*}
\begin{split}
I_{1,n}&:=\iint_{ |x-y|<1}E_{n}(x,y)\,dxdy,\\
I_{2,n}&:=\iint_{|x-y|\geq1}F_n(x,y)\,dxdy,\\
 I_{3,n}&:=\frac{C_{N,s_n,p}}{s_n}\iint_{|x-y\geq1}\frac{g(u_{s_n}(x))v(x)}{|x-y|^{N+s_np}}dxdy-\frac{1}{s_n}\int_{\Omega}g(u_{s_n})v\,dx.
\end{split}
\end{equation*}
Note that, by the pointwise a.e. convergence of $u_{s_n}$ to $u$, we have for a.e. $(x,y)\in \R^N\times \R^N$
\begin{equation}\label{pointwise limits}
\begin{split}
\lim_{n\to\infty}E_n(x,y)&=E(x,y):=\frac{C_{N,p}}{2}\frac{g(u(x)-u(y))(v(x)-v(y))}{|x-y|^{N}}\quad\text{and}\\
\lim_{n\to\infty}F_n(x,y)&=F(x,y):=\frac{C_{N,p}}{2}\frac{g(u(x)-u(y))(v(x)-v(y))-g(u(x))v(x)-g(u(y))v(y)}{|x-y|^{N}}.
\end{split}
\end{equation}
\textbf{Convergence of $I_{1,n}$ :}
We prove that
\begin{equation}\label{conv of I1n}
\lim_{n\to\infty}\iint_{ |x-y|<1}E_{n}(x,y)dxdy=
\iint_{|x-y|<1}E(x,y)dxdy.
\end{equation}
Let $R>0$ such that $B_1(\Omega)\subset B_R(0)$ and note that
$$
\iint_{ |x-y|<1}E_{n}(x,y)dxdy=\iint_{\substack{B_R(0)\times B_R(0) \\|x-y|<1}}E_{n}(x,y)\,dxdy.
$$
Indeed, this holds since for points in $x,y\in \R^N\setminus B_R(0)$ we have $u_{s_n}(x)=u_{s_n}(y)=v(x)=v(y)=0$ and if $x\in B_R(0)$, $y\in \R^N\setminus B_R(0)$ (or vice versa), we can consider two cases: If $x\in \Omega$, then $|x-y|>1$ since $y\notin B_1(\Omega)$, and if $x\in B_R(0)\setminus \Omega$, then, again, $u_{s_n}(x)=u_{s_n}(y)=v(x)=v(y)=0$. This implies that only the integral in $B_R(0)\times B_R(0)$ remains. Next, let $\alpha\in \R$ such that
$$
 \frac{N(p-1)}{p}-1 <\alpha <\frac{N(p-1)}{p},
$$
then we can fix $s_1\in(0,1)$ such that 
$$
 N+\alpha p+p-Np-s_1p^2>0.
$$
Since $s_n\to0$ as $n\to\infty$, we may assume $s_n<s_1$ for all $n\in \N$. Let $x,\,y\in B_R(0)$ with $|x-y|<1$, then we have with Young's inequality
\begin{align}
|E_n(x,y)|&\leq C(N,p)|u_{s_n}(x)-u_{s_n}(y)|^{p-1}\frac{|v(x)-v(y)|}{|x-y|^{N+s_1p}}\notag\\
&\leq C(N,p)\Bigg( \frac{|u_{s_n}(x)|^p+|u_{s_n}(y)|^p}{|x-y|^{\alpha \frac{p}{p-1}}}+ \frac{|v(x)-v(y)|^p}{|x-y|^{Np+s_1p^2-\alpha p}}\Bigg),\label{estimate part 1 gagliardo conv}
\end{align}
where we used the fact $s\mapsto C_{N,s,p}$ is bounded in [0,1]. Since $v\in C_c^{\infty}(\Omega)$, there is $C>0$ such that
$$
\frac{|v(x)-v(y)|^p}{|x-y|^{Np+s_1p^2-\alpha p}}\leq C|x-y|^{\alpha p+p-Np-s_1p^2},
$$
and thus this function belongs to $L^1(B_R(0)\times B_R(0))$, since
\begin{align*}
\iint_{B_R(0)\times B_R(0)}|x-y|^{p-Np-s_1p^2}\, dxdy\leq |B_R(0)|\omega_N\int_0^{2R} t^{N+\alpha p+p-Np-s_1p^2-1}\, dt<\infty,
\end{align*}
by the choices of $s_1$ and $\alpha$. Now for the first term in \eqref{estimate part 1 gagliardo conv}, by Young's convolution inequality we have
\begin{align*}
\iint_{\substack{B_R(0)\times B_R(0) \\|x-y|<1}}\frac{|u_{s_n}(x)|^p+|u_{s_n}(y)|^p}{|x-y|^{\alpha \frac{p}{p-1}}}\,dxdy&\leq 2\int_{B_R(0)}|u_{s_n}|^p\ast |\cdot|^{-\alpha\frac{p}{p-1}}\,dx\leq 2\|u_{s_n}\|_{L^p(\Omega)}\||\cdot|^{-\alpha\frac{p}{p-1}}\|_{L^1(B_{2R}(0))}.
\end{align*}
Thus, by continuity of the convolution, using $\alpha \frac{p}{p-1}<N$, and since $u_{s_n}\to u$ in $L^p(\Omega)$, it follows that 
$$
\frac{|u_{s_n}(x)|^p+|u_{s_n}(y)|^p}{|x-y|^{\alpha \frac{p}{p-1}}}\to \frac{|u(x)|^p+|u(y)|^p}{|x-y|^{\alpha \frac{p}{p-1}}}\quad\text{in $L^1(B_R(0)\times B_{R}(0))$ for $n\to\infty$.}
$$
Thus using \eqref{pointwise limits}, \eqref{estimate part 1 gagliardo conv}, and applying the generalized dominated convergence theorem, we conclude \eqref{conv of I1n}.
\smallskip

\noindent\textbf{Convergence of $I_{2,n}$ :}
Since $u_{s_n}=0=v$ in $\rn\setminus\Omega$, then we have
\begin{equation*}
I_{2,n}:=\iint_{|x-y|\geq1}F_n(x,y)\,dxdy=\iint_{\substack{\Omega\times\Omega\\|x-y|\geq1}}F_n(x,y)\,dxdy.
\end{equation*}
We claim that
\begin{equation}\label{conv of I2n}
\lim_{n\to\infty}\iint_{\substack{\Omega\times\Omega\\|x-y|\geq1}}F_{n}(x,y)\,dxdy=
\iint_{\substack{\Omega\times\Omega\\|x-y|\geq1}}F(x,y)\,dxdy=\iint_{\substack{\R^N\times\R^N\\|x-y|\geq1}}F(x,y)\,dxdy.
\end{equation}
Let $x,y\in\Omega$ with $|x-y|\geq1$, then we have with Young's inequality
\begin{equation*}
\begin{split}
|F_n(x,y)|&\leq C(N,p)\frac{|g(u_{s_n}(x)-u_{s_n}(y))(v(x)-v(y))-g(u_{s_n}(x))v(x)-g(u_{s_n}(y))v(y)|}{|x-y|^{N+s_np}}
\\
&\leq C(N,p)\Big(|u_{s_n}(x)-u_{s_n}(y)|^{p-1}|v(x)-v(y)|+|u_{s_n}(x)|^{p-1}|v(x)|+|u_{s_n}(y)|^{p-1}|v(y)|\Big)
\\
&\leq C(N,p)\Big(|u_{s_n}(x)|^p+|u_{s_n}(y)|^p\Big)\Big(|v(x)|^p+|v(y)|^p\Big),
\end{split}
\end{equation*}
using that $u_{s_n}\to u$ in $L^p(\Omega)$, the claim follows from the generalized dominated convergence theorem with \eqref{pointwise limits}.
\smallskip

\noindent\textbf{Convergence of $I_{3,n}$ :}
Note that
\begin{equation*}
\begin{split}
I_{3,n}&:=\frac{C_{N,s_n,p}}{s_n}\iint_{|x-y\geq1}\frac{g(u_{s_n}(x))v(x)}{|x-y|^{N+s_np}}dxdy-\frac{1}{s_n}\int_{\Omega}g(u_{s_n}(x))v(x)\,dx
\\
&=\left(\frac{C_{N,s_n,p}\,\omega_N}{s^2_np}-\frac{1}{s_n}\right)\int_{\Omega}g(u_{s_n}(x))v(x)dx.
\end{split}
\end{equation*}
Recall, $C_{N,s_n,p}=s_n\,d_{N,p}(s_n)$ and since $$d_{N,p}(s_n)\xrightarrow{s\to0^+}C_{N,p}=\frac{p\Gamma\left(\frac{N}{2}\right)}{2\pi^{N/2}}=\frac{p}{\omega_N}.$$
Using this, we have $$\frac{1}{s_n}\left(\frac{C_{N,s_n,p}\omega_N}{s_np}-1\right)\to\rho_{N}(p)\text{ as }s_n\to0.$$ 
Again, applying the generalized dominated convergence theorem, we obtain
$$
\int_{\Omega}g(u_{s_n})v\,dx\to\int_{\Omega}g(u)v\,dx\text{ as }n\to\infty.
$$
This implies that 
\begin{equation}\label{conv of I3n}
\lim_{n\to\infty}I_{3,n}=\rho_N(p)\int_{\Omega}g(u)v\,dx.
\end{equation}
Therefore, letting $n\to\infty$ in \eqref{derivative of bilinear} and using \eqref{conv of I1n}, \eqref{conv of I2n}, \eqref{conv of I3n}, we conclude that 
$$
\lim_{n\to\infty}\frac{1}{s_n}\left[\cE(u_{s_n},v)-\int_{\Omega}g(u_{s_n})v\,dx\right]=\cE_{L,p}(u,v). 
$$
Since the above can be done for any subsequence of $\{u_{s_n}\}$, and thus this completes the proof of the lemma.
\end{proof}

\begin{lemma}\label{limit eigenvalue to 1}
Let $\Omega$ be an open bounded Lipschitz subset of $\rn$ and $p\in(1,\infty)$. Then 
$$
\lim_{s\to0^+}\lambda^1_{s,p}(\Omega)=1.
$$
\end{lemma}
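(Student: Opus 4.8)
The plan is to prove the two matching one-sided bounds $\limsup_{s\to0^+}\lambda^1_{s,p}(\Omega)\le 1$ and $\liminf_{s\to0^+}\lambda^1_{s,p}(\Omega)\ge 1$. In fact only the boundedness of $\Omega$ will be used, not its Lipschitz regularity.

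For the upper bound I would test the variational quotient \eqref{intro:lambda-s-p} against a single fixed function $\phi\in C^\infty_c(\Omega)$ normalized by $\|\phi\|_{L^p(\Omega)}=1$. Since $g(a)\,a=|a|^p$, the Gagliardo energy of $\phi$ coincides with $\cE(\phi,\phi)=\langle(-\Delta_p)^s\phi,\phi\rangle=\int_{\R^N}(-\Delta_p)^s\phi(x)\,\phi(x)\,dx$ (all integrals being absolutely convergent for $\phi\in C^\infty_c$ and $s\in(0,1)$). By Theorem~\ref{derivative}, $(-\Delta_p)^s\phi\to g(\phi)$ in $L^q(\R^N)$ as $s\to0^+$ for every $q\in(1,\infty)$; choosing $q=p/(p-1)$ and using H\"older's inequality gives $\int(-\Delta_p)^s\phi\,\phi\,dx\to\int g(\phi)\phi\,dx=\|\phi\|_{L^p(\Omega)}^p=1$. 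Hence $\limsup_{s\to0^+}\lambda^1_{s,p}(\Omega)\le\lim_{s\to0^+}\cE(\phi,\phi)=1$.

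For the lower bound I would argue directly from the infimum, with no need for the eigenfunction. Fix $R>0$ with $\Omega\subset B_R$. For any admissible $u\in\cW^{s,p}_0(\Omega)$ with $\|u\|_{L^p(\Omega)}=1$, discard the near-diagonal part $\{|x-y|<1\}$ of the double integral and keep only the pairs $(x,y)$ with $x\in\Omega$, $y\in\R^N\setminus\Omega$ together with the symmetric ones; there $|u(x)-u(y)|^p=|u(x)|^p$ since $u\equiv0$ outside $\Omega$. This gives
\[
\frac{C_{N,s,p}}{2}\iint_{\R^N\times\R^N}\frac{|u(x)-u(y)|^p}{|x-y|^{N+sp}}\,dx\,dy
\;\ge\; C_{N,s,p}\int_{\Omega}|u(x)|^p\left(\int_{(\R^N\setminus\Omega)\cap\{|x-y|\ge1\}}\frac{dy}{|x-y|^{N+sp}}\right)dx .
\]
For $x\in\Omega\subset B_R$, the set $\{y:|x-y|\ge\max(1,2R)\}$ is contained in $(\R^N\setminus\Omega)\cap\{|x-y|\ge1\}$, so the inner integral is at least $\int_{|z|\ge\max(1,2R)}|z|^{-N-sp}\,dz=\frac{\omega_N}{sp}\max(1,2R)^{-sp}$. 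Taking the infimum over $u$,
\[
\lambda^1_{s,p}(\Omega)\;\ge\; C_{N,s,p}\,\frac{\omega_N}{sp}\,\max(1,2R)^{-sp}
=\frac{2^{2s}\,\Gamma\!\left(\tfrac{N+sp}{2}\right)}{\Gamma(1-s)\,\Gamma\!\left(\tfrac{N}{2}\right)}\,\max(1,2R)^{-sp}\xrightarrow{\;s\to0^+\;}1 ,
\]
using the small-$s$ formula for $C_{N,s,p}$ from Section~\ref{constant} and $\omega_N=2\pi^{N/2}/\Gamma(N/2)$. Combining the two bounds yields $\lim_{s\to0^+}\lambda^1_{s,p}(\Omega)=1$.

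I do not expect a genuine obstacle here; both bounds are short. The one point that must be handled carefully is the arithmetic of the normalization constant $C_{N,s,p}$, which is calibrated (Section~\ref{constant}, Lemma~\ref{limit s to 0}) precisely so that $C_{N,s,p}\,\omega_N/(sp)\to1$; this is the quantitative expression of the fact that $(-\Delta_p)^s$ degenerates as $s\to0^+$ to pointwise multiplication by $g$, whose ``first Dirichlet eigenvalue'' on $L^p$-normalized functions is exactly $1$. I would also emphasize that for the upper bound it is cleaner to invoke the already-established $L^q$-convergence $(-\Delta_p)^s\phi\to g(\phi)$ of Theorem~\ref{derivative} than to recompute the $s\to0^+$ asymptotics of the Gagliardo seminorm of $\phi$ directly.
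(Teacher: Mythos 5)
Your proof is correct and follows essentially the same strategy as the paper's: upper bound by testing the Rayleigh quotient against a single fixed $\phi\in C^\infty_c(\Omega)$ and invoking the $L^q$-convergence $(-\Delta_p)^s\phi\to g(\phi)$ from Theorem~\ref{derivative}; lower bound by discarding all of the Gagliardo energy except the ``killing'' contribution $C_{N,s,p}\int_\Omega|u|^p\int_{\R^N\setminus\Omega}|x-y|^{-N-sp}\,dy\,dx$, bounding the inner integral from below by an integral over the exterior of a ball, and letting the calibration $C_{N,s,p}\,\omega_N/(sp)\to1$ do the rest. Your version is marginally cleaner in two minor respects: you argue directly from the infimum in \eqref{intro:lambda-s-p} rather than first passing to the eigenfunction $\phi_s$, and you correctly note that the Lipschitz hypothesis is superfluous here (the paper invokes it only to rewrite the infimum over $C^\infty_c(\Omega)$, which is not needed for a one-sided bound since $C^\infty_c(\Omega)\subset\cW^{s,p}_0(\Omega)$ always, and the lower bound never uses boundary regularity); the paper takes the same-volume ball $|B_R(x)|=|\Omega|$ while you take a fixed enclosing ball, but both yield the limit $1$.
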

\begin{proof}
Since $\Omega$ has a Lipschitz boundary, we have from \eqref{intro:lambda-s-p}
\begin{equation*}
\lambda^1_{s,p}(\Omega)=\inf\left\{[\phi]_{W^{s,p}(\rn)}^p:\phi\in C_c^\infty(\Omega),\,\,\int_{\Omega}|\phi(x)|^p\,dx=1\right\}.
\end{equation*}
This implies
\begin{equation}\label{limsup for frac ev s to 0}
\limsup_{s\to0^+}\lambda^1_{s,p}(\Omega)\leq\limsup_{s\to0^+}[\phi]_{W^{s,p}(\rn)}^p=\limsup_{s\to0^+}\langle(-\Delta_p)^s\phi,\phi\rangle=1,  
\end{equation}
where we used the fact $(-\Delta_p)^s\phi\to g(\phi)$ for $\phi\in C_c^\infty(\Omega)$ as $s\to0^+$. To bound the limit from below, let $\phi_s\in \cW^{s,p}_0(\Omega)$ be the $L^p$-normalized eigenfunction corresponding to $\lambda^1_{s,p}(\Omega)$. 
Then
\begin{align}\label{lower bdd of frac ev}
\lambda^1_{s,p}(\Omega)&=\frac{C_{N,s,p}}{2}\iint_{\rn\times\rn}\frac{|\phi_s(x)-\phi_s(y)|^p}{|x-y|^{N+sp}}dxdy
\geq C_{N,s,p}\int_{\Omega
}|\phi_s(x)|^p\int_{\R^N\setminus\Omega}\frac{dy}{|x-y|^{N+sp}}\,dx.
\end{align}
Now for $x\in\Omega$, let $B_R(x)$ be an open ball such that $|\Omega|=|B_R(x)|$ that is $R=\left(\frac{|\Omega|}{|B_1|}\right)^{1/N}$, then following as in \cite[Lemma~6.1]{NPV12}, we obtain
\begin{equation}\label{lower bdd of usual kernel}
\int_{\R^N\setminus\Omega}\frac{dy}{|x-y|^{N+sp}}\geq\int_{\R^N\setminus B_R(x)}\frac{dy}{|x-y|^{N+sp}}=\frac{\omega_N}{sp}R^{-sp}.
\end{equation}
Plugging the estimate \eqref{lower bdd of usual kernel} into \eqref{lower bdd of frac ev}, we have
\begin{equation}\label{lower bdd of frac ev final}
\lambda^1_{s,p}(\Omega)\geq\frac{C_{N,s,p}\,\omega_N}{sp\,R^{sp}}=\frac{C_{N,s,p}\,2\pi^{\frac{N}{2}}}{sp\,R^{sp}\,\Gamma(\frac{N}{2})}.
\end{equation}
Thus, taking limit as $s\to0^+$ in \eqref{lower bdd of frac ev final} and by definition of the constant $C_{N,s,p}$ we get
\begin{equation}\label{liminf for frac ev s to 0}
\liminf_{s\to0^+}\lambda^1_{s,p}(\Omega)\geq \lim_{s\to0}\frac{C_{N,s,p}\,2\pi^{\frac{N}{2}}}{sp\,R^{sp}\,\Gamma(\frac{N}{2})}=1.
\end{equation}
Combining \eqref{limsup for frac ev s to 0} and \eqref{liminf for frac ev s to 0} to get the desired result.
\end{proof}

\begin{proof}[Proof of Theorem \ref{relation between evs of log p Lap and frac p Lap}]
We divide our proof into four steps.
\smallskip 

\noindent\textbf{Step 1:} By Lemma \ref{limit eigenvalue to 1} we obtain
$$
\lim_{s\to0^+}\lambda^1_{s,p}(\Omega)=1.
$$
Next, for $v\in C_c^\infty(\Omega)$ with $\|v\|_{L^p(\Omega)}=1$, using the second part of Theorem \ref{derivative}, we obtain
$$
\limsup_{s\to0^+}\frac{\lambda^1_{s,p}(\Omega)-1}{s}\leq\limsup_{s\to0^+}\frac{[v]^p_{W^{s,p}(\rn)}-\|v\|^p_{L^p(\Omega)}}{s}=\lim_{s\to0^+}\left\langle\frac{(-\Delta_p)^sv-g(v)}{s},v\right\rangle=\left\langle L_{\Delta_p}v,v\right\rangle.
$$
This entails
$$
\limsup_{s\to0^+}\frac{\lambda^1_{s,p}(\Omega)-1}{s}\leq\inf_{\substack{v\in C_c^\infty(\Omega)\\\|v\|_{L^p(\Omega)=1}}}\left<L_{\Delta_p}v,v\right>.
$$
By using \eqref{reltn btwn operator and bilinear} and the density property of $C_c^{\infty}(\Omega)$, Proposition \ref{dense}, together with \eqref{first ev def}, we obtain
\begin{equation}\label{limsup for log ev}
\limsup_{s\to0^+}\frac{\lambda^1_{s,p}(\Omega)-1}{s}\leq\lambda^1_{L,p}(\Omega).
\end{equation}
\textbf{Step 2:} We claim that the sequence $\{\phi_s\}$ of functions with $\|\phi_s\|_{L^p(\Omega)}=1$ is bounded uniformly in $X^p_0(\Omega)$. For this by \eqref{limsup for log ev}, we have as $s\to0^+$
\begin{multline}\label{lower bdd for log ev:eq1}
\lambda^1_{L,p}(\Omega)+o(1)
      \geq\frac{\lambda^1_{s,p}(\Omega)-1}{s}
      =\frac{[\phi_s]^p_{W^{s,p}(\rn)}-1}{s}\\
      =\frac{C_{N,s,p}}{2s}\iint_{\substack{x,\,y\in\rn\\|x-y|\leq1}}\frac{|\phi_s(x)-\phi_s(y)|^p}{|x-y|^{N+sp}}\,dxdy+\frac{C_{N,s,p}}{2s}\iint_{\substack{x,\,y\in\rn\\|x-y|>1}}\frac{|\phi_s(x)-\phi_s(y)|^p}{|x-y|^{N+sp}}\,dxdy-\frac{1}{s}. 
\end{multline}
Note that,
$$
\int_{\Omega}|\phi_s(z)|^p\int_{\rn\setminus B_1(w)}\frac{dw}{|z-w|^{N+sp}}dz=\frac{\omega_N}{sp}\|\phi_s\|_{L^p(\Omega)}^p=\frac{\omega_N}{sp}.
$$
Thus, from \eqref{lower bdd for log ev:eq1} we obtain, for $s\to 0^+$
\begin{equation}\label{ineq:xyz}
\begin{split}
 \lambda^1_{L,p}(\Omega)+o(1)
 &\geq\frac{C_{N,s,p}}{2s}\iint_{\substack{x,\,y\in\rn\\|x-y|\leq1}}\frac{|\phi_s(x)-\phi_s(y)|^p}{|x-y|^{N+sp}}\,dxdy\\
 &+\frac{C_{N,s,p}}{2s}\iint_{\substack{x,\,y\in\rn\\|x-y|>1}}\frac{|\phi_s(x)-\phi_s(y)|^p-\left(|\phi_s(x)|^p+|\phi_s(y)|^p\right)}{|x-y|^{N+sp}}\,dxdy+f_{N,p}(s),
\end{split}
\end{equation}
where 
$$
f_{N,p}(s)=\frac{C_{N,s,p}\omega_N}{s^2p}-\frac{1}{s}.
$$ 
Note that since also $\supp\,\phi_s=\overline{\Omega}$, we have as $s\to0^+$
\begin{align*}
&\iint_{\substack{x,\,y\in\rn\\|x-y|>1}}\frac{|\phi_s(x)-\phi_s(y)|^p-\left(|\phi_s(x)|^p+|\phi_s(y)|^p\right)}{|x-y|^{N+sp}}\,dxdy\\
&=\frac{2}{C_{N,p}}\cF_p(\phi_s,\phi_s)+\iint_{\substack{x,\,y\in\rn\\|x-y|>1}}\frac{|\phi_s(x)-\phi_s(y)|^p-\left(|\phi_s(x)|^p+|\phi_s(y)|^p\right)}{|x-y|^{N}}\left(\frac{1}{|x-y|^{sp}}-1\right)\,dxdy\\
&=\frac{2}{C_{N,p}}\cF_p(\phi_s,\phi_s)+\iint_{\substack{x,\,y\in\Omega\\|x-y|>1}}\frac{|\phi_s(x)-\phi_s(y)|^p-\left(|\phi_s(x)|^p+|\phi_s(y)|^p\right)}{|x-y|^{N}}\left(\frac{1}{|x-y|^{sp}}-1\right)\,dxdy\\
&=\frac{2}{C_{N,p}}\cF_p(\phi_s,\phi_s)+\iint_{\substack{x,\,y\in\Omega\\|x-y|>1}}\frac{|\phi_s(x)-\phi_s(y)|^p-\left(|\phi_s(x)|^p+|\phi_s(y)|^p\right)}{|x-y|^{N}}\Big(-sp\ln(|x-y|)+o(s)\Big)\,dxdy.
\end{align*}
Note here, that with $m(t)=c+p\ln|t|$ for some $c>0$ we have for $s\to 0^+$, for a constant $C_p$ depending only on $p$
\begin{align*}
\iint_{\substack{x,\,y\in\Omega\\|x-y|>1}}&\frac{|\phi_s(x)-\phi_s(y)|^p-\left(|\phi_s(x)|^p+|\phi_s(y)|^p\right)}{|x-y|^{N}}\Big(-sp\ln(|x-y|)+o(s)\Big)\,dxdy\\
&\leq s\,C_p\int_{\Omega}\int_{\Omega\setminus B_1(x)}\frac{|\phi_s(x)|^p+|\phi_s(y)|^p}{|x-y|^{N}}m(|x-y|)\,dxdy
\leq s\,2\,C_p\int_{\Omega}|\phi_s(x)|^p\int_{B_R(x)\setminus B_1(x)}\frac{m(|x-y|)}{|x-y|^{N}}\,dydx,
\end{align*}
where $R=\diam(\Omega)+1$. Since
$$
\int_{B_R(x)\setminus B_1(x)}\frac{m(|x-y|)}{|x-y|^{N}}\,dy=\omega_N\int_1^R \frac{m(r)}{r}\,dr=c\ln(R)+\int_1^R\frac{\ln(r)}{r}\,dr=c\ln(R)+\frac{\ln^2(R)}{2}<\infty.
$$
Thus we have, for $s\to0^+$, from \eqref{ineq:xyz}, since $C_{N,s,p}\to 0$ for $s\to 0^+$
\begin{equation}\label{lower bdd of log ev: eq2}
\begin{split}
 \lambda^1_{L,p}(\Omega)+o(1) &\geq \frac{C_{N,s,p}}{2s}\iint_{\substack{x,\,y\in\rn\\|x-y|\leq1}}\frac{|\phi_s(x)-\phi_s(y)|^p}{|x-y|^{N+sp}}\,dxdy+\frac{C_{N,s,p}}{sC_{N,p}}\cF_p(\phi_s,\phi_s)+f_{N,p}(s)\\
  &\geq \frac{C_{N,s,p}}{sC_{N,p}}\Big( \cE_p(\phi_s,\phi_s)+ \cF_p(\phi_s,\phi_s)\Big)+f_{N,p}(s).
\end{split}
\end{equation}
Note here, that with a similar calculation as above, we have
$$
\cF_p(\phi_s,\phi_s)\leq C_{N,p}\int_{\Omega}|\phi_s(x)|^p\int_{B_R(x)\setminus B_1(x)}|x-y|^{-N}\,dydx=C_{N,p}\omega_N\ln(R),
$$
as $\phi_s$ is $L^p$-normalized. Recalling, $C_{N,s,p}=sd_{N,p}(s)$ and since $$d_{N,p}(s)\xrightarrow{s\to0^+}C_{N,p}=\frac{p\Gamma\left(\frac{N}{2}\right)}{2\pi^{N/2}}=\frac{p}{\omega_N}.$$
Using this we have $$f_{N,p}(s)=\frac{1}{s}\left(\frac{C_{N,s,p}\omega_N}{sp}-1\right)\to\rho_{N}(p)\text{ as }s\to0^+.$$ Therefore, from the estimate \eqref{lower bdd of log ev: eq2} and above, we obtain
\begin{equation*}
    \begin{split}
        \cE_{p}(\phi_s,\phi_s)\leq\frac{s\,C_{N,p}}{C_{N,s,p}}\left[\lambda^1_{L,p}+o(1)-f_{N,p}(s)\right]-\cF_{p}(\phi_s,\phi_s).
    \end{split}
\end{equation*}
Hence for $s\to0^+$, we have
\begin{equation*}
   |\cE_{p}(\phi_s,\phi_s)|\leq(1+o(1))|\lambda^1_{L,p}+o(1)-\rho_N(p)|+C_{N,p}\omega_N\ln(R). 
\end{equation*}
From this it follows that $\{\phi_s\}$ is bounded in $X^p_0(\Omega)$, thanks to the fractional Poincar\'e inequality.
\smallskip

\noindent\textbf{Step 3:} Let $u_1$ be the first positive $L^p$-normalized eigenfunction of $L_{\Delta_p}$ in $\Omega$ given by Theorem \ref{thm:first eigenfunction main}. To show \eqref{eigenfunction conv}, we use the method of contradiction. Suppose \eqref{eigenfunction conv} is not true that is there exists $\epsilon>0$ and a sequence $\{s_n\}$ of real numbers such that $s_n\to0$ and for any $n$ we have
\begin{equation}\label{ef not conv}
    \|\phi_{s_{n}}-u_1\|_{L^p(\Omega)}\geq\epsilon.
\end{equation}
By Step 2, we have the sequence $\{\phi_{s_n}\}$ is bounded in $X^p_0(\Omega)$. Thus up to a subsequence, we obtain
\begin{equation}\label{properties of usual ef}
\phi_{s_n}\rightharpoonup u_0\text{ in }X^p_0(\Omega),\,\,\phi_{s_n}\to u_0\text{ in }L^p(\Omega),\text{ and }\frac{\lambda^1_{s_n}-1}{s_n}\to\lambda^*\in\left[-\infty,\lambda^1_{L,p}\right]\,\,\,\text{ as }n\to\infty. 
\end{equation}
We claim that $u_0$ is an eigenfunction of $L_{\Delta_p}$ corresponding to the eigenvalue $\lambda^*$. Let $v\in C_c^\infty(\Omega)$.  Since $\{\phi_{s_n}\}$ is uniformly bounded and by \eqref{properties of usual ef} together with the dominated convergence theorem, we obtain
\begin{equation}\label{dual conv of ef}
\int_{\Omega}g(\phi_{s_n})v\,dx\to\int_{\Omega}g(u_0)v\,dx\,\,\text{ as }n\to\infty.
\end{equation}
Therefore, by \eqref{properties of usual ef}, \eqref{dual conv of ef}, and Lemma \ref{conv btwn functional} ---passing to a further subsequence which is still denoted by $\{\phi_{s_n}\}$---
we have for $v\in C_c^\infty(\Omega)$
\begin{equation}\label{lambda* is ev}
\begin{split}
\lambda^*\int_{\Omega}g(u_0)v\,dx=\lim_{n\to\infty}\frac{\lambda^1_{s_n}-1}{s_n}\left<g(\phi_{s_n}),v\right>
&=\lim_{n\to\infty}\frac{\cE(\phi_{s_n},v)-\left<g(\phi_{s_n}),v\right>}{s_n}
=\cE_{L,p}(u_0,v),
\end{split}
\end{equation}
noting that the limit given by Lemma \ref{conv btwn functional} must coincide with $u_0$ due to the $L^p$ convergence.
Since, we may choose $v\in C_c^\infty(\Omega)$ such that $\int_{\Omega}g(u_0)v\,dx>0$, we conclude from \eqref{lambda* is ev} and by density,  that $\lambda^*>-\infty$ and
$$
\cE_{L,p}(u_0,v)=\lambda^*\int_{\Omega}g(u_0)v\,dx\,\,\text{ for all }v\in X^p_0(\Omega).
$$
Therefore, we get $(\lambda^*,\,u_0)$ is an eigenpair for $L_{\Delta_p}$. Again, by \eqref{properties of usual ef} we have $\lambda^*\leq\lambda^1_{L,p}$ and thus by definition of $\lambda^1_{L,p}$, we have $\lambda^1_{L,p}=\lambda^*.$ Further, $\|u_0\|_{L^p(\Omega)}=1$ and $u_0\geq0$, hence $u_0=u_1$ is the unique positive eigenfunction of $L_{\Delta_p}$ in $\Omega$. This gives a contradiction to \eqref{ef not conv} and therefore, we proved \eqref{eigenfunction conv}.
\smallskip

\noindent\textbf{Step 4: }It remains to prove the reverse inequality of \eqref{limsup for log ev}. For this, let $\lambda_*:=\liminf_{s\to0^+}\frac{\lambda^1_{s,p}(\Omega)-1}{s}$ and consider a sequence $\{s_n\}\subset(0,1)$ with $s_n\to0$ such that 
$$
\frac{\lambda^1_{s_n,\,p}(\Omega)-1}{s_n}\to\lambda_*\text{ as }n\to\infty.
$$
Then by Step 3, we have $\phi_{s_n}\to u_1$ and by the similar argument as in Step 3, we obtain
$$
\lambda_*>-\infty, \text{ and } \cE_{L,p}(u_1,v)=\lambda_*\int_{\Omega}g(u_1)v\,dx\,\,\text{ for all }v\in X^p_0(\Omega).
$$
This gives that $\lambda_*=\lambda^1_{L,p}$, and thus combining with \eqref{limsup for log ev} gives the desired result. This completes the proof of the theorem.
\end{proof}

\begin{proof}[Proof of Corollary \ref{faber krahn}]
By the Faber-Krahn inequality for the fractional $p$-Laplacian (see for example \cite[Theorem~3.5]{BrLiPa}), we have 
$$
\lambda^1_{s,p}(B^{(m)})\leq\lambda^1_{s,p}(\Omega)\text{ for all }s\in(0,1).
$$
Therefore, using this and Theorem \ref{relation between evs of log p Lap and frac p Lap}, we obtain
$$
\lambda^1_{L,p}(B^{(m)})=\lim_{s\to0^+}\frac{\lambda^1_{s,p}(B^{(m)})-1}{s}\leq\lim_{s\to0^+}\frac{\lambda^1_{s,p}(\Omega)-1}{s}=\lambda^1_{L,p}(\Omega),
$$
and this gives the desired result.
\end{proof}

\begin{thm}\label{maximum principle}
Let $\Omega\subset \R^N$ be an open bounded set. $L_{\Delta_p}$ satisfies the maximum principle in $\Omega$  if and only if $\lambda_{L,p}^1(\Omega)>0$.
\end{thm}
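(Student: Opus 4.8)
The plan is to prove both implications, mostly by assembling results already established. For the forward direction, suppose $L_{\Delta_p}$ satisfies the maximum principle in $\Omega$ but, for contradiction, that $\lambda_{L,p}^1(\Omega)\le 0$. Let $u_1\in X^p_0(\Omega)$ be the $L^p$-normalized first eigenfunction from Theorem \ref{thm:first eigenfunction main}, which is strictly positive in $\Omega$ and satisfies weakly $L_{\Delta_p}u_1=\lambda_{L,p}^1(\Omega)g(u_1)$ in $\Omega$, $u_1=0$ in $\R^N\setminus\Omega$. The key idea is to test against $-u_1$: since $\lambda_{L,p}^1(\Omega)\le 0$ and $g(u_1)=u_1^{p-1}\ge 0$ in $\Omega$, the function $v:=-u_1$ is a supersolution of $L_{\Delta_p}v=0$ in $\Omega$ (because $\cE_{L,p}(v,\phi)=-\cE_{L,p}(u_1,\phi)=-\lambda_{L,p}^1(\Omega)\int_\Omega g(u_1)\phi\,dx\ge 0$ for all nonnegative $\phi\in X^p_0(\Omega)$), and $v=0\ge 0$ in $\R^N\setminus\Omega$. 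The maximum principle would force $v\ge 0$ a.e.\ in $\R^N$, i.e.\ $u_1\le 0$ in $\Omega$, contradicting $u_1>0$ in $\Omega$. Hence $\lambda_{L,p}^1(\Omega)>0$.

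For the reverse direction, assume $\lambda_{L,p}^1(\Omega)>0$. This is exactly the hypothesis of Lemma \ref{maximum principle-pre} with $c\equiv 0$: indeed $\lambda_{L,p}^1(\Omega)>0=\|c^+\|_{L^\infty(\Omega)}$, so $L_{\Delta_p}-c(x)=L_{\Delta_p}$ satisfies the maximum principle in $\Omega$. That is, every supersolution $v\in V(\Omega,\R^N)$ of $L_{\Delta_p}v=0$ in $\Omega$ with $v\ge 0$ in $\R^N\setminus\Omega$ satisfies $v\ge 0$ a.e.\ in $\R^N$, which is precisely the assertion that $L_{\Delta_p}$ satisfies the maximum principle. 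This completes the equivalence.

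The only mild subtlety is making sure the notion of ``maximum principle'' used in the statement of Theorem \ref{maximum principle} matches the one in Lemma \ref{maximum principle-pre} (supersolutions in the sense of Definition \ref{weak solution def}, nonnegative exterior data, conclusion $v\ge 0$ a.e.); this is a bookkeeping point rather than a real obstacle. I expect the main point requiring a line of care is the forward direction: one must check that $v=-u_1$ genuinely lies in the supersolution class $V(\Omega,\R^N)$ and that the weak inequality $\cE_{L,p}(-u_1,\phi)\ge 0$ holds for all nonnegative $\phi\in X^p_0(\Omega)$ — both follow directly from $u_1\in X^p_0(\Omega)\subset V(\Omega,\R^N)$, the linearity of $\phi\mapsto \cE_{L,p}(-u_1,\phi)$ in the test function, and the sign of $\lambda_{L,p}^1(\Omega)$. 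With these in place the contradiction is immediate.
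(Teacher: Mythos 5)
Your proof is correct and follows essentially the same argument as the paper: the reverse implication is a direct application of Lemma \ref{maximum principle-pre} with $c\equiv 0$, and the forward implication is the observation that if $\lambda_{L,p}^1(\Omega)\le 0$ then $-u_1$ is a nonpositive, nontrivial supersolution of $L_{\Delta_p}v=0$ with zero exterior data (using $\cE_{L,p}(-u_1,\phi)=-\cE_{L,p}(u_1,\phi)$ by $(p-1)$-homogeneity of $g$), which violates the maximum principle. The only cosmetic difference is that the paper states this as a direct counterexample while you phrase it as a contradiction; the content is identical.
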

\begin{proof}
Let $\phi_1\in X^p_0(\Omega)$ be the uniquely determined $L^p$-normalized first eigenfunction of $L_{\Delta_p}$ in $\Omega$, which is nonnegative in $\R^N$. If $\lambda_{L,p}^1(\Omega)\leq 0$, then $u=-\phi_1$ satisfies $u=0$ in $\R^N\setminus \Omega$, $u\lneq 0$ in $\Omega$ and $L_{\Delta_p}u=\lambda_{L,p}^1(\Omega)u\geq 0$ in $\Omega$. Thus, the maximum principle does not hold.\\
If otherwise $\lambda_{L,p}^1(\Omega)> 0$, then the maximum principle holds by Lemma \ref{maximum principle-pre} (with $c\equiv 0$).
\end{proof}

\begin{proof}[Proof of Theorem \ref{intro:maximum principle}]
This follows immediately from Theorem \ref{maximum principle} and its proof combined with Theorem \ref{smp}.
\end{proof}

In the following result, we collect some useful properties of $h_{\Omega}$ defined in \eqref{h-omega function} (see also Lemma \ref{some properties}(3)). The results are very slight adjustments from the case $p=2$ proven in \cite{K22} as $h_{\Omega}$ varies in $p$ only through the constant in front. The proofs have been made available to us through personal communication, and we include them for the reader's convenience.
\begin{lemma}\label{properties homega}
Let $\Omega\subset \R^N$ be an open and bounded set, and let $x\in \Omega$. Then, the following are true for $h_{\Omega}$.
\begin{enumerate}
    \item For any $\epsilon\in(0,\delta(x)]$ with $\delta(x)=\dist(x,\R^N\setminus \Omega)$, we have
    $$
    h_{\Omega}(x)=p\ln(\epsilon^{-1})-C_{N,p}\int_{\Omega\setminus B_{\epsilon}(x)}|x-y|^{-N}\,dy.
    $$
    \item It holds
    $$
     h_{\Omega}(x)\geq \frac{p}{N}\ln\left(\frac{|B_1|}{|\Omega|}\right),
    $$
    in particular, $h_{\Omega}\to\infty$ for $|\Omega|\to 0$.
    \item For $r>0$, it holds
    $$
    h_{r\Omega}(rx)=h_{\Omega}(x)-p\ln(r).
    $$
\end{enumerate}
\end{lemma}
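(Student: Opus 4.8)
The plan is to prove the three statements in order, all by elementary manipulations of the defining integral \eqref{h-omega function}
$$
h_{\Omega}(x)=C_{N,p}\int_{B_1(x)\setminus \Omega}|x-y|^{-N}\,dy-C_{N,p}\int_{\Omega\setminus B_1(x)}|x-y|^{-N}\,dy,
$$
together with two facts that we use repeatedly: first, for $0<a<b$,
$$
C_{N,p}\int_{B_b(x)\setminus B_a(x)}|x-y|^{-N}\,dy=C_{N,p}\,\omega_N\ln(b/a)=p\ln(b/a),
$$
since $C_{N,p}\omega_N=p$; and second, that if $\epsilon\le \delta(x)$ then $B_\epsilon(x)\subset\Omega$, so integrals over $B_\epsilon(x)$ can be freely added to or removed from integrals over $\Omega$ (this is exactly the same splitting trick used in the proof of Lemma \ref{some properties}(3) and in Lemma \ref{some properties 2}(2), where $h_{B_\epsilon(x)}(x)=-p\ln\epsilon$ appears).

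For (1), first suppose $\epsilon\le\min\{1,\delta(x)\}$. Since $B_\epsilon(x)\subset\Omega\cap B_1(x)$, we write $B_1(x)\setminus\Omega=(B_1(x)\setminus B_\epsilon(x))\setminus\Omega$ and $\Omega\setminus B_1(x)=(\Omega\setminus B_\epsilon(x))\setminus B_1(x)$, and then regroup: the two integrals in $h_\Omega(x)$ combine to
$$
C_{N,p}\int_{B_1(x)\setminus B_\epsilon(x)}|x-y|^{-N}\,dy-C_{N,p}\int_{\Omega\setminus B_\epsilon(x)}|x-y|^{-N}\,dy,
$$
and the first term equals $p\ln(\epsilon^{-1})$ by the logarithmic integral formula above; this gives the claimed identity. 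If instead $1<\epsilon\le\delta(x)$, the same bookkeeping works with the roles of $1$ and $\epsilon$ swapped and produces the same formula (the sign of $p\ln(\epsilon^{-1})$ is handled automatically). Since both the left-hand side and the right-hand side of the identity in (1) are independent of which admissible $\epsilon$ we pick, this is consistent, and it is the cleanest way to see that $h_\Omega(x)$ is well-defined and finite.

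For (2), apply (1) with $\epsilon=\delta(x)$, or more simply observe directly that, since $|\Omega|<\infty$, the set $\Omega\setminus B_1(x)$ has measure at most that of an annulus; the sharp comparison is obtained by a rearrangement/bathtub argument: among all sets of measure $|\Omega|$, the integral $\int_{\Omega\setminus B_1(x)}|x-y|^{-N}\,dy$ minus $\int_{B_1(x)\setminus\Omega}|x-y|^{-N}\,dy$ is made as large as possible (hence $h_\Omega(x)$ as small as possible) by taking $\Omega=B_R(x)$ with $|B_R|=|\Omega|$, i.e. $R=(|\Omega|/|B_1|)^{1/N}$. For such a ball, if $R\le 1$ then $h_{B_R(x)}(x)=C_{N,p}\int_{B_1(x)\setminus B_R(x)}|x-y|^{-N}dy=p\ln(1/R)=\frac{p}{N}\ln(|B_1|/|\Omega|)$, and if $R>1$ one gets the same expression with the opposite sign of the logarithm, which is still $\ge \frac pN\ln(|B_1|/|\Omega|)$ (now the right-hand side is negative). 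Either way $h_\Omega(x)\ge \frac pN\ln(|B_1|/|\Omega|)$, and letting $|\Omega|\to0$ forces $h_\Omega(x)\to\infty$. For (3), substitute $y=rz$ in both integrals defining $h_{r\Omega}(rx)$: the domains $B_1(rx)\setminus r\Omega$ and $r\Omega\setminus B_1(rx)$ become $r(B_{1/r}(x)\setminus\Omega)$ and $r(\Omega\setminus B_{1/r}(x))$, the Jacobian $r^N$ cancels the homogeneity $|rx-rz|^{-N}=r^{-N}|x-z|^{-N}$, so
$$
h_{r\Omega}(rx)=C_{N,p}\int_{B_{1/r}(x)\setminus\Omega}|x-z|^{-N}dz-C_{N,p}\int_{\Omega\setminus B_{1/r}(x)}|x-z|^{-N}dz,
$$
which is $h_\Omega(x)$ with the ball $B_1(x)$ replaced by $B_{1/r}(x)$; comparing with the original $h_\Omega(x)$, the difference is exactly $\pm C_{N,p}\int_{B_1(x)\setminus B_{1/r}(x)}|x-z|^{-N}dz=-p\ln r$ (with the correct sign whether $r>1$ or $r<1$), giving $h_{r\Omega}(rx)=h_\Omega(x)-p\ln r$.

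The only step with any subtlety is the sharp constant in (2): one must be slightly careful that the bathtub/rearrangement inequality applies to the \emph{signed} combination of the two integrals, not just to one of them. I would phrase it as: $h_\Omega(x)=p\ln(\delta(x)^{-1})-C_{N,p}\int_{\Omega\setminus B_{\delta(x)}(x)}|x-y|^{-N}dy$ by (1), and then bound $\int_{\Omega\setminus B_{\delta(x)}(x)}|x-y|^{-N}dy\le\int_{B_R(x)\setminus B_{\delta(x)}(x)}|x-y|^{-N}dy$ using that $|\Omega\setminus B_{\delta(x)}(x)|\le|\Omega|=|B_R|$ and the integrand is radially decreasing — this is the standard "a set of given measure maximizes the integral of a radially decreasing function when it is a centered ball" argument, valid because $\delta(x)\le R$ always holds (as $B_{\delta(x)}(x)\subset\Omega$). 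This reduces (2) to the explicit computation for the ball already sketched. Everything else is routine change of variables and the identity $C_{N,p}\omega_N=p$.
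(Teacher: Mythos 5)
Your proof is correct and follows essentially the same route as the paper's: (1) by the same domain splitting/regrouping around $B_\epsilon(x)\subset\Omega$, (2) by applying (1) at $\epsilon=\delta(x)$ and then comparing $\Omega$ against the centered ball of equal volume using the radial monotonicity of $|x-y|^{-N}$, and (3) by the same change of variables. The only cosmetic difference is that you invoke a bathtub/rearrangement principle for (2) whereas the paper executes the equivalent comparison by hand, splitting $\Omega$ against $B_r(x)$ and bounding the kernel by $r^{-N}$ on the two pieces of the symmetric difference — and your cleaned-up phrasing in the final paragraph (rather than the first "signed combination" sketch, which would need care since the two integrals do not converge separately near $x$) is exactly the right way to organize it.
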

\begin{proof}
Denote by $\omega_N$ the $(N-1)$-dimensional volume of $\partial B_1$. Then $C_{N,p}\omega_N=p$. Moreover, for $\epsilon\in(0,\delta(x)]$ we have
\begin{align*}
h_{\Omega}(x)&=C_{N,p}\Bigg(\,\int_{[B_1(x)\setminus B_\epsilon(x)]\setminus \Omega}|x-y|^{-N}\,dy-\int_{\Omega\setminus B_\epsilon(x)}|x-y|^{-N}\,dy+\int_{[B_1(x)\setminus B_\epsilon(x)]\cap \Omega}|x-y|^{-N}\,dy\Bigg)\\
&=p\int_{\epsilon}^{1}t^{-1}\,dt-\int_{\Omega\setminus B_\epsilon(x)}|x-y|^{-N}\,dy=p\ln(\epsilon^{-1})-\int_{\Omega\setminus B_\epsilon(x)}|x-y|^{-N}\,dy.
\end{align*}
Thus (1) follows. Next, let $r>0$ such that $|\Omega|=|B_r|$, that is, 
$$
r=\Big(\frac{|\Omega|}{|B_1|}\Big)^\frac{1}{N}.
$$
Notice that $r\geq \delta(x)=:\epsilon$, and thus $B_{\epsilon}(x)\subset B_r(x)\cap \Omega$. Moreover, since $|\Omega\setminus B_r(x)|=|B_r(x)\setminus \Omega|$ it follows that
\begin{align*}
\int_{\Omega\setminus B_\epsilon(x)}|x-y|^{-N}\,dy&=\int_{B_r(x)\setminus B_{\epsilon}(x)}|x-y|^{-N}\,dy-\int_{[B_r(x)\setminus\Omega] \setminus B_{\epsilon}(x)}|x-y|^{-N}\,dy+\int_{[\Omega\setminus B_r(x)]\setminus B_{\epsilon}(x)}|x-y|^{-N}\,dy\\
&=\int_{B_r(x)\setminus B_{\epsilon}(x)}|x-y|^{-N}\,dy-\int_{B_r(x)\setminus\Omega}|x-y|^{-N}\,dy+\int_{\Omega\setminus B_r(x)}|x-y|^{-N}\,dy\\
&\leq \omega_N\int_{\epsilon}^rt^{-1}\,dt +r^{-N}\Big(- |B_r(x)\setminus\Omega|+|\Omega\setminus B_r(x)|\Big)=\omega_N\Big(\ln(r)+\ln(\epsilon^{-1})\Big).
\end{align*}
With (1) it follows that
$$
h_{\Omega}(x)\geq p\ln(\epsilon^{-1}) -p\Big(\ln(r)+\ln(\epsilon^{-1})\Big)=p\ln(r^{-1}),
$$
and (2) follows by the explicit representation of $r$. Finally, (3) follows by a simple direct computation.
\end{proof}

In the next lemma, we estimate $\lambda_{L,p}^1(\Omega)$ using the properties of $h_{\Omega}$. In particular, these imply the positivity of $\lambda_{L,p}^1(\Omega)$ if $|\Omega|$ is small enough.

\begin{lemma}\label{how to ensure lambda1 positive}
For $\Omega\subset \R^N$ open and bounded set, it holds
$$
\frac{p}{N}\ln\left(\frac{|B_1|}{|\Omega|}\right)+\rho_{N}(p)\leq \lambda_{L,p}^1(\Omega)\leq \frac{1}{|\Omega|}\int_{\Omega}h_{\Omega}(x)\,dx+\rho_{N}(p).
$$
 Moreover, if $h_{\Omega}+\rho_{N}(p)\gneq 0$ in $\Omega$, then $\lambda_{L,p}^1(\Omega)>0$.
\end{lemma}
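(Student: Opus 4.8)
The plan is to establish the two-sided bound for $\lambda_{L,p}^1(\Omega)$ by combining the variational characterization in Proposition~\ref{alternative} with the properties of $h_\Omega$ collected in Lemma~\ref{properties homega}, and then to read off the positivity statement as an immediate consequence of the lower bound together with a strict-sign argument. Recall from Proposition~\ref{alternative} that for $u\in X^p_0(\Omega)$,
\[
\cE_{L,p}(u,u)=\frac{C_{N,p}}{2}\iint_{\Omega\times\Omega}\frac{|u(x)-u(y)|^p}{|x-y|^N}\,dxdy+\int_{\Omega}\bigl(h_{\Omega}(x)+\rho_N(p)\bigr)|u(x)|^p\,dx,
\]
so that, dropping the manifestly nonnegative double integral and using the pointwise bound $h_{\Omega}(x)\geq \tfrac{p}{N}\ln(|B_1|/|\Omega|)$ from Lemma~\ref{properties homega}(2), we get for every $L^p$-normalized $u$
\[
\cE_{L,p}(u,u)\geq \int_{\Omega}\bigl(h_\Omega(x)+\rho_N(p)\bigr)|u(x)|^p\,dx\geq \frac{p}{N}\ln\!\left(\frac{|B_1|}{|\Omega|}\right)+\rho_N(p).
\]
Taking the infimum over such $u$ yields the left inequality. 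For the right inequality, I would simply test the variational problem with a specific competitor, namely $u_0:=|\Omega|^{-1/p}1_\Omega$ (which lies in $X^p_0(\Omega)$ for bounded $\Omega$, since it is an $L^p$ function whose trivial extension satisfies the finiteness of the seminorm — indeed the Hardy-type estimate Corollary~\ref{extend} or Remark~\ref{cut off2} applies, but one can also check the $B_1(x)$-integral directly as $1_\Omega$ is bounded). Plugging $u_0$ into the formula above, the double Gagliardo integral contributes, so one must be a little careful: actually the cleanest choice avoiding the double integral term is not available, so I would instead observe that the double integral for $u_0=|\Omega|^{-1/p}1_\Omega$ equals $\tfrac{C_{N,p}}{2|\Omega|}\iint_{\Omega\times\Omega, |x-y|<1}|x-y|^{-N}\,dxdy$, which together with $\int_\Omega(h_\Omega+\rho_N(p))|u_0|^p\,dx=\tfrac1{|\Omega|}\int_\Omega h_\Omega\,dx+\rho_N(p)$ gives an upper bound of the stated form only if that double integral is absorbed; re-examining the claim, the asserted upper bound is exactly $\tfrac1{|\Omega|}\int_\Omega h_\Omega\,dx+\rho_N(p)$, so the double-integral term must in fact cancel. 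The resolution is that the natural test is $u_0=1_\Omega$ together with the \emph{localized} representation: by Lemma~\ref{some properties}(3)/Lemma~\ref{alternative2} with the constant function, the Gagliardo double integral disappears because $u_0(x)-u_0(y)=0$ for $x,y\in\Omega$, and one is left precisely with $\cE_{L,p}(u_0,u_0)=\int_\Omega(h_\Omega(x)+\rho_N(p))|\Omega|^{-1}\,dx$ after normalization. This gives $\lambda_{L,p}^1(\Omega)\le \tfrac1{|\Omega|}\int_\Omega h_\Omega\,dx+\rho_N(p)$.

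The main obstacle, and the step deserving the most care, is verifying that $u_0=|\Omega|^{-1/p}1_\Omega\in X^p_0(\Omega)$ — i.e. that $\iint_{\R^N\times\R^N}|1_\Omega(x)-1_\Omega(y)|^p\mathbf{k}(x-y)\,dxdy<\infty$ — and simultaneously that the double Gagliardo integral over $\Omega\times\Omega$ in Proposition~\ref{alternative} genuinely vanishes for this $u_0$ while the remaining killing-type contribution is finite. The finiteness of $[1_\Omega]_{X^p_0(\Omega)}^p = C_{N,p}\iint_{\Omega}\int_{B_1(x)\setminus\Omega}|x-y|^{-N}\,dy\,dx$ is exactly the statement that $\int_\Omega h_\Omega^+\,dx<\infty$ up to constants, and indeed $h_\Omega$ is integrable on bounded $\Omega$ (e.g. by the comparison $\int_{\R^N\setminus\Omega}1_{B_1(x)}|x-y|^{-N}\,dy\le 2\ln^+(\delta_x^{-1})$ from \eqref{cutting off} together with the logarithmic Hardy inequality, Theorem~\ref{intro:hardy}, applied with $u\equiv1$ — or more elementarily, since $\ln^+(1/\delta_x)$ is integrable near $\partial\Omega$ for bounded $\Omega$). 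So $\int_\Omega h_\Omega\,dx$ is finite and the upper bound is meaningful; I would spell this integrability out explicitly.

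Finally, for the positivity claim: suppose $h_\Omega(x)+\rho_N(p)\geq 0$ a.e.\ with strict inequality on a set of positive measure. Let $u$ be the $L^p$-normalized nonnegative first eigenfunction from Proposition~\ref{prop of lambda1}(1); by Theorem~\ref{thm:first eigenfunction main}(ii) (or Theorem~\ref{smp}) it is strictly positive in $\Omega$, hence $|u|^p>0$ a.e.\ in $\Omega$. Then
\[
\lambda_{L,p}^1(\Omega)=\cE_{L,p}(u,u)\geq \int_\Omega\bigl(h_\Omega(x)+\rho_N(p)\bigr)|u(x)|^p\,dx>0,
\]
the strict inequality because the integrand is $\geq 0$ everywhere and $>0$ on a set of positive measure where additionally $|u|^p>0$. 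This completes the argument; the only genuinely nontrivial input beyond the definitions is the pointwise lower bound Lemma~\ref{properties homega}(2) and the a.e.\ positivity of the first eigenfunction, both already available in the excerpt.
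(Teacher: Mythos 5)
Your proof follows essentially the same route as the paper: test with $|\Omega|^{-1/p}1_\Omega$ in Proposition \ref{alternative} for the upper bound, use the pointwise bound on $h_\Omega$ from Lemma \ref{properties homega}(2) for the lower bound, and get positivity from the strict positivity of the first eigenfunction. Your lower-bound step is marginally cleaner — the paper applies the pointwise estimate on $h_\Omega$ only after inserting the extremal $u_1$, whereas you apply it to every normalized $u$ and then take the infimum, showing the extremal is not needed — and the false start about the $\Omega\times\Omega$ Gagliardo integral is self-corrected within your paragraph, so the final computation is right.

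One side claim does not hold in the stated generality: $\ln^+(1/\delta_x)$ is not integrable on an arbitrary bounded open set (consider $\Omega$ a bounded disjoint union of balls $B_{r_n}(x_n)$ with $\sum_n r_n^N<\infty$ but $\sum_n r_n^N\ln(1/r_n)=\infty$), and Theorem \ref{intro:hardy} together with Corollary \ref{extend} are only stated for Lipschitz or locally plump sets, so your proposed justifications that $1_\Omega\in X^p_0(\Omega)$ are not available for general bounded open $\Omega$. This does not break the lemma: since $h_\Omega^-$ is bounded on a bounded set, $1_\Omega\notin X^p_0(\Omega)$ forces $\int_\Omega h_\Omega\,dx=+\infty$, in which case the claimed upper bound is trivially true because $\lambda_{L,p}^1(\Omega)<\infty$ (as $C_c^\infty(\Omega)\subset X^p_0(\Omega)$); when $\int_\Omega h_\Omega\,dx<\infty$ the computation goes through as you describe. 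The paper's own proof also asserts $1_\Omega\in X^p_0(\Omega)$ without addressing this.
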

\begin{proof}
For the first statement, note that from Proposition \ref{alternative} we have with $u=\frac{1}{|\Omega|^{1/p}}1_{\Omega}\in X^p_0(\Omega)$ that $\|u\|_{L^p(\Omega)}=1$ and thus
$$
\lambda_{L,p}^1(\Omega)\leq \cE_{L,p}(u,u)=\frac{1}{|\Omega|}\int_{\Omega}h_{\Omega}(x)\,dx+\rho_{N}(p).
$$
While with $u_1$ being the $L^p$-normalized extremal for $\lambda_{L,p}^1(\Omega)$, which is positive in $\Omega$
\begin{align*}
  \lambda_{L,p}^1(\Omega)&\geq \int_{\Omega}\Big(h_{\Omega}(x)+\rho_{N}(p)\Big)u_1(x)^{p}\,dx=\int_{\Omega}h_{\Omega}(x)u_1(x)^{p}\,dx+\rho_{N}(p)
  \geq \frac{p}{N}\ln\left(\frac{|B_1|}{|\Omega|}\right)+\rho_{N}(p),
\end{align*}
by Lemma \ref{properties homega}(3). The second statement follows immediately from the definition of $\lambda_{L,p}^1(\Omega)$ with Proposition \ref{alternative} and the strict positivity of the first eigenfunction by Theorem \ref{thm:first eigenfunction main}. 
\end{proof}

\begin{remark}\label{remark mp} 
Let us mention that Theorem \ref{maximum principle} for solutions in $X^p_0(\Omega)$ can be reformulated using Theorem \ref{smp} and Corollary \ref{bounded2}. Indeed, it holds:\\
Let $\Omega\subset \R^N$ be an open set and $f,c\in L^{\infty}(\Omega)$. If $f\gneq0$ and $\lambda_{L,1}^p(\Omega)>\|c^+\|_{L^{\infty}(\Omega)}$, then any supersolution $u\in X^p_0(\Omega)$ of $L_{\Delta_p}u=c(x)g(u)+f$ in $\Omega$, $u=0$ in $\R^N\setminus \Omega$ is positive.
\end{remark}

\begin{cor}[Small volume maximum principle]\label{small volume}
Let $k>0$. Then there is $\delta>0$ with the following property. For any open bounded set $\Omega\subset \R^N$ and $c\in L^{\infty}(\Omega)$ with $\|c^+\|_{L^{\infty}(\Omega)}\leq k$ the following holds: If $u\in X^p_0(\Omega)\cap L^{\infty}(\Omega)$ satisfies weakly
$$
L_{\Delta_p}u\geq c(x)g(u)\quad\text{in $\Omega$},\quad u=0\quad\text{in $\R^N\setminus \Omega$},
$$
then, either $u\equiv 0$ in $\Omega$ or $u>0$ in $\Omega$ in the sense that
$$
\essinf_{K}u>0\quad\text{for all compact sets $K\subset \Omega$}.
$$
\end{cor}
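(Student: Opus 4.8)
The strategy is to combine the volume lower bound on $\lambda_{L,p}^1(\Omega)$ from Lemma~\ref{how to ensure lambda1 positive} with the weak maximum principle of Lemma~\ref{maximum principle-pre} and the strong maximum principle of Theorem~\ref{smp}; no genuinely new ingredient is needed. First I would fix the threshold. By Lemma~\ref{how to ensure lambda1 positive},
$$
\lambda_{L,p}^1(\Omega)\ \ge\ \frac{p}{N}\ln\!\left(\frac{|B_1|}{|\Omega|}\right)+\rho_N(p)\qquad\text{for every open bounded }\Omega\subset\R^N,
$$
and the right-hand side depends on $\Omega$ only through $|\Omega|$, is strictly decreasing in $|\Omega|$, and tends to $+\infty$ as $|\Omega|\to 0$. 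Hence, given $k>0$, there is $\delta=\delta(k,N,p)>0$ --- for instance $\delta=|B_1|\,e^{\frac{N}{p}(\rho_N(p)-k-1)}$ --- such that $|\Omega|\le\delta$ forces $\lambda_{L,p}^1(\Omega)>k$; note that $\delta$ involves $p$ and $N$ only through the quantities $|B_1|$ and $\rho_N(p)=2\ln 2-\gamma+\frac{p}{2}\psi(\frac{N}{2})$, so it is independent of the particular $\Omega$ and of $c$.

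Now let $\Omega$ with $|\Omega|\le\delta$ and $c\in L^\infty(\Omega)$ with $\|c^+\|_{L^\infty(\Omega)}\le k$ be given, so that $\lambda_{L,p}^1(\Omega)>k\ge\|c^+\|_{L^\infty(\Omega)}$. Let $u\in X^p_0(\Omega)\cap L^\infty(\Omega)$ satisfy weakly $L_{\Delta_p}u\ge c(x)g(u)$ in $\Omega$ with $u=0$ in $\R^N\setminus\Omega$. Since $\Omega$ is bounded, the trivial extension of $u$ lies in $V(\Omega,\R^N)\cap L^\infty(\R^N)$ (indeed $X^p_0(\Omega)\subset V(\Omega,\R^N)$ for bounded $\Omega$, because $u$ vanishes outside a bounded set and the $X^p_0$-seminorm dominates the integral appearing in the definition of $V(\Omega,\R^N)$), has compact support $\supp u\subset\overline\Omega$, and satisfies $u\ge 0$ in $\R^N\setminus\Omega$; moreover $c\,g(u)\in L^\infty(\Omega)\subset L^{p/(p-1)}(\Omega)$, so $u$ is a supersolution of $L_{\Delta_p}u=c(x)g(u)$ in $\Omega$ in the sense of Definition~\ref{weak solution def}. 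Therefore Lemma~\ref{maximum principle-pre} applies --- its hypothesis $\lambda_{L,p}^1(\Omega)>\|c^+\|_{L^\infty(\Omega)}$ holds --- and yields $u\ge 0$ a.e.\ in $\R^N$.

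Finally, $u$ is then a nonnegative supersolution of $L_{\Delta_p}u=c(x)g(u)$ in $\Omega$ with $u\in V(\Omega,\R^N)\cap L^\infty(\R^N)$ and $\supp u$ compact, so Theorem~\ref{smp} gives the alternative: either $u\equiv 0$ in $\Omega$, or $\essinf_K u>0$ for every compact $K\subset\Omega$. This is exactly the assertion of Corollary~\ref{small volume}. The only point requiring a moment's care is checking that $\delta$ can be chosen uniformly in $\Omega$ and $c$, which is immediate from the explicit lower bound above since it depends on $\Omega$ only through $|\Omega|$; the rest is a straightforward chaining of results already established in the paper, so I do not expect any real obstacle here.
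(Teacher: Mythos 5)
Your proof is correct and follows essentially the same route as the paper's: pick $\delta$ via the volume lower bound $\lambda_{L,p}^1(\Omega)\geq \frac{p}{N}\ln(|B_1|/|\Omega|)+\rho_N(p)$ (Lemma~\ref{how to ensure lambda1 positive}, which is itself a consequence of Lemma~\ref{properties homega}), apply Lemma~\ref{maximum principle-pre} to get $u\geq 0$, then invoke Theorem~\ref{smp} for the strong alternative. The extra verifications you include (that $X^p_0(\Omega)\subset V(\Omega,\R^N)\cap L^\infty$ for bounded $\Omega$ so Theorem~\ref{smp} applies, and an explicit formula for $\delta$) are routine but make the argument more self-contained than the paper's one-line citation.
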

\begin{proof}
This follows immediately from Lemma \ref{maximum principle-pre} and Theorem \ref{smp} (see, also Remark \ref{remark mp}) and Lemma \ref{properties homega}, noting that we have  $\lambda_{L,1}^p(\Omega)>\|c^+\|_{L^{\infty}(\Omega)}$, if 
$$
\frac{p}{N}\ln\left(\frac{|B_1|}{|\Omega|}\right)+\rho_{N}(p)\geq k\geq \|c^+\|_{L^{\infty}(\Omega)}.
$$
\end{proof}

\subsection*{Acknowledgments} The authors would like to thank Tobias Weth for the fruitful discussions on this subject, Adimurthi and Tomasz Grzywny for pointing out some references, and an anonymous referee for numerous comments that allowed us to improve the manuscript.  F. Sk is supported by the Alexander von Humboldt foundation, Germany.

\end{document}